\def\blx@maxline{77}
\numberwithin{equation}{section}
\newcommand{\Z}{\mathbb{Z}}
\newcommand{\C}{\mathbb{C}}
\newcommand{\R}{\mathbb{R}}
\newcommand{\HH}{\mathbb{H}}
\renewcommand{\i}{\mathbf{i}}
\DeclareMathOperator{\PP}{\mathbb{P}}
\newcommand{\al}{\alpha}
\newcommand{\la}{\lambda}
\newcommand{\be}{\beta}
\newcommand{\G}{\Gamma}
\newcommand{\D}{\boldsymbol\Delta}
\newcommand{\z}{\mathsf{z}}
\newcommand{\w}{\mathsf{w}}
\renewcommand{\Re}{\mathop{\mathrm{Re}}}
\renewcommand{\Im}{\mathop{\mathrm{Im}}}
\newcommand{\A}{\mathsf{A}}
\newcommand{\mm}{\sqrt M}
\newcommand{\dd}{\mathsf{D}}
\newcommand{\upd}{\uprho^\bullet}
\newcommand{\lod}{\uprho_\bullet}
\newcommand{\QQ}{\mathsf{Q}}
\newcommand{\W}[1]{\mathbb{W}^{#1}}
\newcommand{\aind}{{\upalpha}}
\newcommand{\bind}{{\upbeta}}
\newcommand{\ch}{\boldsymbol{\upchi}}
\newcommand{\fl}[1]{\lfloor{#1}\rfloor}
\newcommand{\V}{\mathsf{V}}
\newcommand{\Sfin}{\mathsf{S}}
\newcommand{\zfin}{\z_{\textnormal{c}}}
\newcommand{\zlim}{\z_{*}}
\newcommand{\ulim}{\mathsf{u}_{*}}
\newcommand{\uvar}{\mathsf{u}}
\newcommand{\zbfin}{\bar\z_{\textnormal{c}}}
\newcommand{\pv}{\mathrm{p.v.}}
\newcommand{\m}{\mathcal{M}}
\newcommand{\CC}{\mathcal{C}}
\newcommand{\RRR}{\mathsf{R}}
\newcommand{\CCC}{\mathsf{d}}
\newcommand{\NRandom}{\mathbf{N}}
\newcommand{\eps}{\varepsilon}
\newcommand{\deps}{\varepsilon}
\newcommand{\TNPower}{\upeta}
\newtheorem{proposition}{Proposition}[section]
\newtheorem{lemma}[proposition]{Lemma}
\newtheorem{theorem}[proposition]{Theorem}
\theoremstyle{definition}
\newtheorem{definition}[proposition]{Definition}
\newtheorem{remark}[proposition]{Remark}
\newtheorem{assumption}{Assumption}
\begin{document}

\title[Universality of local statistics for noncolliding random walks]
{Universality of local statistics\\ for noncolliding random walks}

\author{Vadim Gorin}
\address{V. Gorin,
Massachusetts Institute of Technology, Department of Mathematics,
77 Massachu\-setts Avenue, Cambridge, MA 02139-4307, USA,
and Institute for Information Transmission Problems,
Bolshoy Karetny per. 19, Moscow, 127994, Russia}
\email{vadicgor@gmail.com}

\author{Leonid Petrov}
\address{L. Petrov, University of Virginia, Department of Mathematics,
141 Cabell Drive, Kerchof Hall,
P.O. Box 400137,
Charlottesville, VA 22904, USA,
and Institute for Information Transmission Problems,
Bolshoy Karetny per. 19, Moscow, 127994, Russia}
\email{lenia.petrov@gmail.com}

\begin{abstract}
    We consider the $N$-particle noncolliding Bernoulli random walk --- a discrete time Markov process in $\mathbb{Z}^{N}$ obtained from a collection of $N$ independent simple random walks with steps $\in\{0,1\}$ by conditioning that they never collide. We study the asymptotic behavior of local statistics of this process started from an arbitrary initial configuration on short times $T\ll N$ as $N\to+\infty$. We show that if the particle density of the initial configuration is bounded away from $0$ and $1$ down to scales $\mathsf{D}\ll T$ in a neighborhood of size $\mathsf{Q}\gg T$ of some location $x$ (i.e., $x$ is in the ``bulk''), and the initial configuration is balanced in a certain sense, then the space-time local statistics at $x$ are asymptotically governed by the extended discrete sine process (which can be identified with a translation invariant ergodic Gibbs measure on lozenge tilings of the plane). We also establish similar results for certain types of random initial data. Our proofs are based on a detailed analysis of the determinantal correlation kernel for the noncolliding Bernoulli random walk.

    The noncolliding Bernoulli random walk is a discrete analogue of the $\boldsymbol{\beta}=2$ Dyson Brownian Motion whose local statistics are universality governed by the continuous sine process. Our results parallel the ones in the continuous case. In addition, we naturally include situations with inhomogeneous local particle density on scale $T$, which nontrivially affects parameters of the limiting extended sine process, and in a particular case leads to a new behavior.
\end{abstract}

\date{}

\maketitle

\setcounter{tocdepth}{3}

\section{Introduction} 
\label{sec:introduction}

Our main object is a discrete time Markov chain in the $N$--dimensional lattice $\Z^N$ which is called the \emph{noncolliding Bernoulli random walk}. At $N=1$ by a single-particle chain we mean the simple random walk on $\Z$ which at each time step jumps by $1$ in the positive direction with probability $\be\in(0,1)$, or stays put with the complementary probability $1-\be$. For $N>1$, we consider $N$ independent identical particles on $\Z$ evolving according to the single-particle chain, and condition them to never collide (i.e., never occupy the same location of $\Z$ at the same time). Note that the condition has probability zero, and therefore, needs to be defined through a limit procedure which is performed in, e.g., \cite{konig2002non} (based on a classical result of \cite{KMG59-Coincidence}). The result is a time and space homogeneous Markov chain $\vec X(t)$ living in the Weyl chamber
\begin{equation}\label{Weyl_chamber}
    \W N=\{(x_1,\ldots,x_N)\in\Z^{N}\colon x_1< x_2<\ldots<x_N\},
\end{equation}
with transition probabilities
\begin{equation}
    \PP\big(\vec X(t+1)=\vec x'\mid \vec X(t)=\vec x\big)
    =\begin{cases}
        \dfrac{\V(\vec x')}{\V(\vec x)}
        \be^{|\vec x'|-|\vec x|}(1-\be)^{N-|\vec x'|+|\vec x|},
        &
        \textnormal{if $x'_i-x_i\in\{0,1\}$ for all $i$};\\
        \rule{0pt}{8pt}0,&\textnormal{otherwise},
    \end{cases}
    \label{noncolliding_Bernoulli_walks_transitions}
\end{equation}
where for $\vec x=(x_1,\dots,x_N)$ we denote $|\vec x|=x_1+\ldots+x_N$ and
\begin{equation}\label{Vandermonde}
    \V(\vec x)=\prod_{1\le j<i\le N}(x_i-x_j).
\end{equation}
We refer to \Cref{fig:Bernoulli} for an illustration.

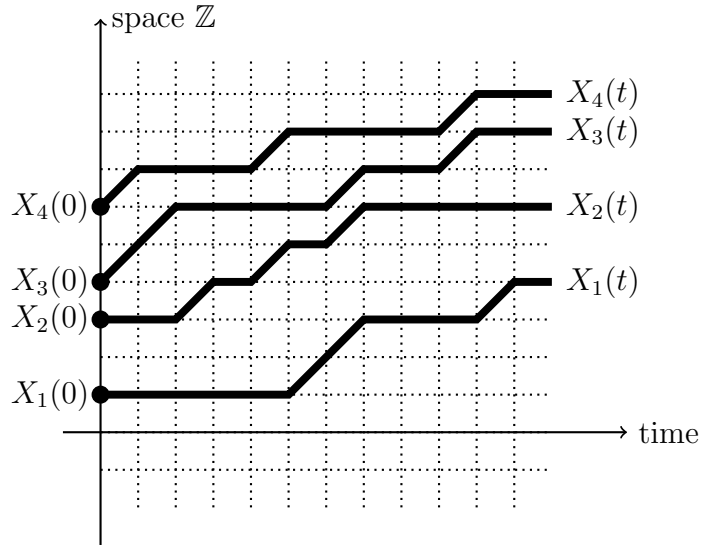
\begin{figure}[htbp]
    \begin{adjustbox}{max height=.5\textwidth}
    \begin{tikzpicture}[scale = 1, thick]
        \draw[->] (-.5,0) --++ (7.5,0) node[right] {time};
        \draw[->] (0,-1.5) --++ (0,7) node[right] {space $\Z$};
        \foreach \h in {-1,...,9}
        {
            \draw[dotted] (0,\h/2)--++(6,0);
        }
        \foreach \h in {1,...,11}
        {
            \draw[dotted] (\h/2,-1)--++(0,6);
        }
        \draw[fill] (0,3) circle(3pt)
        node[left] {$X_{4}(0)$};
        \draw[fill] (0,2) circle(3pt)
        node[left] {$X_3(0)$};
        \draw[fill] (0,1.5) circle(3pt)
        node[left] {$X_2(0)$};
        \draw[fill] (0,.5) circle(3pt)
        node[left] {$X_1(0)$};
        \draw[line width=3]
        (0,.5)--++(2.5,0)--++(.5,.5)--++(.5,.5)
        --++(1.5,0)--++(.5,.5)
        --++(.5,0) node [right] {$X_1(t)$};
        \draw[line width=3] (0,3)--++(.5,.5)
        --++(.5,0)--++(.5,0)--++(.5,0)
        --++(.5,.5)--++(.5,0)
        --++(.5,0)--++(.5,0)--++(.5,0)--++(.5,.5)--++(.5,0)--++(.5,0)
        node [right] {$X_4(t)$};
        \draw[line width=3] (0,2)--++(.5,.5)
        --++(.5,.5)--++(.5,0)--++(.5,0)
        --++(.5,0)--++(.5,0)
        --++(.5,.5)--++(.5,0)--++(.5,0)--++(.5,.5)
        --++(.5,0)--++(.5,0)
        node [right] {$X_3(t)$};
        \draw[line width=3] (0,1.5)--++(.5,0)
        --++(.5,0)--++(.5,.5)--++(.5,0)
        --++(.5,.5)--++(.5,0)--++(.5,.5)
        --++(.5,0)--++(.5,0)--++(.5,0)--++(.5,0)--++(.5,0)
        node [right] {$X_2(t)$};
    \end{tikzpicture}
    \end{adjustbox}
    \caption{Noncolliding Bernoulli random walk of $N=4$ particles started from the configuration $\vec X(0)=(1,3,4,6)\in\W4$.}
    \label{fig:Bernoulli}
\end{figure}

\medskip

If instead of the simple random walk we start from the Brownian Motion, then the same conditioning would lead to the celebrated ($\boldsymbol{\beta}=2$) \emph{Dyson Brownian Motion}, which plays a prominent role in the Random Matrix Theory \cite{dyson1962brownian}, \cite{AndersonGuionnetZeitouniBook}, \cite{LY_RMT_Bull2011}.\footnote{The $\boldsymbol{\beta}=2$ Dyson Brownian Motion is naturally associated to the Gaussian Unitary Ensemble (GUE) --- one of the most classical objects in the study of random matrices. There is an extension of the ensemble and of the Dyson Brownian Motion depending on a continuous parameter $\boldsymbol{\beta}>0$ (sometimes referred to as the inverse temperature). The noncolliding Brownian motions correspond to $\boldsymbol{\beta}=2$. The random--matrix parameter $\boldsymbol{\beta}$ has no connection to our $\beta$ of Bernoulli random walk. It is inevitable for us to use both betas, as the latter $\beta$ is rooted in the traditional notation in the asymptotic representation theory.} Therefore, the noncolliding Bernoulli random walk can be viewed as a discrete version of the Dyson Brownian Motion. There exists also an intermediate \emph{semi-discrete} version related to the Poisson process, see \Cref{sec:determinantal_kernels_for_other_noncolliding_dynamics}.

On the other hand, the noncolliding Bernoulli random walk can be coupled with a $(2+1)$--dimensional interacting particle system with local push/block interactions \cite{BorFerr2008DF}. The latter is linked to the Totally Asymmetric Simple Exclusion Process and its relatives and to random lozenge and domino tilings. We refer to \cite{BorFerr2008DF}, \cite{Nordenstam_Aztec_2009}, \cite{BorodinGorinSPB12}, \cite{BorodinPetrov2013Lect} for details.

From yet another side, fixed time distributions of $\vec X(t)$ can be identified with coefficients in decompositions of tensor products of certain representations of unitary groups, which are of interest in the asymptotic representation theory, see \Cref{sub:representation_theoretic_interpretation} for details.

\medskip

We concentrate on the local (``bulk'') limits of $\vec X(T)$ as both $N$ and $T$ tend to infinity. More precisely, we assume that $T\ll N$, which implies that the \emph{global} profile $\vec X(T)$ is almost indistinguishable from the initial condition $\vec X(0)$. On the other hand, our main results, \Cref{Theorem_main,Theorem_main_convergence}, show that under mild conditions on $\vec X(0)$ (see Assumptions \ref{ass:density}, \ref{ass:intermediate_behavior} in Section \ref{sub:main_result_bulk_limit_theorems}), the \emph{local} characteristics of $\vec X(T)$ (such as, e.g., the asymptotic distribution of the distance between two adjacent particles) become universal: they depend on two real parameters which are computed by explicit formulas involving $\vec X(0)$.

In more detail, we prove that the one--dimensional point process describing the particles of $\vec X(T)$ as $T,N\to\infty$ converges to the \emph{discrete sine process} of \cite{Borodin2000b} (we recall its definition in \Cref{sub:discrete_sine_and_incomplete_beta_kernels}). The two--dimensional point process describing the behavior of $\{\vec X(T+t)\}_{t}$ (where $t$ is kept finite as $T\to\infty$) asymptotically becomes the \emph{extended discrete sine process} \cite{okounkov2003correlation}, which can also be identified with a translation invariant ergodic Gibbs measure on lozenge tilings of the plane \cite{Sheffield2008}, \cite{KOS2006}.

As far as we know, in the \emph{discrete setting} general results on the universal appearance of the discrete sine process were not available previously, and we are aware only of \cite{Gorin2016} where a related theorem is proven for random lozenge tilings. However, for \emph{specific} examples (in our context this would mean considering very special initial conditions $\vec X(0)$ rather than general ones; note that the existing literature was mostly dealing with \emph{other}, yet related discrete random systems) the appearance of the extended discrete sine process was observed by many authors, cf.
\cite{Borodin2000b},
\cite{okounkov2003correlation},
\cite{Johansson2005arctic},
\cite{BorodinKuan2007U},
\cite{BKMM2003},
\cite{Gorin2007Hexagon},
\cite{BreuerDuits2011StaircasePaths},
\cite{Petrov2012}.
We expect that our results on local behavior of the noncolliding Bernoulli random walk can serve as a step towards establishing more general bulk universality results in discrete random systems.

\subsection*{Comparison with Dyson Brownian Motion}

In the continuous setting, the \emph{universal} appearance of the \emph{continuous} sine kernel process in bulk local limits of the Dyson Brownian Motion is relatively well understood. It was first conjectured by Dyson \cite{dyson1962brownian} in the early 1960s that the universal statistics should already appear after \emph{short} times. The first mathematical results in this direction were developed much later in \cite{Johansson2001Universality}, where the universal local behavior on \emph{large} times was proven (using the contour integral formulas of \cite{brezin1997extension} as an important ingredient). For the strongest results in this direction see \cite{Shcherbina2008universality} and references therein.

The rigorous treatment of the short times is even more recent, with the best results appearing in \cite{landon2015convergence}, \cite{erdos2015universality}. It should be noted that these results include cases other than the GUE ($\boldsymbol{\beta}=2$) one, and do not rely on explicit formulas specific to $\boldsymbol{\beta}=2$.

A detailed understanding of the bulk local behavior of the Dyson Brownian Motion became a crucial step towards establishing bulk universality of generalized Wigner matrices and other random matrix ensembles, see \cite{landon2015convergence}, \cite{erdos2015universality}, \cite{bourgade2015fixed}, references therein, and the review \cite{LY_RMT_Bull2011}. See also \cite{TaoVu2012Survey} for an alternative approach to bulk universality of random matrices.

From this point of view, our results are parallel to the $\boldsymbol \beta=2$ Dyson Brownian Motion developments as we prove a discrete analogue of the Dyson's conjecture. We also provide a generalization in a different direction and study the case when the local density of particles is not restricted to be the Lebesgue measure (as was usually assumed in the study of the Dyson Brownian Motion), but can be quite general instead. This leads to new phenomena, see the end of Section \ref{sub:applications} for one example.

\subsection*{Method}
On the technical side, our approach starts from the double contour integral
representation for the correlation kernel for the determinantal point process of
uniformly random Gelfand--Tsetlin patterns of
\cite{Petrov2012} (see also
\cite{Metcalfe2011GT}, \cite{duse2015asymptotic}). We find a limit transition which turns these random Gelfand--Tsetlin
patterns into $\vec X(t)$, and leads to formulas for the correlation functions of the latter process. These formulas are then analyzed using the steepest descent method. For this we develop arguments working for general initial conditions $\vec X(0)$ rather than specific ones, and this requires a significant technical effort.

\subsection*{Outline}

In \Cref{sec:main_results} we formulate our main results and discuss their applications. In \Cref{sec:from_lozenge_tilings_to_noncolliding_random_walks} we show how the noncolliding Bernoulli random walk can be obtained via a limit transition from the ensemble of uniformly random lozenge tilings of certain polygons. This leads to a double contour integral expression for the correlation kernel of the noncolliding Bernoulli random walk. \Cref{sec:setup_of_the_asymptotic_analysis,sec:proof_of_proposition_roots_positioning,sec:asymptotics_of_the_kernel} form the main technical part of the work and are devoted to the asymptotic analysis of the correlation kernel and of the noncolliding Bernoulli random walk. In \Cref{sec:applications} we prove the remaining statements from \Cref{sec:main_results} which deal with various applications of our main bulk limit theorems.

We discuss degenerations of our kernel to the kernels for noncolliding Poisson processes and for the Dyson Brownian Motion with arbitrary initial configurations in \Cref{sec:determinantal_kernels_for_other_noncolliding_dynamics}. In \Cref{sub:representation_theoretic_interpretation} we explain a representation-theoretic interpretation of discrete-space noncolliding random walks, and formulate a more general conjecture.

\subsection*{Acknowledgments}

We are grateful to Alexei Borodin for valuable comments on an earlier draft of the paper, and to Paul Bourgade for helpful remarks.
V.~G.\ was partially supported by the NSF grants DMS-1407562 
and DMS-1664619
and by the Sloan Research Fellowship.
L.~P.\ was partially supported by the 
NSF grant 
DMS-1664617,
and by the 
NSF grant PHY11-25915
through the participation in the
KITP program ``New approaches to non-equilibrium and random systems: KPZ
integrability, universality, applications and experiments''.
We are grateful to the anonymous reviewer for valuable comments 
which helped improve our technical arguments.


\section{Main results} 
\label{sec:main_results}

\subsection{Determinantal structure}
\label{sub:determinantal_structure_of_noncolliding_walks}

Our first result is a formula for the determinantal correlation kernel
of the
noncolliding Bernoulli random walk. Recall that a particle dynamics $\vec X(t)$ is said to be a
(dynamically) determinantal point process if its space-time correlations are given
by determinants of a certain kernel $K(t,x;s,y)$:
\begin{multline}
    \PP\left(
    \textnormal{the particle configuration $\vec X(t_i)$ on $\Z$
    contains the point $y_i$ for all $i=1,\ldots,k$}
    \right)\\=
    \det\big[K(t_\aind,y_\aind;t_\bind,y_\bind)\big]
    _{\aind,\bind=1}^{k},
    \label{determinantal_kernel}
\end{multline}
for any collection of pairwise distinct space-time points
$(t_i,y_i)\in\Z_{\ge0}\times\Z$, $i=1,\ldots,k$. In particular, when all $t_i$ are
the same (and are equal to $t$), we get a determinantal point process on $\Z$ with
the kernel $K_t(x;y)=K(t,x;t,y)$. General details
on determinantal point processes can be
found in, e.g., the surveys \cite{Soshnikov2000},
\cite{peres2006determinantal},
\cite{Borodin2009}.

\begin{theorem}\label{thm:intro_Bernoulli}
    The noncolliding Bernoulli random walk with parameter $\be\in(0,1)$ started from any initial configuration $\vec X(0)=\vec a=(a_1<\ldots<a_N) \in\W N$ is determinantal in the sense of \eqref{determinantal_kernel}, and its correlation kernel has the following form for $x_{1,2}\in\Z$, $t_{1,2}\in\Z_{\ge1}$:\footnote{Throughout the text $\mathbf{1}_{A}$ stands for the indicator function of an event $A$, $\i=\sqrt{-1}$, and we will employ the Pochhammer symbols $(z)_{k}=\frac{\G(z+k)}{\G(z)}=z(z+1)\ldots(z+k-1)$ and the binomial coefficients $\binom{k}{a}=(-1)^a\frac{(-k)_a}{a!}$, where $k,a\in\Z_{\ge0}$.}
    \begin{multline}
        K^{\textnormal{Bernoulli}}_{\vec a; \be}(t_1,x_1;t_2,x_2)
        =
        \mathbf{1}_{x_1\ge x_2}\mathbf{1}_{t_1>t_2}
        (-1)^{x_1-x_2+1}
        \binom{t_1-t_2}{x_1-x_2}
        \\+
        \frac{t_1!}{(t_2-1)!}\frac1{(2\pi\i)^{2}}
        \int\limits_{x_2-t_2+\frac12-\i\infty}
        ^{x_2-t_2+\frac12+\i\infty}dz
        \oint\limits_{\textnormal{all $w$ poles}}dw\,
        \frac{(z-x_2+1)_{t_2-1}}{(w-x_1)_{t_1+1}}
        \\\times\frac{1}{w-z}
        \frac{\sin(\pi w)}{\sin(\pi z)}
        \left(\frac{1-\be}{\be}\right)^{w-z}
        \prod_{r=1}^{N}\frac{z-a_r}{w-a_r}.
        \label{K_Bernoulli}
    \end{multline}
    The $z$ integration contour is the straight vertical line $\Re z=x_2-t_2+\frac12$ traversed upwards, and the $w$ contour is a positively (counter-clockwise) oriented circle or a union of two circles (this depends on the ordering of $t_1,x_1,t_2$, and $x_2$) encircling all the $w$ poles $\{x_1-t_1,x_1-t_1+1,\ldots,x_1-1,x_1\}\cap \{a_1,\ldots,a_N\}$ of the integrand (except $w=z$), see \Cref{fig:cont_intro}.
\end{theorem}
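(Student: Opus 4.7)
The plan is to realize the noncolliding Bernoulli walk trajectory $\{\vec X(0),\ldots,\vec X(T)\}$ as a limit/specialization of uniformly random Gelfand--Tsetlin patterns (equivalently, uniformly random lozenge tilings of a trapezoidal polygon) with top row encoding $\vec a=(a_1<\ldots<a_N)$, and then to degenerate the known double contour integral formula of \cite{Petrov2012} for the GT correlation kernel into the claimed expression. This is the route signposted in the introduction and is natural because the Markov transition in \eqref{noncolliding_Bernoulli_walks_transitions} carries exactly the $\V(\vec x')/\V(\vec x)$ factor characteristic of the interlacing and Karlin--McGregor combinatorics underlying tilings.

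First I would pin down the coupling by extending the polygon with an extra strip of horizontal extent $M$ and taking $M\to\infty$: the combinatorial weight on the $T$ rows of interest should factor into a ``free'' part that cancels in the conditional measure and a per-step factor $\be^{|\vec x'-\vec x|}(1-\be)^{N-|\vec x'-\vec x|}$, with $\be$ emerging through the relative scaling of the added strip. Second, I would transport Petrov's kernel through this limit piece by piece: the product $\prod_r (z-a_r)/(w-a_r)$ descends from the top row directly; the prefactor $(z-x_2+1)_{t_2-1}/(w-x_1)_{t_1+1}$ encodes the row indices (time variables on the walk side); the sine quotient $\sin(\pi w)/\sin(\pi z)$ appears via the $\Gamma$-reflection formula so that the $w$-integrand is meromorphic with poles only at integers in $\{x_1-t_1,\ldots,x_1\}\cap\{a_1,\ldots,a_N\}$ (the remaining apparent poles cancel against $\prod_r(z-a_r)/(w-a_r)$); and the exponential $((1-\be)/\be)^{w-z}$ arises as the limit of a ratio of Gamma functions carrying the parameter $\be$ out of the degeneration.

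Third, the subtracted term $\mathbf{1}_{x_1\ge x_2}\mathbf{1}_{t_1>t_2}(-1)^{x_1-x_2+1}\binom{t_1-t_2}{x_1-x_2}$ is the familiar convolution (``$\phi_{t_1,t_2}$'') piece of an extended Eynard--Mehta kernel; it should either emerge directly in the limit or be produced by collecting the residue at $z=w$ when the $z$-contour is deformed across the pole of $(w-z)^{-1}$ in the case $t_1>t_2$. The contour prescription must also be shown stable under the limit: the $z$-contour is a vertical line separating the integer poles of $\sin(\pi z)$ from the poles introduced by $(z-x_2+1)_{t_2-1}$, while the $w$-contour is a small positively oriented loop (or union of two loops, depending on the ordering of $t_1,t_2,x_1,x_2$) around $\{x_1-t_1,\ldots,x_1\}\cap\{a_1,\ldots,a_N\}$; these are the natural scaling limits of the contours in Petrov's original formula.

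The main obstacle I anticipate is the bookkeeping of the limit transition: choosing a polygon and a scaling that simultaneously (i) yield a genuine Markov chain rather than a richer Gibbs measure on $(2+1)$D interlacing arrays, (ii) produce $\be$ through a single tunable parameter, and (iii) allow both integrand and contours to pass to the limit with uniform control. Once this is set up cleanly, the remaining manipulations are Gamma-function arithmetic and residue calculus, and one can sanity-check the result on special cases (e.g., $T=1$, or $\be\to 0,1$, or a single-time slice) against the direct Karlin--McGregor/Eynard--Mehta computation.
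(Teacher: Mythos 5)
Your proposal matches the paper's actual proof: both realize the noncolliding Bernoulli walk as the $L\to\infty$ limit of noncolliding paths encoded by uniformly random lozenge tilings of a trapezoid of height $L$ (with the base configuration shifted by $\lfloor\beta L\rfloor$ so that $\beta$ emerges as the asymptotic speed), and both degenerate Petrov's double contour integral kernel through that limit using the Gamma reflection formula to produce $\sin(\pi w)/\sin(\pi z)$, Stirling asymptotics to produce $\left(\frac{1-\beta}{\beta}\right)^{w-z}$, and a contour deformation picking up the residue at $w=z$ to assemble the convolution term $\mathbf{1}_{x_1\ge x_2}\mathbf{1}_{t_1>t_2}(-1)^{x_1-x_2+1}\binom{t_1-t_2}{x_1-x_2}$. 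Your flagged obstacle, the bookkeeping of the limit transition and uniform control of the contours, is precisely what the paper's \Cref{prop:limit_to_Bernoulli} and \Cref{prop:proof_of_intro_Bernoulli} handle, so the route you sketch is the right one.
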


\begin{figure}[htbp]
    \scalebox{.85}{\begin{tikzpicture}[scale=1,ultra thick]
        \draw[->, very thick] (-7,0)--(3,0);
        \begin{scope}[shift={(-4,0)},scale=.8]
            \draw[line width=2.5] (-.2,-.2)--++(.4,.4);
            \draw[line width=2.5] (-.2,.2)--++(.4,-.4);
        \end{scope}
        \begin{scope}[shift={(-4+1/3,0)},scale=.8]
            \draw[line width=2.5] (-.2,-.2)--++(.4,.4);
            \draw[line width=2.5] (-.2,.2)--++(.4,-.4);
        \end{scope}
        \begin{scope}[shift={(-3+1/3,0)},scale=.8]
            \draw[line width=2.5] (-.2,-.2)--++(.4,.4);
            \draw[line width=2.5] (-.2,.2)--++(.4,-.4);
        \end{scope}
        \begin{scope}[shift={(-2+2/3,0)},scale=.8]
            \draw[line width=2.5] (-.2,-.2)--++(.4,.4);
            \draw[line width=2.5] (-.2,.2)--++(.4,-.4);
        \end{scope}
        \begin{scope}[shift={(-1/3,0)},scale=.8]
            \draw[line width=2.5] (-.2,-.2)--++(.4,.4);
            \draw[line width=2.5] (-.2,.2)--++(.4,-.4);
        \end{scope}
        \draw[line width=2,->] (-3.25,-2.5)--++(0,1.5);
        \draw[line width=2,->] (-3.25,-1)--++(0,2);
        \draw[line width=2] (-3.25,1)--++(0,1.5);
        \draw[densely dotted, line width=2,->]
        (-1.5,1.6) to [out=180,in=90]
        (-3.12,0) to [out=-90,in=180]
        (-1.5,-1.6) to [out=0,in=-90]
        (-0.03,0) to [out=90,in=0]
        (-1.5,1.6);
        \draw[densely dotted, line width=2,->]
        (-4,1.2) to [out=180,in=90]
        (-4.86,0) to [out=-90,in=180]
        (-4,-1.2) to [out=0,in=-90]
        (-3.4,0)
        to [out=90,in=0]
        (-4,1.2);
        \draw (-1/3,-.25)--++(0,.5) node at (-1/3-.15,.2) (x1) {};
        \node (x1l) at (0.5,1) [draw=black, line width=1, minimum width=1.7em, minimum height=1.2em,
        rectangle, rounded corners, text centered] {$x_1$};
        \draw (-4-1/2,-.25)--++(0,.5) node at (-4-1/2+.15,.2) (x1t1) {};
        \node (x1t1l) at (-6,1.5) [draw=black, line width=1, minimum width=1.7em, minimum height=1.2em,
        rectangle, rounded corners, text centered] {$x_1-t_1$};
        \draw (-3,-.25)--++(0,.5) node at (-3-.05,-.12) (x2t2) {};
        \node (x2t2l) at (-1.5,-2.2) [draw=black, line width=1, minimum width=1.7em, minimum height=1.2em,
        rectangle, rounded corners, text centered] {$x_2-t_2+1$};
        \draw[->, line width=1] (x1l) -- (x1);
        \draw[->, line width=1] (x1t1l) -- (x1t1);
        \draw[->, line width=1] (x2t2l) -- (x2t2);
        \node at (-2.9,2.3) {\Large $z$};
        \node at (-1,1.9) {\Large $w$};
        \node at (-4.3,1.5) {\Large $w$};
    \end{tikzpicture}}
    \caption{Integration contours in \eqref{K_Bernoulli}. The $w$ poles
    are highlighted by crosses.}
    \label{fig:cont_intro}
\end{figure}
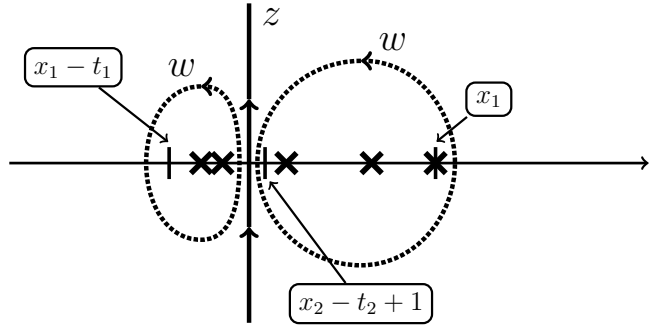


\begin{remark}\label{rmk:Krawtchouk}
When the $N$-point noncolliding Bernoulli random walk starts from the densely packed configuration $\vec a=(0,1,2,\ldots,N-1)$, the distribution of the $N$-point configuration $\vec X(t)\subset\Z$ at any time $t\in\Z_{\ge0}$ is the Krawtchouk orthogonal polynomial ensemble \cite{konig2002non}. Orthogonal polynomial ensembles are determinantal, and their correlation kernels are expressed through the corresponding univariate orthogonal polynomials --- in our case, the Krawtchouk polynomials.\footnote{See Section 1.10 in \cite{Koekoek1996} for definitions and basic properties of the Krawtchouk orthogonal polynomials, and \cite{Konig2005} for a survey of orthogonal polynomial ensembles.} This correlation kernel is explicit enough to be suitable for asymptotic analysis, see, e.g., \cite{johansson2000shape}, \cite{johansson1999}. The corresponding time-dependent kernel as in \eqref{determinantal_kernel} is also explicitly known, it is the extended Krawtchouk kernel \cite{Johansson2005arctic}. \Cref{thm:intro_Bernoulli} generalizes these results to an arbitrary initial configuration $\vec X(0)=\vec a\in\W N$.
\end{remark}

We prove \Cref{thm:intro_Bernoulli} in
\Cref{sec:from_lozenge_tilings_to_noncolliding_random_walks} below. In
\Cref{sec:determinantal_kernels_for_other_noncolliding_dynamics} we also discuss two
limits of the noncolliding Bernoulli random walk and the kernel \eqref{K_Bernoulli}:
\begin{enumerate}[$\bullet$]
    \item Noncolliding Poisson random walk ---
    independent Poisson processes conditioned to never collide.
    This limit is obtained by rescaling time from discrete to continuous,
    and sending $\be\to0$.

    \item
    Dyson Brownian Motion
    --- independent Brownian motions conditioned to never collide.
    This process (introduced
    in \cite{dyson1962brownian})
    is a diffusion limit of the noncolliding Bernoulli random walk.
    The correlation kernel for the Dyson Brownian Motion started from
    an arbitrary initial configuration was first obtained in
    \cite{Johansson2001Universality} (see
    also \cite{Shcherbina2008universality}).
    When the Dyson Brownian Motion starts from a special initial condition
    $(0,0,\ldots,0)$, its determinantal correlation kernel
    can be expressed through the Hermite orthogonal polynomials
    \cite{mehta2004random}, \cite{nagao1998multilevel}.
\end{enumerate}


\subsection{Extended discrete sine kernel} 
\label{sub:discrete_sine_and_incomplete_beta_kernels}

Let us now discuss the point process describing the local asymptotic behavior the
noncolliding Bernoulli random walk.

\begin{definition}
    By the \emph{extended discrete sine process}\footnote{Since in this paper we only discuss convergence to the discrete sine process and do not deal with its continuous counterpart, we often drop the word ``discrete''.} of slope $u\in\mathbb C$, $\Im(u)>0$, we mean the determinantal point process on $\mathbb Z\times \mathbb Z$ with the correlation kernel:
    \begin{equation}\label{incomplete_beta}
        K_{u}(t,x;s,y)
        =-\frac{1}{2\pi\i}\int_{\bar u}^{u}
        (1-z)^{t-s}z^{-(x-y)-1}dz,
    \end{equation}
    where the integration path crosses $(0,1)$ for $t> s$ and $(-\infty,0)$ for $t\le s$.
\end{definition}
The extended sine process was first introduced in \cite{okounkov2003correlation}
in relation to
the bulk limit of random lozenge tilings (equivalently, 3D Young diagrams).
In that paper
the
kernel \eqref{incomplete_beta}
was called the \emph{incomplete beta kernel}.
When $t=s$, the kernel \eqref{incomplete_beta} simplifies,
after conjugation by the
function $x\mapsto (-1)^{x}|u|^{-x}$,\footnote{Transformations of a correlation kernel
of the form
$K(x,y)\mapsto \frac{f(x)}{f(y)}K(x,y)$
(where $f$ is nowhere zero)
not changing the correlation functions
are sometimes referred to as the gauge
transformations.\label{gaugefootnote}}
to the discrete sine kernel of
\cite{Borodin2000b}:
\begin{equation}\label{discrete_sine}
    K^{\textnormal{sine}}_{\phi}(x;y)
    =\frac{\sin\big(\phi(x-y)\big)}{\pi(x-y)},\qquad x,y\in\Z,
    \quad \phi=\pi-\arg(u),
\end{equation}
with the agreement that
$K^{\textnormal{sine}}_{\phi}(x,x)=\phi/\pi$. The quantity
$\phi/\pi\in(0,1)$ is the density along the
$x$ direction of particles in the random configuration from the (extended) discrete sine process.

There exist other extensions of the discrete sine kernel \eqref{discrete_sine}, see \cite{borodin2006stochastic}, \cite{borodin2007periodic}, \cite{borodin2010gibbs}. In \Cref{sub:poisson_case} we briefly discuss the noncolliding Poisson random walk whose local statistics should be universally governed by an extension of \eqref{discrete_sine} other than \eqref{incomplete_beta}.

\begin{remark}\label{rmk:complement}
    The extended sine kernel was introduced in \cite{okounkov2003correlation} in terms of \emph{complementary} (to $\mathbb Z \times \mathbb Z$) configurations. The relation between kernels describing a configuration and its complement is \cite[Appendix A.3]{Borodin2000b}
    \begin{equation*}
        K^{\textnormal{complement}}(t,x;s,y)
        =\mathbf{1}_{x=y}\mathbf{1}_{t=s}-
        K(t,x;s,y).
    \end{equation*}
    In the case of the extended sine kernel, the above delta function can be incorporated inside the contour integral by dragging for $t=s$ the contour through zero and picking the residue of $z^{-(x-y)-1}$, which is exactly $\mathbf{1}_{x=y}$.
\end{remark}

The extended sine process is translation invariant in both directions
($t$ and $x$), and it
describes asymptotic bulk distribution of discrete
two-dimensional
determinantal point
processes when both dimensions stay discrete in the limit.
A characterization of the measure
determined by $K_{z}$ as a unique
translation invariant Gibbs measure of a given complex
slope was obtained in \cite{Sheffield2008}, \cite{KOS2006}.


\subsection{Bulk limit theorems} 
\label{sub:main_result_bulk_limit_theorems}

Here we formulate our main asymptotic
results --- an approximation of the correlation kernel of
the noncolliding Bernoulli random walk
by the extended sine kernel \eqref{incomplete_beta},
and a corresponding bulk local
limit theorem.

Assume that $N$ (the number of noncolliding particles) is our main parameter going to infinity, and that the time scale $T(N)\ll N$, $T(N)\to+\infty$ is fixed.\footnote{Throughout the paper by $A(N)\ll B(N)$ we mean that $\lim\limits_{N\to+\infty}A(N)/B(N)=0$, and similarly for $\gg$.} For each $N=1,2,\dots,$ we also fix an initial condition $\vec X(0)=\mathfrak{A}(N)=(a_1(N)<a_2(N)<\dots<a_N(N))$. We will often omit the dependence on $N$ and simply write $T$ (meaning $T(N)$) and $a_i$ (meaning $a_i(N)$, $1\le i \le N$), etc., when it leads to no confusion.

In what follows, we are describing
the behavior of $\vec X(T+t)$ near the point $x=0$.
Since the definition of the non-colliding
Bernoulli random walk is translation
invariant, one can readily extract similar results on the behavior
near an arbitrary point
$x=x(N)$ by shifting $\mathfrak{A}(N)$ appropriately.

The following two assumptions will be imposed on $\mathfrak{A}(N)$ throughout the
text:

\begin{assumption}[Local density]\label{ass:density}
    There exist scales $\dd=\dd(N)$ satisfying $\dd(N)\ll T(N)$ and
    $\QQ=\QQ(N)$ satisfying $T(N)\ll \QQ(N)\ll N$,
    and absolute constants\footnote{Here and below by an absolute constant we
    mean a certain constant which does not depend on $N$, $T(N)$, or the initial configuration $\mathfrak{A}(N)$.}
    $0<\lod\le\upd<1$, such that in every
    segment of length $\dd(N)$ inside $[-\QQ(N),\QQ(N)]\subset\R$
    there are at least $\lod \dd(N)$ and at most $\upd\dd(N)$ points of the initial configuration
    $\mathfrak{A}(N)$.
\end{assumption}

\begin{assumption}[Intermediate scales]\label{ass:intermediate_behavior}
    For all
    $\updelta>0$, $\RRR>0$ and $N$ large enough
    one has
    \begin{equation}\label{intermediate_behavior}
        \Bigg|\sum_{i\colon \RRR T(N)\le|a_i(N)|\le\updelta N}\frac{1}{a_i(N)}\Bigg|\le \A_{\RRR,\updelta},
    \end{equation}
    where $\A_{\RRR,\updelta}>0$ are absolute constants.
\end{assumption}
\begin{remark}\label{rmk:intermediate}
    Note that if \eqref{intermediate_behavior} holds for some
    $\updelta_0>0$, $\RRR_0>0$, then
    it holds for all other
    $\updelta>0$, $\RRR>0$ because
    the difference of the sums in the left-hand side of
    \eqref{intermediate_behavior} can be bounded by a
    part of the harmonic series (between $\RRR_0T$ and $\RRR T$, etc.),
    which is bounded by a
    constant
    independent of $N$.
\end{remark}

Both Assumptions \ref{ass:density} and \ref{ass:intermediate_behavior} serve the same goal: we want to guarantee that the average density of particles in $\vec X(T)$ near $x=0$ is bounded away from $0$ and from $1$, as otherwise the universal local behavior might fail. Assumption \ref{ass:density} simply bounds the density of the initial configuration $\mathfrak{A}(N)$, while Assumption \ref{ass:intermediate_behavior} requires that the ``densities'' of the configuration $\mathfrak{A}(N)$ far (at scales from $\RRR T(N)$ to $\updelta N$) to the right and to the left of $0$ (our point of observation) are ``comparable''.

We do not impose any other requirements on particles far away from $x=0$. We remark that one can easily cook up (e.g., using an intuition coming from the study of frozen boundaries in random tiling models) a situation in which Assumption \ref{ass:density} fails, and yet the average density of particles in $\vec X (T)$ close to $0$ is still bounded away from $0$ and $1$, and the universal local behavior holds. In other words, Assumption \ref{ass:density} is not necessary. On the other hand, Assumption \ref{ass:intermediate_behavior} is close to being necessary, see the discussion after \Cref{Theorem_main_convergence_Lebesgue} below. Overall, our Assumptions \ref{ass:density} and \ref{ass:intermediate_behavior} are simple to state and straightforward to check in applications, while a full necessary and sufficient condition would be much more technical and involved. We refer to detailed analysis in models of random tilings performed in \cite{Petrov2012}, \cite{duse2015asymptotic}, \cite{Duse2015_partII}, \cite{Duse_Thesis}, \cite{Gorin2016}.

\medskip

The following function of $\z\in\C$ will play a prominent role in our
asymptotic analysis:
\begin{equation}\label{S_T_prime}
    \Sfin'(\z)=
    \sum_{r=1}^{N}\frac{1}{T\z-a_r}
    -\pv\sum_{j\in  \mathfrak{L}_T }\frac{1}{T\z-j}
    -\log(\be^{-1}-1),
\end{equation}
where
\begin{equation}
\label{eq_L_def}
    \mathfrak{L}_T=\mathfrak{L}_{T(N)}=\{\ldots,-T-2,-T-1,-T\}\cup\{0,1,2,\ldots\},
\end{equation}
and the infinite sum should be understood as its principal value, i.e.,
\begin{equation}
\label{eq_pv_def}
 \pv\sum_{j\in  \mathfrak{L}_T}\frac{1}{T\z-j}=\lim_{M\to\infty}
 \sum_{\begin{smallmatrix} j\in  \mathfrak{L}_T \\
 |j|<M\end{smallmatrix}}\frac{1}{T\z-j}.
\end{equation}
We can alternatively write
\begin{equation}\label{Sfin_prime_initial_formula}
    \Sfin'(\z)=
    \sum_{r=1}^{N}\frac{1}{T\z-a_r}
    +\sum_{i=1}^{T-1}\frac{1}{T\z+i}
    -\pi\cot(\pi T\z)-\log(\be^{-1}-1)
\end{equation}
using the fact that
\begin{equation}\label{Euler_cot}
    \pi\cot(\pi z)=\pv\sum_{k\in\Z}\frac{1}{z-k},
\end{equation}
which follows from the Euler's product formula for the sine function
\begin{equation}\label{Euler_sine}
    \sin (\pi z)=\pi z\prod_{k=1}^{\infty}\left(1-\frac{z^{2}}{k^{2}}\right).
\end{equation}

\begin{proposition}\label{Proposition_roots_positioning}
Under Assumptions \ref{ass:density} and
\ref{ass:intermediate_behavior} there exists $N_0$ such that for all $N>N_0$ the
equation $\Sfin'(\z)=0$ has a unique root $\zfin=\zfin(N)$ in the upper half plane (i.e.\
satisfying $\Im(\zfin)>0$). Moreover, there exists a compact set
$\mathcal Z \subset
\{\z\in \mathbb C\colon \Im (\z)>0\}$ such that
$\zfin\in\mathcal Z$ for all $N>N_0$.
The set
$\mathcal Z$ and the constant $N_0$
may depend
on constants in Assumptions \ref{ass:density} and
\ref{ass:intermediate_behavior} but not on the choice of
$\mathfrak{A}(N)$.
\end{proposition}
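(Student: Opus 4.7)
My strategy is to study $\Sfin'$ as a meromorphic function on the upper half plane $\{\Im\z>0\}$ using the representation~\eqref{Sfin_prime_initial_formula}. Via~\eqref{Euler_cot} one obtains the equivalent ``hole/gap-particle'' decomposition
\[
    \Sfin'(\z)=\sum_{r\colon a_r\notin\mathfrak{L}_T}\frac{1}{T\z-a_r}\;-\;\pv\!\!\sum_{\substack{k\in\mathfrak{L}_T\\k\notin\{a_r\}}}\frac{1}{T\z-k}\;-\;L,\qquad L:=\log(\be^{-1}-1),
\]
in which the first sum runs over the at most $T-1$ particles in the gap $\{-1,\ldots,-(T-1)\}=\Z\setminus\mathfrak{L}_T$, and the second (principal-value) sum runs over ``holes'' (integers of $\mathfrak{L}_T$ that are not particles). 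The proof then proceeds in three stages.

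First, I would establish a uniform lower bound $\Im\zfin\geq c>0$ depending only on the constants in Assumptions~\ref{ass:density} and~\ref{ass:intermediate_behavior} and on $\be$. Taking imaginary parts in the display yields
\[
\Im\Sfin'(\z)=T\Im\z\Bigl[\sum_{k\in\mathfrak{L}_T\setminus\{a_r\}}|T\z-k|^{-2}-\sum_{r\colon a_r\in\{-1,\ldots,-(T-1)\}}|T\z-a_r|^{-2}\Bigr],
\]
a Poisson-kernel-type ``hole minus gap-particle'' difference. For small $\Im\z$, Riemann-sum approximations of Poisson-kernel integrals turn the balance $\Im\Sfin'(\z)=0$ into the condition $\arctan\tfrac{1+\Re\z}{\Im\z}-\arctan\tfrac{\Re\z}{\Im\z}=(1-\rho_\mathrm{eff})\pi$ (up to lower-order errors), where $\rho_\mathrm{eff}\in[\lod,\upd]$ is an effective local density of particles near $T\Re\z$. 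Since $\rho_\mathrm{eff}\geq\lod>0$, the resulting $\Im\z$ is uniformly bounded below.

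Next I would show $|\zfin|\leq M$ uniformly in $\mathfrak{A}(N)$. For $\Im\z\geq c$ one has $\pi\cot(\pi T\z)\to-\pi\i$ since $\Im(T\z)\geq cT\to+\infty$, and analogous integral approximations give
\[
\Sfin'(\z)\;\approx\;\pi\i(1-\rho_\mathrm{eff}(\Re\z))+\log(1+1/\z)-L
\]
uniformly in $\mathfrak{A}(N)$: Assumption~\ref{ass:intermediate_behavior} controls the far part of $\sum_r(T\z-a_r)^{-1}$ and Assumption~\ref{ass:density} controls the near part. Solving the leading-order equation $\Sfin'(\z)=0$ yields $\z\approx((\be^{-1}-1)e^{-\i\pi(1-\rho_\mathrm{eff})}-1)^{-1}$, which is bounded for $\rho_\mathrm{eff}\in[\lod,\upd]$. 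With $\Im\zfin\geq c$ and $|\zfin|\leq M$, one takes $\mathcal{Z}$ to be the resulting compact rectangle; existence and uniqueness then follow by the argument principle applied to $\Sfin'$ on $\partial\mathcal{Z}$, the winding of $\Sfin'$ around $0$ computed from the leading-order expression above being exactly $1$.

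\emph{Main obstacle.} The chief technical difficulty is making all approximations uniform in $\mathfrak{A}(N)$, so that $\mathcal{Z}$ and $N_0$ can be chosen independently of the initial configuration. The effective density $\rho_\mathrm{eff}(\Re\z)$ need not converge as $N\to\infty$, but the topological nature of the argument-principle count means the winding number is insensitive to its fluctuations within $[\lod,\upd]$. Quantifying this insensitivity demands careful control of Riemann-sum errors and tail contributions, relying crucially on Assumption~\ref{ass:intermediate_behavior} for long-range tails and Assumption~\ref{ass:density} for local fluctuations.
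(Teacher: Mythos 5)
Your proposal takes a genuinely different route (a priori bounds on any root, then the argument principle for existence and uniqueness, rather than the paper's algebraic root-count in \Cref{lemma:S_T_critical_points_equation} combined with a level-curve intermediate-value argument via \Cref{lemma_imaginary_part_1,lemma_imaginary_part_2,lemma_real_part}), but as written the first step has a real gap. The claimed lower bound $\Im\zfin\geq c$ cannot be extracted from $\Im\Sfin'(\z)=0$ alone: the balance $\arctan\tfrac{1+\Re\z}{\Im\z}-\arctan\tfrac{\Re\z}{\Im\z}=(1-\rho_{\mathrm{eff}})\pi$ has solutions with $\Im\z$ arbitrarily small, because with $\Re\z=O(\Im\z)$ near $0$ (and similarly near $-1$) the left side tends to $\tfrac{\pi}{2}+\arctan(-\Re\z/\Im\z)$ and thus sweeps all of $(0,\pi)$ as the ratio $\Re\z/\Im\z$ varies, matching any effective density in $(\lod,\upd)$. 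Indeed the level set $\{\Im\Sfin'=0\}$ descends to the real axis near $0$ and $-1$ --- this is exactly what the paper's \Cref{lemma_imaginary_part_2} and \Cref{fig:compact_in_C_with_signs_of_Im_S_prime} show. The uniform lower bound on $\Im\zfin$ in the paper comes only after bringing in the \emph{real} part: \Cref{lemma_real_part} is a delicate estimate, using Assumption \ref{ass:intermediate_behavior} in an essential way, showing $|\Re\Sfin'(x+\i\eps)|>1$ in $\eps^\delta$-windows around $0$ and $-1$, which rules out critical points there. Your sketch has no analogue of this step.

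A second, smaller issue concerns the approximation $\Sfin'(\z)\approx\pi\i\bigl(1-\rho_{\mathrm{eff}}(\Re\z)\bigr)+\log(1+1/\z)-L$. It captures the imaginary part correctly but drops the real part of $\tfrac{1}{T}\sum_r(\z-a_r/T)^{-1}$, a Hilbert-transform-type quantity whose tail contribution is essentially $-\CCC_N(\RRR)$. This term is bounded under Assumption \ref{ass:intermediate_behavior} but is configuration-dependent and of order one; in the argument-principle computation of the winding of $\Sfin'$ around $0$ on $\partial\mathcal Z$, a real shift of that size can change the winding. Your homotopy-insensitivity heuristic ignores this, so the winding count is not the simple computation suggested; controlling that real part uniformly over $\mathfrak{A}(N)$ is precisely where the technical weight of the proposition lies.
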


\begin{theorem}\label{Theorem_main}
Under Assumptions \ref{ass:density} and \ref{ass:intermediate_behavior}, for any fixed
$t_1,x_1,t_2,x_2\in\mathbb Z$ we have
\begin{equation*}
    K^{\textnormal{Bernoulli}}_{\mathfrak{A}(N); \be}
    (t_1+T(N),x_1;t_2+T(N),x_2)= K_{\zfin(N)/(\zfin(N)+1)}(
    t_1,x_1;t_2,x_2)+o(1)
\end{equation*}
as $N\to\infty$, where $\zfin(N)$ is the unique root provided by \Cref{Proposition_roots_positioning}. The remainder $o(1)$ admits a tending to $0$ bound which may depend on constants in Assumptions \ref{ass:density} and \ref{ass:intermediate_behavior} but not on the choice of $\mathfrak{A}(N)$.
\end{theorem}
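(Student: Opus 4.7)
The natural approach is a steepest descent analysis of the double contour integral \eqref{K_Bernoulli} at $t_i\mapsto t_i+T(N)$. First, I would rewrite the Pochhammer ratio $\frac{(z-x_2+1)_{T+t_2-1}}{(w-x_1)_{T+t_1+1}}$ in terms of $\Gamma$ functions and apply Stirling's approximation. Combining this with the Euler identity \eqref{Euler_cot} for $\sin(\pi w)/\sin(\pi z)$, the Bernoulli factor $((1-\be)/\be)^{w-z}$, and the product $\prod_r (z-a_r)/(w-a_r)$, the integrand (modulo an algebraic prefactor sub-exponential in $N$ and the kernel $1/(w-z)$) assumes the form
\begin{equation*}
    \exp\bigl(T\,\Sfin(z/T)-T\,\Sfin(w/T)\bigr),
\end{equation*}
where $\Sfin$ is a primitive of the function in \eqref{S_T_prime}: the principal-value sum over $\mathfrak{L}_T$ comes from combining $\sin(\pi z)$ with the Pochhammer poles, $\sum_r 1/(T\z-a_r)$ from $\prod_r(z-a_r)$, and $-\log(\be^{-1}-1)$ from the Bernoulli weight. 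After rescaling $\z=z/T$, $\w=w/T$, the vertical $z$-line becomes the line $\Re\z=-1+O(1/T)$, and the $w$-loop becomes a loop (or pair of loops) enclosing a subset of $[-1,0]$.

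Second, by \Cref{Proposition_roots_positioning} the action $\Sfin$ has a unique saddle $\zfin$ in the upper half plane (with conjugate saddle $\bar\zfin$), both trapped in a compact set uniformly in $\mathfrak{A}(N)$. I would deform the rescaled $\z$ and $\w$ contours onto steepest descent/ascent contours through $\zfin,\bar\zfin$, arranging that $\Re\Sfin$ attains its maximum on the $\z$-contour and its minimum on the $\w$-contour precisely at these two points. In the course of the deformation the $\w$-contour is dragged through the $\z$-contour along an arc from $\bar\zfin$ to $\zfin$, picking up a residue at $\w=\z$. At $\w=\z$ the factors $\sin(\pi w)/\sin(\pi z)$, $\prod_r(z-a_r)/(w-a_r)$, and $((1-\be)/\be)^{w-z}$ all equal $1$, so the residue collapses to the single integral $\frac{1}{2\pi\i}\int$ of $\frac{(T+t_1)!}{(T+t_2-1)!}\cdot\frac{(z-x_2+1)_{T+t_2-1}}{(z-x_1)_{T+t_1+1}}$ along that arc. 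A direct Stirling expansion of the surviving Pochhammer ratio, followed by the fractional-linear change of variable $\zeta=\z/(\z+1)$, converts this into
\begin{equation*}
    -\frac{1}{2\pi\i}\int_{\bar u}^{u}(1-\zeta)^{t_1-t_2}\,\zeta^{-(x_1-x_2)-1}\,d\zeta,\qquad u=\frac{\zfin}{\zfin+1},
\end{equation*}
which is exactly $K_u(t_1,x_1;t_2,x_2)$. The indicator-binomial term $\mathbf{1}_{x_1\ge x_2}\mathbf{1}_{t_1>t_2}(-1)^{x_1-x_2+1}\binom{t_1-t_2}{x_1-x_2}$ in \eqref{K_Bernoulli} is precisely the residue at $\zeta=0$ accounting for the path-convention switch in \eqref{incomplete_beta} between $t_1>t_2$ and $t_1\le t_2$, so it is automatically absorbed into $K_u$. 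The leftover double integral along the steepest contours, away from $\{\zfin,\bar\zfin\}$, is controlled by standard Gaussian saddle-point estimates based on the strict negativity of $\Re[\Sfin(\z)-\Sfin(\w)]$, yielding an $o(1)$ remainder.

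The principal technical obstacle is the uniform construction of these steepest descent contours and the verification of the required sign of $\Re\Sfin$ along them, with constants independent of $\mathfrak{A}(N)$. The function $\Sfin$ has poles at every $a_r/T$ and, through the $\cot$ term, at every point of $T^{-1}\Z$, so the deformed contours must navigate a dense constellation of singularities while realizing steepest descent/ascent at $\zfin,\bar\zfin$. Assumption \ref{ass:density} keeps $\zfin$ a uniform distance from the real axis by providing a uniform lower bound on $\Im\Sfin'$ near the real line, while Assumption \ref{ass:intermediate_behavior} controls the tail contributions from particles at scales between $\RRR T$ and $\updelta N$, so that both the location of $\zfin$ and the lower bound on $|\Sfin''(\zfin)|$ required for the Gaussian bound are uniform in $\mathfrak{A}(N)$. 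Translating these two assumptions into robust, $\mathfrak{A}(N)$-independent contour estimates is the technical heart of the argument; this is why \Cref{Proposition_roots_positioning} is isolated as a separate statement and the saddle analysis of $\Sfin$ warrants dedicated preparation.
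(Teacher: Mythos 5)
Your outline reproduces the paper's strategy: rewrite the integrand as $\exp\{T(\Sfin(\z)-\Sfin(\w))\}$ times sub-exponential prefactors, deform to contours through the conjugate saddles $\zfin,\bar\zfin$ supplied by \Cref{Proposition_roots_positioning}, extract the residue at $\w=\z$ along the arc from $\bar\zfin$ to $\zfin$, and identify it (via Stirling and $\zeta=\z/(\z+1)$, with the binomial term absorbed by dragging the arc through $0$) with $K_{\zfin/(\zfin+1)}$; this part is correct and matches \Cref{prop:K_Beta_paths_form}. The gap is that the step you dismiss as ``standard Gaussian saddle-point estimates'' is exactly where the paper's work lies, and your sketch does not contain the ideas that make it go through. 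The level lines $\{\Im\Sfin=\Im\Sfin(\zfin)\}$ terminate on the real axis, where the integrand has a dense set of poles at $\mathfrak{l}_T\Delta\mathfrak{a}$; one cannot simply ``arrange'' steepest descent/ascent contours there. The paper's resolution is to use the genuine level lines only inside a large rectangle $\{|x|<\mathsf{R}_x,\ \eps<y<\mathsf{R}_y\}$, prove that exactly three escape points lie on the bottom side and are localized within $\eps\log^2\eps$ of $-1$, of the interval $(-1,0)$, and of $0$ respectively (\Cref{Proposition_large_rectangle_contours}, resting on the staircase monotonicity of $\Im\Sfin$ on $\R$, \Cref{lemma:im_at_R_ladder}, and the modulus-of-continuity bound \Cref{lemma:real_neighborhood}), and then splice in short horizontal and vertical segments so that the contours cross $\R$ at pole-free points of the form $(k+1/2)/T$, with the $\z$-contour crossing in $(-1,0)$ and the $\w$-contour enclosing precisely $\{-1+T^{-1}(\D x-\D t),\dots,T^{-1}\D x\}$. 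None of this construction, nor the reason the crossings can be placed where needed, appears in your proposal beyond an acknowledgment that it is ``the technical heart.''

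Two further quantitative inputs are missing and do not follow from Assumptions \ref{ass:density}--\ref{ass:intermediate_behavior} as directly as you assert. First, the loss of the steepest-descent property on the spliced segments must be shown to be $o(1)$ compared with the uniform gap $\delta$ in $\Re\Sfin$ away from the saddle; this uses the derivative bounds of \Cref{lemma:estimate_real} (e.g.\ $|\partial_y\Re\Sfin|\le\pi$ on the vertical lines through half-integer points, and $|\partial_x\Re\Sfin|\le C(\log(1/y)+1/(y^2T^2))$ near $\R$), and the unbounded upward tail of the $\z$-contour needs the monotonicity $\partial_y\Re\Sfin<-1/y$ for large $y$, which you never address. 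Second, the uniform two-sided bound on $\Sfin''$ near $\zfin$ and the uniform bound on the length of $\CC_\w^{\deps}$ (\Cref{lemma:second_derivative,lemma:length_bounded}) are obtained in the paper by a compactness/Hurwitz subsequence argument passing to a limiting action $\Sfin_*$, not by reading them off Assumption \ref{ass:intermediate_behavior}; asserting uniformity of $|\Sfin''(\zfin)|$ ``from the assumptions'' leaves the Gaussian remainder estimate, and hence the uniform $o(1)$ claimed in the theorem, unproven.
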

\begin{remark}
Since all probabilities describing the local behavior of
$\vec X(T+t)$
near
$x=0$ (with $t$ kept finite) are
expressed through
$K^{\textnormal{Bernoulli}}_{\mathfrak{A}(N); \be}(t_1+T,x_1;t_2+T,x_2)$
via
\eqref{determinantal_kernel},
\Cref{Theorem_main} means that as $N\to+\infty$,
locally near $x=0$ the distribution
of $\{\vec X(T+t)\}_{t}$ becomes
indistinguishable from the one corresponding to
the extended sine process.
\end{remark}

If $\mathfrak{A}(N)$ depends on $N$ in a regular way,
then \Cref{Theorem_main}
leads to a convergence statement.

\begin{definition}\label{def:vague}
We say that a sequence $\mu_k$,
$k=1,2,\dots$ of $\sigma$--finite measures on
$\mathbb R$ vaguely
converges to $\mu$, if for any continuous function $f$ with
compact support, we have
\begin{equation*}
    \lim_{k\to\infty} \int_{-\infty}^{+\infty} f(x)\mu_k(dx)=\int_{-\infty}^{+\infty} f(x) \mu(dx).
\end{equation*}
\end{definition}

Let us also denote
\begin{equation}\label{drift_N}
    \CCC_N(\RRR)=
    \sum_{i\colon |a_i(N)|\ge\RRR T(N)}\frac{1}{a_i(N)}.
\end{equation}
Assumption \ref{ass:intermediate_behavior} is equivalent to the boundedness of
the $\CCC_N(\RRR)$'s for fixed $\RRR$,
uniformly in $N$.

\begin{theorem}\label{Theorem_main_convergence}
    Suppose that a sequence $\mathfrak{A}(N)$, $N=1,2,\dots$ is such that
    Assumptions \ref{ass:density} and
    \ref{ass:intermediate_behavior} hold, there
    exists a $\sigma$--finite measure $\mu_{loc}$
    for which vaguely
    \begin{equation*}
        \lim_{N\to+\infty}
        \frac{1}{T(N)}\sum_{i=1}^N \delta_{a_i(N)/T(N)}
        =\mu_{loc},
    \end{equation*}
    and there exists a limit
    $\CCC(\RRR)=\lim_{N\to+\infty}\CCC_N(\RRR)$.

    Then the point process
    describing $\{\vec X(T(N)+t)\}_{t}$ near $x=0$ converges in
    distribution to the extended sine process of complex slope
    $\ulim$, in the sense that
    for each $t_1,x_1,t_2,x_2\in\mathbb Z$ we have
    \begin{equation*}
    \lim_{N\to\infty}
    K^{\textnormal{Bernoulli}}_{\mathfrak{A}(N); \be}(t_1+T(N),x_1;t_2+T(N),x_2)
    = K_{\ulim}(
    t_1,x_1;t_2,x_2),
    \end{equation*}
    where $\uvar=\ulim\in\C$ is a unique root in the upper half plane of the equation\footnote{The equation and the root $\ulim$ do not depend on $\RRR>0$.}
    \begin{equation}\label{Theorem_main_convergence_equation}
        \int_{-\infty}^{+\infty}
        \left(\frac{1-\uvar}{\uvar-(1-\uvar)v}
        +\frac{\mathbf{1}_{|v|>\RRR}}{v}
        \right)\mu_{loc}(dv)
        -\log \uvar
        =\CCC(\RRR)+
        \log(\be^{-1}-1)
        -\i\pi.
    \end{equation}
\end{theorem}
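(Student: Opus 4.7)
By \Cref{Theorem_main} and the evident continuity of $K_u(t_1,x_1;t_2,x_2)$ in $u$ (clear from \eqref{incomplete_beta}), it suffices to prove that $\zfin(N)\to \zlim$ in the open upper half plane, where $\zlim:=\ulim/(1-\ulim)$ is the root of a limiting equation $S_\infty'(\z)=0$. The strategy is to pass to the limit in the equation $\Sfin'(\z)=0$ supplied by \Cref{Proposition_roots_positioning}, and then verify that the substitution $\uvar=\z/(\z+1)$ converts $S_\infty'(\z)=0$ into \eqref{Theorem_main_convergence_equation}.

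\textbf{Passing to the limit in $\Sfin'$.} Fix $\z$ with $\Im\z>0$ and split $\sum_{r=1}^N 1/(T\z-a_r)$ at $|a_r|=\RRR T$. On $|a_r|<\RRR T$, the vague convergence of $\frac{1}{T}\sum_i\delta_{a_i/T}$, applied to a continuous truncation of $\mathbf{1}_{|v|<\RRR}/(\z-v)$, yields $\int_{|v|<\RRR}\mu_{loc}(dv)/(\z-v)$. For $|a_r|\ge \RRR T$, apply the identity
\begin{equation*}
\frac{1}{T\z-a_r}=-\frac{1}{a_r}+\frac{T\z}{a_r(T\z-a_r)};
\end{equation*}
the first term sums to $-\CCC_N(\RRR)\to -\CCC(\RRR)$ by definition, and the second is paired with the test function $\mathbf{1}_{|v|\ge\RRR}\,\z/(v(\z-v))$, whose $O(1/v^2)$ decay combined with the uniform local density bound of \Cref{ass:density} permits truncation at large scales and then an application of vague convergence, producing $\int_{|v|\ge\RRR}\z/(v(\z-v))\,\mu_{loc}(dv)$. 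Next, using \eqref{Sfin_prime_initial_formula}, the Riemann sum $\sum_{i=1}^{T-1}1/(T\z+i)$ converges to $\log((\z+1)/\z)$, while $\pi\cot(\pi T\z)\to -\i\pi$ locally uniformly on $\{\Im\z>0\}$ because $|\cot(\pi T\z)+\i|=O(e^{-2\pi T\Im\z})$. Summing yields an explicit holomorphic $S_\infty'$ on $\{\Im\z>0\}$, and the convergence $\Sfin'\to S_\infty'$ is locally uniform by Vitali's theorem (the same decomposition supplies uniform local bounds). The algebraic identities $(1-\uvar)/(\uvar-(1-\uvar)v)=1/(\z-v)$ and $-\log\uvar=\log((\z+1)/\z)$ under $\uvar=\z/(\z+1)$ then identify $S_\infty'(\z)=0$ with \eqref{Theorem_main_convergence_equation}; the asserted $\RRR$-independence of the equation follows from the vague-limit identity $\CCC(\RRR)-\CCC(\RRR')=\int_{\RRR\le|v|<\RRR'}\mu_{loc}(dv)/v$ obtained by testing $1/v$ against the empirical measure on the compact annulus $\RRR\le|v|<\RRR'$.

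\textbf{Uniqueness and convergence of the root.} \Cref{Proposition_roots_positioning} confines $\{\zfin(N)\}$ to a fixed compact $\mathcal Z\subset\{\Im\z>0\}$, so every subsequence admits a further convergent sub-subsequence, and any subsequential limit $\z_*$ satisfies $S_\infty'(\z_*)=0$ by the locally uniform convergence above. Uniqueness of the root of $\Sfin'$ in the upper half plane, provided by \Cref{Proposition_roots_positioning}, forces uniqueness of the root of $S_\infty'$ there by Hurwitz's theorem: a second zero of $S_\infty'$ would, by locally uniform convergence, produce a second zero of $\Sfin'$ for large $N$. Hence every subsequential limit equals $\zlim$, so $\zfin(N)\to\zlim$, and combining with \Cref{Theorem_main} concludes the proof.

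\textbf{Main obstacle.} The principal technical step is the passage to the limit on $|a_r|\ge \RRR T$: vague convergence a priori controls only compactly supported test functions, so one must simultaneously exploit the explicit $1/v^2$ decay of $\z/(v(\z-v))$ and the uniform local density bound of \Cref{ass:density} to truncate at arbitrarily large scales. The algebraic identity above is what makes this possible, isolating the nonintegrable piece $-1/a_r$ whose sum is precisely $\CCC_N(\RRR)$ and handled by \Cref{ass:intermediate_behavior}; the remaining subtlety of correctly summing the pv-tail and the Riemann sum on $[0,1]$ is circumvented by the closed form \eqref{Sfin_prime_initial_formula}.
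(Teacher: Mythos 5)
Your proposal is correct and follows essentially the same route as the paper's proof, which is carried out via \Cref{lemma:S_prime_limit}: prove locally uniform convergence $\Sfin' \to \Sfin_*'$ by regularizing the empirical-measure integral with the $\CCC_N(\RRR)$-correction, deduce $\zfin(N)\to\zlim$ from \Cref{Proposition_roots_positioning} plus Hurwitz's theorem, and convert $\Sfin_*'(\z)=0$ to \eqref{Theorem_main_convergence_equation} by the M\"obius change $\z=\uvar/(1-\uvar)$. Your bookkeeping differs only cosmetically (you split the sum at $|a_r|=\RRR T$ and handle the two pieces separately, whereas the paper writes a single test function $\frac{1}{\z-v}+\frac{\mathbf{1}_{|v|>\RRR}}{v}$ decaying as $v^{-2}$), and you are somewhat more explicit about Vitali's theorem and the subsequence-compactness step, but the substance and all key identities match the paper's argument.
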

In fact, the additional hypotheses in \Cref{Theorem_main_convergence} as compared to \Cref{Theorem_main} are not too restrictive, see \Cref{rmk:passing_to_subsequences} below.

The condition that the quantities \eqref{drift_N} converge can sometimes be not easy to verify, and the determination of the limit $\CCC(\RRR)$ could be even harder. Let us present a sufficient condition and a way to compute $\CCC(\RRR)$ which involves the global profile $\vec X(0)$:

\begin{theorem}\label{Theorem_main_convergence_glob}
    Suppose that a sequence $\mathfrak{A}(N)$, $N=1,2,\dots$ is such that
    Assumptions \ref{ass:density} and
    \ref{ass:intermediate_behavior} hold,
    and, moreover, $\lim_{\updelta\to0}\A_{\RRR,\updelta}=0$ in Assumption \ref{ass:intermediate_behavior}.
    Next, let there
    exist a $\sigma$--finite measure $\mu_{loc}$
    and a probability measure $\mu_{glob}$
    for which vaguely
    \begin{equation}\label{mu_loc_mu_glob_convergence}
        \lim_{N\to+\infty}
        \frac{1}{T(N)}\sum_{i=1}^N \delta_{a_i(N)/T(N)}
        =\mu_{loc},
        \qquad
        \lim_{N\to+\infty}
        \frac{1}{N}\sum_{i=1}^N \delta_{a_i(N)/N}
        =\mu_{glob},
    \end{equation}
    and the principal value integral
    $\pv\int_{-\infty}^{\infty}v^{-1}\mu_{glob}(dv)$
    exists.
    Then the quantities $\CCC_N(\RRR)$ converge
    to this integral (so $\CCC(\RRR)=\CCC$ is independent of $\RRR$), and
    the conclusion of \Cref{Theorem_main_convergence} holds
    with
    \begin{equation}\label{main_convergence_glob_drift_term}
        \CCC=\pv\int_{-\infty}^{\infty}
        \frac{\mu_{glob}(dv)}{v}.
    \end{equation}
\end{theorem}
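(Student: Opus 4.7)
The plan is to reduce Theorem~\ref{Theorem_main_convergence_glob} to Theorem~\ref{Theorem_main_convergence}. The sole extra content is the identification of the limit $\CCC(\RRR)=\lim_{N\to\infty}\CCC_N(\RRR)$. I will show that, under the present hypotheses, for every fixed $\RRR>0$ one has
\begin{equation*}
\lim_{N\to\infty}\CCC_N(\RRR)=\pv\int_{-\infty}^{\infty}\frac{\mu_{glob}(dv)}{v},
\end{equation*}
which is manifestly independent of $\RRR$; feeding this into Theorem~\ref{Theorem_main_convergence} then gives the statement.

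To compute the limit, I split the sum defining $\CCC_N(\RRR)$ at an auxiliary scale $\updelta N$, with $\updelta>0$ a small free parameter (the condition $\RRR T(N)<\updelta N$ holds for $N$ large because $T(N)\ll N$):
\begin{equation*}
\CCC_N(\RRR)=\!\sum_{\RRR T(N)\le|a_i(N)|\le\updelta N}\!\frac{1}{a_i(N)}\;+\;\int_{|v|>\updelta}\frac{1}{v}\,d\mu_N^{glob}(v),
\end{equation*}
where $\mu_N^{glob}:=\frac{1}{N}\sum_i\delta_{a_i(N)/N}$. The intermediate sum is bounded in absolute value by $\A_{\RRR,\updelta}$ by Assumption~\ref{ass:intermediate_behavior}, and the strengthened hypothesis $\A_{\RRR,\updelta}\to 0$ as $\updelta\to 0$ makes this contribution uniformly negligible once $\updelta$ is chosen small. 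For the far integral, I first upgrade the assumed vague convergence $\mu_N^{glob}\to\mu_{glob}$ to weak convergence of probability measures: this is automatic, since testing vague convergence against continuous cutoffs of $[-R,R]$ together with $\mu_N^{glob}(\R)=\mu_{glob}(\R)=1$ yields tightness, ruling out any loss of mass at infinity. Combined with the Portmanteau theorem applied to the bounded, measurable integrand $\mathbf{1}_{|v|>\updelta}/v$ (whose discontinuity set $\{\pm\updelta\}$ is $\mu_{glob}$-null for all but at most countably many $\updelta$), and with the crude uniform tail bound $\sum_{|a_i(N)|>RN}|a_i(N)|^{-1}\le 1/R$ (and the analogous bound for $\mu_{glob}$) controlling the integrals over $\{|v|>R\}$, this gives
\begin{equation*}
\lim_{N\to\infty}\int_{|v|>\updelta}\frac{1}{v}\,d\mu_N^{glob}(v)=\int_{|v|>\updelta}\frac{1}{v}\,\mu_{glob}(dv).
\end{equation*}
Finally, sending $\updelta\to 0$ and using the hypothesis that the principal value $\pv\int v^{-1}\,\mu_{glob}(dv)$ exists identifies the common limit as $\CCC$.

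The only substantive step is the weak-convergence argument for the far integral: vague convergence gives no a priori control at infinity, and the integrand $1/v$ is neither continuous at $\pm\updelta$ nor compactly supported. Both difficulties are resolved by elementary considerations (tightness from preservation of total mass, Portmanteau for the discontinuities, and the harmonic tail bound $1/R$ for what lies outside a large window), so no deeper analysis is needed beyond what Theorem~\ref{Theorem_main_convergence} already provides.
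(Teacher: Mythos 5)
Your proposal is correct and follows essentially the same route as the paper's proof: split $\CCC_N(\RRR)$ at the scale $\updelta N$, kill the intermediate sum with the strengthened hypothesis $\A_{\RRR,\updelta}\to 0$, identify the far sum as $\int_{|v|>\updelta}v^{-1}\,d\mu_N^{glob}$ and pass to the limit using the convergence $\mu_N^{glob}\to\mu_{glob}$ of probability measures, then let $\updelta\to0$ via the existence of the principal value integral and invoke Theorem~\ref{Theorem_main_convergence}. The only difference is that you spell out the vague-to-weak upgrade (tightness from conservation of mass, Portmanteau at $\pm\updelta$, and the $1/R$ tail bound), which the paper dispatches in a single sentence.
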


In many applications $\mu_{loc}$ is a multiple of the
Lebesgue measure on $\R$,
in which case $\ulim$ is more
explicit:
\begin{theorem}\label{Theorem_main_convergence_Lebesgue}
    Assume that \Cref{Theorem_main_convergence} holds
    with
    $\mu_{loc}$ being
    $q\in(0,1)$ times the Lebesgue measure on $\R$.
    Then $\CCC(\RRR)=\CCC$ does not depend on $\RRR$,
    and the complex slope of the limiting
    extended sine process is given by
    \begin{equation}\label{ulim_Lebesgue}
        \ulim=
        \frac{\be e^{-\CCC}}{1-\be}\,e^{\i \pi  (1-q)}.
    \end{equation}
\end{theorem}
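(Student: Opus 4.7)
The plan is to substitute $\mu_{loc}(dv)=q\,dv$ into the equation \eqref{Theorem_main_convergence_equation} from \Cref{Theorem_main_convergence} and evaluate the resulting integral in closed form. Introducing the auxiliary variable $\alpha=\uvar/(1-\uvar)$, which lies strictly in the upper half plane because $\Im\alpha=\Im\uvar/|1-\uvar|^{2}>0$, rewrites the integrand as
\[
\frac{1-\uvar}{\uvar-(1-\uvar)v}+\frac{\mathbf{1}_{|v|>\RRR}}{v}=\frac{1}{\alpha-v}+\frac{\mathbf{1}_{|v|>\RRR}}{v}.
\]
Since $\alpha-v$ never meets the negative real axis for real $v$, every logarithm of $\alpha-v$ below refers to the principal branch extended continuously through the upper half plane.

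I split the $v$-integral into $\int_{|v|\le\RRR}$ and $\int_{|v|>\RRR}$. On the bounded piece, a direct antiderivative gives
\[
\int_{-\RRR}^{\RRR}\frac{dv}{\alpha-v}=\log(\alpha+\RRR)-\log(\alpha-\RRR).
\]
For the unbounded piece one uses the identity $\frac{1}{v}+\frac{1}{\alpha-v}=\frac{\alpha}{v(\alpha-v)}$, whose integrand decays like $v^{-2}$ and is absolutely integrable. Tracking the continuous branch of $\arg(\alpha-v)\in(0,\pi)$ (tending to $\pi$ as $v\to+\infty$ and to $0$ as $v\to-\infty$) produces
\[
\int_{\RRR}^{\infty}\Big(\tfrac{1}{v}+\tfrac{1}{\alpha-v}\Big)dv=-\log\RRR+\log(\alpha-\RRR)-\i\pi,
\]
\[
\int_{-\infty}^{-\RRR}\Big(\tfrac{1}{v}+\tfrac{1}{\alpha-v}\Big)dv=\log\RRR-\log(\alpha+\RRR).
\]
Summing the three contributions, every $\RRR$-dependent logarithm telescopes away and one is left with the constant $-\i\pi$.

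Multiplying by $q$, the full integral on the left-hand side of \eqref{Theorem_main_convergence_equation} equals $-\i\pi q$, manifestly independent of $\RRR$. Substituting back,
\[
-\i\pi q-\log\uvar=\CCC(\RRR)+\log(\be^{-1}-1)-\i\pi.
\]
Since the left-hand side has no $\RRR$-dependence, neither can $\CCC(\RRR)$; write $\CCC(\RRR)=\CCC$. Solving for $\uvar$ produces
\[
\log\uvar=\log\tfrac{\be}{1-\be}-\CCC+\i\pi(1-q),
\]
which exponentiates to the claimed \eqref{ulim_Lebesgue}. The only subtle point is keeping the branches of all logarithms consistent; uniqueness of the root $\ulim$ in the upper half plane, provided by \Cref{Proposition_roots_positioning} and \Cref{Theorem_main_convergence}, guarantees that the branch choice above automatically selects the correct root.
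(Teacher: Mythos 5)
Your proof is correct and follows essentially the same route as the paper: you substitute the Lebesgue $\mu_{loc}$ into the convergence equation (in the variable $\alpha=\uvar/(1-\uvar)$, which is exactly the paper's $\z$), split the principal value integral into $|v|\le\RRR$ and $|v|>\RRR$, evaluate the antiderivatives with careful branch tracking, observe that the $\RRR$-dependent pieces cancel leaving $-\i\pi$, multiply by $q$, and solve for $\ulim$. The paper organizes the telescoping slightly differently by introducing a cutoff $M$ and grouping the $1/(\z-v)+1/v$ pieces together from the outset, but the computation and its conclusion are the same.
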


Let us make some remarks about the elegant formula \eqref{ulim_Lebesgue}. First, the (same-time) density of particles under the limiting extended sine process is $1-\arg(\ulim)/\pi=q$, as it should be. In particular, this density does not depend on the ``speed'' $\be$ of the noncolliding random walk, or on the parameter $\CCC$ capturing the effect of the global profile.

To isolate the effect of the global profile, observe that the second parameter $|\ulim|$ of the extended sine kernel can be rewritten as
\begin{equation}\label{beta_eff}
    |\ulim|=
    \frac{\be_{\textnormal{eff}}}{1-\be_{\textnormal{eff}}},
    \qquad
    \be_{\textnormal{eff}}=\frac{1}{1+e^{\CCC}(\be^{-1}-1)}.
\end{equation}
That is, for fixed $q$ the bulk local distribution is the same as if the parameter $\be$ was replaced by $\be_{\textnormal{eff}}$, while the contribution from the global profile (encoded by $\CCC$) was not present.

The quantity $\be_{\textnormal{eff}}$ increases in $\be$ and decreases in $\CCC$. The dependence on $\CCC$ can be interpreted as an effect of repulsion. For example, having much more particles of the initial configuration to the right of $0$ than to the left corresponds to larger values of $\CCC$, which leads to a decrease in $\be_{\textnormal{eff}}$.

Moreover, if $\CCC$ is very large or very small, then  $\be_{\textnormal{eff}}$ is close to $0$ or $1$, respectively, leading to an almost deterministic behavior of the noncolliding paths in the bulk local limit. This suggests that our Assumption \ref{ass:intermediate_behavior} is close to being necessary for the universal local bulk behavior. Namely, if it is violated, then $\CCC=\pm\infty$ along a subsequence $\{N_k\}$, and so the local bulk distribution is not described by the universal extended sine kernel. However, we will not pursue this analysis further.


\subsection{Applications} 
\label{sub:applications}

Let us give several examples which demonstrate that Assumptions \ref{ass:density} and \ref{ass:intermediate_behavior} are checkable in applications. The first example deals with an arbitrary \emph{smooth} deterministic initial configuration $\vec X(0)$.

\begin{theorem}\label{Theorem_f_IC}
    Take a twice continuously differentiable function $f$ on $[-\frac12,\frac12]$ such that \mbox{$f'(x)>1$} for all $x\in[-\frac12,\frac12]$, and $f(-\frac12)<0<f(\frac12)$. Let $\chi\in(-\frac12,\frac12)$ be the unique point where $f(\chi)=0$. Assume for simplicity that $N$ is odd, and let the initial configuration of the noncolliding Bernoulli random walk be\footnote{Throughout the text $\fl{\cdots}$ denotes the integer part.}
    \begin{equation}\label{f_IC}
    a_i(N)=\fl{N f(i/N)},
    \qquad
    i=-\tfrac{N-1}2,
    -\tfrac{N-1}2+1,\ldots,\tfrac{N-1}2-1,\tfrac{N-1}2.
    \end{equation}
    Fix any $0<\TNPower<1$, and let $T(N)=\fl{N^{\TNPower}}$. Then \Cref{Theorem_main_convergence_Lebesgue} is applicable, where $\mu_{loc}$ is the Lebesgue measure on $\R$ times $q=1/f'(\chi)$, and $\CCC$ has the form
    \begin{equation}\label{f_IC_drift_term}
        \CCC=
        \pv\int_{-\frac12}^{\frac12}\frac{dx}{f(x)}.
    \end{equation}
\end{theorem}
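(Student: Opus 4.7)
The plan is to verify the hypotheses of \Cref{Theorem_main_convergence_glob}; since $\mu_{loc}$ will turn out to be a multiple of Lebesgue, \Cref{Theorem_main_convergence_Lebesgue} then applies and yields the desired formula. Write $g=f^{-1}$ and fix constants $1<m\le M<\infty$ with $m\le f'(x)\le M$ on $[-\tfrac12,\tfrac12]$, which exist by continuity of $f'$. For Assumption~\ref{ass:density} I would take, e.g., $\dd(N)=\fl{T(N)^{1/2}}$ and $\QQ(N)=\fl{N^{(1+\TNPower)/2}}$, so $\dd\ll T\ll\QQ\ll N$; then for $|y|\le\QQ$,
\begin{equation*}
    \#\{i\colon y\le a_i\le y+\dd\}=N\bigl(g((y+\dd)/N)-g(y/N)\bigr)+O(1)=\dd\,g'(y/N)+O(\dd^2/N+1),
\end{equation*}
and since $|y|/N\to 0$, $g'(y/N)\to 1/f'(\chi)\in(1/M,1/m)$, so this count lies in $[\lod\dd,\upd\dd]$ for large $N$ with $\lod=1/(2M)$ and $\upd=(1+1/m)/2<1$.

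Next I would establish the two vague limits in \eqref{mu_loc_mu_glob_convergence}. A standard Riemann-sum argument, using $a_i/N=f(i/N)+O(1/N)$, gives $N^{-1}\sum\phi(a_i/N)\to\int_{-1/2}^{1/2}\phi(f(x))\,dx$ for any continuous $\phi$ of compact support, so $\mu_{glob}$ is the push-forward of Lebesgue on $[-\tfrac12,\tfrac12]$ by $f$, with smooth density $g'(v)=1/f'(g(v))$ on $(f(-\tfrac12),f(\tfrac12))$. The change of variables $v=f(x)$ gives $\pv\int v^{-1}\mu_{glob}(dv)=\pv\int_{-1/2}^{1/2}dx/f(x)$, and this principal value exists because $f$ is $C^1$ with $f'(\chi)>0$. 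For $\mu_{loc}$ only indices with $|i-\fl{\chi N}|=O(T)$ contribute; Taylor expansion at $\chi$ gives
\begin{equation*}
    a_i=f'(\chi)(i-\fl{\chi N})+O(T^2/N+1)=f'(\chi)(i-\fl{\chi N})+o(T)
\end{equation*}
uniformly in such $i$ (using $\TNPower<1$), so $T^{-1}\sum\delta_{a_i/T}\to (1/f'(\chi))\,dx$ vaguely, i.e.\ $\mu_{loc}=q\cdot\mathrm{Leb}$ with $q=1/f'(\chi)$.

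The main technical step is Assumption~\ref{ass:intermediate_behavior} with the refinement $\lim_{\updelta\to0}\A_{\RRR,\updelta}=0$. I would compare
\begin{equation*}
    \sum_{i\colon\RRR T\le|a_i|\le\updelta N}\frac{1}{a_i}=\int_{\RRR T/N\le|v|\le\updelta}\frac{g'(v)}{v}\,dv+o_N(1)
\end{equation*}
via $v=f(x)$ and Riemann-sum approximation. Writing $g'(v)=g'(0)+v\,r(v)$ with $r$ smooth near $0$, the $g'(0)$ piece vanishes by symmetry of the cutoff, while the $r$ piece is $O(\updelta)$; this gives $\A_{\RRR,\updelta}\to 0$ as $\updelta\to 0$ uniformly in large $N$. \Cref{Theorem_main_convergence_glob} then applies and yields $\CCC=\pv\int v^{-1}\mu_{glob}(dv)=\pv\int_{-1/2}^{1/2}dx/f(x)$, and \Cref{Theorem_main_convergence_Lebesgue} together with \eqref{ulim_Lebesgue} completes the proof. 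The main obstacle is the Riemann-sum comparison near $\chi$: outside $|x-\chi|\ge\eps$ the derivative of $1/f$ is $O(1/\eps^2)$, giving sum-integral error $O(1/(\eps^2 N))$, while inside $|x-\chi|\le\eps$ both the sum and the integral are $O(\eps)$ by near-linearity of $f$ and the resulting approximate symmetry of the lattice around $\chi$; sending $N\to\infty$ and then $\eps\to 0$ finishes the bound.
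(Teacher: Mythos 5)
Your proof is correct and follows essentially the same route as the paper's: verify the density and intermediate-scale assumptions, identify $\mu_{loc}$ and $\mu_{glob}$ by Riemann sums, and control $\CCC_N(\RRR)$ by Taylor expanding $f$ at $\chi$ and exploiting the symmetric cutoff near the singularity. The only cosmetic difference is that you route through \Cref{Theorem_main_convergence_glob} (hence must check $\lim_{\updelta\to0}\A_{\RRR,\updelta}=0$), whereas the paper computes $\lim_N\CCC_N(\RRR)$ directly; the underlying estimates are the same.
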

\begin{remark}
    In the situation of \Cref{Theorem_f_IC} the global probability measure exists and has the form $\mu_{glob}(dv)={dv}/{f'(f^{-1}(v))}$. The expression \eqref{main_convergence_glob_drift_term} for $\CCC$ is equivalent to \eqref{f_IC_drift_term} via a change of variables.
\end{remark}

The next two examples deal with \emph{random} initial configurations $\vec X(0)$.

\begin{theorem} \label{Theorem_Bernoulli_IC}
Fix $0<p<1$ and $0<\al<1$. For $M=1,2,\dots$, consider a particle configuration on $\{-\fl{M(1-\al)},-\fl{M(1-\al)}+1,\dots, \fl{M\al}-1,\fl{M\al}\}$ obtained by putting a particle at each location with probability $p$ independently of all others. Let $\NRandom$ be the (random) number of particles in this configuration, and $\boldsymbol{\mathfrak{A}}(\NRandom)$ denote the configuration itself. By $\vec{\mathbf{X}}(t)$ denote the noncolliding Bernoulli random walk started from $\boldsymbol{\mathfrak{A}}(\NRandom)$. Choose $0<\TNPower<1$ and set $T(M)=\fl{M^{\TNPower}}$.

Then the point process $\{\vec{\mathbf{X}}(T(M)+t)\}_{t}$ converges near $x=0$ to the extended sine process as in \Cref{Theorem_main_convergence}, where $\mu_{loc}$ is $p$ times the Lebesgue measure on $\R$, and $\CCC(\RRR)=\CCC=p\log(\frac{\al}{1-\al})$. That is, the complex slope of the limiting extended sine process is
\begin{equation*}
    \ulim=
    \frac{\be}{1-\be}\left(\frac{1-\al}{\al}\right)^{p}
    e^{\i\pi(1-p)}.
\end{equation*}
\end{theorem}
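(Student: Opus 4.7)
The plan is to reduce this to \Cref{Theorem_main_convergence_Lebesgue} with parameters $q=p$ and $\CCC=p\log(\al/(1-\al))$, which by \eqref{ulim_Lebesgue} produces exactly the claimed complex slope $\ulim=\frac{\be}{1-\be}\bigl(\frac{1-\al}{\al}\bigr)^{p}e^{\i\pi(1-p)}$. The reduction requires, with high probability in $M$: Assumptions \ref{ass:density} and \ref{ass:intermediate_behavior}, vague convergence of $T(M)^{-1}\sum_{i}\delta_{a_{i}(M)/T(M)}$ to $p$ times the Lebesgue measure, and $\CCC_{\NRandom}(\RRR)\to p\log(\al/(1-\al))$. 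Because the initial data is random while \Cref{Theorem_main_convergence_Lebesgue} is a deterministic statement, at the end we transfer in-probability convergences to kernel convergence via a standard subsequencing argument that exploits the uniformity of the $o(1)$ error in \Cref{Theorem_main} (which depends only on the constants in Assumptions \ref{ass:density}--\ref{ass:intermediate_behavior}, not on the specific $\mathfrak{A}(N)$).

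To verify Assumption \ref{ass:density}, we pick $\dd(M)=\fl{M^{\TNPower/2}}$ and $\QQ(M)=\fl{M^{(1+\TNPower)/2}}$, so that $\dd\ll T\ll\QQ$ and, since $\NRandom/M\to p$ by the law of large numbers, also $\QQ\ll\NRandom$ holds with high probability (recall $\TNPower<1$). The particle count in any integer-endpoint window of length $\dd$ inside $[-\QQ,\QQ]$ is Binomial$(\dd,p)$, so by the Chernoff bound it lies in $[\lod\dd,\upd\dd]$ for some fixed $0<\lod<p<\upd<1$ outside an event of probability $e^{-c\dd}$. Since every length-$\dd$ window is sandwiched between two adjacent integer-endpoint windows up to $O(1)$ boundary error, a union bound over the $O(\QQ)$ integer-endpoint windows yields Assumption \ref{ass:density} with total failure probability at most $\QQ\cdot e^{-cM^{\TNPower/2}}=o(1)$.

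For Assumption \ref{ass:intermediate_behavior} and the value of $\CCC$, let $\xi_{j}\in\{0,1\}$ for $j\in\{-\fl{M(1-\al)},\ldots,\fl{M\al}\}$ denote the independent Bernoulli$(p)$ occupation indicators, so that $\CCC_{\NRandom}(\RRR)=\sum_{\RRR T\le|j|\le M}\xi_{j}/j$. By direct summation,
\begin{equation*}
    \mathbb{E}\bigl[\CCC_{\NRandom}(\RRR)\bigr]=p\!\!\!\sum_{\RRR T\le j\le\fl{M\al}}\!\!\frac{1}{j}\;-\;p\!\!\!\sum_{\RRR T\le j\le\fl{M(1-\al)}}\!\!\frac{1}{j}\;=\;p\log\tfrac{\al}{1-\al}+o(1),
\end{equation*}
and by independence $\mathrm{Var}\bigl[\CCC_{\NRandom}(\RRR)\bigr]=\sum_{|j|\ge\RRR T}p(1-p)/j^{2}=O\bigl(1/(\RRR T)\bigr)\to 0$, so Chebyshev gives $L^{2}$-convergence $\CCC_{\NRandom}(\RRR)\to p\log(\al/(1-\al))$. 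Applied instead to the truncated sum $\sum_{\RRR T\le|a_{i}|\le\updelta\NRandom}1/a_{i}$, the same bounds show its expectation is $O(1)$ in $M$ and its variance is $o(1)$, hence Assumption \ref{ass:intermediate_behavior} holds in probability with some $\A_{\RRR,\updelta}$. A nearly identical first/second moment computation for $T(M)^{-1}\sum_{i}\varphi(a_{i}(M)/T(M))$, $\varphi\in C_{c}(\R)$, gives vague convergence of the empirical measure to $p$ times the Lebesgue measure in $L^{2}$.

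The main (and essentially the only) conceptual subtlety is passing from in-probability hypotheses to the deterministic kernel limit of \Cref{Theorem_main_convergence_Lebesgue}. This is handled as follows: from any subsequence $\{M_{k}\}$ extract a further sub-subsequence along which all four required convergences hold almost surely with their deterministic limits; on that sub-subsequence \Cref{Theorem_main_convergence_Lebesgue} applies configuration-by-configuration to produce $K_{\ulim}$ with the stated $\ulim$. Since the limit is independent of the choice of sub-subsequence, the full sequence $K^{\textnormal{Bernoulli}}_{\boldsymbol{\mathfrak{A}}(\NRandom);\be}(t_{1}+T(M),x_{1};t_{2}+T(M),x_{2})$ converges to $K_{\ulim}(t_{1},x_{1};t_{2},x_{2})$ in probability for every fixed $t_{1,2},x_{1,2}\in\Z$, yielding the claimed convergence of $\{\vec{\mathbf{X}}(T(M)+t)\}_{t}$ to the extended sine process in distribution.
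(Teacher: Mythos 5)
Your proposal is correct and verifies the same four facts as the paper — Assumptions \ref{ass:density} and \ref{ass:intermediate_behavior} w.h.p., vague convergence of $T^{-1}\sum_i\delta_{a_i/T}$ to $p$ times Lebesgue, and $\CCC_{\NRandom}(\RRR)\to p\log\frac{\al}{1-\al}$ — but the route from random initial data to the deterministic kernel limit differs from the paper's. The paper first proves a general reduction, Proposition \ref{prop:annealed}, whose proof decomposes the annealed probability $\mathbf{E}_{\mathbf{P}^M}[\PP_{\boldsymbol{\mathfrak{A}}(\NRandom)}(F)]$ over a ``regular'' set $\mathbb{W}_{reg}(M)$ of high $\mathbf{P}^M$-probability and then sandwiches the contribution from $\mathbb{W}_{reg}(M)$ between $\PP_{\mathfrak{A}_{\min}}(F)$ and $\PP_{\mathfrak{A}_{\max}}(F)$, both of which converge to the same determinant by the uniformity hypothesis. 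You instead invoke the subsequence criterion for convergence in probability: from every subsequence extract a further one along which the moment bounds force a.s.\ convergence of the hypothesis-defining quantities, apply \Cref{Theorem_main_convergence_Lebesgue} pathwise, and conclude that the random kernel converges in probability. Both transfers rely, as you say, on the $\mathfrak{A}$-uniformity of the $o(1)$ in \Cref{Theorem_main}. The concrete estimates are also cosmetically different but equivalent: you strengthen the density step with a Chernoff bound and use scales $\dd=\fl{M^{\TNPower/2}}$, $\QQ=\fl{M^{(1+\TNPower)/2}}$, whereas the paper uses Chebyshev with a $\dd^{2/3}$ window tolerance and scales $\dd=\fl{M^{\TNPower-\delta}}$, $\QQ=\fl{M^{\TNPower+\delta}}$ for $\delta<\min(1-\TNPower,\TNPower/7)$; either choice satisfies $\dd\ll T\ll\QQ\ll\NRandom$. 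One step you should make explicit: convergence in probability of the random kernel does not by itself yield the claimed convergence of the annealed point process; you need to observe that the determinants $\det[K^{\textnormal{Bernoulli}}_{\boldsymbol{\mathfrak{A}}(\NRandom);\be}(t_\aind+T,y_\aind;t_\bind+T,y_\bind)]$ are correlation probabilities (hence in $[0,1]$) and apply bounded convergence to pass the limit through the expectation over $\mathbf{P}^M$ — this is exactly what Proposition \ref{prop:annealed} encapsulates for you in the paper's argument.
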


\begin{proposition}\label{Proposition_sine_IC}
Fix $0<\phi<\pi$ and $0<\al<1$. For $M=1,2,\dots$, let the initial particle configuration of the noncolliding Bernoulli random walk be obtained by restricting the configuration of the discrete sine process of density $\phi/\pi$ (i.e., with the correlation kernel $K^{\textnormal{sine}}_{\phi}$ given by \eqref{discrete_sine}) to $\{-\fl{M(1-\al)},-\fl{M(1-\al)}+1,\dots, \fl{M\al}-1,\fl{M\al}\}$.

With other notation the same as in \Cref{Theorem_Bernoulli_IC}, the point process $\{\vec{\mathbf{X}}(T(M)+t)\}_{t}$ describing the configuration of the noncolliding walk started from the sine process initial configuration converges near $x=0$ to the extended sine process as in \Cref{Theorem_main_convergence}, where $\mu_{loc}$ is $\frac\phi\pi$ times the Lebesgue measure on $\R$, and $\CCC(\RRR)=\CCC=\frac\phi\pi\log(\frac{\al}{1-\al})$, so
\begin{equation*}
    \ulim=
    \frac{\be}{1-\be}\left(\frac{1-\al}{\al}\right)^{\frac\phi\pi}
    e^{\i(\pi-\phi)}.
\end{equation*}
is the complex slope of the limiting extended sine process.
\end{proposition}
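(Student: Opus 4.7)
The plan is to follow the route of \Cref{Theorem_Bernoulli_IC}: verify Assumptions~\ref{ass:density} and~\ref{ass:intermediate_behavior} with probability tending to one for the random initial data $\boldsymbol{\mathfrak{A}}(\NRandom)$, identify the local density as $\mu_{loc}=\tfrac{\phi}{\pi}\,dx$ and the drift as $\CCC=\tfrac{\phi}{\pi}\log\tfrac{\al}{1-\al}$, and then invoke \Cref{Theorem_main_convergence_Lebesgue}. The only genuine change from the Bernoulli case is that $\boldsymbol{\mathfrak{A}}(\NRandom)$ is now sampled from a nontrivial determinantal point process rather than a product measure, so the independence--based tail bounds must be replaced by variance estimates for linear statistics of determinantal processes; these follow from the decay bound $|K^{\textnormal{sine}}_{\phi}(x,y)|\le\min(1,(\pi|x-y|)^{-1})$.

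For \Cref{ass:density} I would choose scales $(\log M)^{2}\ll \dd(M)\ll T(M)\ll \QQ(M)\ll M$. The one-point intensity of the discrete sine process equals $\phi/\pi$, and the variance formula for determinantal linear statistics gives $\mathrm{Var}\,|\boldsymbol{\mathfrak{A}}(\NRandom)\cap I|=O(\log\dd)$ for any window $I$ of length $\dd$. Since by the Hough--Krishnapur--Peres--Vir\'ag representation the count in $I$ has the same law as a sum of independent Bernoullis, standard Bernstein concentration and a union bound over the $O(\QQ/\dd)$ windows inside $[-\QQ,\QQ]$ give that, off an event of probability $o(1)$, every such window contains between $\lod\dd$ and $\upd\dd$ points for any fixed $\lod<\phi/\pi<\upd$.

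For \Cref{ass:intermediate_behavior} and the value of $\CCC$, I split $\CCC_{\NRandom}(\RRR)=\sum_{|a_i|\ge \RRR T(M)}a_i^{-1}$ into expectation plus fluctuation. Using $K^{\textnormal{sine}}_{\phi}(j,j)=\phi/\pi$ and Euler--Maclaurin,
\begin{equation*}
    \mathbb{E}\,\CCC_{\NRandom}(\RRR)
    =\tfrac{\phi}{\pi}\biggl(\sum_{j=\fl{\RRR T}}^{\fl{M\al}}\tfrac{1}{j}-\sum_{j=\fl{\RRR T}}^{\fl{M(1-\al)}}\tfrac{1}{j}\biggr)
    =\tfrac{\phi}{\pi}\log\tfrac{\al}{1-\al}+o(1),
\end{equation*}
and the determinantal variance formula combined with the decay bound on $K^{\textnormal{sine}}_{\phi}$ yields $\mathrm{Var}\,\CCC_{\NRandom}(\RRR)=O((\RRR T)^{-1})$; Chebyshev then gives $\CCC_{\NRandom}(\RRR)\to\tfrac{\phi}{\pi}\log\tfrac{\al}{1-\al}$ in probability, with uniform constants in Assumption~\ref{ass:intermediate_behavior}. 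Vague convergence of $\tfrac{1}{T(M)}\sum_i\delta_{a_i/T(M)}$ to $\tfrac{\phi}{\pi}\,dx$ is analogous: expectations of continuous compactly supported test functions are Riemann sums for $\phi/\pi$ with error $O(T^{-1})$, and variances are $O(T^{-1}\log T)$.

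On the event of probability tending to one on which all of the above hold, the hypotheses of \Cref{Theorem_main_convergence_Lebesgue} are satisfied with $q=\phi/\pi$ and $\CCC=\tfrac{\phi}{\pi}\log\tfrac{\al}{1-\al}$, and substitution into~\eqref{ulim_Lebesgue} produces the claimed value of $\ulim$; uniform boundedness of the correlation functions of a determinantal process then upgrades this in-probability kernel convergence to convergence in distribution of the full point process $\{\vec{\mathbf{X}}(T(M)+t)\}_{t}$ via dominated convergence applied to the determinants. The main technical obstacle I anticipate is the boundary effect near $\fl{M\al}$ and $-\fl{M(1-\al)}$ where the sine process is truncated to an interval and translation invariance fails: the one-point intensity and the off-diagonal kernel bounds must be controlled uniformly up to the endpoints, so that the $\log(\al/(1-\al))$ contribution to the expectation is captured with the correct constant and no spurious additional drift is produced.
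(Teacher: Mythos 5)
Your proposal is correct and follows essentially the same route as the paper: verify Assumptions~\ref{ass:density} and~\ref{ass:intermediate_behavior} with $\mathbf{P}^M$-probability tending to $1$ by bounding the variance of the relevant linear statistics of the determinantal sine process, identify $\mu_{loc}=\tfrac{\phi}{\pi}\,dx$ and $\CCC=\tfrac{\phi}{\pi}\log\tfrac{\alpha}{1-\alpha}$, and pass from quenched to annealed convergence via the framework of \Cref{prop:annealed} (your dominated-convergence remark is precisely its content). The paper's proof takes a small shortcut you don't: it observes directly from the determinantal variance formula \eqref{variance_estimate} that the sine-process variance of the window counts is \emph{dominated} by the Bernoulli variance, so the Chebyshev estimates from \Cref{sub:bernoulli_initial_data_proof_of_theorem_bernoulli_ic} transfer verbatim with $p$ replaced by $\phi/\pi$ and no fresh concentration argument is needed; you instead reprove concentration from scratch using the sharper $O(\log\dd)$ bound and the HKPV Bernoulli decomposition, which is valid but more machinery than required (and indeed the paper remarks in passing that the true variance is $O(\log \dd)$ but does not need it). Your variance bound $O((\RRR T)^{-1})$ for $\CCC_M(\RRR)$ is consistent with the paper's estimate after the mixed-sign off-diagonal terms are controlled as in \eqref{variance_estimate2}--\eqref{variance_estimate3}. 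The boundary effect you flag near $\pm\fl{M\alpha}$, $\pm\fl{M(1-\alpha)}$ is a non-issue here because the initial configuration is the \emph{restriction} of the translation-invariant sine process to the interval, so the one- and two-point functions on any window inside $[-\QQ,\QQ]$ are exactly those of the unrestricted process, and the tails only enter through the sum $\sum a_i^{-1}$, whose expectation you compute correctly.
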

\begin{remark}
    \Cref{Proposition_sine_IC} is formulated for time $T(M)=\fl{M^{\TNPower}}$ going to infinity. However, it is probably true even for $T(M)=0$ because the initial configuration is already close the same-time configuration of the extended sine process. Here we do not pursue in this direction.
\end{remark}

For the last example let us consider an initial configuration $\vec X(0)$ for which $\mu_{loc}$ differs from the Lebesgue measure.
\begin{proposition}\label{prop:nonLeb_IC}
    Fix two parameters $0<\eta<1$ and $h>0$. Set $T(N)=\lfloor N^\eta\rfloor$. For $N=1,2,\dots$, let $\mathfrak{A}(N)$ be the $N$--particle configuration defined by the following three conditions, cf. \Cref{fig:nonLeb_IC}:
    \begin{enumerate}[$\bullet$]
        \item There are $\fl{N/2}$ particles to the left from the origin and they occupy every second lattice site;
        \item $\lfloor h T(N) \rfloor$ particles adjacent to the origin from the right occupy every third lattice site;
        \item The remaining particles are to the right from $3\lfloor h T(N) \rfloor$ and they occupy every second lattice site.
    \end{enumerate}
    Then the point process $\{\vec{\mathbf{X}}(T(M)+t)\}_{t}$ converges near $x=0$ to the extended sine process as in \Cref{Theorem_main_convergence}. The complex slope $\ulim$ of the limiting extended sine process is a unique point in the upper half--plane satisfying
    \begin{equation}
    \label{eq_non_leb_example}
        \sqrt[6]{1-3h\frac{1-\ulim}{\ulim}}=
        \i\ulim (1-\be^{-1}),
    \end{equation}
    where the $6$th degree root is understood in the sense of the principal branch.
\end{proposition}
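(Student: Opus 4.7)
The plan is to apply \Cref{Theorem_main_convergence} directly, since $\mu_{loc}$ is piecewise Lebesgue but not a single constant multiple of Lebesgue measure (so \Cref{Theorem_main_convergence_Lebesgue} does not apply as stated). First, I would verify \Cref{ass:density} by choosing any $\dd(N)$ with $1 \ll \dd(N) \ll T(N)$ and observing that on any segment of length $\dd$ inside $[-\QQ,\QQ]$ with $T \ll \QQ \ll N$, the local density of $\mathfrak{A}(N)$ lies between $1/3+o(1)$ and $1/2+o(1)$, so bounds $\lod=1/4, \upd=3/5$ work. For \Cref{ass:intermediate_behavior}, I split the sum $\sum_{\RRR T \le |a_i|\le\updelta N} 1/a_i$ into three parts and use that the left particles $\{-2k\}$ and the right particles $\{3\lfloor hT\rfloor + 2j-1\}$ produce logarithmic contributions $-\tfrac12\log(\updelta N/(\RRR T))$ and $+\tfrac12\log(\updelta N/(\RRR T))$ respectively which cancel to leading order, leaving a bounded remainder (the middle part is itself $O(1)$ since it has $O(T)$ particles at positions $\Theta(T)$).

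Next I would compute the two limits needed as inputs to \Cref{Theorem_main_convergence}. Testing $\tfrac1T \sum \delta_{a_i/T}$ against a compactly supported continuous function yields
\[
    \mu_{loc}(dv) = \tfrac12\mathbf 1_{v<0}\,dv + \tfrac13\mathbf 1_{0<v<3h}\,dv + \tfrac12\mathbf 1_{v>3h}\,dv.
\]
The same cancellation as above (now taking $\RRR>3h$ fixed and $N\to\infty$) gives $\CCC(\RRR)=\lim_N \CCC_N(\RRR)=0$, since the left and right tails both produce $\tfrac12\log N$ with opposite signs and the $hT$-shift in the right endpoint disappears in the limit because $hT/N\to 0$. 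Thus \eqref{Theorem_main_convergence_equation} becomes an equation purely in $\uvar$ and the parameters $h,\be$, with drift $\CCC=0$.

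Now I would evaluate the left-hand side of \eqref{Theorem_main_convergence_equation} by direct antidifferentiation, using $\int \tfrac{1-u}{u-(1-u)v}\,dv = -\log(u-(1-u)v)$ with the principal branch (which is valid because $\Im u>0$ keeps $u-(1-u)v$ off the negative real axis for $v\in\R$). The left part combined with the $\mathbf 1_{v<-\RRR}/v$ regulator yields $\tfrac12\log\tfrac{\RRR(1-u)}{u}$; the middle integral gives $\tfrac13\log\tfrac{u}{u-3h(1-u)}$; the right part with the $\mathbf 1_{v>\RRR}/v$ regulator gives $\tfrac12\log\tfrac{u-3h(1-u)}{(1-u)\RRR}-\tfrac{\i\pi}{2}$, where the $-\tfrac{\i\pi}{2}$ comes from carefully tracking $\lim_{v\to+\infty}\log\tfrac{v}{u-(1-u)v}=-\log(u-1)=-\log(1-u)-\i\pi$ in the principal branch (this is the main technical delicacy). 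Summing, the $\RRR$- and $\log(1-u)$-terms cancel, and the $\log\tfrac{u}{u-3h(1-u)}$ contributions combine with coefficient $\tfrac13-\tfrac12=-\tfrac16$. Substituting into \eqref{Theorem_main_convergence_equation} with $\CCC=0$ and simplifying, one obtains
\[
    \tfrac16\log\!\left(1 - 3h\tfrac{1-u}{u}\right) = \log u + \log(\be^{-1}-1) - \tfrac{\i\pi}{2},
\]
which is precisely \eqref{eq_non_leb_example} after exponentiating and using $e^{-\i\pi/2}=-\i$, i.e. $\i(1-\be^{-1})$. Uniqueness of $\ulim$ in the upper half plane follows from \Cref{Proposition_roots_positioning}. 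The only real subtlety in the argument is the branch tracking producing the $-\i\pi$ from the right-tail regulator; all the rest is bookkeeping.
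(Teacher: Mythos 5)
Your proof is correct and takes essentially the same approach as the paper: verify the two Assumptions, compute $\mu_{loc}$ and $\CCC(\RRR)=0$ (exploiting the symmetric cancellation of the two tails), and plug into the critical-point equation from \Cref{Theorem_main_convergence}. The only cosmetic difference is that you carry out the antidifferentiation directly in the $\uvar$-variable from \eqref{Theorem_main_convergence_equation}, tracking the $-\i\pi/2$ from the right-tail limit $\lim_{v\to+\infty}\log\tfrac{v}{\uvar-(1-\uvar)v}=-\log(1-\uvar)-\i\pi$, whereas the paper's proof computes $\Sfin_*'(\z)$ in the $\z$-variable (the $\i\pi$ arising from $\lim_M[\log(\z+M)-\log(\z-M)]=-\i\pi$ in the $\pv$ integral) and substitutes $\z=\uvar/(1-\uvar)$ only at the end; the two computations are related precisely by this Möbius change of variable and yield the same equation $\tfrac16\log(1-3h\tfrac{1-\uvar}{\uvar})=\log\uvar+\log(\be^{-1}-1)-\tfrac{\i\pi}2$.
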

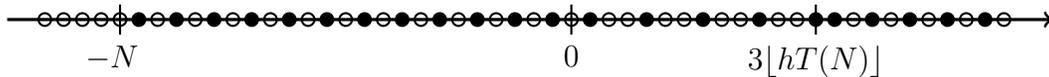
\begin{figure}[htbp]
    \begin{tikzpicture}
        [scale=1,thick]
        \draw[->, very thick] (-7.5,0)--(6.4,0);
        \draw (0,.2)--++(0,-.4) node[below] {$0$};
        \draw (-6,.2)--++(0,-.4) node[below, xshift=-3]
        {$-N$};
        \draw (3.25,.2)--++(0,-.4) node[below] {$3\lfloor h T(N) \rfloor$};
        \foreach \ppp in {-5,-3,-1,-7,-9,-11,-13,
        -15,-17,-19,-21,-23,
        1,4,7,10,13,14,16,18,20,22}
        {
            \draw[fill,thick] (\ppp/4,0) circle (2.4pt);
        }
        \foreach \ppp in {0,-2,-4,-6,-8,-10,-12,-14,-16,-18,
        -20,-22,-24,-25,-26,-27,-28,
        2,3,5,6,8,9,11,12,15,17,19,21,23}
        {
            \draw[thick] (\ppp/4,0) circle (2.4pt);
        }
    \end{tikzpicture}
    \caption{Initial condition in \Cref{prop:nonLeb_IC}. Black dots mean particles.}
    \label{fig:nonLeb_IC}
\end{figure}

The behavior of $\ulim$ given by \eqref{eq_non_leb_example} as a function of $h$ is quite interesting. When $h$ is small, $\ulim \approx \i \frac{\be}{1-\be}$, matching Theorem \ref{Theorem_main_convergence_Lebesgue}. On the other hand, as $h\to+\infty$, $\ulim$ goes to infinity in the direction $+\i\infty$. This leads to a behavior which seems new. Namely, as $h\to+\infty$, the same-time local configuration around zero is still governed by the discrete sine kernel \eqref{discrete_sine} (with $\phi=\pi/2$), while the time-dependent extension of this process is \emph{deterministic}: at each discrete time step each particle always goes to the right by $1$ and does not stay put. Heuristically, for very large $h$ the repelling force coming from the higher density region to the left of the origin is so large that this creates a deterministic flow of particles.

It is likely that with a proper time rescaling one can find a more delicate $h\to+\infty$ limit in which the paths make rare jumps, as in the classical transition from the random walk to the Poisson process. We will not pursue this direction here.

\begin{remark}
    In this observation we first took the large $N$ limit, and then a degeneration in the parameter $h$. This is the reason why the same-time distribution stays universal as $h\to+\infty$. We believe that if instead $h=h(N)$ goes to infinity in a certain way, then the limiting local configuration around zero would become \emph{completely deterministic}: particles would occupy every other site, and at each time step go to the right by $1$. This combined limit does not follow from \Cref{prop:nonLeb_IC}, and we will not consider it further.
\end{remark}


\subsection*{Notation}

Throughout the paper $C$ stands for positive constants 
whose values may change from line to line.
These constants might depend on the parameters of the model
(and our assumptions about them),
but not on variables going to zero or infinity.


\section{From lozenge tilings to noncolliding walks: proof of \Cref{thm:intro_Bernoulli}} 
\label{sec:from_lozenge_tilings_to_noncolliding_random_walks}

\subsection{Random lozenge tilings} 
\label{sub:uniformly_random_lozenge_tilings}

Consider uniformly random tilings
(by lozenges of three types) of
polygons drawn on the triangular lattice,
see \Cref{fig:tiling_paths},
left. The asymptotic behavior of such tilings in various regimes has
been studied in \cite{CohnKenyonPropp2000},
\cite{OkounkovKenyon2007Limit}, \cite{Kenyon2004Height},
\cite{Gorin2007Hexagon}, \cite{borodin-gr2009q}, \cite{Petrov2012},
\cite{Petrov2012GFF}, \cite{GorinPanova2012_full},
\cite{Novak2015Lozenge}.\footnote{Note that we are using an affine
transform of the regular triangular lattice, thus
our picture differs from some
of the references cited. This is done for a better coordinate
notation in our situation.}

We will employ a result of \cite{Petrov2012}
(see also \cite{duse2015asymptotic})
on the determinantal
structure of uniformly random lozenge tilings of polygons such as in
\Cref{fig:tiling_paths}, left. That is, consider a
trapezoid $\mathcal{T}_{N,L}$ of height $L\in\Z_{\ge1}$ with vertices
$(\frac12,0)$, $(\frac12,L)$, $(\frac12-N,L)$, and $(\frac12-N-L,0)$.
Fix $(y_1<\ldots<y_L)\in\{0,-1,\ldots,-N-L+1\}$
and put lozenges of type
\scalebox{.38}{\begin{tikzpicture}
    \draw [line width=3] (-.25,0)--++(0,-.5)
    --++(.5,.5)--++(0,.5)--cycle;
\end{tikzpicture}}
at each of the $y_i$'s cutting $L$ small triangles at the base of the trapezoid. We will consider tilings of the resulting figure by lozenges of three types. The assumption that the $y_i$'s belong to $\{0,-1,\ldots,-N-L+1\}$ (and hence are all negative) is not essential since the whole situation is translation invariant. However, this will be convenient in \Cref{sub:limit_ltoinfty_paths} when discussing the limit as $L\to+\infty$.

\begin{remark}
    Putting the lozenges
    \scalebox{.38}{\begin{tikzpicture}
        \draw [line width=3] (-.25,0)--++(0,-.5)
        --++(.5,.5)--++(0,.5)--cycle;
    \end{tikzpicture}}
    at the $y_i$'s fixes locations of some other of the lozenges of the same type (the darker ones in \Cref{fig:tiling_paths}, left). In this way the tiling we are describing can be alternatively interpreted at a lozenge tiling of a certain polygon.
\end{remark}

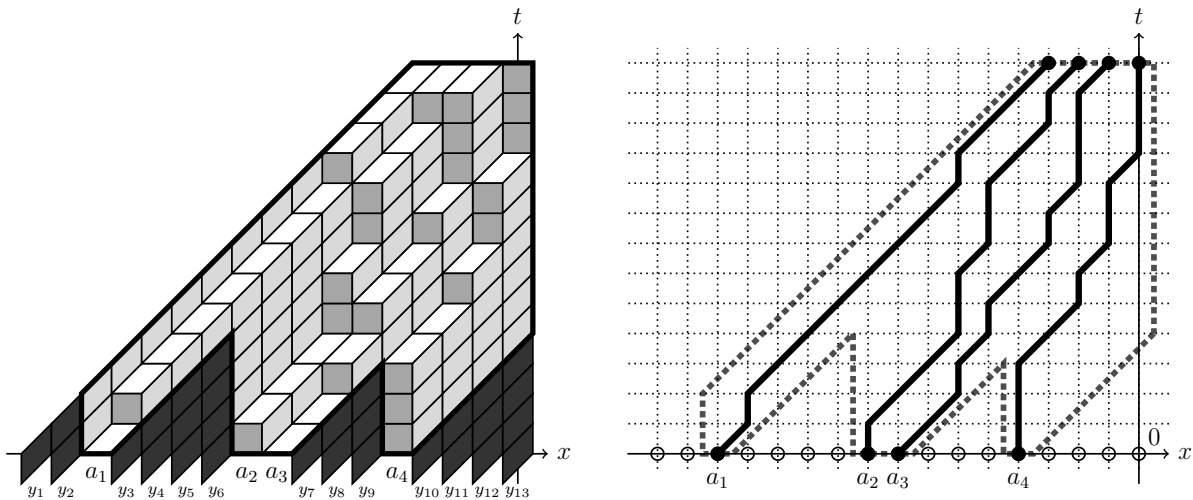
\begin{figure}[htbp]
    \scalebox{.8}{\begin{tikzpicture}[scale = 1, thick]
        \draw[->] (0,-.5) --++ (0,7.5) node[above] {$t$};
        \draw[->] (-8.5,0) --++ (9,0) node[right] {$x$};
        \node[below,yshift=-2] at (-2,0) {$a_{4}$};
        \node[below,yshift=-2] at (-4,0) {$a_{3}$};
        \node[below,yshift=-2] at (-4.5,0) {$a_{2}$};
        \node[below,yshift=-2] at (-7,0) {$a_{1}$};
        \node[below] at (-8,-.4) {$\scriptstyle y_{1}$};
        \node[below] at (-7.5,-.4) {$\scriptstyle y_{2}$};
        \node[below] at (-6.5,-.4) {$\scriptstyle y_{3}$};
        \node[below] at (-6,-.4) {$\scriptstyle y_{4}$};
        \node[below] at (-5.5,-.4) {$\scriptstyle y_{5}$};
        \node[below] at (-5,-.4) {$\scriptstyle y_{6}$};
        \node[below] at (-3.5,-.4) {$\scriptstyle y_{7}$};
        \node[below] at (-3,-.4) {$\scriptstyle y_{8}$};
        \node[below] at (-2.5,-.4) {$\scriptstyle y_{9}$};
        \node[below] at (0,-.4) {$\scriptstyle y_{13}$};
        \node[below] at (-.5,-.4) {$\scriptstyle y_{12}$};
        \node[below] at (-1,-.4) {$\scriptstyle y_{11}$};
        \node[below] at (-1.5,-.4) {$\scriptstyle y_{10}$};

        \foreach \ll in {
        (-3.5,.5),(-6.5,.5),
        (-4,1),(-3,1),
        (-3.5,1.5),(-6,1.5),
        (-5.5,2),(-3,2),(-2.5,2),(-1.5,2),
        (-5,2.5),(-1,2.5),
        (-4.5,3),(-2,3),
        (-4,3.5),(-2.5,3.5),(-1.5,3.5),(-.5,3.5),
        (-3.5,4),
        (-3,4.5),(-1,4.5),
        (-2,5),(0,5),
        (-2.5,5.5),(-1.5,5.5),
        (-2,6),
        (-1.5,6.5),(-1,6.5),(-.5,6.5)
        }
        {
        \begin{scope}[shift=\ll]
            \draw [thick,fill=white] (-.25,0)--++(.5,0)--++(-.5,-.5)
            --++(-.5,0)--cycle;
        \end{scope}}

        \foreach \sl in {
        (-2,.5),(-4.5,.5),
        (-2,1),(-6.5,1),
        (-2,1.5),(-3,1.5),
        (-2.5,2.5),(-3,2.5),
        (-1,3),(-3,3),
        (-.5,4),(-1.5,4),(-2.5,4),
        (-.5,4.5),(-2.5,4.5),
        (-1,5),(-3,5),
        (-1,5.5),(0,5.5),
        (-1,6),(0,6),(-1.5,6),(0,6.5)
        }
        {
        \begin{scope}[shift=\sl]
            \draw [thick,fill=gray!70!white] (-.25,0)--++(.5,0)
            --++(0,-.5)--++(-.5,0)--cycle;
        \end{scope}}

        \foreach \vl in {
        (-7,.5),(-4,.5),(-1.5,.5),
        (-1,1),(-1.5,1),(-1.5,1.5),(-1,1.5),(-.5,1.5),(-1,2),
        (-.5,2),(-0,2),
        (0,2.5),(-.5,2.5),(0,3),(-.5,3),(0,3.5),(0,4),(0,4.5),
        (-7,1),(-6.5,1.5),(-6,2),(-5.5,2.5),(-5,3),(-4.5,3.5),
        (-4,4),(-3.5,4.5),
        (-6,1),(-5.5,1.5),(-5,2),(-4.5,1),(-4.5,1.5),(-4.5,2),
        (-4.5,2.5),
        (-4,1.5),(-4,2),(-4,2.5),(-4,3),
        (-3.5,2),(-3.5,2.5),(-3.5,3),(-3.5,3.5),
        (-3,3.5),(-3,4),
        (-2.5,5),(-2,5.5),
        (-3.5,1),
        (-2.5,1.5),(-2.5,3),
        (-2,2),(-2,2.5),(-2,3.5),(-2,4),(-2,4.5),
        (-1.5,5),(-1.5,4.5),(-1.5,3),(-1.5,2.5),
        (-1,3.5),(-1,4),
        (-.5,5),(-.5,5.5),(-.5,6)
        }
        {
        \begin{scope}[shift=\vl]
            \draw [thick,fill=gray!30!white] (-.25,0)--++(0,-.5)
            --++(.5,.5)--++(0,.5)--cycle;
        \end{scope}}

        \foreach \vl in {
        (0,0),(-.5,0),(-1,0),(-1.5,0),
        (0,.5),(-.5,.5),(-1,.5),
        (0,1),(-.5,1),
        (0,1.5),
        (-2.5,0),(-3,0),(-3.5,0),
        (-2.5,.5),(-3,.5),(-2.5,1),
        (-5,0),(-5.5,0),(-6,0),(-6.5,0),
        (-5,.5),(-5.5,.5),(-6,.5),
        (-5,1),(-5.5,1),
        (-5,1.5),
        (-7.5,0),(-8,0),
        (-7.5,.5)
        }
        {
        \begin{scope}[shift=\vl]
            \draw [thick,fill=black!80!white] (-.25,0)--++(0,-.5)
            --++(.5,.5)--++(0,.5)--cycle;
        \end{scope}}

        \draw[line width=2.5] (.25,2)--++(-2,-2)--++(-.5,0)
        --++(0,1.5)--++(-1.5,-1.5)--++(-1,0)--++(0,2)--++(-2,-2)
        --++(-.5,0)--++(0,1)--++(5.5,5.5)--++(2,0)--++(0,-4.5)--cycle;
    \end{tikzpicture}}
    \hspace{10pt}
    \scalebox{.8}{\begin{tikzpicture}[scale = 1, thick]
        \draw[->] (-8.5,0) --++ (9,0) node[right] {$x$};
        \draw[->] (0,-.5) --++ (0,7.5) node[above] {$t$};

        \node[above right] at (0,0) {0};
        \foreach \h in {1,...,13}
        {
            \draw[dotted] (.5,\h/2)--++(-9,0);
        }
        \node[below] at (0,-.4) {\phantom{$\scriptstyle y_{13}$}};
        \foreach \y in {-16,...,0}
        {
            \draw (\y/2,0) circle(3pt);
            \draw[dotted] (\y/2,-.25)--++(0,7);
        }
        \draw[fill] (-2,0) circle(3pt) node[below,yshift=-5]
        {$a_{4}$};
        \draw[fill] (-4,0) circle(3pt) node[below,yshift=-5]
        {$a_{3}$};
        \draw[fill] (-4.5,0) circle(3pt) node[below,yshift=-5]
        {$a_{2}$};
        \draw[fill] (-7,0) circle(3pt) node[below,yshift=-5]
        {$a_{1}$};
        \draw[fill] (0,13/2) circle(3pt);
        \draw[fill] (-.5,13/2) circle(3pt);
        \draw[fill] (-1,13/2) circle(3pt);
        \draw[fill] (-1.5,13/2) circle(3pt);
        \draw[line width=3] (-2,0)--++(0,.5)--++(0,.5)--++(0,.5)
        --++(.5,.5)--++(.5,.5)--++(0,.5)--++(.5,.5)
        --++(0,.5)--++(0,.5)--++(.5,.5)--++(0,.5)--++(0,.5)
        --++(0,.5);
        \draw[line width=3] (-4,0)--++(.5,.5)--++(.5,.5)--++(0,.5)
        --++(.5,.5)--++(0,.5)--++(.5,.5)--++(.5,.5)
        --++(0,.5)--++(.5,.5)--++(0,.5)--++(0,.5)--++(0,.5)
        --++(.5,.5);
        \draw[line width=3] (-4.5,0)--++(0,.5)--++(0.5,.5)--++(0.5,.5)
        --++(.5,.5)--++(0,.5)--++(0,.5)--++(.5,.5)
        --++(0,.5)--++(0,.5)--++(.5,.5)--++(.5,.5)--++(0,.5)
        --++(.5,.5);
        \draw[line width=3] (-7,0)--++(.5,.5)--++(0,.5)--++(.5,.5)
        --++(.5,.5)--++(.5,.5)--++(.5,.5)--++(.5,.5)
        --++(.5,.5)--++(.5,.5)--++(0,.5)--++(.5,.5)--++(.5,.5)
        --++(.5,.5);

        \draw[densely dashed, line width = 2.7, opacity=.7] (.25,2)
        --++(-2,-2)
        --++(-.5,0)--++(0,1.5)--++(-1.5,-1.5)--++(-1,0)--++(0,2)
        --++(-2,-2)
        --++(-.5,0)--++(0,1)--++(5.5,5.5)
        --++(2,0)--++(0,-4.5)--cycle;
    \end{tikzpicture}}
    \caption{
    A lozenge tiling of (left) and its bijective encoding as
    a collection of noncolliding paths (right).
    The trapezoid $\mathcal{T}_{4,13}$
    has height
    $L=13$, and the
    number of paths ($N=4$) is
    the same as the length of the top side of $\mathcal{T}_{4,13}$.}
    \label{fig:tiling_paths}
\end{figure}

The total number of such tilings is equal to
(e.g., see Section 2 in \cite{BorodinPetrov2013Lect})
\begin{equation}\label{ZNL_partition_function}
    Z^{N,L}_{\{y_1,\ldots,y_L\}}
    =\prod_{1\le i<j\le L}\frac{y_j-y_i}{j-i}.
\end{equation}
We interpret the uniformly random tiling as a random
particle configuration by looking at centers of the lozenges
of type
\scalebox{.38}{\begin{tikzpicture}
    \draw [line width=3] (-.25,0)--++(0,-.5)
    --++(.5,.5)--++(0,.5)--cycle;
\end{tikzpicture}}
(there are $L(L+1)/2$ lozenges in total).
The centers have integer coordinates
$(x,t)\in\Z_{\le 0}\times\Z_{\ge0}$.

\begin{theorem}\label{thm:tilings_kernel}
    The uniformly random tiling described
    above gives rise to a determinantal point process, that is,
    \begin{multline*}
        \PP\left(\textnormal{there are lozenges
        \scalebox{.38}{\begin{tikzpicture}
        \draw [line width=3] (-.25,0)--++(0,-.5)
        --++(.5,.5)--++(0,.5)--cycle;
        \end{tikzpicture}}
        at locations $(x_i,t_i)$, $i=1,\ldots,k$}\right)
        \\=\det\big[K^{\textnormal{tilings}}_{N;\{y_1,\ldots,y_L\}}
        (t_\aind,x_\aind;t_\bind,x_\bind)\big]_{\aind,\bind=1}^{k},
    \end{multline*}
    with the correlation kernel
    \begin{multline}\label{tilings_kernel}
        K^{\textnormal{tilings}}_{N;\{y_1,\ldots,y_L\}}
        (t_1,x_1;t_2,x_2)=
        -\mathbf{1}_{t_1<t_2}\mathbf{1}_{x_2\le x_1}
        \frac{(x_1-x_2+1)_{t_2-t_1-1}}{(t_2-t_1-1)!}
        \\+\frac{t_1!}{(t_2-1)!}
        \frac1{(2\pi\i)^{2}}
        \oint\limits_{c(x_2-t_2+1)}dz\oint\limits_{C(\infty)}dw
        \frac{(z-x_2+1)_{t_2-1}}{(w-x_1)_{t_1+1}}
        \frac{1}{w-z}
        \prod_{r=1}^{L}\frac{w-y_r}{z-y_r}.
    \end{multline}
    Here $x_1,x_2\in\Z$, $0\le t_1\le L-1$, $1\le t_2\le L-1$,
    and the integration contours look as
    in \Cref{fig:cont_one}:
    \begin{enumerate}[$\bullet$]
        \item The $z$ contour $c(x_2-t_2+1)$ is positively
        oriented and encircles the points
        $x_2-t_2+1,x_2-t_2+2,\ldots,y_L$,
        and does not encircle $x_2-t_2,x_2-t_2-1,\ldots$;
        \item The $w$ contour $C(\infty)$
        is positively oriented and
        encircles $c(x_2-t_2+1)$ and is
        sufficiently large so that to include all $w$ poles
        $x_1,x_1-1,\ldots,x_1-t_1$.
    \end{enumerate}
\end{theorem}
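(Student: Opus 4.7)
The plan is to exploit the classical bijection between lozenge tilings of $\mathcal{T}_{N,L}$ (with the $L$ fixed bottom positions) and interlacing integer arrays (Gelfand--Tsetlin--type patterns), and then invoke the Eynard--Mehta / Lindström--Gessel--Viennot machinery to extract the determinantal correlation kernel in double contour integral form. First I would encode each tiling by the $x$--coordinates of the centers of its vertical lozenges at each horizontal slice $t\in\{0,1,\ldots,L-1\}$. This gives an interlacing chain $X^{(0)}\prec X^{(1)}\prec\cdots\prec X^{(L-1)}$ with $X^{(0)}=\{y_1,\ldots,y_L\}$ fixed and the top configuration restricted to the $N$ lattice slots adjacent to the top edge of the trapezoid. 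Under this encoding the uniform measure on tilings factorizes as a product over consecutive levels of elementary interlacing weights, with partition function given by \eqref{ZNL_partition_function}.

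Next I would recognize this measure as a fixed--time marginal of a Markov chain on interlacing particle systems and apply the Eynard--Mehta theorem in its particle--system formulation. This gives a determinantal kernel of the shape
\begin{equation*}
    K(t_1,x_1;t_2,x_2) = -\mathbf{1}_{t_1<t_2}\phi_{t_1,t_2}(x_1,x_2) + \sum_{i} \Psi^{t_1}_i(x_1)\,\Phi^{t_2}_i(x_2),
\end{equation*}
where $\phi_{s,t}(x,y)=(x-y+1)_{t-s-1}/(t-s-1)!$ is the single--step convolution kernel reflecting interlacing, and $\{\Psi^t_i,\Phi^t_i\}$ is a biorthogonal pair determined by the boundary data $\{y_r\}$. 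The key step is to construct this biorthogonal pair explicitly using Cauchy--type integrals around the $y_r$'s dual to Pochhammer polynomials; combining them recasts the sum above as a double contour integral with integrand $\prod_{r=1}^{L}(w-y_r)/(z-y_r)$ multiplied by the ratio of Pochhammer symbols $(z-x_2+1)_{t_2-1}/(w-x_1)_{t_1+1}$.

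The main technical obstacle is the bookkeeping of contours: one must verify that choosing the $z$--contour $c(x_2-t_2+1)$ to enclose exactly the poles $\{y_r\}$ on the positive side, and the $w$--contour $C(\infty)$ to surround $c(\cdot)$ together with the additional Pochhammer poles $x_1,x_1-1,\ldots,x_1-t_1$, reproduces the Eynard--Mehta kernel. The $-\mathbf{1}_{t_1<t_2}\mathbf{1}_{x_2\le x_1}\phi_{t_1,t_2}(x_1,x_2)$ term must emerge from the residue at $w=z$ when the two contours are deformed into one another, and no spurious contributions can appear from the Pochhammer poles in the denominator. A cleaner route, which the paper's outline in \Cref{sec:introduction} suggests, is to obtain \eqref{tilings_kernel} directly from the contour integral formula of \cite{Petrov2012} for uniformly random Gelfand--Tsetlin schemes via a specialization corresponding to the trapezoidal shape, thereby recycling the contour bookkeeping already carried out there rather than redoing it from scratch.
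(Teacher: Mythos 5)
Your proposal is correct, and the closing paragraph identifies the route the paper actually takes: the paper does not derive \eqref{tilings_kernel} from scratch but simply observes that it coincides with \cite[Thm.~5.1]{Petrov2012} after the change of coordinates $t_{1,2}=L-n_{1,2}$. The Eynard--Mehta outline in the body of your proposal is the correct framework underlying Petrov's result, but since the biorthogonalization and contour bookkeeping you flag as the main obstacle were already carried out there, the citation is the whole proof.
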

This is the same kernel as in \cite[Thm. 5.1]{Petrov2012},
up to the change of coordinates $t_{1,2}=L-n_{1,2}$.

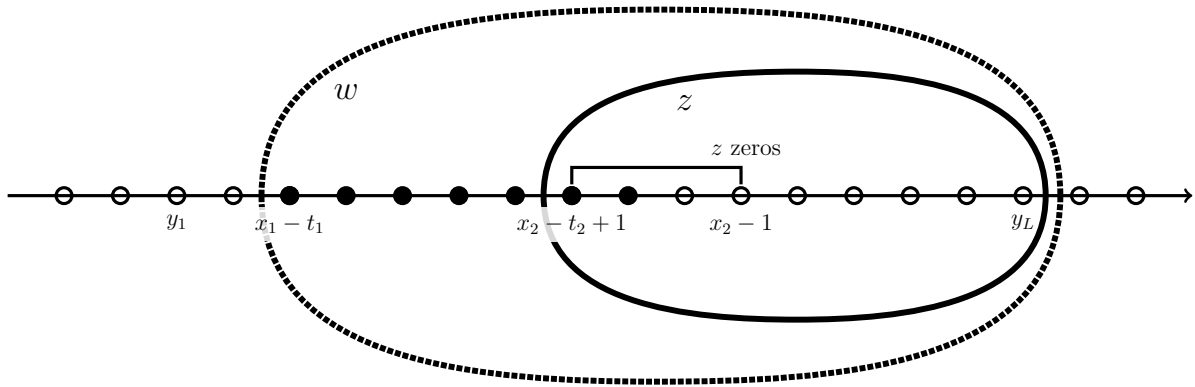
\begin{figure}[htbp]
    \scalebox{.75}{\begin{tikzpicture}[scale=1,ultra thick]
        \draw[->] (-10,0)--(11,0);
        \draw (3,0) circle (4pt) node[below,yshift=-5] {$x_2-1$};
        \draw (8,0) circle (4pt) node[below,yshift=-5] {$y_L$};
        \draw (-7,0) circle (4pt) node[below,yshift=-5] {$y_1$};
        \foreach \ii in {-9,-8,-6,2,4,5,6,7,9,10}
        {
            \draw (\ii,0) circle (4pt);
        }
        \foreach \ii in {-4,-3,-2,-1,1}
        {
            \draw[fill] (\ii,0) circle (4pt);
        }
        \draw[line width=3] (-.5,0) to [out=90,in=180]
        (4,2.2) to [out=0,in=90] (8.4,0)
        to [out=-90,in=0] (4,-2.2) to [out=180,in=-90] (-.5,0);
        \draw (0,0) circle (4pt)
        node[below,yshift=-5,fill=white,opacity=.85]
        {\phantom{$x_2-t_2+1$}};
        \draw[fill] (0,0) circle (4pt)
        node[below,yshift=-5] {$x_2-t_2+1$};
        \node at (2,1.65) {\Large $z$};
        \draw (0,0.2) --++(0,.3)--++(3,0)--++(0,-.3)
        node [above right, yshift=10,xshift=-20] {$z$ zeros};
        \draw[densely dotted, line width=3] (-5.5,0)
        to [out=90,in=180] (1.5,3.3) to [out=0,in=90] (8.66,0)
        to [out=-90,in=0] (1.5,-3.3) to [out=180,in=-90] (-5.5,0);
        \node at (-4,1.85) {\Large $w$};
        \draw (-5,0) circle (4pt)
        node[below,yshift=-5,fill=white,opacity=.85]
        {\phantom{$x_1-t_1$}};
        \draw[fill] (-5,0)
        circle (4pt) node[below,yshift=-5] {$x_1-t_1$};
    \end{tikzpicture}}
    \caption{Integration contours for the kernel
    $K^{\textnormal{tilings}}_{N;\{y_1,\ldots,y_L\}}$
    \eqref{tilings_kernel}.
    The poles at $w=x_1-t_1,x_1-t_1+1,\ldots,x_1-1,x_1$
    are highlighted in black.}
    \label{fig:cont_one}
\end{figure}


\subsection{Noncolliding paths of length $L$} 
\label{sub:noncolliding_paths}

Let us keep $L$ and $y_1,\ldots,y_L$ fixed and consider another
interpretation of a lozenge tiling in terms of noncolliding paths as
in \Cref{fig:tiling_paths}, right. There are $N$ such paths; they
trace lozenges of two types other than
\scalebox{.38}{\begin{tikzpicture}
    \draw [line width=3] (-.25,0)--++(0,-.5)
    --++(.5,.5)--++(0,.5)--cycle;
\end{tikzpicture}},
start at points
\begin{equation}\label{a_through_y_configurations}
    \vec a=(a_1<\ldots<a_N)=\{0,-1,\ldots,-N-L+1\}
    \setminus\{y_1,\ldots,y_L\}
\end{equation}
on the $t=0$ horizontal, and end at points $-N+1,\ldots,-1,0$,
respectively, on the $t=L$ horizontal. Denote the path configuration
at height $t$ by $X^{(L)}_{1}(t)<\ldots<X^{(L)}_{N}(t)$.
Thus, we obtain a random path ensemble
$\{\vec X^{(L)}(t)\}_{t=0}^{L}$ corresponding to our random tiling.

The correlation kernel for the paths $\vec X^{(L)}$
(whose configuration is complementary
to the configuration of the \scalebox{.38}{\begin{tikzpicture}
    \draw [line width=3] (-.25,0)--++(0,-.5)
    --++(.5,.5)--++(0,.5)--cycle;
\end{tikzpicture}} lozenges) can be obtained
from the one for the tilings \eqref{tilings_kernel} by a particle-hole involution (e.g., see \cite[Appendix A.3]{Borodin2000b}):
\begin{equation*}
    \PP\big(\textnormal{paths $\vec X^{(L)}$
    pass through
    points $(x_i,t_i)$, $i=1,\ldots,k$}\big)
    =\det\big[K^{\textnormal{paths}}_{L;\vec a}
    (t_\aind,x_\aind;t_\bind,x_\bind)\big]_{\aind,\bind=1}^{k}
\end{equation*}
with
\begin{equation}\label{paths_kernel}
    K^{\textnormal{paths}}_{L;\vec a}(t_1,x_1;t_2,x_2)=
    \mathbf{1}_{x_1=x_2}\mathbf{1}_{t_1=t_2}
    -K^{\textnormal{tilings}}_{N;\{y_1,\ldots,y_L\}}(t_1,x_1;t_2,x_2),
\end{equation}
as long as the pairwise distinct points $(x_i,t_i)$, $i=1,\ldots,k$,
are inside the trapezoid $\mathcal{T}_{N,L}$ defined in
\Cref{sub:uniformly_random_lozenge_tilings} above.
\begin{remark}\label{rmk:parasite_paths}
    The reason for the last restriction (that the observation points $(x_i,t_i)$ belong to $\mathcal{T}_{N,L}$) is because the particle-hole involution of the configuration of lozenges gives rise not only to the $N$ noncolliding paths $\vec X^{(L)}$ but also to infinitely many trivial paths connecting $(j,0)$ to $(j,L)$, $j\ge1$, and $(j,0)$ to $(j+L,L)$, $j\le -N-L$. These paths correspond to a unique way of extending the lozenge tiling of our polygon to the infinite horizontal strip of height $L$ with $L$ small triangles added at the bottom. Therefore, to capture the correlation structure of the nontrivial paths $\vec X^{(L)}$, the points $(x_i,t_i)$ should be inside $\mathcal{T}_{N,L}$.
\end{remark}
\begin{remark}\label{rmk:extended_Hahn}
    When the starting configuration $\vec a$ is the densely packed one $(0,1,\ldots,N-1)$, the polygon which is tiled reduces to the hexagon. In this case the distribution of the Markov process $\{\vec X^{(L)}(t)\}_{t=0}^{L}$ is described by the extended Hahn kernel expressed through the Hahn orthogonal polynomials, see \cite{johansson2005non}. Asymptotic analysis of the noncolliding paths in the hexagon utilizing this representation of the kernel was performed in \cite{BKMM2003}, \cite{Gorin2007Hexagon}.
\end{remark}


\subsection{Limit $L\to\infty$ of noncolliding paths} 
\label{sub:limit_ltoinfty_paths}

We will now perform a limit transition of our uniformly random tilings
to the noncolliding Bernoulli random walks. Fix $\be\in(0,1)$,
$N\in\Z_{\ge1}$, and $\vec a=(a_1<\ldots<a_N)\in\W N$.
Start the path ensemble
$\{\vec X^{(L)}(t)\}_{t=0}^{L}$ from the points
\begin{equation}\label{y_tilings_scaled}
    \vec a^{(L)}=(a_1-\fl{\be L},\ldots,a_N-\fl{\be L}).
\end{equation}
Clearly, for $L$ large enough we have
\begin{equation*}
    -N-L+1< a_i-\fl{\be L}<0
\end{equation*}
for all $i$, cf. \eqref{a_through_y_configurations}.
Thus, for any
fixed $\vec a\in\W N$ the uniformly random tiling
and the path ensemble $\{\vec X^{(L)}(t)\}_{t=0}^{L}$
are well-defined for large $L$.

The above shifting of the initial configuration of $\vec X^{(L)}$ forces the $N$ noncolliding paths to have asymptotic speed $\be$. This leads to the noncolliding Bernoulli random walk with parameter $\be$:
\begin{proposition}\label{prop:limit_to_Bernoulli}
    As $L\to\infty$, all finite-dimensional
    distributions of
    the path ensemble
    \begin{equation*}
        \{\vec X^{(L)}(t)+\fl{\be L}\}_{t=0}^{L}
    \end{equation*}
    converge
    to those of the noncolliding Bernoulli random walk
    $\vec X(t)$ (defined in
    the Introduction)
    started from the configuration $\vec a$.
\end{proposition}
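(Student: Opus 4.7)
My plan is to prove the Proposition by verifying pointwise convergence of one-step transition kernels, since both $\tilde{\vec X}^{(L)}:=\vec X^{(L)}+\fl{\beta L}\vec 1$ and the noncolliding Bernoulli random walk $\vec X$ are time-homogeneous Markov chains on $\W N$ sharing the initial distribution $\delta_{\vec a}$ by construction.

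The first step is an exact identification of laws. By Lindström--Gessel--Viennot, the uniform tiling measure makes $\vec X^{(L)}$ the joint law of $N$ independent symmetric Bernoulli walks (each step $\{0,1\}$ with probability $\tfrac12$) started from $\vec a^{(L)}$, conditioned on (i) non-collision throughout $[0,L]$ and (ii) reaching the configuration $(-N+1,\dots,0)$ at time $L$. The Radon--Nikodym derivative of a single symmetric path relative to a single Bernoulli$(\beta)$ path equals $2^{-L}\beta^{-k}(1-\beta)^{-(L-k)}$ on any path with $k$ upward steps, which depends only on $k$; since condition (ii) fixes $k$ for each particle, the conditional law is unchanged if we replace symmetric walks by Bernoulli$(\beta)$ walks. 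Consequently, after shifting by $\fl{\beta L}\vec 1$, $\tilde{\vec X}^{(L)}$ is exactly the noncolliding Bernoulli$(\beta)$ walk $\vec X$ started from $\vec a$ and further conditioned on the far-future event $\{\vec X(L)=\vec y^{(L)}\}$, where $\vec y^{(L)}:=(\fl{\beta L}-N+1,\ldots,\fl{\beta L})$. Doob's $h$-transform then gives
\[
\PP\bigl(\tilde{\vec X}^{(L)}(t+1)=\vec x'\mid\tilde{\vec X}^{(L)}(t)=\vec x\bigr) = \PP\bigl(\vec X(t+1)=\vec x'\mid\vec X(t)=\vec x\bigr)\cdot\frac{\pi^\beta_{L-t-1}(\vec x';\vec y^{(L)})}{\pi^\beta_{L-t}(\vec x;\vec y^{(L)})},
\]
where $\pi_n^\beta(\vec z;\vec w):=\PP(\vec X(n)=\vec w\mid\vec X(0)=\vec z)$. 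The Proposition reduces to showing the displayed Doob factor tends to $1$ for each one-step-admissible pair $(\vec x,\vec x')$.

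This is a local central limit theorem for the noncolliding Bernoulli walk. By Karlin--McGregor, $\pi_n^\beta(\vec z;\vec w)=(\V(\vec w)/\V(\vec z))\det[p_n^\beta(z_i,w_j)]_{i,j}$ with $p_n^\beta(z,w)=\binom{n}{w-z}\beta^{w-z}(1-\beta)^{n-w+z}$. Since $y_j^{(L)}-z_i-\beta n=O(1)$ with $\vec z$ fixed and $n\to\infty$, Stirling puts each entry at its Gaussian peak: $p_n^\beta(z_i,y_j^{(L)}) = G_n\exp(-(y_j^{(L)}-z_i-\beta n)^2/(2\beta(1-\beta)n))(1+O(n^{-1/2}))$, with $G_n=(2\pi\beta(1-\beta)n)^{-1/2}$. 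Writing $(y_j^{(L)}-z_i-\beta n)^2=\alpha_j^2-2\alpha_j z_i+z_i^2$ with $\alpha_j:=y_j^{(L)}-\beta n$ splits the exponential into row/column pieces times the cross factor $\exp(\alpha_j z_i/(\beta(1-\beta)n))$. Taylor expansion of the cross factor together with the Cauchy--Binet identity yields
\[
\det[\exp(\alpha_j z_i/(\beta(1-\beta)n))]_{i,j} \sim \frac{\V(\vec\alpha)\V(\vec z)}{(\beta(1-\beta)n)^{\binom{N}{2}}\prod_{k=0}^{N-1}k!}.
\]
Combined with the Karlin--McGregor prefactor $\V(\vec y^{(L)})/\V(\vec z)$, the $\V(\vec z)$ dependence \emph{cancels}, so $\pi_n^\beta(\vec z;\vec y^{(L)})$ is asymptotically independent of $\vec z$ and the Doob factor is $1+O(L^{-1/2})$.

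The main obstacle is the Cauchy--Binet identification of the Vandermonde at subleading order: the leading contribution from $\exp(0)=1$ gives a rank-one determinant that vanishes, so the relevant Gaussian contribution enters only at order $(\beta(1-\beta)L)^{-\binom{N}{2}}$, and one must control the expansion to this precision --- including higher-order Stirling corrections and the $O(1)$ deviations of $\vec\alpha$ from its limit. Once the Doob factor is shown to converge to $1$, the Markov-chain convergence principle together with matching initial conditions yields convergence of all finite-dimensional distributions, proving the Proposition.
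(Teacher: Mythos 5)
Your approach is genuinely different from the paper's. The paper computes the one-step transition probabilities of $\vec X^{(L)}$ directly as ratios $Z^{N,L-t-1}_{\{m'\}}/Z^{N,L-t}_{\{m\}}$ of the exact product formula \eqref{ZNL_partition_function} for the tiling partition function, and then takes a termwise $L\to\infty$ limit via Gamma-function asymptotics. You instead reinterpret the uniform tiling measure probabilistically: via the non-intersecting-path picture it is the law of $N$ independent symmetric Bernoulli walks conditioned on non-collision and on the endpoint, and your Radon--Nikodym observation --- that conditioning on the endpoint erases the distinction between symmetric and Bernoulli$(\beta)$ increments --- is clean, correct, and attractive. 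It identifies $\tilde{\vec X}^{(L)}$ as the noncolliding Bernoulli$(\beta)$ walk Doob-conditioned on the far-future event $\{\vec X(L)=\vec y^{(L)}\}$, reducing the Proposition to convergence of the Doob factor to $1$, i.e.\ to a local CLT for the conditioned chain.

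The analytic core --- showing $\pi^\beta_{L-t-1}(\vec x';\vec y^{(L)})/\pi^\beta_{L-t}(\vec x;\vec y^{(L)})\to 1$ --- is not actually established, and this is a genuine gap rather than a routine detail. As you yourself note, after peeling off the row/column Gaussian factors the non-vanishing contribution to the Karlin--McGregor determinant enters only at order $(\beta(1-\beta)n)^{-\binom N2}$, which for $N\ge 3$ is far smaller than a naive $O(n^{-1})$ per-entry error from the local CLT; an unstructured entrywise error of that size would swamp the signal. Making this rigorous requires a structured Edgeworth-type expansion of $\binom{n}{y_j-z_i}\beta^{y_j-z_i}(1-\beta)^{n-y_j+z_i}$ so that the subleading corrections organize into row-times-column terms that can again be fed into Cauchy--Binet, plus control over the non-converging $O(1)$ oscillation of $\fl{\beta L}-\beta(L-t)$; you flag both issues but do not resolve them. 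It is worth noting how the paper sidesteps all of this: because the endpoint $\vec y^{(L)}$ is the densely packed configuration, the bridge probability $\pi^\beta_n(\vec z;\vec y^{(L)})$ has an exact product formula (this is precisely what $Z^{N,\cdot}$ encodes), so the Doob ratio telescopes into a finite product of elementary Gamma-ratios whose limit is read off directly. If you want to complete your route, either carrying out the Edgeworth/Cauchy--Binet bookkeeping carefully or, more in the spirit of the paper, exploiting the densely-packed endpoint to replace the determinant estimate by an exact evaluation would close the gap.
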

\begin{proof}
    Because the random lozenge tiling used to construct the path ensemble $\vec X^{(L)}$ is picked uniformly, $\{\vec X^{(L)}(t)\}_{t=0}^{L}$ can be viewed as a Markov process (with time bounded by~$L$). Indeed, the uniformity ensures that the past and the future are conditionally independent given the present configuration \cite[Ch. I.12]{ShiryaevProb}.

    Observe that each conditional probability
    \begin{equation}\label{X_L_conditional_probabilities}
        \PP\big(\vec X^{(L)}(t+1)=\vec b'\mid
        \vec X^{(L)}(t)=\vec b\big),\qquad
        \vec b,\vec b'\in\W N
    \end{equation}
    is simply the ratio of the number of lozenge tilings of a polygon of height $L-(t+1)$ with the bottom boundary determined by $\vec b'$ similarly to \eqref{a_through_y_configurations}, and the number of tilings of a polygon of height $L-t$ with the bottom boundary corresponding to $\vec b$. It suffices to show that the transition probabilities \eqref{X_L_conditional_probabilities} converge to the transition probabilities \eqref{noncolliding_Bernoulli_walks_transitions} of the noncolliding Bernoulli random walk.

    Let us fix $\vec b,\vec b'\in \W N$ such that $b_i'-b_i\in\{0,1\}$
    for all $i$. (Clearly, if these conditions do not hold, then the
    transition probabilities from $\vec b$ to $\vec b'$ in both $\vec
    X^{(L)}(t)+\fl{\be L}$ and $\vec X(t)$ vanish.) We have
    \begin{multline*}
        \PP\big(\vec X^{(L)}(t+1)=\vec b'-\fl{\be L}\mid
        \vec X^{(L)}(t)=\vec b-\fl{\be L}\big)=
        \frac{Z^{N,L-t-1}_{\{m_1',\ldots,m_{L-t-1}'\}}}
        {Z^{N,L-t}_{\{m_1,\ldots,m_{L-t}\}}}\\=
        \prod_{1\le i<j\le L-t-1}\frac{m_j'-m_i'}{j-i}
        \prod_{1\le i<j\le L-t}\frac{j-i}{m_j-m_i}=
        (L-t-1)!\,\frac{\prod\limits_{1\le i<j\le L-t-1}(m_j'-m_i')}{
        \prod\limits_{1\le i<j\le L-t}(m_j-m_i)
        },
    \end{multline*}
    where we used \eqref{ZNL_partition_function}, and
    \begin{equation*}
        \begin{split}
            \{m_1,\ldots,m_{L-t}\}&=\{0,-1,\ldots,-N-L+t+1\}
            \setminus\{b_1-\fl{\be L},\ldots,b_N-\fl{\be L}\},\\
            \{m_1',\ldots,m_{L-t-1}'\}&=\{0,-1,\ldots,-N-L+t+2\}
            \setminus\{b_1'-\fl{\be L},\ldots,b_N'-\fl{\be L}\}.
        \end{split}
    \end{equation*}
    Above we have assumed that $L$ is large enough so that all
    polygons are well-defined.
    We can write (using the notation \eqref{Vandermonde})
    \begin{align*}
				&
        \frac{\prod\limits_{1\le i<j\le L-t-1}(m_j'-m_i')}{
        \prod\limits_{1\le i<j\le L-t}(m_j-m_i)}=
				\frac{\V(-N-L+t+2,\ldots,-1,0)}
				{\V(-N-L+t+1,-N-L+t+2,\ldots,-1,0)}
				\frac{\V(\vec b)}{\V(\vec b')}
        \\&\hspace{85pt}\times
				\prod_{i=1}^{N}
				\Biggl(
					\prod_{\substack{-N-L+t+2\le j\le 0\\j\notin \vec b'-\lfloor \beta L \rfloor }}
					|b_i'-\fl{\be L}-j|^{-1}
					\prod_{\substack{-N-L+t+1\le j\le 0\\j\notin \vec b-\lfloor \beta L \rfloor }}
					|b_i-\fl{\be L}-j|
				\Biggr).
    \end{align*}
		We have 
		$
				\frac{\V(-N-L+t+2,\ldots,-1,0)}
				{\V(-N-L+t+1,-N-L+t+2,\ldots,-1,0)}
				=\frac{1}{(N+L-t-1)!}.
		$
		Next, let us insert the 
		products over $i\ne j$ of $|b_i-b_j|/|b_i'-b_j'|$ into the 
		big product in the previous formula. We obtain
		\begin{multline*}
				\frac{\V(\vec b)}{\V(\vec b')}
				\prod_{i=1}^{N}
				\Biggl(
					\prod_{\substack{-N-L+t+2\le j\le 0\\j\notin \vec b'-\lfloor \beta L \rfloor }}
					|b_i'-\fl{\be L}-j|^{-1}
					\prod_{\substack{-N-L+t+1\le j\le 0\\j\notin \vec b-\lfloor \beta L \rfloor }}
					|b_i-\fl{\be L}-j|
				\Biggr)
				\\=
				\frac{\V(\vec b')}{\V(\vec b)}
				\prod_{i=1}^{N}
				\prod_{j=-N-L+t+2}^{0}
				\frac
				{|b_i-\fl{\be L}-j|^{\mathbf{1}_{b_i-\lfloor \beta L \rfloor \ne j}}}
				{|b_i'-\fl{\be L}-j|^{\mathbf{1}_{b_i'-\lfloor \beta L \rfloor \ne j}}}
				\prod_{i=1}^{N}
				|b_i-\fl{\be L}+N+L-t-1|.
		\end{multline*}
    Using the well-known asymptotics
    for the Gamma function \cite[1.18.(5)]{Erdelyi1953}
    \begin{equation}\label{gamma_asymptotics}
        \frac{\G(L+\alpha)}{\G(L)}\sim L^{\alpha},\qquad L\to+\infty
    \end{equation}
    (where $\alpha$ is fixed), let us collect the last product and the factors involving 
		factorials and write
		\begin{equation*}
			\lim_{L\to+\infty}
			\frac{(L-t-1)!}{(N+L-t-1)!}
			\prod_{i=1}^{N}
			|b_i-\fl{\be L}+N+L-t-1|=(1-\beta)^{N}.
		\end{equation*}
		
		Let us turn to the remaining factors.
		We have to following equivalence as $L\to+\infty$:
		\begin{equation*}
				\prod_{i=1}^{N}
				\prod_{j=-N-L+t+2}^{0}
				\frac
				{|b_i-\fl{\be L}-j|^{\mathbf{1}_{b_i-\lfloor \beta L \rfloor \ne j}}}
				{|b_i'-\fl{\be L}-j|^{\mathbf{1}_{b_i'-\lfloor \beta L \rfloor \ne j}}}
				\sim
				\prod_{i=1}^{N}
				\frac
				{\Gamma(\beta L-b_i)\Gamma(b_i+(1-\beta)L)}
				{\Gamma(\beta L-b_i')\Gamma(b_i'+(1-\beta)L)}.
		\end{equation*}
		If $b_i'=b_i$, the corresponding term is simply $1$, 
		and otherwise it converges to $\beta/(1-\beta)$
		due to \eqref{gamma_asymptotics}.

		Collecting all the terms we see that
		the transition probabilities of $\vec X^{(L)}$
		converge to those of the noncolliding random walk.
		This completes the proof.
\end{proof}


\subsection{Limit $L\to\infty$ in the kernel} 
\label{sub:limit_ltoinfty_kernel}

Let us now take the $L\to\infty$ limit in the kernel
for the process $\vec X^{(L)}(t)$
coming from the uniformly random tilings. The latter kernel is given
by \eqref{tilings_kernel} and \eqref{paths_kernel}.

\begin{proposition}\label{prop:proof_of_intro_Bernoulli}
    Fix $\be\in(0,1)$
    and $\vec a\in\W N$,
    and consider the correlation kernel
    of the
    process $\vec X^{(L)}(t)$
    started from the shifted
    initial configuration
    $\vec a^{(L)}$ as in \eqref{y_tilings_scaled}.
    Then for any fixed\footnote{Clearly, under
    our scaling the restrictions on
    the variables in the kernel
    $K^{\textnormal{paths}}_{L;\vec a^{(L)}}$ imposed in
    \Cref{sub:noncolliding_paths}
    (cf.~\Cref{rmk:parasite_paths}) will eventually
    disappear.}
    $t_{1}\in\Z_{\ge0}$, $t_2\in\Z_{\ge1}$,
    and
    $x_{1,2}\in\Z$
    we have the convergence
    \begin{equation*}
        \lim_{L\to+\infty}
        K^{\textnormal{paths}}_{L;\vec a^{(L)}}
        (t_1,x_1-\fl{\be L};t_2,x_2-\fl{\be L})
        =
        K^{\textnormal{Bernoulli}}_{\vec a; \be}(t_1,x_1;t_2,x_2),
    \end{equation*}
    where
    $K^{\textnormal{Bernoulli}}_{\vec a; \be}$ is
    given by \eqref{K_Bernoulli}.
\end{proposition}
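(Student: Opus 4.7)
The plan is to start from the explicit formulas \eqref{paths_kernel} and \eqref{tilings_kernel} and take the $L\to\infty$ limit pointwise in the integrand, after a reparametrization that trades the product over the $y_r$'s for one over the $a_r$'s. Writing $M=\lfloor\beta L\rfloor$ and using $\{y_1,\ldots,y_L\}=\{0,-1,\ldots,-N-L+1\}\setminus\{a_r^{(L)}\}$, the key algebraic identity is
\begin{equation*}
\prod_{r=1}^{L}\frac{w-y_r}{z-y_r}=\frac{(w)_{N+L}}{(z)_{N+L}}\prod_{r=1}^{N}\frac{z-a_r^{(L)}}{w-a_r^{(L)}}.
\end{equation*}
After the change of variables $w\mapsto w-M$, $z\mapsto z-M$ (which shifts the contours accordingly and turns $a_r^{(L)}$ into $a_r$ and the observation points $x_{1,2}-\lfloor\beta L\rfloor$ into $x_{1,2}$), all of the $L$-dependence is concentrated in the single Pochhammer ratio $(w-M)_{N+L}/(z-M)_{N+L}$.

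The analytic core is the asymptotics of this ratio. Writing $(w-M)_{N+L}=\Gamma(w-M+N+L)/\Gamma(w-M)$ and applying the reflection formula $\Gamma(w-M)=\pi(-1)^M/(\sin(\pi w)\Gamma(M+1-w))$ to replace the negative-large-argument factor by a positive one, the ratio becomes a ratio of sines times a ratio of products of Gamma functions whose arguments grow like $\beta L$ or $(1-\beta)L$. Applying \eqref{gamma_asymptotics} to both pairs yields
\begin{equation*}
\frac{(w-M)_{N+L}}{(z-M)_{N+L}}\;\xrightarrow[L\to\infty]{}\;\frac{\sin(\pi w)}{\sin(\pi z)}\left(\frac{1-\beta}{\beta}\right)^{w-z}
\end{equation*}
uniformly on compacts in $(z,w)$ avoiding the integers, matching the corresponding factor in \eqref{K_Bernoulli}.

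The remaining work is to match contours. In the shifted variables, the tilings $z$-contour encloses the integers in $[x_2-t_2+1,\,y_L+M]$; letting $L\to\infty$, I would deform it into the vertical line $\Re z=x_2-t_2+\tfrac12$ traversed upward (equivalently, express the closed-contour integral as its sum of residues and identify the limiting sum with such a vertical integral). On the $w$ side, for $L$ large the zeros of the new factor $(w-M)_{N+L}$ cancel any would-be $w$-poles at $\{x_1-t_1,\ldots,x_1\}\setminus\{a_r\}$, so the surviving $w$-poles of the integrand are exactly $\{x_1-t_1,\ldots,x_1\}\cap\{a_r\}$ together with $w=z$. Shrinking the tilings $w$-contour past the $z$-contour to the small contours encircling only the Bernoulli $w$-poles produces a residue at $w=z$, contributing an extra single-variable $z$-integral of the rational function $(z-x_2+1)_{t_2-1}/(z-x_1)_{t_1+1}$.

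The main obstacle is the clean bookkeeping of all non-integral contributions. One has to verify that this extra $z$-residue integral, together with the diagonal indicator $\mathbf{1}_{x_1=x_2}\mathbf{1}_{t_1=t_2}$ from \eqref{paths_kernel} and the pure pole term $-\mathbf{1}_{t_1<t_2}\mathbf{1}_{x_2\le x_1}(x_1-x_2+1)_{t_2-t_1-1}/(t_2-t_1-1)!$ from \eqref{tilings_kernel}, reassembles precisely into the Bernoulli non-integral contribution $\mathbf{1}_{x_1\ge x_2}\mathbf{1}_{t_1>t_2}(-1)^{x_1-x_2+1}\binom{t_1-t_2}{x_1-x_2}$. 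This reduces to closing the vertical $z$-contour on the appropriate side (rapid decay being available in the relevant ordering regime) and summing residues of $(z-x_2+1)_{t_2-1}/(z-x_1)_{t_1+1}$ at its integer poles $z\in\{x_1-t_1,\ldots,x_1\}$; the resulting Pochhammer identities should yield exactly the claimed indicator pattern, case-by-case on the relative ordering of $(t_1,x_1)$ and $(t_2,x_2)$, provided all signs and orientations are tracked carefully.
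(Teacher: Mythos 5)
Your proposal follows essentially the same route as the paper: substitute the shifted data, use the identity trading the product over $y_r$'s for the Pochhammer ratio $(w)_{N+L}/(z)_{N+L}$ times a product over $a_r$'s, shift $z,w$ by $\lfloor\beta L\rfloor$, apply the reflection formula plus Stirling to get the limit $\frac{\sin(\pi w)}{\sin(\pi z)}\left(\frac{1-\beta}{\beta}\right)^{w-z}$, deform the $z$-contour to a vertical line, and absorb a residue at $w=z$ when reorganizing the $w$-contour, then combine the resulting single $z$-integral with the indicator terms from \eqref{paths_kernel}--\eqref{tilings_kernel}. The one cosmetic difference is that the paper evaluates the single residue integral by citing Section~6.2 of \cite{Petrov2012} to get $\mathbf{1}_{x_1\ge x_2}\frac{(t_2-t_1)_{x_1-x_2}}{(x_1-x_2)!}$, whereas you propose to reprove it by closing the contour and summing residues of $(z-x_2+1)_{t_2-1}/(z-x_1)_{t_1+1}$ --- this is the same computation, just unpacked. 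The only point you leave implicit that the paper makes explicit is the uniform-in-$L$ convergence of the $z$-integral over the vertical line (needed to interchange the limit with the integral), which follows from the rapid decay of the Pochhammer ratio in $\Im z$ for fixed $L$ large enough; you should state that estimate explicitly to make the limit-interchange rigorous.
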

\Cref{prop:proof_of_intro_Bernoulli}
together with
\Cref{prop:limit_to_Bernoulli}
will imply \Cref{thm:intro_Bernoulli}.
\begin{proof}[Proof of \Cref{prop:proof_of_intro_Bernoulli}]
    We first focus on the part of $K^{\textnormal{paths}}_{L;\vec a^{(L)}}(t_1,x_1-\fl{\be L};t_2,x_2-\fl{\be L})$ containing the double contour integral. By substituting the shifted parameters $\vec a^{(L)}$ and $x_{1,2}-\fl{\be L}$ into the kernel given by \eqref{tilings_kernel}, \eqref{paths_kernel} and at the same time shifting both integration variables $z,w$ by $\fl{\be L}$ turns the double contour integral into
    \begin{multline}\label{I_paths_scaled}
        I^{\textnormal{paths}}
        =\frac{t_1!}{(t_2-1)!}
        \frac1{(2\pi\i)^{2}}
        \oint\limits_{c(x_2-t_2+1)}dz\oint\limits_{C(\infty)}dw
        \frac{(z-x_2+1)_{t_2-1}}{(w-x_1)_{t_1+1}}
        \\\times\frac{1}{w-z}
        \frac{(w-\fl{\be L})_{N+L}}{(z-\fl{\be L})_{N+L}}
        \prod_{r=1}^{N}\frac{z-a_r}{w-a_r}.
    \end{multline}
    (Note that this integral enters $K^{\textnormal{paths}}_{L;\vec a^{(L)}}$ with a negative sign which we ignore for now.) Here the $z$ contour $c(x_2-t_2+1)$ encircles the points $x_2-t_2+1,x_2-t_2+2,\ldots,\fl{\be L}$ and not $x_2-t_2-1,x_2-t_2-2,\ldots$, while the $w$ contour $C(\infty)$ encircles $c(x_2-t_2+1)$ and all the $w$ poles of the integrand. For large enough $L$ these $w$ poles are contained inside the intersection $\{x_1-t_1,x_1-t_1+1,\ldots,x_1-1,x_1\}\cap \{a_1,\ldots,a_N\}$. See \Cref{fig:cont_two}.

    Let us split the $w$ integration over $C(\infty)$ into integration over two contours: one encircling all the $w$ poles outside the $z$ contour $c(x_2-t_2+1)$ (denote it by $c'(x_2-t_2)$), and the other one encircling just the $z$ contour $c(x_2-t_2+1)$ (denote it by $c_{\textnormal{out}}(x_2-t_2+1)$). In this second integral we will drag the $w$ contour inside the $z$ contour at the cost of picking the residue at $w=z$. Denote the resulting $w$ contour by $c_{\textnormal{in}}(x_2-t_2+1)$. See \Cref{fig:cont_three}. Thus, \eqref{I_paths_scaled} can be rewritten as follows:
    \begin{multline}
        I^{\textnormal{paths}}
        =
        \frac{t_1!}{(t_2-1)!}
        \frac1{(2\pi\i)^{2}}
        \oint\limits_{c(x_2-t_2+1)}dz\oint\limits_{c'(x_2-t_2)\cup
        c_{\textnormal{in}}(x_2-t_2+1)}dw\,
        \frac{1}{w-z}
        \frac{(z-x_2+1)_{t_2-1}}{(w-x_1)_{t_1+1}}
        \\\times\frac{(w-\fl{\be L})_{N+L}}{(z-\fl{\be L})_{N+L}}
        \prod_{r=1}^{N}\frac{z-a_r}{w-a_r}
        +
        \frac{t_1!}{(t_2-1)!}
        \frac1{2\pi\i}
        \oint\limits_{c(x_2-t_2+1)}
        \frac{(z-x_2+1)_{t_2-1}}{(z-x_1)_{t_1+1}}dz.
        \label{I_paths_scaled_two}
    \end{multline}

    \begin{figure}[htbp]
        \scalebox{.85}{\begin{tikzpicture}[scale=1,ultra thick]
            \draw[->] (-8,0)--(11,0);
            \foreach \ii in {-22,...,30}
            {
                \draw[fill] (\ii/3,0) circle (1.5pt);
            }
            \begin{scope}[shift={(-4,0)},scale=.8]
                \draw[line width=2.5] (-.2,-.2)--++(.4,.4);
                \draw[line width=2.5] (-.2,.2)--++(.4,-.4);
            \end{scope}
            \begin{scope}[shift={(-4+1/3,0)},scale=.8]
                \draw[line width=2.5] (-.2,-.2)--++(.4,.4);
                \draw[line width=2.5] (-.2,.2)--++(.4,-.4);
            \end{scope}
            \begin{scope}[shift={(-3+1/3,0)},scale=.8]
                \draw[line width=2.5] (-.2,-.2)--++(.4,.4);
                \draw[line width=2.5] (-.2,.2)--++(.4,-.4);
            \end{scope}
            \begin{scope}[shift={(-2+2/3,0)},scale=.8]
                \draw[line width=2.5] (-.2,-.2)--++(.4,.4);
                \draw[line width=2.5] (-.2,.2)--++(.4,-.4);
            \end{scope}
            \begin{scope}[shift={(-1/3,0)},scale=.8]
                \draw[line width=2.5] (-.2,-.2)--++(.4,.4);
                \draw[line width=2.5] (-.2,.2)--++(.4,-.4);
            \end{scope}
            \draw[line width=2.4] (-3.16,0) to [out=90,in=180]
            (3,3) to [out=0,in=90] (9.12,0) to [out=-90,in=0]
            (3,-3) to [out=180,in=-90] (-3.16,0);
            \draw[densely dotted, line width=2.4] (-4.82,0)
            to [out=90,in=180]
            (2.2,3.7) to [out=0,in=90] (9.22,0) to [out=-90,in=0]
            (2.2,-3.7) to [out=180,in=-90] (-4.82,0);
            \node[below, yshift=-5, fill=white, opacity=.85]
            at (-3,0) {\phantom{$x_2-t_2+1$}};
            \draw (-3,.25)--++(0,-.5) node[below, yshift=-5]
            at (-3,0) {$x_2-t_2+1$};
            \node[below, yshift=-5, fill=white, opacity=.85]
            at (9,0) {\phantom{$\fl{\be L}$}};
            \draw (9,.25)--++(0,-.5) node[below, yshift=-5]
            at (9,0) {$\fl{\be L}$};
            \node[above, yshift=5, fill=white, opacity=.85]
            at (-4-2/3,0) {\phantom{$x_1-t_1$}};
            \draw (-4-2/3,.25)--++(0,-.5) node[above, yshift=5]
            at (-4-2/3,0) {$x_1-t_1$};
            \draw (-1/3,.25)--++(0,-.5) node[above, yshift=5]
            at (-1/3,0) {$x_1$};
            \node at (1,2.55) {\Large $z$};
            \node at (-2.5,3.35) {\Large $w$};
        \end{tikzpicture}}
        \caption{Integration contours for
        $I^{\textnormal{paths}}$ \eqref{I_paths_scaled}.
        Note that the $z$ contour grows with $L$ but the
        $w$ poles (highlighted by crosses) do not depend on $L$.}
        \label{fig:cont_two}
    \end{figure}
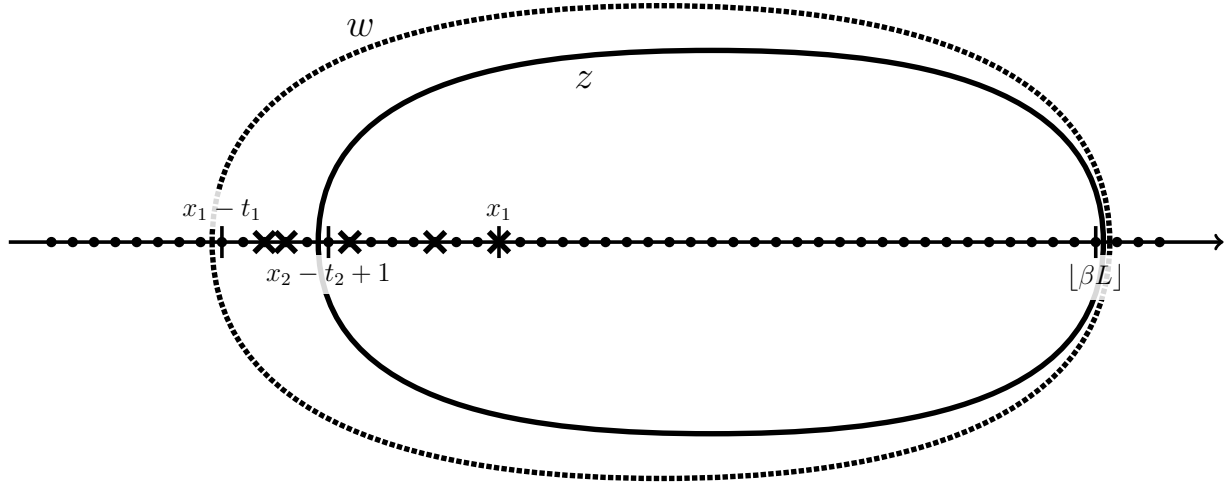

    \begin{figure}[htbp]
        \scalebox{.85}{\begin{tikzpicture}[scale=1,ultra thick]
            \draw[->] (-8,0)--(11,0);
            \foreach \ii in {-22,...,30}
            {
                \draw[fill] (\ii/3,0) circle (1.5pt);
            }
            \begin{scope}[shift={(-4,0)},scale=.8]
                \draw[line width=2.5] (-.2,-.2)--++(.4,.4);
                \draw[line width=2.5] (-.2,.2)--++(.4,-.4);
            \end{scope}
            \begin{scope}[shift={(-4+1/3,0)},scale=.8]
                \draw[line width=2.5] (-.2,-.2)--++(.4,.4);
                \draw[line width=2.5] (-.2,.2)--++(.4,-.4);
            \end{scope}
            \begin{scope}[shift={(-3+1/3,0)},scale=.8]
                \draw[line width=2.5] (-.2,-.2)--++(.4,.4);
                \draw[line width=2.5] (-.2,.2)--++(.4,-.4);
            \end{scope}
            \begin{scope}[shift={(-2+2/3,0)},scale=.8]
                \draw[line width=2.5] (-.2,-.2)--++(.4,.4);
                \draw[line width=2.5] (-.2,.2)--++(.4,-.4);
            \end{scope}
            \begin{scope}[shift={(-1/3,0)},scale=.8]
                \draw[line width=2.5] (-.2,-.2)--++(.4,.4);
                \draw[line width=2.5] (-.2,.2)--++(.4,-.4);
            \end{scope}
            \draw[line width=2] (-3.15,0) to [out=90,in=180]
            (3,3) to [out=0,in=90] (9.2,0) to [out=-90,in=0]
            (3,-3) to [out=180,in=-90] (-3.15,0);
            \draw[densely dotted, line width=2] (-3.08,0)
            to [out=90,in=180]
            (-1.5,1.6) to [out=0,in=90] (-.11,0) to [out=-90,in=0]
            (-1.5,-1.6) to [out=180,in=-90] (-3.08,0);
            \draw[densely dotted, line width=2] (-3.22,0)
            to [out=90,in=0]
            (-4,1.2) to [out=180,in=90] (-4.86,0) to [out=-90,in=180]
            (-4,-1.2) to [out=0,in=-90] (-3.22,0);
            \node[below, yshift=-5, fill=white, opacity=.85]
            at (-3,0) {\phantom{$x_2-t_2+1$}};
            \draw (-3,.25)--++(0,-.5) node[below, yshift=-5]
            at (-3,0) {$x_2-t_2+1$};
            \node[below, yshift=-5, fill=white, opacity=.85]
            at (9,0) {\phantom{$\fl{\be L}$}};
            \draw (9,.25)--++(0,-.5) node[below, yshift=-5]
            at (9,0) {$\fl{\be L}$};
            \node[above, yshift=5, fill=white, opacity=.85]
            at (-4-2/3,0) {\phantom{$x_1-t_1$}};
            \draw (-4-2/3,.25)--++(0,-.5) node[above, yshift=5]
            at (-4-2/3,0) {$x_1-t_1$};
            \node[above, yshift=5, fill=white, opacity=.85]
            at (-1/3,0) {\phantom{$x_1$}};
            \draw (-1/3,.25)--++(0,-.5) node[above, yshift=5]
            at (-1/3,0) {$x_1$};
            \node at (1,3.25) {\Large $z$};
            \node at (.95,1.65)
            {\Large $c_{\textnormal{in}}(x_2-t_2+1)$};
            \node at (-5.2,1.53) {\Large $c'(x_2-t_2)$};
        \end{tikzpicture}}
        \caption{Integration contours in the double
        integral in \eqref{I_paths_scaled_two}.}
        \label{fig:cont_three}
    \end{figure}
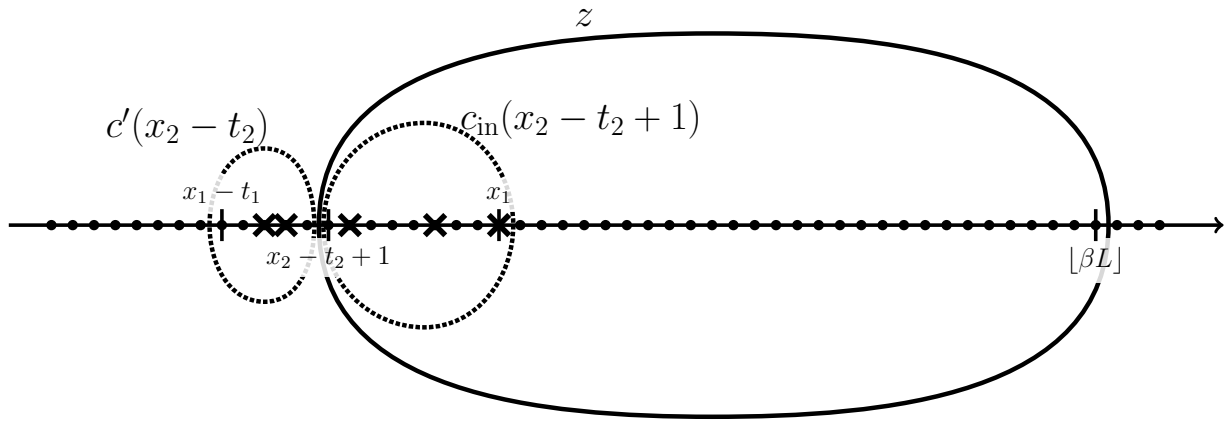

    The single integral in \eqref{I_paths_scaled_two}
    can be evaluated using the results of
    Section 6.2 in
    \cite{Petrov2012}, it is equal to
    \begin{equation}\label{new_single_integral}
        \mathbf{1}_{x_1\ge x_2}\frac{(t_2-t_1)_{x_1-x_2}}{(x_1-x_2)!}.
    \end{equation}

    In the double contour integral in \eqref{I_paths_scaled_two} we first
    note that for $L$ fixed but large enough, due to the presence of the
    polynomial $(z-\fl{\be L})_{N+L}$ in the denominator, the integrand
    decays rapidly as $z\to\infty$. Thus, the $z$ integration contour can
    be replaced by the vertical line from $x_2-t_2+\frac12-\i\infty$ to
    $x_2-t_2+\frac12+\i\infty$ traversed from bottom to top, yielding
    a new
    minus sign in front of the double contour
    integral (cf. \Cref{fig:cont_intro}).

    The $z$
    integral over the vertical line converges uniformly in $L$.
    Indeed, observe that
    \begin{equation*}
        \left|
        \frac{w+k}{x+\i y+k}
        \right|<\frac{1}{1+C|y|/|k|},\qquad k\in\Z,
    \end{equation*}
    for some $C>0$, where $z=x+\i y$, and $C$ is uniform in $w$ belonging to a bounded contour.
    For large $|y|$ and $L>L_0$ the product of the above quantities over $k$ as in
    \eqref{I_paths_scaled_two}
    can be bounded by a constant independent of $L$
    times a fixed (but arbitrarily large) negative power of $|y|$.
    Here we also used the fact that for fixed $y$ the
    infinite product over $k$ diverges to infinity.

    Thus, the integration contours do not depend on $L$, and we can pass
    to a pointwise limit as $L\to\infty$ in the integrand.
    Since $w,z\notin\Z$ on our contours, we can write
    \begin{multline}\label{from_tilings_to_Bernoulli_estimation}
        \frac{(w-\fl{\be L})_{N+L}}{(z-\fl{\be L})_{N+L}}=
        \frac{\G(w+L-\fl{\be L}+N)}{\G(z+L-\fl{\be L}+N)}
        \frac{\G(z-\fl{\be L})}{\G(w-\fl{\be L})}
        \\=\frac{\G(w+L-\fl{\be L}+N)}{\G(z+L-\fl{\be L}+N)}
        \frac{\G(-w+1+\fl{\be L})}{\G(-z+1+\fl{\be L})}
        \frac{\sin (\pi w)}{\sin (\pi z)},
    \end{multline}
    where in the second equality we used
    \begin{equation}\label{Gamma_function_flip}
        \G(u)=\frac{\pi}{\sin(\pi u)\G(1-u)}.
    \end{equation}
    Let us employ the Stirling asymptotics
    for the
    Gamma function \cite[1.18.(2)--(3)]{Erdelyi1953}
    which can be formulated as
    \begin{equation}\label{Stirling_Gamma}
        \G(L+\al)=\big(1+O(L^{-1})\big)\sqrt{2\pi}\exp\Big(
        \big(L+\al-\tfrac12\big)\log L-L\Big),
        \qquad L\to+\infty,
    \end{equation}
    where $\al\in\C$ if fixed and the remainder $O(L^{-1})$ is uniform
    in $\al$ belonging to compact subsets of $\C$.
    Thus, continuing \eqref{from_tilings_to_Bernoulli_estimation},
    \begin{equation*}
        \frac{(w-\fl{\be L})_{N+L}}{(z-\fl{\be L})_{N+L}}=
        \frac{\sin (\pi w)}{\sin (\pi z)}
        \left(\frac{1-\be}{\be}\right)^{w-z}\big(1+O(L^{-1})\big).
    \end{equation*}

    Finally, the summands not involving contour integrals
    coming from \eqref{tilings_kernel}, \eqref{paths_kernel},
    and \eqref{new_single_integral}
    can be simplified as
    \begin{multline*}
        \mathbf{1}_{x_1=x_2}\mathbf{1}_{t_1=t_2}+
        \mathbf{1}_{t_1<t_2}\mathbf{1}_{x_2\le x_1}
        \frac{(x_1-x_2+1)_{t_2-t_1-1}}{(t_2-t_1-1)!}
        -
        \mathbf{1}_{x_1\ge x_2}
        \frac{(t_2-t_1)_{x_1-x_2}}{(x_1-x_2)!}
        \\=
        \mathbf{1}_{x_1\ge x_2}
        \mathbf{1}_{t_1>t_2}
        (-1)^{x_1-x_2+1}\binom{t_1-t_2}{x_1-x_2}
    \end{multline*}
    (note that all of them involve only the difference $x_1-x_2$
    which is not affected by the shift by $\fl{\be L}$).
    This coincides with the summand not containing
    integrals in \eqref{K_Bernoulli}.
    This completes the proof of
    \Cref{prop:proof_of_intro_Bernoulli} and hence of
    \Cref{thm:intro_Bernoulli}.
\end{proof}

\begin{remark}\label{rmk:moving_int_contours}
    The argument in the above proof implies in particular that the integration in
    $K^{\textnormal{Bernoulli}}_{\vec a; \be}$ \eqref{K_Bernoulli} can
    be alternatively performed over a shifted contour $z$. This
    contour can be shifted as far as to the vertical  line traversed
    from $x_2-\frac12-\i\infty$ to $x_2-\frac12+\i\infty$. Indeed, the
    difference between the two expressions is equal to the residue at
    $z=w$ integrated over a certain part of the $w$ contour; it is the same
    as the single integral in \eqref{I_paths_scaled_two} but over a
    contour which does not contain any poles inside, and thus
    vanishes.
\end{remark}



\section{Setup of the asymptotic analysis} 
\label{sec:setup_of_the_asymptotic_analysis}

Here we explain the relevance of the function $\Sfin'(\z)$ defined
in \eqref{S_T_prime} for the asymptotics of the correlation
kernel of the noncolliding Bernoulli random walks.

\subsection{A change of variables} 
\label{sub:a_change_of_variables}

Changing the variables as $z=t_2\z+x_2$, $w=t_2\w+x_2$, and employing the shorthand notation
\begin{equation}\label{delta_notation}
    \D t=t_1-t_2,\qquad \D x=x_1-x_2
\end{equation}
turns the kernel \eqref{K_Bernoulli}  of the noncolliding Bernoulli random walk into
\begin{multline}
    K^{\textnormal{Bernoulli}}_{\vec a; \be}(t_1,x_1;t_2,x_2)
    =
    \mathbf{1}_{\D x\ge0}\mathbf{1}_{\D t>0}
    (-1)^{\D x+1}
    \binom{\D t}{\D x}
    \\+
    \frac1{(2\pi\i)^{2}}
    \int\limits_{-1+\frac12t_2^{-1}-\i\infty}
    ^{-1+\frac12t_2^{-1}+\i\infty}d\z
    \oint\limits_{\textnormal{all $\w$ poles}}d\w\,
    \frac{(t_2+\D t)!\cdot t_2}{(t_2-1)!}
    \frac{(t_2\z+1)_{t_2-1}}{(t_2\w-\D x)_{t_2+\D t+1}}
    \\\times\frac{1}{\w-\z}
    \frac{\sin(\pi t_2\w)}{\sin(\pi t_2\z)}
    \left(\frac{1-\be}{\be}\right)^{t_2(\w-\z)}
    \prod_{r=1}^{N}\frac{t_2\z+x_2-a_r}{t_2\w+x_2-a_r}.
    \label{K_Bernoulli_general}
\end{multline}
Here $\z$ is integrated over a vertical line (which crosses the real line to the right of $-1$), and the $\w$ integration contour (a circle or a union of two circles, cf. \Cref{fig:cont_intro}) must encircle all the $\w$ poles of the integrand except $\w=\z$. Note that now these poles all belong to $\{-1+t_2^{-1}(\D x-\D t),\ldots,t_2^{-1}(\D x-1),t_2^{-1}\D x\}$.

From \eqref{K_Bernoulli_general} we see that by shifting the initial data $\vec a\in\W N$ it is possible to take $x_2=0$. Since the initial data is arbitrary and its finite shifts do not change our Assumptions \ref{ass:density} and \ref{ass:intermediate_behavior}, throughout the sequel without loss of generality we may and will assume that $x_2=0$, and so $x_1=\D x\in\Z$ is fixed throughout the analysis. Moreover, since we aim to study the asymptotic behavior of $K^{\textnormal{Bernoulli}}_{\mathfrak{A}(N); \be} (t_1+T(N),x_1;t_2+T(N),0)$ (cf. \Cref{Theorem_main}) and finite shifts in the $t$ parameters can be incorporated into $T=T(N)$, we may also assume that $t_2=T$ and $t_1=\D t+T$, where $\D t\in\Z$ is fixed.


\subsection{Definition of the function $\Sfin(\z)$} 
\label{sub:function_sfin_z_}

With the notation explained in \Cref{sub:a_change_of_variables},
rewrite the integrand in \eqref{K_Bernoulli_general}
(without $1/(\w-\z)$)
as follows:
\begin{multline}
    \frac{(T+\D t)!\cdot T}{(T-1)!}
    \frac{(T\z+1)_{T-1}}{(T\w-\D x)_{T+\D t+1}}
    \frac{\sin(\pi T\w)}{\sin(\pi T\z)}
    \left(\frac{1-\be}{\be}\right)^{T(\w-\z)}
    \prod_{r=1}^{N}\frac{T\z-a_r}{T\w-a_r}
    \\=
    \exp\Big\{T\big(\Sfin(\z)-\Sfin(\w)\big)\Big\}
    \frac{(T+\D t)!\cdot T}{(T-1)!}
    \frac{(T\w+1)_{T-1}}{(T\w-\D x)_{T+\D t+1}}
    ,\label{integrand_asymptotics}
\end{multline}
where
\begin{equation}\label{S_T_action_prelim}
    \Sfin(\z)=
    \frac 1T\sum_{r=1}^{N}\log\Big(\z-\frac{a_r}T\Big)
    +\frac 1T\sum_{i=1}^{T-1}\log\Big(\z+\frac iT\Big)
    -\frac 1T\log(\sin(\pi T\z))-\z\log(\be^{-1}-1),
\end{equation}
Let us discuss the choice of branches of the logarithms. Because $\Sfin(\z)$ is exponentiated in \eqref{integrand_asymptotics}, different choices of branches lead to the same integrand. However, a certain particular choice makes $\Sfin(\z)$ holomorphic in the upper half plane $\HH=\{z\in\C\colon \Im z>0\}$, which will be convenient in \Cref{sec:asymptotics_of_the_kernel}. Let us restrict our attention to $\HH$, the situation in the lower half plane is analogous (however, one clearly cannot choose a branch making $\Sfin(\z)$ holomorphic in the whole complex plane).

The standard branch of the logarithm, denoted by $\log z$, has the cut along the negative real axis, and takes positive real values for real $z>1$. Let $\log_{\HH} z$ denote a branch in the upper half plane which extends holomorphically to $\R\setminus\{0\}$ and has the cut along the negative imaginary axis:
\begin{equation*}
    \log_{\HH} z=\log (ze^{-\i\pi/2})+\i\pi/2.
\end{equation*}
For $z\in\HH$ the branches $\log z$ and $\log_\HH z$ coincide. We will use $\log_{\HH}$ for the logarithms of $\z-a_r/T$ and $\z+i/T$ in \eqref{S_T_action_prelim}. Next, simply plugging $\sin(\pi T\z)$ into any of these logarithms does not produce a continuous function in $\HH$. Let us use \eqref{Euler_sine} instead, and define
\begin{equation}\label{log_branch}
    \log(\sin\pi z)_{\HH}=\log(\pi z)+\sum_{k=1}^{\infty}\bigl(\log(1+z/k)+\log(1-z/k)\bigr).
\end{equation}
In the right-hand side the logarithms are standard, and we mean direct substitution. The series in $k$ converges for any fixed $z\in\HH$ because it is bounded by the sum of $C/k^2$. One can check that alternatively \eqref{log_branch} can be written as
\begin{equation}\label{log_branch2}
    \log(\sin\pi z)_{\HH}=
    \log_{\HH}(\sin\pi z)-
    2\pi\i\left\lfloor
    \tfrac12\Re(z)+\tfrac12
    \right\rfloor,
\end{equation}
where $\log_{\HH}(\sin\pi z)$ is the direct substitution.
This expression provides a holomorphic continuation
of $\log(\sin\pi z)_{\HH}$ into $\R\setminus\Z$.
From \eqref{log_branch2} it readily follows that
\begin{equation}\label{log_branch_large_im}
    \log\bigl(\sin(\pi (x+\i y))\bigr)_{\HH}
    =-\i \pi  (x+\i y)+{\i \pi }/{2}-\log 2+o(1),\qquad y\to+\infty,
\end{equation}
uniformly in $x\in\R$ (the remainder $o(1)$ is periodic in $x$).

Therefore, the function $\Sfin(\z)$ takes the form
\begin{equation}\label{S_T_action}
    \begin{split}
        &\Sfin(\z)=
        \frac 1T\sum_{r=1}^{N}\log_{\HH}\Big(\z-\frac{a_r}T\Big)
        +\frac 1T\sum_{i=1}^{T-1}\log_{\HH}\Big(\z+\frac iT\Big)
        \\&
        \hspace{80pt}-\frac 1T\log_{\HH}(\sin(\pi T \z))
        +
        \frac {2\pi\i}T
        \left\lfloor
        \tfrac12\Re(T\z)+\tfrac12
        \right\rfloor
        -\z\log(\be^{-1}-1).
    \end{split}
\end{equation}
With these choices of branches it
becomes holomorphic in $\HH$,
and extends to
$\z\in\R$ everywhere except the singularities.
Recall the notation $\mathfrak{L}_T=\mathfrak{L}_{T(N)}$ \eqref{eq_L_def} and
$\mathfrak{A}=\mathfrak{A}(N)$, and denote
\begin{equation}
    \mathfrak{l}_T=\mathfrak{l}_{T(N)}=\{\ldots,-1-\tfrac 2T,-1-\tfrac 1T,-1\}\cup\{0,\tfrac 1T,\tfrac 2T,\ldots\},
    \quad
    \mathfrak{a}=\mathfrak{a}(N)=\{\tfrac{a_1}T,\tfrac{a_2}T,\ldots,\tfrac{a_N}T\}.
    \label{L_A_small_set_notation}
\end{equation}
The set of (non-removable) singularities of $\Sfin(\z)$ is $\mathfrak{l}_T\Delta\mathfrak{a}$
(the symmetric difference)
because of the cancellations in \eqref{S_T_action}
with the help of \eqref{log_branch}.

The function $\Sfin'(\z)$ defined by \eqref{S_T_prime}
is simply the derivative of $\Sfin(\z)$.
Note that this derivative does not depend on choices of the branches.

\medskip

We will study the asymptotic behavior
of the kernel \eqref{K_Bernoulli_general}
by means of the steepest descent method.
That is, we will find
\emph{critical points} of the function $\Sfin(\z)$
(i.e., where $\Sfin'(\z)=0$) and deform the contours so that
they pass through these critical points and are steepest descent
for $\Re \Sfin(\z)$
(that is, $\Re \Sfin(\z)$ on these contours
decreases or increases the most).
As was first noted in \cite[Section 3.2]{Okounkov2002},
having a pair of nonreal complex conjugate
simple critical points
 $\zfin$ and $\zbfin$ (plus certain properties of the integration contours)
leads to the discrete sine kernel.



\section{Existence of nonreal
critical points: proof of \Cref{Proposition_roots_positioning}} 
\label{sec:proof_of_proposition_roots_positioning}

In this section we deal with properties of $\Sfin'(\z)$
\eqref{S_T_prime}, and prove \Cref{Proposition_roots_positioning}
(stating that the equation $\Sfin'(\z)=0$ has a unique root $\zfin=\zfin(N)$ in $\HH$,
and it is
uniformly bounded away from the real line and infinity)
through a
series of lemmas.

\begin{lemma}\label{lemma:S_T_critical_points_equation}
    The equation $\Sfin'(\z)=0$
    has at most one pair of nonreal complex conjugate
    roots.
\end{lemma}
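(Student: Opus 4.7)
Every pole of $\Sfin'$ in \eqref{S_T_prime} lies on $\mathbb R$ with a real residue ($\pm 1/T$), and the additive constant $-\log(\be^{-1}-1)$ is real, so $\Sfin'(\bar\z)=\overline{\Sfin'(\z)}$. Hence nonreal zeros come in complex conjugate pairs, and the lemma is equivalent to showing that $\Sfin'$ has at most one zero in the open upper half-plane $\HH$.

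To count such zeros, I plan to use the argument principle. Using $\pi\cot(\pi T\z)=\pv\sum_{k\in\mathbb Z}(T\z-k)^{-1}$ in \eqref{S_T_prime} to cancel coinciding real poles of the two sums, one rewrites
\begin{equation*}
    \Sfin'(\z)=\sum_{a\in\mathfrak{A}_0}\frac{1}{T\z-a}-\pv\sum_{k\in\mathfrak{L}_T\setminus\mathfrak{A}}\frac{1}{T\z-k}-\log(\be^{-1}-1),\qquad \mathfrak{A}_0:=\mathfrak{A}\cap\{-T+1,\dots,-1\},
\end{equation*}
so that $\Sfin'$ has simple real poles with residue $+1/T$ at the finitely many $\{a/T:a\in\mathfrak{A}_0\}$ and residue $-1/T$ at the remaining points $\{k/T:k\in\mathfrak{L}_T\setminus\mathfrak{A}\}$. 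Since $\pi\cot(\pi T\z)\to -\pi i$ as $\Im\z\to+\infty$ and the bounded sums decay, $\Sfin'(\z)\to \pi i-\log(\be^{-1}-1)\neq 0$ at infinity in $\HH$. As contour I would take the boundary of $\{|\z|<R\}\cap\{\Im\z>0\}$ with small semicircular indentations of radius $\eps$ in $\HH$ above each real pole of $\Sfin'$ contained in $(-R,R)$; by the argument principle, the number of zeros of $\Sfin'$ in $\HH$ equals the winding number of $\Sfin'$ about $0$ along this contour in the limit $R\to\infty$, $\eps\to 0^+$.

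This winding number decomposes into three parts: (i) a bounded contribution from the large arc, where $\Sfin'$ stays close to a fixed nonzero limit; (ii) a half-turn of sign $-\sgn(\text{residue})$ at each indentation; (iii) a $\pi$-jump at each real zero of $\Sfin'$ on real-axis segments. A key ingredient is the identity (from $\pi^2\csc^2(\pi u)=\sum_k(u-k)^{-2}$)
\begin{equation*}
\Sfin''(\z)=T\sum_{k\in\mathfrak{L}_T\setminus\mathfrak{A}}\frac{1}{(T\z-k)^2}-T\sum_{a\in\mathfrak{A}_0}\frac{1}{(T\z-a)^2},
\end{equation*}
which, because $\mathfrak{A}_0$ is finite while the first sum is essentially a dense principal-value series, implies that for $x\in\mathbb R$ outside a bounded interval one has $\Sfin''(x)>0$. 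Thus $\Sfin'$ is strictly increasing between consecutive negative-residue poles there, producing exactly one real zero per such interval, and the corresponding half-turn contributions from (ii) telescope against the $\pi$-jumps from (iii) modulo $2\pi$ and contribute nothing in the limit. The main obstacle is making this infinite-sum telescoping rigorous and carefully enumerating the leftover finite contribution from the bounded interval (containing the finitely many positive-residue poles from $\mathfrak{A}_0$) together with the large-arc value $\pi i-\log(\be^{-1}-1)$; this residual quantity is then shown by direct inspection to bound the winding number by $1$, completing the proof.
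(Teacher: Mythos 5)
Your reduction to counting zeros in $\HH$ and the rewriting of $\Sfin'$ with residues $+1/T$ at $\mathfrak{A}_0/T$ and $-1/T$ at $(\mathfrak{L}_T\setminus\mathfrak{A})/T$ are fine, but the proposal has a genuine gap: the step that would actually bound the winding number by $1$ is never carried out. You defer it to ``making the infinite-sum telescoping rigorous'' and ``direct inspection'' of a residual quantity, yet that residual count \emph{is} the content of the lemma; nothing in the plan shows why it cannot come out to be $2$ or more. Worse, the key ingredient you do state is false in general: it is not true that $\Sfin''(x)>0$ for all real $x$ outside a bounded interval. Take $T$ fixed and $\mathfrak{A}=\{-K+1,\dots,K\}$ with $K\gg T$ (the lemma is stated for arbitrary configurations and its proof cannot invoke Assumptions \ref{ass:density}--\ref{ass:intermediate_behavior}). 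For $1\ll x\ll K/T$ the negative contribution $T\sum_{a\in\mathfrak{A}_0}(Tx-a)^{-2}\approx x^{-2}$ dominates the positive one, which is only of order $T/K$ because the nearest points of $\mathfrak{L}_T\setminus\mathfrak{A}$ are at distance $\approx K$; so $\Sfin''(x)<0$ on a region of length $\sim\sqrt{K/T}$, which can be made to exceed any fixed bound. Consequently the claimed ``exactly one real zero per interval'' structure, and with it the telescoping, breaks down, and the leftover region you would have to inspect contains unboundedly many poles. There are also smaller but real problems: the argument principle requires no zeros of $\Sfin'$ on the contour (real zeros must themselves be indented or perturbed, and the sign of each $\pi$-jump depends on the direction of the sign change, not just on its presence); the indentation over a simple real pole contributes a fixed $+\pi$ to $\arg\Sfin'$ irrespective of the sign of the residue, so item (ii) is miscounted as stated; and the large arc is only controlled where it stays away from $\R$ --- near its endpoints $\Sfin'$ is nowhere near the limit $\i\pi-\log(\be^{-1}-1)$ because of the infinitely many nearby poles, so the $R\to\infty$, $\eps\to0$ limits need separate justification.

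For comparison, the paper's proof avoids all of this analytic bookkeeping: it truncates the principal-value sum to $|j|<M$ (Hurwitz's theorem transfers the count back to $\Sfin'$), clears denominators to get a polynomial equation of degree $d$ (or $d-1$), and then produces at least $d-3$ real roots by the intermediate value theorem on each finite segment between two consecutive poles \emph{of the same sign}, noting that at most two segments have endpoints of different types. Since nonreal roots of a real polynomial come in conjugate pairs, at most one such pair remains. If you want to salvage your route, you would in effect have to reproduce this same combinatorial count of sign changes between poles along $\R$ inside your winding-number computation, without relying on any convexity of $\Sfin'$ on the real axis; the truncation-plus-root-counting argument is both shorter and immune to the counterexample above.
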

\begin{proof}
    The sum over $|j|<M$ in \eqref{eq_pv_def} converges, as $M\to+\infty$,
    uniformly on compact sets in $\C$ to the corresponding principal value sum (i.e., the
    left-hand side of \eqref{eq_pv_def}).
    Therefore, by Hurwitz's theorem, for the
    purpose of counting critical points
    it is enough to prove that the following equation (approximating
    $\Sfin'(\z)=0$)
    \begin{equation}\label{S_T_critical_points_approximate_equation}
        \sum_{r=1}^{N}\frac{1}{T\z-a_r}
        -\sum_{j\in \mathfrak{L}_T\cap\{-M,\ldots,M\}}\frac{1}{T\z-j}
        =\log(\be^{-1}-1),
    \end{equation}
    has at most one pair of nonreal complex roots for all large enough $M$.

    Let $d$ be the size of $(\mathfrak{L}_T\Delta\mathfrak{A})\cap\{-M,\ldots,M\}$, this is the number of poles in the left-hand side of \eqref{S_T_critical_points_approximate_equation} after canceling out equal terms with opposite signs. Multiplying by the common denominator turns equation \eqref{S_T_critical_points_approximate_equation} into a polynomial equation of degree $d$ if $\be\ne\frac12$, and of degree $d-1$ otherwise (when the logarithm in the right-hand side vanishes).

    Let us demonstrate that \eqref{S_T_critical_points_approximate_equation} already has at least $d-3$ real roots. The left-hand side of \eqref{S_T_critical_points_approximate_equation} has $d$ poles which divide the real line onto $d-1$ segments of finite length, plus two semi-infinite rays. These $d$ poles are of two types (see \Cref{fig:roots} for an example):
    \begin{enumerate}[$\bullet$]
        \item For all $a_i\in\{-T+1,\ldots,-1\}$, the pole comes from the term $\frac{1}{T\z-a_i}$.
        \item
        For points of
        $\{-M,\ldots,-T-1,-T\}\cup\{0,1,\ldots,M\}$ which are not equal to any $a_i$
        the pole comes from the term
        $-\frac{1}{T\z-\ell}$ of the opposite sign.
    \end{enumerate}
    Clearly, on a segment
    between any two poles of the same sign
    the left-hand side of \eqref{S_T_critical_points_approximate_equation} takes all values
    between $-\infty$ and $+\infty$, and thus equation \eqref{S_T_critical_points_approximate_equation} has at least one root
    on this segment.
    Among the $d-1$ segments of finite length, at most two
    have endpoints which are singularities of
    different types, and thus the presence of a root there is not guaranteed.
    Thus, there are at least $d-3$ real roots.
    Because the coefficients of \eqref{S_T_critical_points_approximate_equation} are real,
    its nonreal roots come in complex conjugate pairs, and so this equation cannot have more than one such pair of nonreal roots.
\end{proof}

    \begin{figure}[htbp]
        \includegraphics[width=.7\textwidth]{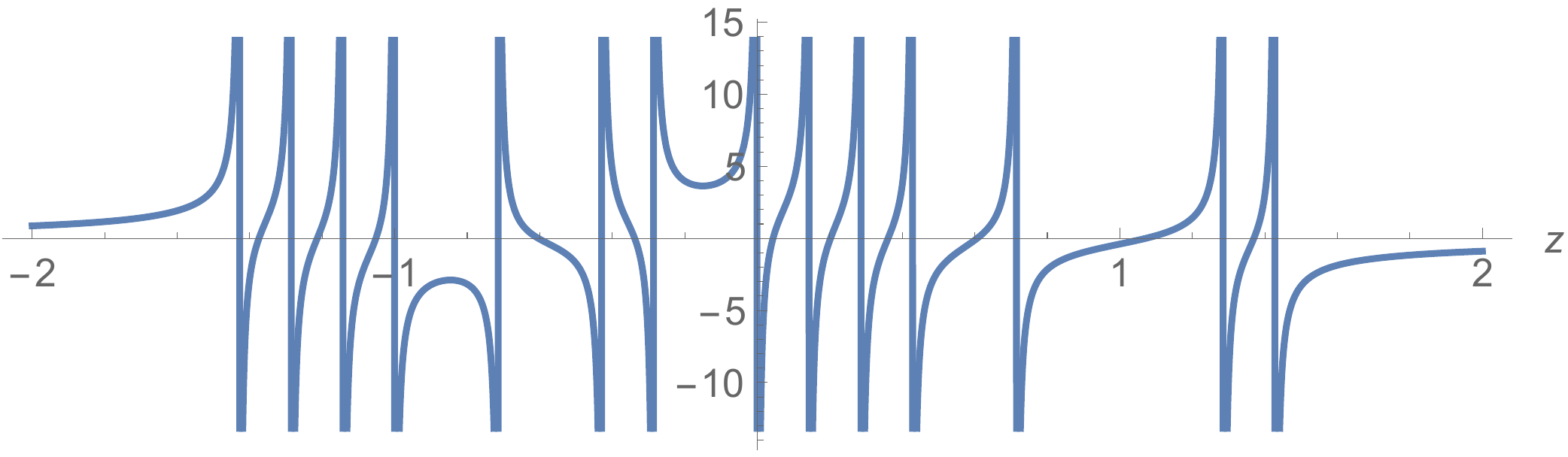}
        \caption{Plot of the left-hand side of \eqref{S_T_critical_points_approximate_equation} as a function
        of $\z\in\R$. The parameters are
        $\mathfrak{A}=(-5,-3,-2,4, 6, 7, 8)$,
        $T=7$, $M=10$, and $d=14$.
        Note that for
        $\be\ne \frac12$ equation \eqref{S_T_critical_points_approximate_equation} has
        one extra real root belonging to one of the semi-infinite rays.}
        \label{fig:roots}
    \end{figure}

\Cref{lemma:S_T_critical_points_equation} implies that there is at most one
critical point in the upper half plane. In the rest of this section we
show its existence, and
obtain a
more precise control on the position of this critical point. The complex equation
$\Sfin'(\z)=0$ \eqref{S_T_prime}
is equivalent to a pair of real equations in $\z=x+\i y$, $x\in\R$, $y\in\R_{>0}$:
\begin{align}
    \label{eq_imaginary_eq}
    0=\Im \Sfin'(x+\i y)&=
    -\frac{1}{T}\sum_{r=1}^{N}\frac{y}{y^2+(x-a_r/T)^{2}}
    +\frac{1}{T}\sum_{j\in \mathfrak{L}_T}\frac{y}{y^2+(x-j/T)^{2}},\\
    \label{eq_real_eq}
    0=\Re\Sfin'(x+\i y)&=
    \sum_{r=1}^{N}\frac{1}{T}\frac{x-a_r/T}{y^2+(x-a_r/T)^{2}}
    -\pv\sum_{j\in \mathfrak{L}_T}\frac{1}{T}\frac{x-j/T}{y^2+(x-j/T)^{2}}
    -\log(\be^{-1}-1).
\end{align}
Note that the infinite sum in \eqref{eq_imaginary_eq} is absolutely convergent,
while in \eqref{eq_real_eq} we need to use the principal value summation.

We start from \eqref{eq_imaginary_eq}, and rewrite it in a more compact form.
For a discrete subset $U\subset\R$, define the atomic measure
\begin{equation}\label{atomic_measure}
    \m_{T}[U]=\frac{1}{T}\sum_{u\in U}\delta_{u}
\end{equation}
(note that it is not necessarily a probability or even a finite measure), and denote
by
\begin{equation}\label{Cauchy_measure}
    \CC_{y}(u)=\frac{y}{\pi(y^2+u^2)}
\end{equation}
the Cauchy probability density on $\R$ rescaled by $y>0$. Using this notation,
rewrite \eqref{eq_imaginary_eq} as
\begin{equation}\label{im_Sprime_measures}
    0=\frac{1}{\pi}\Im \Sfin'(x+\i y)=
    -
    \bigl(\m_{T}[\mathfrak{a}]*\CC_y\bigr)(x)+
    \bigl(\m_{T}[\mathfrak{l}_{T}]*\CC_y\bigr)(x),
\end{equation}
where ``$*$'' means the usual convolution of measures.

\begin{lemma} \label{lemma_imaginary_part_1}
 Under Assumptions \ref{ass:density} and \ref{ass:intermediate_behavior},
 for each $0<\delta<1$ there exists
 $\eps_0>0$ (which may depend
 on constants in our assumptions but not on the choice of
 $\mathfrak{A}(N)$), such that for any $0<\eps<\eps_0$ there is
 $N_0\in\Z_{\ge1}$, and for all $N>N_0$ (see \Cref{fig:compact_in_C_with_signs_of_Im_S_prime}):
 \begin{enumerate}[$\bullet$]
    \item $\Im \Sfin'(x+\i y)>0$ for all $(x,y)$ such that
    $\sqrt{x^2+y^2}=\eps^{-1}$ and $y\ge\eps$,
    and for all
    $(x,y)=(x,\eps)$, where
    $x\in[-\eps^{-1},-1-\eps^{\delta})\cup(\eps^{\delta},\eps^{-1}]$;
    \item $\Im \Sfin'(x+\i y)<0$ for all $(x,y)=(x,\eps)$, where
    $x\in(-1+\eps^{\delta},-\eps^{\delta})$.
 \end{enumerate}
\end{lemma}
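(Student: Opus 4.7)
My starting point is \eqref{im_Sprime_measures}, which writes the sign of $\Im\Sfin'(x+\i y)$ as that of
\begin{equation*}
F(x,y):=(\m_{T}[\mathfrak{l}_T]*\CC_y)(x)-(\m_{T}[\mathfrak{a}]*\CC_y)(x).
\end{equation*}
The measure $\m_{T}[\mathfrak{l}_T]$ is the $(1/T)$-spaced discretization of the indicator density $g:=\mathbf{1}_{\R\setminus(-1,0)}$, while by Assumption~\ref{ass:density} the measure $\m_{T}[\mathfrak{a}]$ has local density in $[\lod,\upd]\subset(0,1)$ on scales $\geq\dd/T$ inside the bulk window $[-\QQ/T,\QQ/T]$. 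Writing $\rho$ for a smoothed density of $\mathfrak{a}$, the sign of $F$ is morally that of $\int(g-\rho)\CC_y$, and the key sign facts are $g-\rho\geq 1-\upd>0$ off $(-1,0)$ and $g-\rho\leq-\lod<0$ on $(-1,0)$.

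\textbf{Uniform reduction.} Uniformly for $(x,y)$ with $|x|,y\leq\eps^{-1}$ and $y\geq\eps$, and for $N$ large enough, I would prove
\begin{equation*}
(\m_{T}[\mathfrak{l}_T]*\CC_y)(x)=1-\int_{(-1,0)}\CC_y(x-u)\,du+o(1),
\end{equation*}
together with $\lod-o(1)\leq(\m_{T}[\mathfrak{a}]*\CC_y)(x)\leq\upd+o(1)$. The Riemann-sum error is $O(1/(T\eps))$ from $|\CC_y'|\leq C/y^2$ and spacing $1/T$. The density control for $\mathfrak{a}$ uses Assumption~\ref{ass:density} on the bulk (applicable since $y\geq\eps\gg\dd/T$) together with a tail estimate for $|a_r|>\QQ$: for $\QQ\gg T\eps^{-1}$ one has $|a_r/T-x|\geq|a_r|/(2T)$, and since the $a_r$ are distinct integers $\sum_{|n|>\QQ,n\neq 0}n^{-2}\leq 2/\QQ$, so
\begin{equation*}
\frac{y}{\pi T}\sum_{|a_r|>\QQ}\frac{1}{(a_r/T-x)^2}\leq\frac{4yT}{\pi}\sum_{|a_r|>\QQ}\frac{1}{a_r^2}\leq\frac{8T}{\pi\eps\QQ}=o(1).
\end{equation*}
Combining yields $(1-\upd)-J(x,y)-o(1)\leq F(x,y)\leq(1-\lod)-J(x,y)+o(1)$, where $J(x,y):=\int_{(-1,0)}\CC_y(x-u)\,du$.

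\textbf{Case analysis.} The integral $J(x,y)=\tfrac{1}{\pi}[\arctan((x+1)/y)-\arctan(x/y)]$ is handled via the asymptotic $\arctan(z)=\tfrac{\pi}{2}\mathrm{sgn}(z)-1/z+O(z^{-3})$ as $|z|\to\infty$.
\begin{enumerate}[$\bullet$]
\item For $y=\eps$ and $x\in(-1+\eps^\delta,-\eps^\delta)$: both arctan arguments exceed $\eps^{\delta-1}$ in absolute value, so $J(x,\eps)=1-O(\eps^{1-\delta})$, and the upper bound gives $F\leq-\lod+O(\eps^{1-\delta})+o(1)<0$ for $\eps$ small.
\item For $y=\eps$ and $x\in[-\eps^{-1},-1-\eps^\delta)\cup(\eps^\delta,\eps^{-1}]$: both arguments have the same sign with $\min(|x/\eps|,|(x+1)/\eps|)\geq\eps^{\delta-1}$, so $J(x,\eps)=O(\eps^{1-\delta})$, and the lower bound gives $F\geq(1-\upd)-O(\eps^{1-\delta})-o(1)>0$.
\item On the arc $x^2+y^2=\eps^{-2}$, $y\geq\eps$: I split into $y\geq\eps^{-1}/\sqrt{2}$ (where $J\leq 1/(\pi y)\leq\sqrt{2}\eps/\pi$) and $y<\eps^{-1}/\sqrt{2}$ (where $|x|\geq\eps^{-1}/\sqrt{2}>1$ and the tangent-slope bound $\arctan(b)-\arctan(a)\leq(b-a)/(1+\min(|a|,|b|)^2)$ yields $J\leq C\eps$). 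In either subcase $F\geq(1-\upd)-C\eps-o(1)>0$.
\end{enumerate}

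\textbf{Main obstacle.} The most delicate step is the uniform Riemann-sum approximation of $\m_{T}[\mathfrak{a}]$ on the narrow convolution scale $y=\eps$: Assumption~\ref{ass:density} controls densities only on scales $\geq\dd/T$, but since $\dd/T\to 0$ while $\eps>0$ is fixed before $N\to\infty$, the bulk density bounds become available for $N$ large enough, with constants uniform in the choice of $\mathfrak{A}(N)$.
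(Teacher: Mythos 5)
Your proof is correct and takes essentially the same route as the paper's: starting from \eqref{im_Sprime_measures}, evaluating the lattice convolution via a Riemann-sum/arctan computation, bounding the configuration convolution away from $0$ and $1$ via Assumption \ref{ass:density}, and then running the same three-region sign analysis at $y=\eps$ and on the arc. The only (harmless) difference is that you bound the configuration term by $\lod-o(1)$ and $\upd+o(1)$ using the full window $[-\QQ/T,\QQ/T]$ together with a Cauchy-tail estimate, whereas the paper works with a window of rescaled size $O(\eps^{-1})$ and settles for the slightly weaker bounds $\lod/2$ and $\tfrac{1+\upd}{2}$ on the arc.
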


\begin{figure}[htpb]
    \begin{tikzpicture}[scale=1,thick]
        \def\ee{.22}
        \def\eedd{.6}
        \draw[->] (-5,0)--(5,0);
        \draw[->] (0,-.5)--(0,5);
        \draw[fill] (0,0) circle (2pt) node [below left] {$\phantom{\eps^{\delta}}0$};
        \draw[fill] (-2,0) circle (2pt) node [xshift=5,below] {$-1\phantom{\eps^{\delta}}$};
        \draw[fill] (\eedd,0) circle (1pt) node [below] {$\eps^{\delta}$};
        \draw[fill] (1/\ee,0) circle (1pt) node [below] {$\eps^{-1}$};
        \draw[fill] (-2-\eedd,0) circle (1pt) node [xshift=5,below left] {$-1-\eps^{\delta}$};
        \def\centerarc[#1](#2)(#3:#4:#5)
        {\draw[#1] ($(#2)+({#5*cos(#3)},{#5*sin(#3)})$) arc (#3:#4:#5)}
        \centerarc[line width = 3.4](0,0)(3.5:176.5:1/\ee);
        \draw[line width = 3.4] (\eedd,\ee)--(1/\ee+.05,\ee);
        \draw[line width = 3.4] (-1/\ee-.05,\ee)--(-2-\eedd,\ee);
        \draw[line width = 0.7, dotted] (-1/\ee,\ee)--(1/\ee,\ee);
        \draw[line width = 3.4] (-2+\eedd,\ee)--(-\eedd,\ee);
        \node at (1,4.1) {$+$};
        \node[above] at (2.5,\ee) {$+$};
        \node[above] at (-3.5,\ee) {$+$};
        \node[above] at (-1,\ee) {$-$};
        \draw[blue, densely dashed, line width = 2] (-\eedd/2,\ee)
        to [out=90,in=180] (-\eedd/3,1)
        to [out=0,in=90] (-\ee/2,\ee);
        \draw[blue, densely dashed, line width = 2] (\eedd/2,\ee)
        to [out=90,in=0] (-1,2)
        to [out=180,in=90] (-2+\eedd/2,\ee);
        \draw[blue, densely dashed, line width = 2] (-2-\eedd/2,\ee)
        to [out=90,in=180] (-2-\eedd/3,.7)
        to [out=0,in=90] (-2-\ee/2,\ee);
    \end{tikzpicture}
    \caption{Signs of $\Im \Sfin'(\z)$ along the curves described in \Cref{lemma_imaginary_part_1}. Blue dashed curves represent a possible part of the boundary of the set $\mathcal U$ inside $\mathcal D$, see \Cref{lemma_imaginary_part_2} below.}
    \label{fig:compact_in_C_with_signs_of_Im_S_prime}
\end{figure}

\begin{remark}
    The presence of $\delta$ in this lemma and \Cref{lemma_imaginary_part_2} below
    is not essential. However, $\delta$ is put here to better
    link these statements with
    \Cref{lemma_real_part} below where $\delta$ plays an important role.
\end{remark}
\begin{proof}[Proof of \Cref{lemma_imaginary_part_1}]
    Fix $\eps>0$. As $N$ (and thus $T$) grows, the
    absolutely convergent sum
    $\bigl(\m_{T}[\mathfrak{l}_{T}]*\CC_y\bigr)(x)$
    is a Riemann sum for the corresponding integral,
    and it approximates the integral
    uniformly on compact subsets
    of the upper half plane (and in particular, for $(x,y)$
    in each of the sets described in the hypotheses of the lemma).
    Thus,
    for any $c>0$ there exists $N_0$
    such that for all $N>N_0$,
    \begin{equation*}
        \left|\bigl(\m_{T}[\mathfrak{l}_{T}]*\CC_y\bigr)(x)-
        \frac{1}{\pi}
        \bigg(\int_{-\infty}^{-1}+
        \int_{0}^{\infty}\bigg)
        \frac{y\,du}{y^2+(x-u)^2}\right|<c.
    \end{equation*}
    The integral above can be explicitly evaluated,
    it is equal to
    \begin{equation*}
        1+\frac{1}{\pi}\tan ^{-1}\left(\frac{x}{y}\right)
        -\frac{1}{\pi}\tan ^{-1}\left(\frac{x+1}{y}\right).
    \end{equation*}
    For small $y$, this expression is close to $1$ if $x\in(-\infty, -1)\cup(0,+\infty)$,
    and close to $0$ if $x\in(-1,0)$. Moreover, for
    $\sqrt{x^2+y^2}=\eps^{-1}$ and $y\ge\eps$ this expression is close to $1$, too.

    Let us now deal with
    $\bigl(\m_{T}[\mathfrak{a}]*\CC_y\bigr)(x)$,
    which enters \eqref{eq_imaginary_eq} with a negative sign.
    We aim to show that this sum is bounded away from $0$ and $1$,
    which will imply the claim.
    Use Assumption \ref{ass:density}
    and take $N$ so large that $\QQ>2T\eps^{-1}$.
    Throw away summands for which $|a_i|>2T\eps^{-1}$,
    and then split the segment $(-2T\eps^{-1},2T\eps^{-1})$
    into $4T\eps^{-1}/\dd$ segments
    of the form
    $(-2T\eps^{-1}+j\dd,-2T\eps^{-1}+(j+1)\dd)$,
    each of which contains
    at least $\lod\dd$ points from the configuration $\mathfrak{A}$.
    On each of these segments, replace the
    summands $\frac{1}{\pi T}\frac{y}{y^2+(x-a_r/T)^{2}}$
    by $\lod \dd$ times
    the minimum of $\frac{1}{\pi T}\frac{y}{y^2+(x-a/T)^{2}}$
    over $a$ belonging to the corresponding segment.
    This allows to estimate
    $\bigl(\m_{T}[\mathfrak{a}]*\CC_y\bigr)(x)$ from below
    by a Riemann
    sum of the integral
    \begin{equation*}
        \lod\int_{-2\eps^{-1}}^{2\eps^{-1}}\frac{1}{\pi}\frac{y\,du}{y^2+(x-u)^2}=
        \frac{\lod}{\pi}
        \left[\tan ^{-1}\left(\frac{2\eps^{-1}-x}{y}\right)+
        \tan ^{-1}\left(\frac{2\eps^{-1}+x}{y}\right)\right]
    \end{equation*}
    within error $O(T^{-1}\eps^{-1})$ which goes to zero.
    For $y=\eps$, the expression
    in the square brackets is close to $\pi$, and for
    $\sqrt{x^2+y^2}=\eps^{-1}$ and $y\ge\eps$ it is $\ge\frac \pi2$.
    Therefore, $\bigl(\m_{T}[\mathfrak{a}]*\CC_y\bigr)(x)\ge\frac{\lod}2$.

    The other estimate is obtained in a similar manner but now
    we assume that all locations outside $(-2T\eps^{-1},2T\eps^{-1})$
    are occupied by particles from the configuration $\mathfrak{A}$. This
    allows to write
    \begin{multline*}
        \bigl(\m_{T}[\mathfrak{a}]*\CC_y\bigr)(x)\le
        \frac{\upd}{\pi}
        \int_{-2\eps^{-1}}^{2\eps^{-1}}\frac{y\,du}{y^2+(x-u)^2}
        +\int_{\R\setminus(-2\eps^{-1},2\eps^{-1})}
        \frac{1}{\pi}\frac{y\,du}{y^2+(x-u)^2}+O\Big(\frac{1}{T\eps}\Big)
        \\=
        \frac{\upd-1}{\pi}
        \left[\tan ^{-1}\left(\frac{2\eps^{-1}-x}{y}\right)+
        \tan ^{-1}\left(\frac{2\eps^{-1}+x}{y}\right)\right]+
        1+O\Big(\frac{1}{T\eps}\Big)\le
        \frac{1+\upd}{2}
    \end{multline*}
    for large enough $N$.
    This completes the proof.
\end{proof}

\begin{lemma}\label{lemma_imaginary_part_2}
 Under Assumptions \ref{ass:density} and \ref{ass:intermediate_behavior}, for each $0<\delta<1$ there exists
 $\eps_0>0$ (which may depend
 on constants in our assumptions but not on the choice of
 $\mathfrak{A}(N)$),
 such that for any $0<\eps<\eps_0$ there is
 $N_0\in\Z_{\ge1}$, and for all $N>N_0$
 there exists a curve $\gamma=\gamma(N)$
 in the upper half plane
 with the following properties:
 \begin{enumerate}[$\bullet$]
  \item For all $\z\in\gamma$ we have
  $\Im \Sfin'(\z)=0$, $\Im(\z)\ge\eps$, and
  $|\z|<\eps^{-1}$;
  \item The curve $\gamma$ starts in the set $\{x+\i \eps \colon
  {-1}-\eps^\delta<x<{-1}+\eps^\delta\}$, and ends in the set
  $\{x+\i \eps \colon{-\eps}^\delta<x<\eps^\delta\}$.
 \end{enumerate}
\end{lemma}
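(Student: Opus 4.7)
My plan is to realize the curve $\gamma$ as a suitable sub-arc of the topological boundary of a connected component of the sublevel set $\{\Im \Sfin' < 0\}$, where the overall region of interest is pinned down by \Cref{lemma_imaginary_part_1}.

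First, I fix the compact ``box'' $\mathcal{D} \subset \HH$ bounded by the arc $\{|\z| = \eps^{-1},\ \Im \z \ge \eps\}$ and the horizontal segment $\{\Im \z = \eps,\ |\Re \z| \le \eps^{-1}\}$. By \Cref{lemma_imaginary_part_1}, for all sufficiently large $N$ the function $\Im \Sfin'$ is strictly positive on $\partial \mathcal{D}$ except possibly on two short ``windows'' on the bottom contained in $(-1-\eps^\delta,-1+\eps^\delta)$ and $(-\eps^\delta,\eps^\delta)$; on the middle segment $\{\Im \z = \eps,\ \Re \z \in (-1+\eps^\delta,-\eps^\delta)\}$ it is strictly negative.

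Second, I define $\mathcal{V} := \{\z \in \mathcal{D} : \Im \Sfin'(\z) < 0\}$ and let $\mathcal{U}$ be the connected component of $\mathcal{V}$ that contains the point $-\tfrac12 + \i \eps$ (which lies in $\mathcal{V}$ by \Cref{lemma_imaginary_part_1}). Since $\Sfin'$ is holomorphic and non-constant in $\HH$, the function $\Im \Sfin'$ is a non-trivial harmonic function and its zero set $\mathcal{N} := \{\z \in \HH : \Im \Sfin'(\z)=0\}$ is a real-analytic one-dimensional variety, smooth away from the discrete set of zeros of $\Sfin''$. The portion of $\partial \mathcal{U}$ in the interior of $\mathcal{D}$ is contained in $\mathcal{N}$, while $\overline{\mathcal{U}} \cap \partial \mathcal{D}$ is forced (by strict positivity on the rest of $\partial \mathcal{D}$) to lie inside the two transition windows on the bottom.

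Third, I extract the curve. The closure $\overline{\mathcal{U}}$ contains the entire middle segment of the bottom, whose left and right endpoints lie in the closures of the left and right transition windows respectively; hence $\partial \mathcal{U}$ must approach the bottom inside \emph{both} windows. Viewing $\partial \mathcal{U}$ as the frontier of a connected open region in the plane and following it along $\mathcal{N}$, a standard boundary-tracing argument yields a piecewise real-analytic arc $\gamma \subset \mathcal{N} \cap \overline{\mathcal{U}}$ joining a point of $\{\Im\z=\eps\}\cap(-1-\eps^\delta,-1+\eps^\delta)$ to a point of $\{\Im\z=\eps\}\cap(-\eps^\delta,\eps^\delta)$. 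Along $\gamma$ one automatically has $\Im \Sfin'=0$, $\Im \z \ge \eps$ and $|\z| \le \eps^{-1}$, as required.

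The main technical obstacle is the handling of branch points of $\mathcal{N}$ (the zeros of $\Sfin''$ in $\HH$), where several analytic arcs of the level set meet and where $\partial \mathcal{U}$ is not literally a smooth curve. The resolution is that at each such point $\partial \mathcal{U}$ locally selects an unambiguous pair of arcs — those that separate $\mathcal{U}$ from its complement — so the trace can be continued through the branch point in a canonical way. One also has to rule out that $\partial \mathcal{U}$ splits into two pieces each reaching only one window; this is prevented by the connectedness of $\mathcal{U}$ together with the fact that $\partial \mathcal{U}$ is the topological frontier of a single connected planar region containing the middle bottom segment, whose two ends lie in the two different windows.
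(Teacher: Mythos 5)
Your proposal is correct and follows essentially the same route as the paper: it fixes the same region $\mathcal{D}$, uses the sign pattern from \Cref{lemma_imaginary_part_1} on $\partial\mathcal D$, and takes $\gamma$ to be a piece of the boundary of the negativity region $\{\Im\Sfin'<0\}$ inside $\mathcal D$, which by the separation/continuity argument must run from the window near $-1$ to the window near $0$. The extra care you devote to branch points of the level set (zeros of $\Sfin''$) is exactly the issue the paper dismisses in a footnote as unnecessary, so your treatment is a slightly more detailed version of the same argument rather than a different one.
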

\begin{proof}
    Let
    $\mathcal D=\{x+\i y \in \mathbb C \colon y>\eps,\, \sqrt{x^2+y^2}<\eps^{-1}\}$,
    and denote
    \begin{equation*}
    \mathcal U =
    \mathcal D \cap
    \{x+\i y\in\C\colon \Im \Sfin'(x+\i y)<0\}.
    \end{equation*}
    By \Cref{lemma_imaginary_part_1}, the part of the boundary of $\mathcal U$ which
    lies inside the interior of $\mathcal D$
    is a union of several curves
    whose start and end points belong to
    \begin{equation*}
        \{x+\i\eps\in\C\colon -1-\eps^{\delta}<x<-1+\eps^{\delta}
        \textnormal{ or }
        -\eps^{\delta}<x<\eps^{\delta}\},
    \end{equation*}
    cf. \Cref{fig:compact_in_C_with_signs_of_Im_S_prime}.\footnote{One can show
    that these curves do not intersect, i.e., that
    $\Sfin''(\z)$ cannot vanish where
    $\Im\Sfin'(\z)=0$, but we do not need this fact.}

    By continuity and \Cref{lemma_imaginary_part_1},
    on any path from
    the segment
    $\{x+\i \eps\colon -1+\eps^\delta<x<-\eps^\delta\}$
    (where $\Im \Sfin'(\z)<0$) to the curved boundary of $\mathcal D$
    (where $\Im \Sfin'(\z)>0$)
    there exists a point where
    $\Im\Sfin'(\z)=0$. Thus,
    as $\gamma$ we can take any of the curves forming the boundary
    of $\mathcal U$ inside $\mathcal D$
    which starts to the left of $-1+\eps^{\delta}$,
    ends to the right of $-\eps^{\delta}$,
    and does not intersect the set
    $\{x+\i\eps\}$ except at its endpoints.
    This implies the claim.
\end{proof}

\begin{lemma}
\label{lemma_real_part}
    Under Assumptions \ref{ass:density} and \ref{ass:intermediate_behavior}, there exist
    $0<\delta<1$ and $\eps_0>0$,
    (which may depend
    on constants in our assumptions but not on the choice of
    $\mathfrak{A}(N)$), such that for each $0<\eps<\eps_0$ there
    exists $N_0\in\Z_{\ge1}$, and for all $N>N_0$ we have
    \begin{enumerate}[$\bullet$]
    \item $\Re \Sfin'(x+\i \eps)<-1$ for all $-1-\eps^\delta<x<-1+\eps^\delta$;
    \item $\Re \Sfin'(x+\i \eps)>1$ for all $-\eps^\delta<x<\eps^\delta$.
    \end{enumerate}
\end{lemma}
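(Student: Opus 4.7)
The plan is to use the equivalent representation \eqref{Sfin_prime_initial_formula} for $\Sfin'$, so that at $\z=x+\i\eps$
\begin{equation*}
\Re\Sfin'(x+\i\eps)=A(x,\eps)+B(x,\eps)-\Re\bigl[\pi\cot(\pi T\z)\bigr]-\log(\be^{-1}-1),
\end{equation*}
where $A(x,\eps)=\Re\sum_{r=1}^N(T\z-a_r)^{-1}$ and $B(x,\eps)=\Re\sum_{i=1}^{T-1}(T\z+i)^{-1}$, and to analyze the three pieces separately. The cotangent term is $O(e^{-2\pi T\eps})=o(1)$ (since $T\eps\to\infty$) and is negligible.

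For $B(x,\eps)$, the sum is a Riemann sum for $\int_0^1(\z+u)^{-1}du=\log(\z+1)-\log\z$ with error $O((T\eps^2)^{-1})=o(1)$, so
\begin{equation*}
B(x,\eps)=\tfrac12\log\frac{(x+1)^2+\eps^2}{x^2+\eps^2}+o(1).
\end{equation*}
For $|x|<\eps^\delta$ one has $(x+1)^2+\eps^2\ge\tfrac12$ and $x^2+\eps^2\le 2\eps^{2\delta}$, hence $B(x,\eps)\ge\delta|\log\eps|-C$; symmetrically $B(x,\eps)\le-\delta|\log\eps|+C$ for $|x+1|<\eps^\delta$. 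This piece alone already has the correct sign and the large magnitude required by the lemma.

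The hard part is a uniform bound on $A(x,\eps)=\frac1T\sum_r h(\tilde a_r)$, where $\tilde a_r=a_r/T$ and $h(u)=(x-u)/((x-u)^2+\eps^2)$. I would split at a threshold $R>0$ chosen depending only on the constants in our Assumptions. For the far piece $|\tilde a_r-x|>R$, expand $(Tx-a_r)^{-1}=-a_r^{-1}+O(T/a_r^2)$, drop the correction factor $\bigl(1+T^2\eps^2/(Tx-a_r)^2\bigr)^{-1}=1+O(\eps^2/R^2)$, and invoke Assumption~\ref{ass:intermediate_behavior} together with the trivial bound $\sum_{|a_i|\ge\updelta N}|a_i|^{-1}\le 1/\updelta$; the result is an absolute constant. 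For the near piece, partition $[x-R,x+R]$ into $\sim 2RT/\dd$ sub-intervals $J_j$ of rescaled length $\dd/T$; by Assumption~\ref{ass:density} the number of $\tilde a_r$ in $J_j$ gives density $\rho_j\in[\lod,\upd]$. A standard Riemann-sum step (with error $O(R\upd\dd/(T\eps^2))=o(1)$) reduces the near piece to $\sum_j\rho_j(\dd/T)h(y_j)$. Splitting $\rho_j=\bar\rho+\tilde\rho_j$ with $\bar\rho=(\lod+\upd)/2$ and $|\tilde\rho_j|\le\tfrac12(\upd-\lod)$, the $\bar\rho$-part vanishes by oddness of $h$ about $u=x$ over the symmetric interval, while the fluctuation part is bounded in absolute value by $\tfrac12(\upd-\lod)\int_{-R}^R|v|(v^2+\eps^2)^{-1}dv=(\upd-\lod)|\log\eps|+C$. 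Altogether $|A(x,\eps)|\le(\upd-\lod)|\log\eps|+C$ uniformly.

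To conclude, choose any $\delta\in(\upd-\lod,1)$, which is possible since $0<\lod\le\upd<1$. Then for $|x|<\eps^\delta$,
\begin{equation*}
\Re\Sfin'(x+\i\eps)\ge\bigl(\delta-(\upd-\lod)\bigr)|\log\eps|-C',
\end{equation*}
which exceeds any prescribed constant (in particular $1$) once $\eps$ is sufficiently small; the reversed inequality is obtained identically for $|x+1|<\eps^\delta$. The principal obstacle is extracting the \emph{sharp} coefficient $\upd-\lod$ in the bound on $A$: adversarial density patterns within $[\lod,\upd]$ can genuinely produce $|\log\eps|$-growth of exactly this order, so a looser estimate (e.g.\ using only $\sup|h|\cdot\tfrac1T\cdot N_{\text{near}}\sim 1/\eps$) would prevent any choice of $\delta<1$ that beats the error coming from $A$.
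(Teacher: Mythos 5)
Your proof is correct and reaches the same conclusion, but via a genuinely different decomposition of the dangerous particle-sum term. Both you and the paper peel off the exponentially small cotangent and the Riemann-sum contribution from the lattice terms, and both agree that the lattice contribution produces the dominant $\delta|\log\eps|$ with the correct sign. The divergence is in how the contribution from $\mathfrak A(N)$ is tamed. The paper's argument is \emph{one-sided}: to lower-bound $\Re\Sfin'$ near $x=0$, it discards all nonnegative summands in the particle sum and bounds what remains by $\frac{1+\upd}{2}$ times the corresponding integral (and symmetrically for the upper bound near $x=-1$), ending with $\Re\Sfin' > \bigl(\frac{1+\upd}{2}-\delta\bigr)\log\eps + C$ and requiring $\delta\in\bigl(\frac{1+\upd}{2},1\bigr)$. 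Your argument is \emph{two-sided}: you write the local particle density as $\bar\rho+\tilde\rho_j$ with $\bar\rho=(\lod+\upd)/2$, kill the $\bar\rho$-part by oddness of $(x-u)/((x-u)^2+\eps^2)$ about $u=x$, and bound the fluctuation by $\frac{\upd-\lod}{2}\int|h|$, yielding $|A|\le(\upd-\lod)|\log\eps|+C$ and the constraint $\delta\in(\upd-\lod,1)$. Both constraints are satisfiable, and yours is strictly weaker (since $\upd-\lod<\frac{1+\upd}{2}$ always), so your estimate on the particle sum is sharper. You are also right that the near/far split at a fixed $R$, plus Assumption~\ref{ass:intermediate_behavior} together with $\sum_{|a_i|>\updelta N}|a_i|^{-1}\le\updelta^{-1}$, disposes of the far tail; this mirrors the paper's three-way split \eqref{eq_split_into_3}. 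The only remark worth recording is that your averaging trick produces an actual absolute-value bound $|A|\le(\upd-\lod)|\log\eps|+C$, which is a strictly stronger statement than the one-sided bound the paper proves, and it is this extra strength that buys you the wider admissible window for $\delta$.
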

\begin{proof}
    We will prove only the second claim, as the first one is analogous.
    We will specify the exact value of $\eps_0$ at the end of the proof,
    and for now let us just fix arbitrary
    $\eps<\eps_0<1$ and $\delta\in(0,1)$.
    In addition, take a large positive real $\RRR$. If
    $\RRR$ and $N$ are large enough, then we can restrict the summation in the infinite
    principal value sum in
    \eqref{eq_real_eq} to $j\in \mathfrak L_T \cap [-\RRR T,\RRR T]$, so that
    \begin{equation*}
     \left| \pv\sum_{j\in \mathfrak{L}_T}\frac{1}{T}\frac{x-j/T}{y^2+(x-j/T)^{2}}- \sum_{j\in \mathfrak L_T \cap
    [-\RRR T,\RRR T]}\frac{1}{T}\frac{x-j/T}{y^2+(x-j/T)^{2}}\right|<1,\qquad
    y=\eps.
    \end{equation*}
    In turn, the sum over $j\in \mathfrak L_T \cap [-\RRR T,\RRR T]$ is the Riemann sum for
    the corresponding integral, so for large $N$ we have
    \begin{equation}
        \left| \pv\sum_{j\in \mathfrak{L}_T}\frac{1}{T}\frac{x-j/T}{y^2+(x-j/T)^{2}}-
        \int\limits_{u\in [-\RRR,-1]\cup [0,\RRR]} \frac{(x-u)du}{y^2+(x-u)^{2}}\right|<2,
        \qquad
        y=\eps.
    \end{equation}

    Let us now
    bound the sum over the configuration
    $\mathfrak{A}(N)$ in \eqref{eq_real_eq}.
    For that
    we split this sum into three parts:
    \begin{multline}\label{eq_split_into_3}
        \sum_{i\in\mathfrak{A}(N)}\frac{1}{T}\frac{x-i/T}{y^2+(x-i/T)^{2}}=
        \sum_{i\in\mathfrak{A}(N)\cap[-\RRR T,\RRR T]}\frac{1}{T}\frac{x-i/T}{y^2+(x-i/T)^{2}}
        \\+
        \sum_{i\in\mathfrak{A}(N)\setminus [-\RRR T,\RRR T]}\frac{1}{T}\left(\frac{x-i/T}{y^2+(x-i/T)^{2}}+\frac{T}{i}\right)
        - \sum_{i\in\mathfrak{A}(N)\setminus [-\RRR T,\RRR T]} \frac{1}{i}.
    \end{multline}
    The third sum in \eqref{eq_split_into_3} is bounded due to
    Assumption \ref{ass:intermediate_behavior}.
    For the second sum, observe that
    \begin{equation*}
        \frac{1}{T}\left(\frac{x-i/T}{y^2+(x-i/T)^{2}}+\frac{T}{i}\right)=
        \frac{1}{T}
        \frac{y^2+x^2  -(i/T)x}{
        (i/T)(y^2+(x-i/T)^{2})},
    \end{equation*}
    that is, the second sum over $i$ converges absolutely.
    Moreover, we can estimate as $N\to\infty$:
    \begin{equation*}
        \sum_{i\in\mathfrak{A}(N)\setminus [-\RRR T,\RRR T]}
        \left|\frac{1}{T}
        \left(\frac{x-i/T}{y^2+(x-i/T)^{2}}+\frac{T}{i}\right)
        \right|\le
        \sum_{i\in \Z\setminus [-\RRR T,\RRR T]}
        \left|\frac{1}{T}
        \frac{y^2+x^2  -(i/T)x}{
        (i/T)(y^2+(x-i/T)^{2})}
        \right|,
    \end{equation*}
    and the right-hand side is the Riemann sum
    for the integral
    \begin{equation*}
        \int\limits_{\R\setminus [-\RRR,\RRR]}
        \left|\frac{y^2+x^2-ux}{
        u(y^2+(x-u)^{2})}\right|du,
    \end{equation*}
    which is uniformly bounded for $(x,y)$ in our segment
    (where $x$ is around $0$). Thus,
    the second sum in \eqref{eq_split_into_3} is uniformly bounded
    by a constant independent of $\eps$.

    Finally, for the first sum in \eqref{eq_split_into_3} we use
    Assumption \ref{ass:density} and approximate sums by integrals
    similarly to the proof of \Cref{lemma_imaginary_part_1}.
    To get a lower bound, first throw away all nonnegative
    summands in this sum, and write for the remaining ones:
    \begin{equation}
    \sum_{\substack{i\in\mathfrak{A}(N)\cap[-\RRR T,\RRR T]
    \\
    i/T>x}}\frac{1}{T}\frac{x-i/T}{y^2+(x-i/T)^{2}}>
    \frac{1+\upd}{2}\int_{x}^{\RRR}
    \frac{(x-u)du}{y^2+(x-u)^2},
    \end{equation}
    where $N$ is sufficiently large.

    Combining all the estimates, we obtain the following bound. For each $\eps>0$ there
    exists $N_0$ such that for all $N>N_0$ we have
    \begin{multline}
    \Re \Sfin'(x+\i y) >
    \frac{1+\upd}{2}
    \int_{x}^{\RRR}
    \frac{(x-u)du}{y^2+(x-u)^2}-
    \int\limits_{u\in [-\RRR,-1]\cup [0,\RRR]} \frac{(x-u)du}{y^2+(x-u)^{2}}
    \\- \log(\be^{-1}-1)+\textnormal{``error''},
    \end{multline}
    where \textnormal{``error''}
    is uniform in $(x,y)$ in our segment
    and is independent of $\eps$.
    Observe that
    \begin{equation*}
        \int_{x}^{\RRR}
        \frac{(x-u)du}{y^2+(x-u)^2}=
        \int_{0}^{\RRR-x}\frac{-vdv}{v^2+y^2}>
        \int_{0}^{\RRR+1}\frac{-vdv}{v^2+y^2},
    \end{equation*}
    and
    \begin{equation*}
        -\int_{0}^{\RRR}
        \frac{(x-u)du}{y^2+(x-u)^{2}}=
        \int_{-x}^{\RRR-x}
        \frac{vdv}{v^2+y^2}
        >\int_{-x}^{\RRR-1}
        \frac{vdv}{v^2+y^2}.
    \end{equation*}
    At the same time,
    \begin{equation*}
        -\int_{-\RRR}^{-1}\frac{(x-u)du}{y^2+(x-u)^{2}}
        =-\frac{1}{2} \log \left(\frac{(\RRR+x)^2+y^2}{(1+x)^2+y^2}\right)
    \end{equation*}
    can be bounded by an absolute constant since
    both $x$ and $y$ are close to zero.
    Thus, we can write
    \begin{multline*}
        \frac{1+\upd}{2}
        \int_{x}^{\RRR}
        \frac{(x-u)du}{y^2+(x-u)^2}-
        \int\limits_{u\in [-\RRR,-1]\cup [0,\RRR]}
        \frac{(x-u)du}{y^2+(x-u)^{2}}
        \\>
        \frac{1+\upd}{2}
        \int_{0}^{\RRR+1}\frac{-vdv}{v^2+y^2}
        +
        \int_{-x}^{\RRR-1}
        \frac{vdv}{v^2+y^2}+C
        \\=
        -\frac{1+\upd}{4}
        \log \left(\frac{(\RRR+1)^2}{y^2}+1\right)
        +
        \frac{1}{2}
        \log \left(\frac{(\RRR-1)^2+y^2}{x^2+y^2}\right)+C.
    \end{multline*}
    Here and below in
    this proof $C$ stands for some real
    constant which is
    uniform in $x,y$ and does not depend on $\eps$
    but may depend on $\RRR$ (but we fixed large $\RRR$
    once and for all in the beginning of the proof).
    The value of $C$ can change from line to line.
    Since $y=\eps$ is small, we have
    \begin{equation*}
        -\frac{1+\upd}{4}
        \log \left(\frac{(\RRR+1)^2}{y^2}+1\right)
        =
        \frac{1+\upd}{2}\log y+C+O(\eps^2).
    \end{equation*}
    We also have
    \begin{equation*}
        \frac{1}{2}
        \log \left(\frac{(\RRR-1)^2+y^2}{x^2+y^2}\right)
        =\frac{1}{2}\log\left(y^2+(\RRR-1)^{2}\right)-
        \log\sqrt{x^2+y^2}>C- \delta\log y
    \end{equation*}
    because $x^2+y^2<\eps^{2 \delta}+\eps^{2}$,
    which behaves as $\eps^{2 \delta}(1+o(1))=
    y^{2 \delta}(1+o(1))$.
    When $\delta$ is close enough to $1$,
    \begin{equation*}
        \frac{1+\upd}{2}\log y- \delta\log y
    \end{equation*}
    tends to $+\infty$ as
    $y=\eps\to 0$, and we are done.
\end{proof}

\begin{proof}[Proof of \Cref{Proposition_roots_positioning}]
Fix $\eps>0$ and $N_0\in\Z_{\ge1}$ depending on $\eps$
such that
\Cref{lemma_imaginary_part_1,lemma_imaginary_part_2,lemma_real_part}
hold
(recall that $\eps<\eps_0$,
where $\eps_0$ may depend
 on constants in our assumptions but not on the choice of
 $\mathfrak{A}(N)$).
Consider the curve $\gamma$ from
\Cref{lemma_imaginary_part_2}. This is a continuous curve
on which
$\Im \Sfin'(\z)=0$. Furthermore, \Cref{lemma_real_part} guarantees
that $\Re \Sfin'(\z)$ has distinct signs
at the endpoints of $\gamma$.
Since $\Re \Sfin'(\z)$ is a continuous function on $\gamma$, we conclude that there
exists $\zfin\in \gamma$ for which $\Re \Sfin'(\zfin)=0$, and so
$\Sfin'(\zfin)=0$ (as $\Sfin'$ depends on $N$,
so does $\zfin$, and this statement is
valid for all $N>N_0$).
\Cref{lemma:S_T_critical_points_equation} then implies that there
are no other critical points in the upper half plane,
and therefore $\zfin$ is the
desired unique one.
As the compact set $\mathcal{Z}$ capturing $\zfin$
take $\{x+\i y\in\C
\colon y\ge \eps,\sqrt{x^2+y^2}\le \eps^{-1}\}$.
\end{proof}


\section{Asymptotics of the kernel:
proofs of
\Cref{Theorem_main,Theorem_main_convergence,Theorem_main_convergence_glob,Theorem_main_convergence_Lebesgue}} 
\label{sec:asymptotics_of_the_kernel}

In this section, based on the existence of nonreal critical points afforded by \Cref{Proposition_roots_positioning}, we establish the approximation of the correlation kernel \eqref{K_Bernoulli} of the noncolliding Bernoulli random walk by the extended sine kernel, and also the corresponding bulk limit theorems. That is, here we prove the remaining statements from \Cref{sub:main_result_bulk_limit_theorems}.

\subsection{Behavior of $\Im \Sfin(\z)$ and $\Re \Sfin(\z)$} 
\label{sub:behavior_of_im_sfin_z_old}



%

We aim to describe the steepest descent contours for $\Re \Sfin(\z)$. For that
we need to analyze the behavior of $\Re \Sfin(\z)$ and $\Im\Sfin(\z)$ in
various parts of the upper half plane $\HH$.  Recall that we defined
$\Sfin(\z)$ in \Cref{sub:function_sfin_z_} so that it is holomorphic in $\HH$
and extends to the real axis except the singularities at
$\mathfrak{l}_T\Delta\mathfrak{a}$ (all other logarithmic singularities
belonging to $\frac1T\Z$ are removable).

We start by considering the behavior of $\Im\Sfin(\z)$ close to the real line,
and define
\begin{equation}\label{eq:notation_b_left_right}
    \begin{split}
        b^r_0&=\min(\Z_{\ge0}\setminus\mathfrak{A}),\qquad
        b^{\ell}_{0}=\max(\mathfrak{A}\cap \{-T+1,\ldots,-1\}),
        \\
        b^{\ell}_{-1}&=\max(\Z_{\le -T}\setminus\mathfrak{A}),
        \qquad
        b^{r}_{-1}=\min(\mathfrak{A}\cap \{-T+1,\ldots,-1\}).
    \end{split}
\end{equation}
Clearly,
\begin{equation*}
    b^{\ell}_{-1}\le-T<-T+1\le b^{r}_{-1}\le b^{\ell}_{0}\le-1<0\le b^{r}_{0}.
\end{equation*}

\begin{lemma}\label{lemma:im_at_R_ladder}
    For $x\in\R$, $x\notin \mathfrak{l}_T\Delta\mathfrak{a}$, the function $\Im\Sfin(x)$ is piecewise constant, 
    making jumps at points of 
    $\mathfrak{l}_T\Delta\mathfrak{a}$. 
    It weakly increases for $x\in(-\infty,b_{-1}^{r}/T)\cup (b_0^{\ell}/T,+\infty)$, 
    and weakly decreases for $x\in(b^{\ell}_{-1}/T,b^{r}_{0}/T)$. See \Cref{fig:Im_S} for an example.
\end{lemma}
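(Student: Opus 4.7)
The plan is to make explicit the dependence of $\Im \Sfin(x)$ on $x \in \R \setminus (\mathfrak{l}_T \Delta \mathfrak{a})$ by extending formula \eqref{S_T_action} continuously from $\HH$ to the real line away from singularities. Since $\log_{\HH}(y) = \log|y|$ for real $y > 0$ and $\log|y| + \i \pi$ for real $y < 0$, while $-\z \log(\be^{-1} - 1)$ is real on $\R$, I will obtain
\begin{equation*}
T \, \Im \Sfin(x) = \pi \bigl|\{r\colon a_r > Tx\}\bigr| + \pi \bigl|\{i \in \{1,\ldots,T-1\}\colon i + Tx < 0\}\bigr| - \pi \, \mathbf{1}_{\sin(\pi Tx) < 0} + 2\pi \bigl\lfloor \tfrac{Tx+1}{2}\bigr\rfloor.
\end{equation*}
An elementary case check on $u \in (2n, 2n+1)$ versus $u \in (2n-1, 2n)$ reveals that $-\mathbf{1}_{\sin(\pi u)<0} + 2\lfloor (u+1)/2 \rfloor = \lfloor u \rfloor$ for $u \in \R \setminus \Z$, which collapses the expression to
\begin{equation*}
\Im \Sfin(x) = \frac{\pi}{T}\bigl(N - A(Tx) + L(Tx) + \lfloor Tx \rfloor\bigr),
\end{equation*}
where $A(u) := \#\{r\colon a_r \le u\}$ and $L(u) := \#\{i \in \{1,\ldots,T-1\}\colon i < -u\}$. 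This is a sum of three integer-valued step functions all of whose jumps sit on $\Z$, so on $\tfrac{1}{T}(\R \setminus \Z)$ it is automatically piecewise constant.

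Next I will track the net integer jump of $N - A(u) + L(u) + \lfloor u \rfloor$ as $u$ increases through an integer $u_0$: $-A$ contributes $-\mathbf{1}_{u_0 \in \mathfrak{A}}$, $L$ contributes $-\mathbf{1}_{u_0 \in \{-(T-1),\ldots,-1\}}$, and $\lfloor \cdot \rfloor$ always contributes $+1$. Splitting into six cases according to whether $u_0$ lies in $\Z_{\ge 0}$, in $\{-(T-1),\ldots,-1\}$, or in $\Z_{\le -T}$, and whether $u_0 \in \mathfrak{A}$, the net jump turns out to be $0$ exactly when $u_0 \notin \mathfrak{L}_T \Delta \mathfrak{A}$ (this confirms that $\Im \Sfin$ extends continuously across the removable integer singularities, so the jumps of $\Im\Sfin$ really sit on $\mathfrak{l}_T \Delta \mathfrak{a}$), $+1$ when $u_0 \in \mathfrak{L}_T \setminus \mathfrak{A}$ (that is, either $u_0 \in \Z_{\ge 0} \setminus \mathfrak{A}$ or $u_0 \in \Z_{\le -T} \setminus \mathfrak{A}$), and $-1$ when $u_0 \in \mathfrak{A} \setminus \mathfrak{L}_T$, i.e.\ $u_0 \in \mathfrak{A} \cap \{-(T-1),\ldots,-1\}$.

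Finally, I will read off monotonicity directly from where these jumps are supported. By the definitions in \eqref{eq:notation_b_left_right}, the negative jumps of $\Im\Sfin$ are concentrated in the subset $\{b^r_{-1}/T, \ldots, b^\ell_0/T\}$, the positive jumps on the right half-line start at $b^r_0/T$, and the positive jumps on the far left end at $b^\ell_{-1}/T$. Hence on $(-\infty, b^r_{-1}/T)$ and on $(b^\ell_0/T, +\infty)$ only nonnegative jumps are encountered, so $\Im \Sfin$ weakly increases there, while on $(b^\ell_{-1}/T,\, b^r_0/T)$ only nonpositive jumps occur, so $\Im \Sfin$ weakly decreases, as claimed. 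The main subtlety is the six-case bookkeeping: one must verify that the three competing monotone contributions cancel exactly at the removable integer points and produce the correct sign at the three non-removable patterns.
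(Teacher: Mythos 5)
Your proof is correct and follows essentially the same route as the paper, which simply notes that the claim is straightforward from the definition \eqref{S_T_action} of $\Sfin$ and the observation $\Im(\log_{\HH}(x))=\pi\mathbf{1}_{x<0}$ for $x\in\R\setminus\{0\}$. You have merely carried out the bookkeeping (collapsing the sine and floor terms into $\lfloor Tx\rfloor$ and tracking the integer jumps case by case) that the paper leaves to the reader.
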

\begin{proof}
    This is straightforward from the definition of $\Sfin(\z)$ in \Cref{sub:function_sfin_z_} and the observation that $\Im(\log_{\HH}(x))=\pi\mathbf{1}_{x<0}$, where
    $x\in\R\setminus\{0\}$.
\end{proof}

\begin{figure}[htbp]
    \includegraphics[width=.4\textwidth]{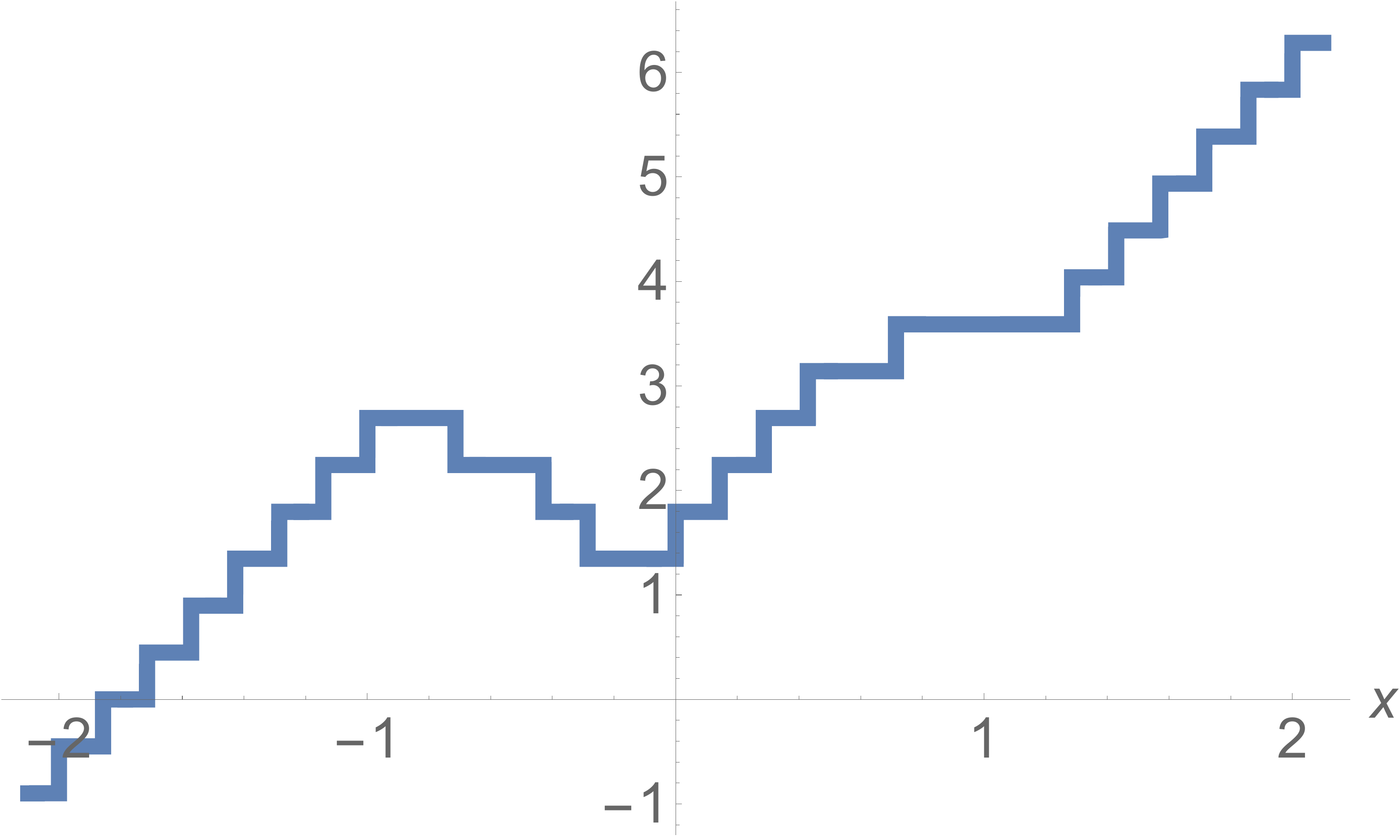}
    \caption{Staircase-type plot of $\Im\Sfin(x)$ for $x\in\R$, with parameters as in \Cref{fig:roots}.
    The singularities
    leading to the down steps are
    $\{-\frac67,\ldots,-\frac17\}\cap
    \{-\frac57,-\frac37,-\frac27,\frac47,\frac67,1,\frac87\}$.}
    \label{fig:Im_S}
\end{figure}

\begin{lemma}\label{lemma:real_neighborhood}
		Fix $0<\beta<1$ and the constants in Assumption
		\ref{ass:intermediate_behavior}. There exists $C>0$ depending only on these
		choices, and such that for each $T,N=1,2,\dots$, $x\in\mathbb R$, $y>0$ we
        have
        (note that $y^2-y\log y$ in the right-hand side is positive for $y>0$)
    \begin{equation}
    \label{eq_imaginary_bound}
     \bigl|\Im \Sfin(x+\i y)-\Im \Sfin(x+\i 0)\bigr|\le C \cdot \left( y  \log(|x|+1)-y\log y+y^2 +\frac{1}{T} \right).
    \end{equation}
\end{lemma}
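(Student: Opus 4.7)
Since $\Sfin$ is holomorphic in $\HH$, the Cauchy--Riemann identity gives $\partial_y\Im\Sfin(x+\i y) = \Re\Sfin'(x+\i y)$, and hence
\begin{equation*}
\Im\Sfin(x+\i y) - \Im\Sfin(x+\i 0) = \int_0^y \Re\Sfin'(x+\i s)\,ds.
\end{equation*}
I would substitute the alternative expression \eqref{Sfin_prime_initial_formula} for $\Sfin'$ and integrate term by term, using $\int_0^y a\,ds/(a^2+T^2s^2) = \sgn(a)\arctan(Ty/|a|)/T$ for each rational summand and $\int_0^y\pi\cot\pi T(x+\i s)\,ds = -(\i/T)\bigl[\log\sin\pi T(x+\i y) - \log\sin\pi Tx\bigr]$ along a continuous branch. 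Writing $X:=Tx$, this gives
\begin{equation*}
\Im\Sfin(x+\i y) - \Im\Sfin(x+\i 0) = \tfrac{1}{T}\bigl(S_{\mathfrak A} + S_{\mathrm{reg}} - G\bigr) - y\log(\be^{-1}-1),
\end{equation*}
where $S_{\mathfrak A}:=\sum_{r=1}^{N}\sgn(X-a_r)\arctan(Ty/|X-a_r|)$, $S_{\mathrm{reg}}:=\sum_{i=1}^{T-1}\sgn(X+i)\arctan(Ty/|X+i|)$, and $G$ is the continuous increment of $\Im\log\sin\pi(X+\i\tau)$ as $\tau$ runs from $0$ to $Ty$. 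It then remains to estimate each of these four pieces.

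The linear term is $O(y)$. For $G$, the factorization $\sin\pi(X+\i Ty)/\sin\pi X = \cosh\pi Ty + \i\cot(\pi X)\sinh\pi Ty$ yields $|G|\le C(1+\log(1+Ty))$ away from integer $X$, with branch continuity handling $X\in\Z$, so $|G|/T\le C(y+1/T)$ by the elementary bound $\log(1+Ty)\le Ty$. For the $\mathfrak A$-independent sum $S_{\mathrm{reg}}$, a dyadic decomposition of $i\mapsto|X+i|$ (each shell of scale $2^k$ containing $O(2^k)$ integer positions) combined with $\arctan(Ty/u)\le\min(\pi/2,Ty/u)$ yields $|S_{\mathrm{reg}}|/T\le C(1/T + y + (-y\log y))$, contributing the corresponding terms of the target bound.

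The main work is to bound $S_{\mathfrak A}/T$, which produces both the $y\log(|x|+1)$ term (from the initial configuration) and the $-y\log y$ term (from local integer counting). I would split by $|X-a_r|$: the \emph{close} band $|X-a_r|\le Ty$ contains at most $2Ty+1$ integer positions, each contributing at most $\pi/2$, giving $O(y+1/T)$ after division by $T$. For the \emph{far} band $|X-a_r|>Ty$, Taylor expand
\begin{equation*}
\sgn(X-a_r)\arctan\!\bigl(Ty/|X-a_r|\bigr) = \tfrac{Ty}{X-a_r} + O\!\bigl((Ty)^3/|X-a_r|^3\bigr);
\end{equation*}
the cubic error summed over $|X-a_r|>Ty$ is $\le C(Ty)^3\sum_{n>Ty}n^{-3} = O(Ty)$, contributing $O(y)$ after division. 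The principal linear piece $Ty\sum_{|X-a_r|>Ty}1/(X-a_r)$ is then further split at $|a_r|=\RRR T$, where $\RRR:=\max(\RRR_0,2|x|)$ and $\RRR_0$ is a fixed threshold supplied by Assumption \ref{ass:intermediate_behavior}. For the bulk range $|a_r|\le\RRR T$, integer spacing alone (with no cancellation used) gives
\begin{equation*}
\Big|\sum_{\substack{|X-a_r|>Ty\\|a_r|\le\RRR T}}\frac{1}{X-a_r}\Big|\le 2\log\!\Big(\frac{\RRR T}{Ty}\Big)+O(1) = 2\log(\RRR/y)+O(1),
\end{equation*}
so after multiplication by $Ty$ and division by $T$ this contributes $Cy\log\RRR + C(-y\log y) + Cy\le Cy\log(|x|+1) + C(-y\log y) + Cy$; note the crucial cancellation of $\log T$ inside $\log(\RRR T/Ty) = \log(\RRR/y)$, which is what keeps the dependence on $|x|$ logarithmic rather than polynomial. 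For the tail $|a_r|>\RRR T$, expand $1/(X-a_r) = -1/a_r + O(|X|/a_r^2)$; the leading piece $-Ty\sum 1/a_r$ is controlled by $|\sum_{|a_r|>\RRR T}1/a_r|\le C(1+\log(|x|+1))$ via Assumption \ref{ass:intermediate_behavior} and \Cref{rmk:intermediate} (which ensures that enlarging $\RRR$ proportionally to $|x|$ costs only a logarithmic factor in the constant), and the $O(|X|/a_r^2)$ error sums to $O(|X|/(\RRR T))=O(1)$ since $\RRR\ge 2|x|$.

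The principal obstacle is precisely this combined treatment of the bulk and tail of $S_{\mathfrak A}$: one must pick the threshold $\RRR$ with the correct $|x|$-dependence to exploit both the $\log T$ cancellation in the bulk estimate and the conditional convergence of the intermediate sum (via Assumption \ref{ass:intermediate_behavior}) for the tail. Residual $O(y)$ contributions from the various pieces are absorbed into $-y\log y + y^2$ using the elementary inequality $y \le -y\log y + y^2$ for all $y>0$ (with minimum attained at $y=1$), which explains the presence of the $y^2$ slack term in the claimed bound.
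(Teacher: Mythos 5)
Your argument is correct and is essentially the paper's proof in different packaging: integrating $\Re\Sfin'$ via the Cauchy--Riemann identity produces exactly the increments $\arctan(Ty/|X-a_r|)$ that the paper obtains by Taylor-expanding the logarithms in \eqref{S_T_action}, and all the subsequent steps (counting $O(Ty)$ integer positions in the band of width comparable to $y$, expanding the far terms to first order, splitting off $\sum_r 1/a_r$ controlled by Assumption \ref{ass:intermediate_behavior} together with the harmonic-series comparison of \Cref{rmk:intermediate}, and the $\min(\pi/2,\cdot)$ bound that produces the $-y\log y$ term) coincide with the paper's. The only blemish is the displayed bulk bound $2\log\bigl(\RRR T/(Ty)\bigr)+O(1)$, which is not literally valid when $Ty<1$ since up to two terms with $Ty<|X-a_r|<1$ can be as large as $1/(Ty)$; after multiplication by $y$ these contribute only $O(1/T)$, so your final estimate is unaffected.
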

\begin{remark}
	The value of $\Im \Sfin(x+\i 0)$ 
	when $x\in \mathfrak{l}_T\Delta\mathfrak{a}$
	(so that this piecewise linear function makes a jump)
	can be chosen arbitrarily (as long as \Cref{lemma:im_at_R_ladder} holds) ---
	this introduces an error of at most $1/T$ which is included
	in the right-hand side of \eqref{eq_imaginary_bound}.
\end{remark}
\begin{proof}[Proof of Lemma \ref{lemma:real_neighborhood}]
	Recall the definition 
	\eqref{S_T_action} of the function $\Sfin$.
	Our aim is to obtain a uniform bound on the increment $\Im \Sfin(x+\i y)-\Im
	\Sfin(x+\i 0)$. 
	
	We start from the second line in \eqref{S_T_action}. For
	$-\z\log(\be^{-1}-1)$ the increment is linear in $y$ and fits into the
	right-hand side of \eqref{eq_imaginary_bound}. For  $\frac {2\pi\i}T
    \left\lfloor
    \tfrac12\Re(T\z)+\tfrac12
    \right\rfloor$ the increment vanishes. For $-\frac 1T\log_{\HH}(\sin(\pi T \z))$,
		the imaginary part of $\log_{\HH}(\cdot)$ is bounded, since it is an
		argument of a complex number. Thus, the increment is bounded by $C/T$.

	 We proceed to the first line of \eqref{S_T_action}. Let us analyze the first
	 term, $ \frac 1T\sum_{r=1}^{N}\log_{\HH}\big(\z-\frac{a_r}T\big)$. Choose a
	 $\delta>0$, which will be later set to $\delta=4y$, and split the sum into
   \begin{multline}
   \label{eq_sum_split}
    \frac 1T\sum_{r=1}^{N}\log_{\HH}\Big(x+\i y-\frac{a_r}T\Big)= \frac 1T\sum_{1\le r\le N:\, |x-a_r/T|<\delta } \log_{\HH}\Big(x+\i y-\frac{a_r}T\Big)\\ +  \frac 1T\sum_{1\le r\le N:\, |x-a_r/T|\ge \delta } \log_{\HH}\Big(x+\i y-\frac{a_r}T\Big).
   \end{multline}
	 The first term in \eqref{eq_sum_split} has at most $2\delta T$ summands, the
	 increment of each one between the points $x+\i y$ and $x+\i 0$ is bounded by
	 a constant. Therefore, the increment of the first term is bounded by $C
	 \cdot \delta=4C y$.

   For the second term in \eqref{eq_sum_split}, we compute the increment directly as
   \begin{equation}
   \label{eq_x1}
    \Im \frac 1T \sum_{1\le r\le N:\, |x-a_r/T|\ge \delta } \log_{\HH}\left(1+\frac{\i y}{x-a_r/T} \right).
  \end{equation}
	By our choice of $\delta$, $\left|\frac{\i y}{x-a_r/T}\right|\le 1/4$.
	Therefore, we can Taylor expand each $\log_{\HH}(\cdot)$ and bound the
	absolute value of \eqref{eq_x1} as
  \begin{equation}
  \label{eq_x2}
   \frac y T \left|\sum_{1\le r\le N:\, |x-a_r/T|\ge \delta } \frac{1}{x-a_r/T} \right| +\frac{C \cdot y^2}{T}
   \sum_{1\le r\le N:\, |x-a_r/T|\ge \delta } \frac{1}{(x-a_r/T)^2}  .
  \end{equation}
  The second term in \eqref{eq_x2} is smaller than
	\begin{equation*}
   C y^2 \int_{|u|>\delta} \frac{1}{u^2}=C y \cdot \frac{2 y}{\delta}<Cy,
	\end{equation*}
	and therefore fits into the right-hand side of \eqref{eq_imaginary_bound}.
	For the first term in \eqref{eq_x2} we write
	$\frac{1}{u-v}=\frac{u}{(u-v)v}-\frac{1}{v}$ and bound from above as follows:
  \begin{multline}
    \label{eq_x3}
        \frac y T \left|
				\sum_{\substack{1\le r\le N\colon\\ |x-a_r/T|\ge \delta, \, |a_r|> T } }
				\left(\frac{x}{(x-a_r/T)(a_r/T)}-\frac{T}{a_r} \right) \right|+\frac y T \left|
				\sum_{\substack{1\le r\le N\colon\\ |x-a_r/T|\ge \delta, \, |a_r|\le  T } }
				\frac{1}{x-a_r/T} \right|
     \\ \le
		   y  \left|
			 \sum_{1\le r\le N\colon  |a_r|> T } 
			 \frac{1}{a_r}\right|+
		   y  \left|
			 \sum_{\substack{1\le r\le N\colon |x-a_r/T|< \delta, \, |a_r|> T } }
			 \frac{1}{a_r}\right| \\ +
				\frac y T \left|
				\sum_{\substack{1\le r\le N\colon\\ |x-a_r/T|\ge \delta, \, |a_r|> T}}
				\frac{x}{(x-a_r/T)(a_r/T)} \right| +y \left|
				\sum_{\substack{1\le r\le N\colon\\ |x-a_r/T|\ge \delta, \, |a_r|\le  T } }
				\frac{1}{Tx-a_r} \right|.
  \end{multline}
	In the right-hand side of \eqref{eq_x3} the first term is bounded by $C\cdot
	y$  due to Assumption \ref{ass:intermediate_behavior} and Remark
	\ref{rmk:intermediate}.  The second term has at most $2\delta T$ summands,
	each of which is at most $1/T$. Therefore, the second term in \eqref{eq_x3}
	is bounded from above by $2 y \delta=4y^2$. For the third term, we can
	replace the sum over $a_r$ by the sum over all integers $j$ satisfying the
	inequalities 
	$|x-j/T|\ge \delta$, $|j|> T$,
	and then upper bound the sum by the integral to get
	\begin{equation*}
    y \int_{|u-x|\ge \delta, \, |u|>1} \frac{x du }{(x-u)u}=y \lim_{M\to+\infty} \int_{|u-x|\ge \delta, \, 1<|u|<M} \left(\frac{1}{u}+\frac{1}{x-u} \right)du
	\end{equation*}
	At this point we need to consider several cases depending on the order of the
	points $x\pm \delta$ and $\pm 1$. In all the cases the integral evaluates
	into a combination of the expressions of the form $\log|x\pm \delta|$, $\log
	\delta$, and $\log|x\pm 1|$. We conclude that this term fits into the form of
	the right-hand side of \eqref{eq_imaginary_bound}.

	For the forth term in the right-hand side of \eqref{eq_x3} we again replace
	$a_r$ by all integers and then use $\sum\limits_{n= \theta_1 T}^{\theta_2 T}
	n^{-1}\approx \log(\theta_2/ \theta_1)$. As a result we get a bound of
	the form $C (\log y+1)$, which fits into the right-hand side of
	\eqref{eq_imaginary_bound}.

	\smallskip

	We have obtained a uniform bound for the increment of each term in
	\eqref{S_T_action} except for $\frac 1T\sum_{i=1}^{T-1}\log_{\HH}\bigl(x+\i
	y+\frac iT\bigr)$, and we proceed to bound this term. When $x$ is bounded away
	from $0$ and $-1$, the argument is the same as we just had. However, when $x$
	is close to $0$ or~$-1$, we need to proceed differently. Let us split the
	sum into two according to the sign of $x+ \frac{i}{T}$. Each of them is
	analyzed in the same way, so we will only deal with one. 
	This reduces the
	problem to bounding
 \begin{equation}
 \label{eq_x4}
  \frac{1}{T} \sum_{i=1}^{T'-1} \Im \left[\log_{\HH}
	\Big(x+\frac iT+\i y\Big)-\log_{\HH}\Big(x+\frac iT+\i 0\Big)\right], \qquad x\ge -\frac{1}{T}, \quad T'\le T.
 \end{equation}
 Each term in \eqref{eq_x4} has the form
	\begin{equation*}
   \Im \left[\log_{\HH}\Big(1+\frac{\i y}{x+\frac iT}\Big)\right]=\arctan\left(\frac{y}{x+\frac iT} \right).
 \end{equation*}
 For $x=-1/T$, $i=1$, the corresponding term in \eqref{eq_x4} vanishes. 
 For all other cases we note that $\arctan$ 
 is monotone and use $\arctan(u)\le \min(\pi/2, u)$, $u\ge 0$.  
 We thus bound~\eqref{eq_x4} by
 \begin{multline*}
 \frac{\pi}{2}\cdot \frac{Ty+1}{T}+ \frac{1}{T} 
 \sum_{i=\lceil Ty \rceil+1 }^{T'-1}  \left(\frac{Ty}{i-1}\right)\le  
 \frac{\pi}{2}\cdot \frac{Ty+1}{T} +y \int_{T\min(y,1)}^{T} \frac{1}{u} du 
 \\\le  \frac{\pi}{2}\cdot \frac{Ty+1}{T} -y \log\bigl(\min(y,1)\bigr).
 \end{multline*}
 Since the last expression fits into the right-hand side of \eqref{eq_imaginary_bound}, we are done.
\end{proof}

We now turn to the real part $\Re \Sfin(\z)$.

\begin{lemma}\label{lemma:estimate_real} 
    Under Assumptions \ref{ass:density} and \ref{ass:intermediate_behavior} 
    and with constants depending only on these assumptions, 
    the following estimates hold:
\begin{enumerate}
\item
    For any $k\in\Z$, $N=1,2,\dots$, and $y>0$ we have
    \begin{equation*}
        \left|\frac{\partial}{\partial y}\Re \Sfin\Big(
        \frac{k+1/2}{T}+\i y
        \Big)\right|\le \pi.
    \end{equation*}
\item There exists $Y>0$, $N_0>0$, such that for any $|x|<\frac{1}{2} \cdot \frac{\QQ(N)}{T(N)}$,  $y>Y$, and $N>N_0$ we have
	\begin{equation*}
     \frac{\partial}{\partial y}\Re \Sfin\bigl(
        x+\i y\bigr)=-\frac{\partial}{\partial x}\Im \Sfin\bigl(
        x+\i y\bigr)<-\frac{1}{y}.
 \end{equation*}

\item For each $X>0$ there exist $C,N_0>0$ such that for all $0<y<1/2$, $|x|<X$ and all $N>N_0$ we have
  \begin{equation*}
     \left|\frac{\partial}{\partial x}\Re \Sfin\bigl(
        x+\i y\bigr)\right|=  \left|\frac{\partial}{\partial y}\Im \Sfin\bigl(
        x+\i y\bigr)\right|< C\left( \log(y^{-1})+\frac{1}{y^2 T^2}\right).
  \end{equation*}

%

\end{enumerate}
\end{lemma}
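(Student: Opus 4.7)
The plan is to exploit that $\Sfin$ is holomorphic in $\HH$, so the Cauchy--Riemann equations yield $\partial_y\Re\Sfin=-\Im\Sfin'$ and $\partial_x\Re\Sfin=\partial_y\Im\Sfin=\Re\Sfin'$. Each of the three bounds thus reduces to a pointwise estimate on $\Sfin'(x+\i y)$, which we derive from \eqref{Sfin_prime_initial_formula}.

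\emph{Part (1).} At $T\z=k+\tfrac12+\i Ty$ one computes $\pi\cot(\pi T\z)=-\i\pi\tanh(\pi Ty)$, so
\begin{equation*}
\Im\Sfin'(\z)=\pi\tanh(\pi Ty)-\sum_{r=1}^{N}\frac{Ty}{(k+\tfrac12-a_r)^2+(Ty)^2}-\sum_{i=1}^{T-1}\frac{Ty}{(k+\tfrac12+i)^2+(Ty)^2}.
\end{equation*}
Since the index sets of the two subtracted sums are subsets of $\Z$ and $\sum_{j\in\Z}\frac{Ty}{(k+\tfrac12-j)^2+(Ty)^2}=\pi\tanh(\pi Ty)$ (Mittag--Leffler for $\pi\cot$), each subtracted sum is nonnegative and bounded by $\pi\tanh(\pi Ty)$. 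Hence $|\Im\Sfin'(\z)|\le\pi\tanh(\pi Ty)\le\pi$.

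\emph{Part (2).} We start from \eqref{im_Sprime_measures}: $\tfrac1\pi\Im\Sfin'(x+\i y)=(\m_T[\mathfrak{l}_T]*\CC_y)(x)-(\m_T[\mathfrak{a}]*\CC_y)(x)$. The first convolution is, up to $O(1/T)$, the Riemann sum for $\int_{(-\infty,-1)\cup(0,+\infty)}\CC_y(x-u)\,du=1-\tfrac1\pi\arctan\tfrac{x+1}y+\tfrac1\pi\arctan\tfrac{x}y\ge 1-\tfrac{1}{\pi y}$. The second is split at scale $\QQ/T$: inside, Assumption \ref{ass:density} bounds the density by $\upd$ giving contribution $\le\upd+O(1/T)$; outside, $|x-u|>\QQ/(2T)$ and $\QQ/T\to\infty$ force the tail $\int_{|v|>\QQ/(2T)}\CC_y(v)\,dv$ to zero. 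Thus $\Im\Sfin'(x+\i y)\ge\pi(1-\upd)-1/y-o(1)$, which exceeds $1/y$ once $Y>2/(\pi(1-\upd))$ is chosen with $N_0$ large enough.

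\emph{Part (3).} Expanding $\pi\cot(\pi T\z)=\pv\sum_{k\in\Z}\tfrac1{T\z-k}$ and writing $g_y(v)=v/(v^2+y^2)$, we decompose
\begin{equation*}
\Re\Sfin'(x+\i y)+\log(\be^{-1}-1)=\tfrac1T\sum_{r=1}^N g_y\bigl(x-\tfrac{a_r}T\bigr)+\tfrac1T\sum_{i=1}^{T-1}g_y\bigl(x+\tfrac iT\bigr)-\Re\bigl[\pi\cot(\pi T\z)\bigr],
\end{equation*}
and bound each piece. The interval sum is the composite trapezoidal approximation to $\int_0^1 g_y(x+u)\,du=\tfrac12\log\tfrac{(x+1)^2+y^2}{x^2+y^2}=O(\log y^{-1})$, with integral-form error $\le\tfrac1{12T^2}\int_0^1|g_y''|\,du=O(T^{-2}y^{-2})$ using $\int_\R|g_y''|=\tfrac{5}{2y^2}$. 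The closed form $\Re[\pi\cot(\pi T\z)]=\tfrac{\pi\sin(\pi Tx)\cos(\pi Tx)}{\sin^2(\pi Tx)+\sinh^2(\pi Ty)}$ with $a^2+b^2\ge2|ab|$ gives $|\Re[\pi\cot(\pi T\z)]|\le 1/(2Ty)$, which is $\le 1/(T^2y^2)$ for $Ty\le 1$ and constant otherwise. The particle sum is handled by the decomposition of \Cref{lemma_real_part}: the at-most-one near term with $|Tx-a_r|<1$ contributes $\le 1/(2Ty)$; intermediate terms yield $O(\log y^{-1})$ via integrals of the form $\int dv/v$ and Assumption \ref{ass:density}; and the far tail is controlled by Assumption \ref{ass:intermediate_behavior}. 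The hard part will be securing the $O(T^{-2}y^{-2})$ error in the trapezoidal remainder: using $\sup|g_y''|=O(1/y^3)$ pointwise yields only $O(T^{-2}y^{-3})$, so we must use the integral form $\int|g_y''|=O(1/y^2)$.
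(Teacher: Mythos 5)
Your parts (1) and (3) are essentially correct and follow the same route as the paper. In (1) your identity $\Im \Sfin'\bigl(\tfrac{k+1/2}{T}+\i y\bigr)=\pi\tanh(\pi T y)-B-C$ with $0\le B,C\le \pi\tanh(\pi Ty)$ (each being a sub-sum over a subset of $\Z$ of the full lattice sum) does give $|\partial_y\Re\Sfin|\le\pi\tanh(\pi Ty)\le\pi$, which is the paper's computation in a slightly different dressing. In (3) your devices are sound: the trapezoidal remainder in integral form with $\int_\R|g_y''|=\tfrac{5}{2y^2}$ does yield the $O(T^{-2}y^{-2})$ error (the pointwise $\sup|g_y''|$ bound would indeed be too weak, as you note), and the closed form $\Re[\pi\cot(\pi T\z)]=\pi\sin(\pi Tx)\cos(\pi Tx)/(\sin^2(\pi Tx)+\sinh^2(\pi Ty))$ with $a^2+b^2\ge 2|ab|$ gives $\le \tfrac{1}{2Ty}$, which fits the claimed bound; the particle sum is then handled exactly as in the paper's decomposition (near terms crudely, intermediate terms against $\int v\,dv/(v^2+y^2)$ keeping the $y$ in the denominator --- not against $\sum 1/|Tx-a_r|$, which would give $\log T$ --- and the far tail via Assumption \ref{ass:intermediate_behavior}).

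Part (2), however, has a genuine gap: the estimate must hold for \emph{all} $y>Y$ with a single $N_0$, and your error terms are not uniform in $y$. You bound $(\m_T[\mathfrak{a}]*\CC_y)(x)$ by $\upd$ inside $[-\QQ,\QQ]$ plus the Cauchy tail $\int_{|v|>\QQ/(2T)}\CC_y(v)\,dv=1-\tfrac{2}{\pi}\arctan\tfrac{\QQ}{2Ty}$ for the (up to $N\gg\QQ$) particles outside, and you assert this tail vanishes because $\QQ/T\to\infty$. That is true only when $y\ll \QQ(N)/T(N)$; for $y\gtrsim \QQ/T$ (allowed, since $y>Y$ is otherwise unrestricted) the tail is of order one, and your chain only gives $\Im\Sfin'\ge 2\arctan\tfrac{\QQ}{2Ty}-\pi\upd-\tfrac1y-o(1)$, which is negative for large $y$ when $\upd$ is close to $1$ --- so the decomposition ``full lattice minus all particles, with a blanket $\upd$ bound inside and a trivial bound outside'' is structurally too lossy, not just under-quantified. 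The missing idea is the exact cancellation used in the paper: every particle with $a_r\ge 0$ or $a_r\le -T$ sits on a site of $\mathfrak{L}_T$ and cancels the corresponding lattice term, so $\partial_y\Re\Sfin$ equals minus the sum over \emph{unoccupied} sites of $\mathfrak{L}_T$ (whose density in $[-\QQ,\QQ]$ is at least $1-\upd$ by Assumption \ref{ass:density}) plus the sum over particles in $\{1-T,\dots,-1\}$, which is at most $C/y$. Keeping the resulting factor $(1-\upd)\arctan\bigl(\tfrac{\QQ}{2Ty}\bigr)$ rather than discarding it is what makes the bound beat $C/y$ uniformly in $y>Y$: for $y\le \QQ/(2T)$ the arctangent is bounded below by a constant, and for $y>\QQ/(2T)$ one uses $\arctan u\ge cu$ together with $\QQ/T\to\infty$.
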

\begin{proof}
    Using \eqref{log_branch} we see that, apart from the linear term $-(\Re \z)\log(\be^{-1}-1)$, the function $\Re \Sfin(\z)$ is an infinite linear combination (with coefficients $\pm1$) of shifts of $\frac{1}{T}\log|\z|$. We have for any $x,y\in \mathbb R$
	\begin{equation*}
     \frac{\partial}{\partial y} \log|x+\i y|=\frac{y}{x^2+y^2}.
 \end{equation*}
    In particular, for
    $j\in\Z$:
    \begin{equation*}
        \left|\frac{\partial}{\partial y}
        \frac{1}{T}\log\Big|\frac{k+1/2}{T}+\i y+\frac{j}{T}\Big| \right|
    =
        \frac{4 T |y|}{(2 k+2j+1)^2+4 T^2 y^2}.
    \end{equation*}
    Thus, the absolute value of the derivative of $\Re\Sfin$ in the first claim can be bounded in the absolute value by
    \begin{equation*}
        \sum_{j\in\Z}\frac{4 T |y|}{(2j+1)^2+4 T^2 y^2}=
        \pi  \tanh (\pi  T |y|)\le \pi,
    \end{equation*}
    this is summed with the help of a partial fraction expansion and \eqref{Euler_cot}, and $\tanh$ is bounded by one. This establishes the first claim.

   \smallskip

    For the second claim we need to be more careful with signs. 
    Recalling that $\mathfrak A(N)$ is the initial condition and using notation \eqref{eq_L_def}, we write
    \begin{equation}
    \label{eq_x5}
     \frac{\partial}{\partial y} \Re \Sfin(x+\i y)= -\frac{1}{T} \sum_{a\in \mathfrak L_T \setminus
     \mathfrak A(N)} \frac{y}{(x-a/T)^2+y^2}+\frac{1}{T}\sum_{a\in \mathfrak A(N)\bigcap \{1-T,\dots,-1\}}  \frac{y}{(x-a/T)^2+y^2}.
    \end{equation}
    Our aim is to show that in the last sum the first term dominates. 
    Using Assumption \ref{ass:density} and replacing sums by integrals 
    (with multiplicative error at most 2), we upper bound \eqref{eq_x5} by
    \begin{multline}
    \label{eq_x6}
      -\frac{1-\upd}{2} \left(\int_{-\QQ(N)/T}^{-1}+\int_0^{\QQ(N)/T}\right)  \frac{y\, dv }{(v-x)^2+y^2}+2 \int_{-1}^0 \frac{y\, dv}{(v-x)^2+y^2}
      \\ = -\frac{1-\upd}{2} \left(\int_{-x/y-\QQ(N)/(Ty)}^{(x-1)/y}+\int_{x/y}^{-x/y+\QQ(N)/(Ty)}\right)  \frac{du }{u^2+1}+2 \int_{-(x+1)/y}^{-x/y} \frac{ du}{u^2+1}
      \\ \le
      -\frac{1-\upd}{2} \int_{-\QQ(N)/(2Ty)}^{\QQ(N)/(2Ty)}  \frac{du }{u^2+1}+3 \int_{-(x+1)/y}^{-x/y} \frac{ du}{u^2+1}
      \\ \le
       -(1-\upd) \arctan\left( \frac{\QQ(N)}{2Ty}\right)+ \frac{3}{y}.
    \end{multline}
		Considering separately the cases of small and large $y$, using $\QQ(N)/T(N)\to\infty$ 
		as $N\to\infty$, 
		we see that the last expression is smaller than $-1/y$ for large $N$, and the second claim is proven. 
		(Notice that 
		$\frac{\partial}{\partial y}\Re \Sfin\bigl(x+\i y\bigr)
		=
		-\frac{\partial}{\partial x}\Im \Sfin\bigl(x+\i y\bigr)$ 
		is the Cauchy--Riemann equation.)

    \smallskip

 Let us turn to the third claim.    Assume that $x$ is fixed. We have
    \begin{equation*}
        \frac{\partial}{\partial x}
        \frac{1}{T}\log\Big|x+
        \i y+\frac jT\Big|
        =\frac{T x+j}{y^2 T^2+(T x+j)^2},
    \end{equation*}
    so

    \begin{multline}
        \frac{\partial}{\partial x}\Re \Sfin\big(
        x+
        \i y
        \big)=
        \sum_{r=1}^{N}\frac{Tx-a_r}{y^{2}T^2+(Tx-a_r)^{2}}
        +\sum_{i=1}^{T-1}
        \frac{T x+i}{y^2 T^2+(T x+i)^2}
        \\-\log(\be^{-1}-1)-\frac{\partial}{\partial x}
        \frac 1T\log|\sin(\pi T(x+\i y))|.
        \label{estimate_close_to_R_proof_new}
    \end{multline}

    Fix sufficiently large $\RRR>0$, which might depend on $x$, but not on $y$. For the first sum in \eqref{estimate_close_to_R_proof_new} with $|a_r|<\RRR T$, and also for the second sum in \eqref{estimate_close_to_R_proof_new} we upper bound the absolute values of the sums by
    twice of

    \begin{multline*}
        \sum_{j=0}^{2 \RRR T}\frac{j+1}{j^2+y^{2}T^2}\le
        \sum_{j=0}^{2 \RRR T}\frac{j}{j^2+y^{2}T^2}+\frac{1}{y^2 T^2}+\sum_{j=1}^\infty \frac{1}{j^2}\le
        \int_{0}^{2 \RRR}\frac{v}{v^2+y^{2}}dv +\frac{1}{y^2 T^2} +C
       \\ =
         \frac{1}{2} \log \left(1+\frac{4 \RRR^2}{y^2}\right)+\frac{1}{y^2 T^2}+C.
    \end{multline*}

    Therefore, the contribution of these terms admits the desired bound.
    Next, for $|a_r|>\RRR T$ in the first sum in \eqref{estimate_close_to_R_proof_new} we have
    \begin{equation*}
        \frac{1}{Tx-a_r}-\frac{T x-a_r}{y^{2}T^2+(Tx-a_r)^{2}}
        =\frac{T^2 y^2}{(Tx-a_r) \left(T^2 y^2+(Tx-a_r)^2\right)},
    \end{equation*}
    and summing this over $|a_r|>\RRR T$ has order $y^{2}/\RRR^{2}$, which is bounded. Thus, the contribution from $|a_r|>\RRR T$ in the first sum in \eqref{estimate_close_to_R_proof_new} is the same as if the summands were just $1/(Tx-a_r)$. Observe that
    \begin{equation*}
        \frac1{Tx-a_r}+\frac1{a_r}=
        \frac{T x}{a_r (Tx-a_r)}
    \end{equation*}
    and the sum of these quantities over $|a_r|>\RRR T$ with large $\RRR$ is bounded (recall that $|x|$ is bounded and  $\RRR$ is chosen to be much larger than it). Thus, the sum of the terms with $|a_r|>\RRR T$ in the first sum in \eqref{estimate_close_to_R_proof_new} has the same order as the sum of $1/a_r$ over $|a_r|>\RRR T$, which is bounded by Assumption \ref{ass:intermediate_behavior}.

    Finally, for the last summand in \eqref{estimate_close_to_R_proof_new} we have
    \begin{equation*}
        \left| \frac{\partial}{\partial x}
        \frac 1T\log|\sin(\pi T(x+\i y))\right|= \pi \left|\frac{\cos(\pi T(x +\i y))}{\sin(\pi T(x+\i y))} \right|\le C\left(1+\frac{1}{T^2y^2}\right),
        \end{equation*}
        where we used the bound 
        $|\sin(\alpha +\i \beta)|\ge C \cdot \min(|\beta|^2,1)$ 
        for an absolute constant $C>0$.
\end{proof}

\subsection{Steepest descent/ascent contours in a large rectangle}
\label{sub:new_contours}

Our next aim is to present a new set of contours for the double contour integral expression of Theorem \ref{Theorem_Bernoulli_IC}. 
In this section we explain their geometry in a (sufficiently large)
compact subset of the upper half plane.
In discussion of integration 
contours in the rest of this section
it
suffices to 
argue in the upper half plane: the 
contour configuration
in
the lower half plane (with a suitable choice of
branches of logarithms, cf. \Cref{sub:function_sfin_z_}) is obtained by reflection
with respect to the real line.

Recall the critical point $\zfin=\zfin(N)$ afforded by \Cref{Proposition_roots_positioning}.
We need the following statement which will be proven 
in \Cref{sub:convergent_initial_data} below:

\begin{lemma}
\label{lemma:second_derivative}
    Under Assumptions \ref{ass:density} and \ref{ass:intermediate_behavior} there exists $C>0$,
    such that $C^{-1}<|\Sfin''(z)|<C$ for all $z$ 
    in the upper half--plane satisfying $|z-\zfin|<C^{-1}$, and all $N=1,2,\dots$.
\end{lemma}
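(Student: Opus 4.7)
The plan is to work with the explicit formula
$$\Sfin''(\z)=\frac{1}{T}\sum_{j\in\mathfrak{L}_T}\frac{1}{(\z-j/T)^2}-\frac{1}{T}\sum_{r=1}^{N}\frac{1}{(\z-a_r/T)^2},$$
obtained by differentiating \eqref{S_T_prime}; both sums are absolutely convergent because $(\z-u)^{-2}$ is summable over distinct integer-rescaled locations. Throughout I restrict to a small disk $D(\zfin,\delta)\subset\HH$; by \Cref{Proposition_roots_positioning}, $\Im\z\ge\eps_0/2$ on $D(\zfin,\delta)$ for some $\eps_0>0$ depending only on the constants of Assumptions \ref{ass:density} and \ref{ass:intermediate_behavior}. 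For the upper bound, I would split each sum into a bulk part ($|u|\le R$) and a tail ($|u|>R$). The bulk of the $j$-sum is a Riemann sum approximation of $\int_{(-\infty,-1]\cup[0,\infty)}(\z-u)^{-2}du$, which is bounded uniformly in $\z\in D(\zfin,\delta)$. The bulk of the $a_r$-sum is controlled by $\upd\int_{-R}^{R}|\z-u|^{-2}du$ via the upper density bound of Assumption \ref{ass:density}. For $R$ large relative to $|\zfin|$, the tail contributions of both sums are bounded by $4T\sum_{|k|>RT}k^{-2}=O(1/R)$, uniformly in $\z$ and $N$, since the underlying points are distinct integers.

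For the lower bound I would argue by contradiction via compactness. Suppose there is a sequence $N_k\to\infty$ and configurations $\mathfrak{A}(N_k)$ (each satisfying the two assumptions) along which $|\Sfin''(\zfin(N_k))|\to 0$. By compactness of $\mathcal{Z}$, extract a subsequence so that $\zfin(N_k)\to\zfin^*\in\mathcal{Z}$. Using Assumption \ref{ass:density} and Banach--Alaoglu, together with diagonalization over an exhaustion of $\R$ by compact sets, extract a further subsequence so that the rescaled atomic measures $\m_{T(N_k)}[\mathfrak{a}(N_k)]$ converge vaguely on $\R$ to a locally finite measure $\mu^*$ with density between $\lod$ and $\upd$. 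Since the tail estimate from the previous paragraph is uniform in $N$, $\Sfin''(\z)$ converges uniformly on a small disk around $\zfin^*$ to a holomorphic limit $\Sfin''_{\infty}$ with $\Sfin''_{\infty}(\zfin^*)=0$. A parallel but more delicate argument using Assumption \ref{ass:intermediate_behavior} to handle the conditionally convergent tail of $\Sfin'$, along the lines of the proof of \Cref{Proposition_roots_positioning}, yields uniform convergence $\Sfin'\to\Sfin'_{\infty}$ on the same disk, and hence $\Sfin'_{\infty}(\zfin^*)=0$; thus $\zfin^*$ is a multiple zero of $\Sfin'_{\infty}$ in $\HH$.

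To close the contradiction, truncate the infinite sum in \eqref{S_T_prime} at level $M$ and clear denominators to obtain the polynomial $P_M$ from the proof of \Cref{lemma:S_T_critical_points_equation}, which was shown there to have at most one simple pair of complex conjugate nonreal roots. Applying Hurwitz's theorem first as $M\to\infty$ (passing from $P_M$ to $\Sfin'$ at fixed $N$), then as $k\to\infty$ (passing from $\Sfin'$ to $\Sfin'_{\infty}$), to a small disk around $\zfin^*$ bounded away from $\R$, yields exactly one simple zero of $\Sfin'_{\infty}$ in that disk. Thus $\Sfin''_{\infty}(\zfin^*)\ne 0$, contradicting $\Sfin''_{\infty}(\zfin^*)=\lim_k\Sfin''(\zfin(N_k))=0$. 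The main obstacle is establishing locally uniform convergence $\Sfin'\to\Sfin'_{\infty}$ along the subsequence: because the series defining $\Sfin'$ is only conditionally convergent, one must combine Assumption \ref{ass:intermediate_behavior} with the distinct-integer structure carefully to obtain uniform-in-$N$ control of the intermediate-scale contributions from points at distances between $\RRR T$ and $\updelta N$. The convergence of $\Sfin''$ itself, as well as the Hurwitz-plus-zero-counting step, are by contrast straightforward.
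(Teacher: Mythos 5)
Your proposal follows essentially the same path as the paper: extract a subsequence along which the local measures and the quantities $\CCC_N(\RRR)$ converge, establish locally uniform convergence of $\Sfin'$ (and hence $\Sfin''$) using the regularization afforded by Assumption~\ref{ass:intermediate_behavior}, and then combine Hurwitz's theorem with \Cref{lemma:S_T_critical_points_equation} to rule out a double critical point of the limiting function. The one small imprecision is that the contradiction hypothesis should allow $z_{N_k}$ ranging over a shrinking neighborhood of $\zfin(N_k)$ rather than fixing $z_{N_k}=\zfin(N_k)$, since the lemma asserts a two-sided bound on a whole disk around $\zfin$; but the uniform convergence of $\Sfin''$ near $\zfin^*$ that you already establish supplies precisely the continuity needed to close this, just as in the paper.
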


Fix three constants: small $\eps<0$ and large $\mathsf{R}_x,\mathsf{R}_y>0$,
which do not depend on $N$ or $\mathfrak A(N)$, but might depend on the constants in Assumptions
\ref{ass:density}, \ref{ass:intermediate_behavior}. There are  four contours $\{\z\colon \Im
\Sfin(\z)=\Im\Sfin(\zfin)\}$ emanating from the critical point. Let us trace these contours until
they leave a rectangle $\mathcal{R}:=\{x+\i y \in \mathbb C\colon |x|<\mathsf{R}_x, \, \eps<y<\mathsf{R}_y\}$
(for some $\mathsf{R}_y>\eps>0$, $\mathsf{R}_x>0$).
Let $z_1,z_2,z_3,z_4$
be the \emph{escape} points where the contours leave the rectangle.

\begin{proposition}
\label{Proposition_large_rectangle_contours}
There exist $\mathsf{R}_y>\eps>0$, $\mathsf{R}_x>0$, such that for all large enough $N$:
	\begin{enumerate}
		\item Three escape points, $z_1, z_2, z_3$ (ordered as $\Re z_1< \Re z_2 < \Re z_3$) are on the lower
			side $\Im z=\eps$, and $z_4$ is on the upper side $\Im z=\mathsf{R}_y$.
		\item The real part $\Re \Sfin(\z)$ grows along the contours escaping through $z_1$ and $z_3$, and
			decays along the contours escaping through $z_2$, $z_4$.
		\item The escape points on the lower sides of the 
			rectangle satisfy 
			\begin{equation}\label{eq:large_rectangle_estimates_on_z_123}
				\Re z_1 \in (-\infty, -1+\eps \log^2\eps),
				\quad
				\Re z_2\in(-1-\eps \log^2\eps, \eps\log^2\eps),
				\quad
				\Re z_3\in (-\eps \log^2\eps,+\infty).
			\end{equation}
		\item $\mathsf{R}_y>Y$, where $Y$ is from the second claim of \Cref{lemma:estimate_real}.
		\item $\eps< \frac{1}{10}\min_{i=1,2,3}|\Re \Sfin (z_i)-\Re \Sfin(\zfin)|$,
	\end{enumerate}
\end{proposition}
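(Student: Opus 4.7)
My plan exploits the saddle structure at $\zfin$ together with the monotonicity and size estimates from \Cref{lemma:im_at_R_ladder,lemma:real_neighborhood,lemma:estimate_real}. By \Cref{lemma:second_derivative}, $|\Sfin''(\zfin)|$ is bounded above and below uniformly in $N$, so locally the level set $\{\Im \Sfin = \Im \Sfin(\zfin)\}$ is a transverse "X" with four smooth arcs meeting at right angles at $\zfin$, along which $\Re \Sfin$ alternately increases and decreases. Since \Cref{Proposition_roots_positioning} precludes any other critical point in $\HH$ and places $\zfin$ in the fixed compact $\mathcal Z$, each arc continues as a smooth, non-self-intersecting level curve and must eventually exit $\mathcal R$.

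For the top exit, \Cref{lemma:estimate_real}(2) combined with the Cauchy--Riemann relation gives $\partial_x \Im \Sfin(x + \i y) > 1/y$ for $y > Y$, so $\Im \Sfin$ is strictly monotone in $x$ on every horizontal line above height $Y$. Hence at most one of the four arcs can enter $\{\Im \z > Y\}$, and along any such arc the implicit function theorem combined with $\partial_y \Re \Sfin < -1/y$ yields $\Re \Sfin \to -\infty$ as $y \to \infty$. Integrating this bound upward from $\zfin$ forces at least one of the two descent arcs to escape to arbitrary height; taking $\mathsf{R}_y$ larger than that height gives the top exit $z_4$, and automatically the descent property in (2) for $z_4$. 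No ascent arc can enter $\{\Im \z > Y\}$, because the unique graph $x=x(y)$ living there supports only decreasing $\Re \Sfin$.

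The remaining three arcs are then forced to exit through the bottom side $\Im \z = \eps$ (the vertical sides are excluded by choosing $\mathsf{R}_x$ larger than the horizontal extent of the level arcs issuing from the compact set $\mathcal Z$). To locate them, I compare $\Im \Sfin(x + \i \eps)$ with the staircase $\Im \Sfin(x + \i 0)$ described by \Cref{lemma:im_at_R_ladder}: the staircase ascends from $-\infty$ on $(-\infty, b^{r}_{-1}/T)$, descends through the middle region containing $(-1,0)$, and ascends to $+\infty$ on $(b^{\ell}_{0}/T, +\infty)$. By \Cref{lemma:real_neighborhood}, the vertical perturbation $|\Im \Sfin(x+\i\eps) - \Im \Sfin(x+\i 0)|$ is $O(\eps|\log\eps| + \eps\log(|x|+1) + 1/T)$, which is small relative to the staircase's total oscillation. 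Since $\Im \Sfin(\zfin)$ is bounded (as $\zfin \in \mathcal Z$), for $\eps$ small and $N$ large the level $\Im \Sfin = \Im \Sfin(\zfin)$ is attained on $\{\Im \z = \eps\}$ exactly three times, one per monotone piece. The $\eps\log^2\eps$ brackets in \eqref{eq:large_rectangle_estimates_on_z_123} come from pairing the vertical error $O(\eps|\log\eps|)$ with the horizontal derivative estimate $|\partial_x \Im \Sfin(x+\i\eps)| = O(\log\eps^{-1})$ from \Cref{lemma:estimate_real}(3), which pins each bottom crossing to within $O(\eps \log^2\eps)$ of the corresponding staircase transition at $-1$ or $0$.

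Properties (4) and (5) are then tuned independently: (4) is ensured in the top-exit step by requiring $\mathsf{R}_y > Y$, and (5) follows by further shrinking $\eps$, since along each bottom-bound arc $\Re \Sfin$ changes by a quantity of order $1$ between $\zfin$ and any point at height of order $1$ above the real line (by integrating $|\Sfin'|$, whose length one controls using the positive lower bound on $\Im \zfin$ from \Cref{Proposition_roots_positioning} and the lower bound on $|\Sfin''|$ from \Cref{lemma:second_derivative}), while $10\eps$ can be made arbitrarily small. The main obstacle is the quantitative bottom-side localization in the third paragraph: it requires a delicate balancing of the vertical error from \Cref{lemma:real_neighborhood} against the horizontal derivative bound of \Cref{lemma:estimate_real}(3), uniformly in $N$, to ensure that the three transverse crossings persist under the $\eps$-scale perturbation and fall into the three intervals prescribed by \eqref{eq:large_rectangle_estimates_on_z_123}.
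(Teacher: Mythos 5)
Your overall architecture mirrors the paper's --- saddle structure at $\zfin$, monotone $\Re\Sfin$ along level curves with interlacing of ascent/descent arcs, exclusion of the vertical sides, top-side monotonicity from the second claim of \Cref{lemma:estimate_real}, and comparison of $\Im\Sfin(\cdot+\i\eps)$ with the staircase of \Cref{lemma:im_at_R_ladder} --- but two steps do not hold up. The top-escape deduction fails as stated: the bound $\partial_y\Re\Sfin<-1/y$ is available only for $y>Y$, while $\zfin$ lies in a fixed compact subset of $\HH$ that need not sit above height $Y$; so ``integrating this bound upward from $\zfin$'' is not licensed, and nothing else in your argument explains why any of the four arcs should rise to height $Y$ at all. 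The paper gets the top exit by counting: after excluding vertical exits and showing at most one exit on the top side (which you also carry out), it proves that \emph{exactly three} of the four arcs exit through the bottom, which forces the fourth to exit through the top.

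That bottom-side count is where the more serious gap lies. You claim the level $\Im\Sfin=\Im\Sfin(\zfin)$ is attained on $\{\Im\z=\eps\}$ ``exactly three times, one per monotone piece,'' but \Cref{lemma:im_at_R_ladder} gives piecewise monotonicity only at height zero; the vertical perturbation controlled by \Cref{lemma:real_neighborhood} is of size $O(\eps\log\eps^{-1})$ and can certainly produce extra crossings at height $\eps$. The paper never counts level crossings. Instead it proves a separation $|z_i-z_j|>\delta/\log(1/\eps)$ between escape points, derived from a constant-order gap between the values $\Re\Sfin(z_i)$ (a consequence of \Cref{lemma:second_derivative}, interlacing, and the monotonicity of $\Re\Sfin$ along the level arcs) combined with the bound $|\partial_x\Re\Sfin(x+\i\eps)|<C\log(\eps^{-1})$ from the third claim of \Cref{lemma:estimate_real}; together with the $C\eps\log(\eps^{-1})$-neighborhood localization of each $z_i$ near one of the three small sets $\mathcal{U}_i\subset\R$, this caps the bottom exits at three and yields \eqref{eq:large_rectangle_estimates_on_z_123}. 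Note also a mis-citation: the third claim of \Cref{lemma:estimate_real} controls $|\partial_x\Re\Sfin|=|\partial_y\Im\Sfin|$, not $|\partial_x\Im\Sfin|$ as you invoke it, so it enters the separation bound rather than any direct horizontal slope estimate for $\Im\Sfin$ on the bottom side.
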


\begin{proof}
	First, $\Re \Sfin(\z)$ can not have local extrema on the contours 
	$\{\z\colon \Im\Sfin(\z)=\Im\Sfin(\zfin)\}$ 
	inside the rectangle (expect at point $\zfin$), as any
	such extremum would be a new critical point for $\Sfin(\z)$ contradicting
	\Cref{Proposition_roots_positioning}. 
	Therefore, $\Re \Sfin(\z)$ is monotone along
	these contours. This also means that these contours cannot intersect
	anywhere in the rectangle except at $\zfin$.
	Out of these four contours, along two the real part
	$\Re \Sfin(\z)$ grows, and along other two it
	decays.
	Since the growth/decay types interlace
	as the contours leave $\zfin$, we conclude that the growth/decay types also
	interlace along the boundary of the rectangle.

	We now fix arbitrary $\mathsf{R}_y>Y$, such that $\Im(\zfin)<\mathsf{R}_y/2$. The second claim of 
	\Cref{lemma:estimate_real} implies that (for large $N$) $\Im\Sfin(\z)$ is monotone along the top
	side $\Im\z=\mathsf{R}_y$
	of the rectangle.
	Therefore, at most one of the points $z_i$, $1\le i\le 4$,
	can be there.

	Next, \Cref{lemma:im_at_R_ladder} combined with Assumption \ref{ass:density} implies that
	there exists $R>0$ such that 
	$\Im\Sfin(x+\i 0)> R^{-1} x$ 
	for $x>R$ and 
	$\Im\Sfin(x+\i 0)<-R^{-1} x$ 
	for $x<-R$.  Thus, \Cref{lemma:real_neighborhood} implies that we can choose large enough
  $\mathsf{R}_x>R$, such that 
	\begin{equation*}
		\Im\Sfin(\mathsf{R}_x+\i y)> 2|\Im \Sfin(\zfin)|,
		\qquad 
    \Im\Sfin(-\mathsf{R}_x+\i y)< 2|\Im \Sfin(\zfin)|
	\end{equation*}
	for all $0\le y \le \mathsf{R}_y$. 
  We fix such $\mathsf{R}_x$ and notice that
	this choice implies that $z_i$, $1\le i \le 4$ do not belong to the vertical sides of the
	rectangle.

	Thus, either three or four of the points $z_i$ belong to the bottom horizontal side of the
	rectangle. It remains to specify $\eps>0$, so that there are exactly three and their positions
	satisfy \eqref{eq:large_rectangle_estimates_on_z_123}.

	By \Cref{lemma:second_derivative} we have a uniform control over the
	growth/decay of $\Re \Sfin(\z)$ in a small (but fixed size) neighborhood of
	$\zfin$. Thus, when the contours reach the boundary of the rectangles, the values of
	$\Re \Sfin(\z)$ are separated by a constant. Combining this fact with interlacing of the
	growth/decay contours and the bound of the third statement of 
	\Cref{lemma:estimate_real} we conclude that there exists $\delta>0$ such that for
	each $0<\eps<1/2$ and $T>\eps^{-1}$ we have $|z_i-z_j|> \frac{\delta}{\log(1/\eps)}$ for
	all $i\ne j$.

	Next, let $\mathcal{U}\subset \mathbb R$ denote the set of points $x$ such that
	$\Im\Sfin(x+0\i )=\Im \Sfin(\zfin)$. According to \Cref{lemma:im_at_R_ladder},
	$\mathcal{U}$ splits into three disjoint sets $\mathcal{U}_1$, $\mathcal{U}_2$,
	$\mathcal{U}_3$ (some of which might be empty): $\mathcal{U}_1\subset (-\infty, -1]$,
	$\mathcal{U}_2\subset [-1,0]$, $\mathcal{U}_3\subset [0,+\infty)$. Using Assumption
	\ref{ass:density}, we see that the diameter of each set $\mathcal{U}_i$ is at most
	$\frac{\dd(N)}{T(N)}$ which tends to $0$ as $N\to\infty$.

	We further would like to show that $z_i$ is close to $\mathcal{U}_i$, $i=1,2,3$. For that note
	that by Assumption \ref{ass:density}, the function $x\mapsto \Im\Sfin(x+\i 0)$ has
	growth bounded away from $0$ in the sense that for some $c>0$ we have
	\begin{equation*}
		|\Im(\Sfin(x+\i 0)-\Im(\Sfin(x'+\i 0)|\ge c\cdot |x-x'|, \text{ if }  |x-x'|\ge
		\frac{\dd(N)}{T(N)}\text{ and } \begin{cases} x,x'\in
		[-\mathsf{R}_x, -1], \text{ or}\\ x,x'\in [-1,0],\text{ or}\\ x,x'\in [0,\mathsf{R}_x].\end{cases}
	\end{equation*}

	Thus, using \Cref{lemma:real_neighborhood} we conclude that for fixed $\eps>0$
	and large $N$, $T$ (compared to that $\eps$) the real part of each point $z_i$ (out of
	those lying in the bottom horizontal side of the rectangle) should be in $C \eps
    \log(\eps^{-1})$--neighborhood of $\mathcal{U}_1 \cup \mathcal{U}_2 \cup \mathcal{U}_3$, 
    where $C$ does not depend on $T$ or $N$. On the other hand, the diameter of
	each $\mathcal{U}_i$ is small and $|z_i-z_j|>\frac{\delta}{\log(1/\eps)}$. Since
	$\frac{\delta}{\log(1/\eps)}\gg C \eps \log(\eps^{-1})$, we conclude that the only
	possibility is that there are precisely three points $z_i$ on the bottom horizontal
	side of the rectangle (which means that $z_4$ is on the upper horizontal side) and
	each $z_i$ is in $C \eps \log(\eps^{-1})$ neighborhood of $\mathcal{U}_i$,
	respectively, for $i=1,2,3$. For small enough $\eps$ we would have $C\eps
	\log(\eps^{-1})< \eps \log^2(\eps)$, which finishes the proof.
\end{proof}


\subsection{Completing the proof of \Cref{Theorem_main}} 
\label{sub:moving_the_contours}

Here we describe how the new contours 
in a large rectangle
constructed in \Cref{sub:new_contours}
should be continued outside the rectangle.
(Recall that by symmetry, it suffices to argue in the upper
half plane only.)
We then rewrite
the correlation kernel 
$K^{\textnormal{Bernoulli}}_{\vec a; \be}$
in terms of these new contours, and 
complete the proof of \Cref{Theorem_main}
on approximation of $K^{\textnormal{Bernoulli}}_{\vec a; \be}$
by the 
extended discrete sine kernel.

Fix $\mathsf{R}_y>\eps>0$, $\mathsf{R}_x>0$
as in \Cref{Proposition_large_rectangle_contours}. 
Define the new $z$ contour
$\CC_\z^{\deps}=\CC_\z^{\deps}(N)$ as follows.\footnote{This contour, 
as well as $\CC_\w^{\deps}$ defined below, also depends 
on the constants $\mathsf{R}_{x}$, $\mathsf{R}_y$ and 
other data in Assumptions \ref{ass:density} and \ref{ass:intermediate_behavior}.
We suppress all this dependence in the notation.}
Inside the rectangle $\mathcal{R}$
it
coincides with the union of the steepest descent (for $\Re\Sfin$)
contours
$\{\z\colon\Im\Sfin(\z)=\Im\Sfin(\zfin)\}$
escaping through the points $z_2$ and $z_4$.
After the point $z_4$ we continue 
the contour vertically so that it escapes to infinity.
Inside the $\deps$-neighborhood of the real line we 
have to modify the steepest descent contour 
so that it crosses
$\R$ strictly between $-1$ and $0$.
To achieve that, we add to this contour a horizontal segment 
of the line 
$\Im\z=\i \deps$, and then a vertical segment 
connecting it to the real line such that 
$\CC_\z^{\deps}$
crosses $\R$
at a point of the form $(k+1/2)/T$ for some $k\in\{-T,\ldots,-1\}$
which is close to $z_2$ within $C\cdot(\eps \log^2\eps+T^{-1})$.
The contour $\CC_\z^{\deps}$ is oriented upwards.

\begin{figure}[htbp]
    \includegraphics[width=.57\textwidth]{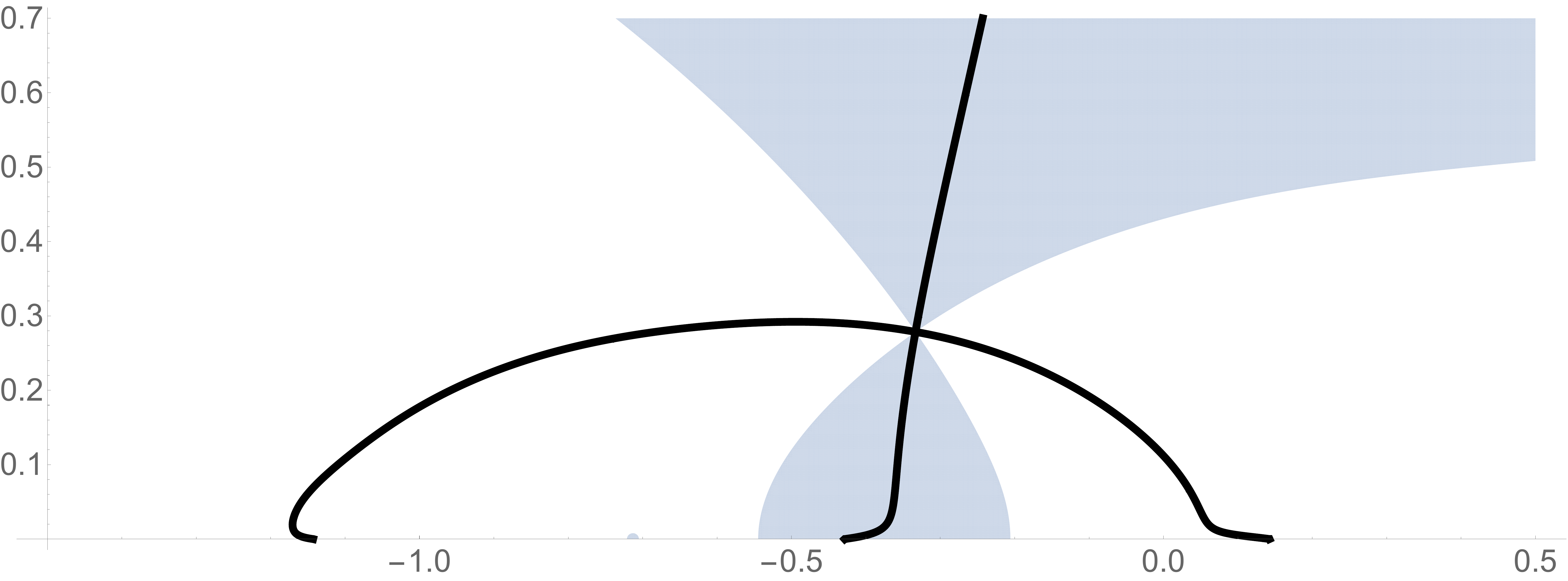}
		\qquad 
		\begin{adjustbox}{max width=.37\textwidth}
			\begin{tikzpicture}[scale=1,ultra thick]
        \draw[thick,->] (-5,0)--(5,0);
        \draw[thick,->] (2,-3.5)--(2,3.5);
        \draw[fill] (-2,0) circle (1pt) node[below] {$-1$};
        \draw[fill] (2,0) circle (1pt) node[below right] {$0$};
        \draw[fill] (2,1) circle (1pt) node[above right] {$\deps$};
        \draw[fill] (2,-1) circle (1pt) node[below left] {$-\deps$};
        \draw (1,3.3) to [out=-60,in=110] (2.8,1)--++(-1,0)--++(0,-2)--++(1,0) to
        [out=-110,in=60] (1,-3.3) node [right] {$\CC_\z^{\deps}$};
        \draw[densely dashed] (3.5,1)--(4,1) --++ (0,-2) --++ (-.5,0)
        to [out=-90,in=0] (1.5,-2.5) to [out=180,in=-50] (-2.2,-1)
        --++(-1.5,0)--++(0,2)--++(1.5,0)
        to [out=50,in=180] (1.5,2.5) to [out=0,in=90] (3.5,1)
        node [above right] {$\CC_\w^{\deps}$};
    \end{tikzpicture}
	\end{adjustbox}
    \caption{Left: Steepest descent/ascent contours
    $\{\z\colon\Im \Sfin(\z)=
    \Im \Sfin(\zfin)\}$ for the function $\Re \Sfin(\z)$
    in the upper half plane.
    Regions where $\Re \Sfin(\z)<\Re \Sfin(\zfin)$ are shaded. The parameters $\mathfrak{A}$
    and $T$ are as in \Cref{fig:roots}, and $\be=0.4$.
		Right: Modification of the contours
		in an $\deps$-neighborhood of the real line. }
    \label{fig:steepest_descent}
\end{figure}

Next, by $\CC_\w^{\deps}=\CC_\w^{\deps}(N)$ denote the
closed positively oriented contour
which 
inside the rectangle $\mathcal{R}$
coincides
with the union of the steepest ascent (for $\Re\Sfin$)
contours
$\{\z\colon\Im\Sfin(\z)=\Im\Sfin(\zfin)\}$
escaping through the points $z_1$ and $z_3$.
Outside $\mathcal{R}$ we 
modify the steepest ascent contour so that it encircles
$\{-1+T^{-1}(\D x-\D t),\ldots,T^{-1}(\D x-1),T^{-1}\D x\}$,
and crosses $\R$ at two points of the form $(k+1/2)/T$, $k\in \mathbb{Z}$, 
close to $z_1$ and $z_3$ within $C\cdot(\eps\log^2\eps+T^{-1})$. This is achieved by adding 
horizontal and vertical segments
similarly to $\CC_\z^{\deps}$. 
See \Cref{fig:steepest_descent} for an illustration
of the new contours 
$\CC_\z^{\deps}$ and $\CC_\w^{\deps}$.

\begin{proposition}\label{prop:K_Beta_paths_form}
    With the above definitions and conventions,
    for any $N>N_0$
    the kernel \eqref{K_Bernoulli_general} can be written as
    \begin{multline}\label{K_Beta_paths_form}
        K^{\textnormal{Bernoulli}}_{\vec a; \be}(T+\D t,\D x;T,0)
        =
        K_{\zfin/(\zfin+1)}(T+\D t,\D x;T,0)
        +
        \frac1{(2\pi\i)^{2}}
        \int_{\CC_\z^{\deps}}
        d\z
        \oint_{\CC_\w^{\deps}}d\w\,
        \frac{1}{\w-\z}\\\times
        \frac{(T+\D t)!\cdot T}{(T-1)!}
        \frac{(T\z+1)_{T-1}}{(T\w-\D x)_{T+\D t+1}}
        \frac{\sin(\pi T\w)}{\sin(\pi T\z)}
        \left(\frac{1-\be}{\be}\right)^{T(\w-\z)}
        \prod_{r=1}^{N}\frac{T\z-a_r}{T\w-a_r}+
        O(T^{-1}).
    \end{multline}
\end{proposition}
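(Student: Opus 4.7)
The plan is to deform the two contours in \eqref{K_Bernoulli_general} onto $\CC_\z^{\deps}$ and $\CC_\w^{\deps}$, collect the residues picked up along the way, and identify the resulting correction with $K_{\zfin/(\zfin+1)}$ up to an error of $O(T^{-1})$ modulo a cancellation with the indicator term already present in \eqref{K_Bernoulli_general}.

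Denote by $A(\z,\w)$ the integrand of \eqref{K_Bernoulli_general} stripped of the factor $(\w-\z)^{-1}$. As a function of $\z$ alone, $A(\z,\w)$ is holomorphic outside $\mathfrak{l}_T\Delta\mathfrak{a}\subset\R$; as a function of $\w$ alone, its only poles are real, lying in $T^{-1}(\{\D x-T-\D t,\dots,\D x\}\cap\mathfrak{A})$. First I would use \Cref{rmk:moving_int_contours} to shift the original $\z$-contour (for free) to a vertical line at $\Re\z=\alpha_0$, where $\alpha_0$ is the half-integer $/T$ point at which $\CC_\z^{\deps}$ crosses $\R$. Then deform the $\w$-contour onto $\CC_\w^{\deps}$: this encloses exactly the same set of $\w$-poles, but the new contour additionally sweeps over the segment of the $\z$-vertical line bounded by its two intersections with $\CC_\w^{\deps}$, and each such $\z$ contributes the residue $\mathrm{Res}_{\w=\z}[(\w-\z)^{-1}]=1$. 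Finally, deform the $\z$-vertical line onto $\CC_\z^{\deps}$: for $\w$ on $\CC_\w^{\deps}$ the $\z$-integrand has no poles in the swept region apart from the real ones dodged by the half-integer crossing of $\CC_\z^{\deps}$, so this deformation is free, and afterwards the single integral of residues runs along the arc of $\CC_\z^{\deps}$ from $\bar\zfin$ to $\zfin$. Altogether,
\begin{equation*}
K^{\textnormal{Bernoulli}}_{\vec a;\be}(T{+}\D t,\D x;T,0)=\mathbf{1}_{\D x\ge 0}\mathbf{1}_{\D t>0}(-1)^{\D x+1}\binom{\D t}{\D x}+I_{\text{new}}-\frac{1}{2\pi\i}\int_{\bar\zfin}^{\zfin}A(\z,\z)\,d\z,
\end{equation*}
with $I_{\text{new}}$ the double integral on the right of \eqref{K_Beta_paths_form}.

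It remains to show the last integral equals $K_{\zfin/(\zfin+1)}-(\text{indicator})+O(T^{-1})$. Setting $\w=\z$ collapses the $\sin$-quotient, the $((1-\be)/\be)^{T(\w-\z)}$ factor, and the product over $\mathfrak{a}$, leaving $A(\z,\z)=\frac{(T+\D t)!\,T}{(T-1)!}\cdot\frac{(T\z+1)_{T-1}}{(T\z-\D x)_{T+\D t+1}}$. Writing the Pochhammer symbols as $\Gamma$-ratios and applying Stirling's formula \eqref{Stirling_Gamma} yields
\begin{equation*}
A(\z,\z)=\z^{-\D x-1}(\z+1)^{\D x-\D t-1}\bigl(1+O(T^{-1})\bigr),
\end{equation*}
uniformly on the compact arc, since \Cref{Proposition_roots_positioning} keeps $\zfin$ at positive distance from $\{0,-1\}$ and the half-integer $/T$ crossing keeps $T\z$ at distance $\ge 1/2$ from $\mathbb{Z}$. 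The substitution $w=\z/(\z+1)$ sends the arc to a path from $\bar u$ to $u$ with $u=\zfin/(\zfin+1)$, crossing $(-\infty,0)$, and transforms the integrand into $(1-w)^{\D t}w^{-\D x-1}\,dw$. Comparing with \eqref{incomplete_beta}: for $\D t\le 0$ this is precisely $K_u(T{+}\D t,\D x;T,0)$ and the indicator vanishes, giving \eqref{K_Beta_paths_form}. For $\D t>0$ the defining $K_u$-path crosses $(0,1)$ rather than $(-\infty,0)$, and the difference is a counterclockwise loop around $w=0$ contributing the residue $(-1)^{\D x}\binom{\D t}{\D x}$, which cancels the indicator $(-1)^{\D x+1}\binom{\D t}{\D x}$ exactly.

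The main technical obstacle will be carrying out the $\w$-deformation cleanly when the $\w$-poles straddle $\CC_\z^{\deps}$, so that the original $\w$-contour must be taken as a union of two loops: one needs to verify that enlarging them into the single contour $\CC_\w^{\deps}$ produces only the single-arc $\w=\z$ residue described above and no extraneous topological contributions. This is achieved by a careful homotopy argument in the complement of the real-axis $\w$-poles, after which the uniform $O(T^{-1})$ Stirling bound is routine from the two distance conditions noted above.
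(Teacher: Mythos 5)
Your proposal is correct and follows essentially the same route as the paper: deform the contours to $\CC_\z^{\deps}$ and $\CC_\w^{\deps}$ picking up the residue at $\w=\z$ along the arc from $\zbfin$ to $\zfin$, apply Stirling to identify $A(\z,\z)=\z^{-\D x-1}(1+\z)^{\D x-\D t-1}(1+O(T^{-1}))$, change variables $w=\z/(\z+1)$, and absorb the indicator term by dragging the contour through $w=0$. The only difference is that you deform $\z$ to an intermediate vertical line, then $\w$, then $\z$ again (carrying the residue arc), whereas the paper deforms $\z$ all the way to $\CC_\z^{\deps}$ first and then $\w$, so the residue arc appears on $\CC_\z^{\deps}$ immediately; this is a harmless reordering, and your orientation $\int_{\bar\zfin}^{\zfin}$ is the correct one matching the sign of the stated proposition.
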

\begin{proof}
		All poles of the integrand in \eqref{K_Bernoulli_general} are on the real
		line. This integrand has no poles at
		$\z\in\{b_{-1}^{\ell}/T,\ldots,b_{0}^{r}/T\}$ (recall the notation
		\eqref{eq:notation_b_left_right}), and thus we can drag the
		point of intersection of the $\z$ contour with the real line to the desired
		location dictated by the contour $\CC_\z^{\deps}$. Hence we can deform the
		whole $\z$ contour to coincide with $\CC_\z^{\deps}$ without crossing any
		poles. Next, let us unite the two circles comprising the $\w$ contour in
		\eqref{K_Bernoulli_general} into the contour $\CC_\w^{\deps}$ intersecting
		with $\CC_\z^{\deps}$ at the critical points $\zfin$ and $\zbfin$. This
		leads to an additional integral of the residue at $\w=\z$ over the arc of
		$\CC_\z^{\deps}$ from $\zbfin$ to $\zfin$ crossing the real line between
		$-1$ and $0$, see \Cref{fig:dragging}. This deformation of the $\w$ contour
		does not cross any other $\w$ poles of the integrand.\footnote{The desire
        that these deformations do not cross any real poles is the reason 
        why the contours $\CC_\z^{\deps}$ and $\CC_\w^{\deps}$ should differ from the
        steepest descent/ascent ones close to the real line.}

\begin{figure}[htbp]
		\scalebox{.8}{\begin{tikzpicture}
				[scale=.9, ultra thick]
				\draw[densely dotted, postaction={decorate,decoration={markings,
						mark=between positions .1 and 1 step 0.25 with {\draw [->] (0,0) -- (.1,0);}}}]
				(-1.5,1.6) to [out=180,in=90]
				(-3,0) to [out=-90,in=180]
				(-1.5,-1.6) to [out=0,in=-90]
				(-0.03,0) to [out=90,in=0]
				(-1.5,1.6);
				\draw[densely dotted, postaction={decorate,decoration={markings,
						mark=between positions 0.2 and 1 step 0.25 with {\draw [->] (0,0) -- (.1,0);}}}]
				(-4,1.2) to [out=180,in=90]
				(-5.86,0) to [out=-90,in=180]
				(-4,-1.2) to [out=0,in=-90]
				(-3.4,0)
				to [out=90,in=0]
				(-4,1.2);
				\node at (-5.2,1.3) {$\w$};
				\node at (-1,1.8) {$\w$};
				\draw[postaction={decorate,decoration={markings,
						mark=between positions 0.1 and 0.9 step 0.4 with {\draw [->] (0,0) -- (.1,0);}}}]
				(-2.25,-2.5)
				to [out=135,in=-80] (-3.1,-1)
				to [out=100,in=-100] (-3.1,1)
				to [out=80,in=-135] (-2.25,2.5) node[right] {$\z$};
				\begin{scope}[shift={(-3.1,1)},scale=.6]
						\draw[line width=2.5] (-.2,-.2)--++(.4,.4);
						\draw[line width=2.5] (-.2,.2)--++(.4,-.4);
						\node at (-.48,.6) {$\zfin$};
				\end{scope}
				\begin{scope}[shift={(-3.1,-1)},scale=.6]
						\draw[line width=2.5] (-.2,-.2)--++(.4,.4);
						\draw[line width=2.5] (-.2,.2)--++(.4,-.4);
						\node at (-.4,-.4) {$\zbfin$};
				\end{scope}
				\node[anchor=west] at (-6.5,-2.9) {\large{}Double contour integral in \eqref{K_Bernoulli_general}};
		\end{tikzpicture}}
		\scalebox{.8}{\begin{tikzpicture}
				[scale=.9, ultra thick]
				\draw[densely dotted, postaction={decorate,decoration={markings,
						mark=between positions .1 and 1 step 0.25 with {\draw [->] (0,0) -- (.1,0);}}}]
				(-0.03,0)
				to [out=90, in=0] (-3.1,1)
				to [out=180,in=90] (-5.86,0)
				to [out=-90, in=180] (-3.1,-1)
				to [out=0,in=-90] (-0.03,0);
				\node at (-5.2,1.2) {$\w$};
				\draw[postaction={decorate,decoration={markings,
						mark=between positions 0.1 and 0.9 step 0.4 with {\draw [->] (0,0) -- (.1,0);}}}]
				(-2.25,-2.5)
				to [out=135,in=-80] (-3.1,-1)
				to [out=100,in=-100] (-3.1,1)
				to [out=80,in=-135] (-2.25,2.5) node[right] {$\z$};
				\begin{scope}[shift={(-3.1,1)},scale=.6]
						\draw[line width=2.5] (-.2,-.2)--++(.4,.4);
						\draw[line width=2.5] (-.2,.2)--++(.4,-.4);
						\node at (-.48,.6) {$\zfin$};
				\end{scope}
				\begin{scope}[shift={(-3.1,-1)},scale=.6]
						\draw[line width=2.5] (-.2,-.2)--++(.4,.4);
						\draw[line width=2.5] (-.2,.2)--++(.4,-.4);
						\node at (-.4,-.5) {$\zbfin$};
				\end{scope}
				\node at (-7,0) {\huge$=$};
				\node at (-7.9,0) {};
				\node at (1.2,0) {\huge$-$};
				\node at (1.8,0) {};
				\node[anchor=west] at (-5,-2.9) {\large{}\phantom{($*$)}};
		\end{tikzpicture}}
		\scalebox{.8}{\begin{tikzpicture}
				[scale=.9, ultra thick]
				\node at (-3.48,.2) {$\z$};
				\node at (-2.6,-.2) {$\mathop{\mathrm{Res}}
				\limits_{\textnormal{$\w=\z$}}$};
				\phantom{\draw[postaction={decorate,decoration={markings,
						mark=between positions 0.1 and 0.9 step 0.4 with {\draw [->] (0,0) -- (.1,0);}}}]
				(-2.25,-2.5)
				to [out=135,in=-80] (-3.1,-1)
				to [out=100,in=-100] (-3.1,1)
				to [out=80,in=-135] (-2.25,2.5) node[right] {$\z$};}
				\draw[postaction={decorate,decoration={markings,
						mark=at position 0.5 with {\draw [->] (0,0) -- (.1,0);}}}]
				(-3.1,-1) to [out=100,in=-100] (-3.1,1);
				\begin{scope}[shift={(-3.1,1)},scale=.6]
						\draw[line width=2.5] (-.2,-.2)--++(.4,.4);
						\draw[line width=2.5] (-.2,.2)--++(.4,-.4);
						\node at (-.48,.6) {$\zfin$};
				\end{scope}
				\begin{scope}[shift={(-3.1,-1)},scale=.6]
						\draw[line width=2.5] (-.2,-.2)--++(.4,.4);
						\draw[line width=2.5] (-.2,.2)--++(.4,-.4);
						\node at (-.4,-.4) {$\zbfin$};
				\end{scope}
				\node[anchor=west] at (-3.2,-2.9) {\large{}\phantom{($**$)}};
		\end{tikzpicture}}
		\caption{Unifying two circles into a single $\w$ contour.}
		\label{fig:dragging}
\end{figure}

    The expression coming from the residue of the integrand at $\w=\z$ behaves as
    \begin{multline}\label{discrete_sine_asympt}
        -\frac{1}{2\pi\i}\frac{(T+\D t)!\cdot T}{(T-1)!}
        \frac{(T\w+1)_{T-1}}{(T\w-\D x)_{T+\D t+1}}\\=
        -\frac{1}{2\pi\i}
        \big(1+O(T^{-1})\big)T^{\D t+2}
        \frac{\G(T \w+T)\G(T\w-\D x)}{\G(T\w+1)\G(T\w-\D x+T+\D t+1)}\\=
        -\frac{1}{2\pi\i}
        \big(1+O(T^{-1})\big)
        \w^{-\D x-1}
        (1+\w)^{-\D t+\D x-1},
    \end{multline}
    where we used
    \eqref{Stirling_Gamma}, and the
    asymptotic expression is valid for all $\w\in\CC_\w^{\deps}$
    (for real $\w<-1$ one should apply \eqref{Gamma_function_flip}
    to all four gamma functions before
    using \eqref{Stirling_Gamma}).

    The right-hand side of \eqref{discrete_sine_asympt} above
    has singularities at $\w=-1$ and $\w=0$, and the arc
    of the contour $\CC_\z^{\deps}$ between the critical points $\zbfin$ and $\zfin$
    crosses $(-1,0)$.
    The integral of the error $O(T^{-1})$
    in \eqref{discrete_sine_asympt} over the arc
    from $\zbfin$ to $\zfin$
    is
    bounded by $O(|\zfin|/T)$, which is $O(1/T)$ because $\zfin$ belongs to a
    compact set $\mathcal Z$.

    Let us now identify the extended sine kernel
    \eqref{incomplete_beta} in the remaining terms outside the double contour integral over
    $\CC_\z^{\deps}$ and $\CC_\w^{\deps}$.
    Observe that for $\D t\ge0$ we have
    \begin{equation*}
        \mathop{\mathrm{Res}}_{\w=0}
        \Big(\w^{-\D x-1}
        (1+\w)^{-\D t+\D x-1}\Big)=-(-1)^{\D x+1}\binom{\D t}{\D x}\mathbf{1}_{\D x\ge0}.
    \end{equation*}
    By dragging the integration arc through $0$ for $\D t>0$ we obtain
    \begin{multline*}
        \mathbf{1}_{\D x\ge0}\mathbf{1}_{\D t>0}
        (-1)^{\D x+1}
        \binom{\D t}{\D x}
        -\frac{1}{2\pi\i}\int_{\zfin}^{\zbfin}
        \w^{-\D x-1}
        (1+\w)^{-\D t+\D x-1}d\w
        \\=
        -\frac{1}{2\pi\i}\int_{\zfin}^{\zbfin}
        \w^{-\D x-1}
        (1+\w)^{-\D t+\D x-1}d\w,
    \end{multline*}
    where in the right-hand side the arc
    crosses $(-1,0)$ for $\D t\le 0$ and $(0,+\infty)$ for $\D t>0$.
    Changing the variables in the right-hand side as $\w=\frac{z}{1-z}$
    (so $z=\frac{\w}{1+\w}$)
    turns the above integral into
    $K_{\zfin/(\zfin+1)}(T+\D t,\D x;T,0)$ \eqref{incomplete_beta},
    as desired.
\end{proof}

To complete the proof of \Cref{Theorem_main} it remains to show that the double contour integral in \eqref{K_Beta_paths_form} is negligible as $N\to+\infty$. It has the form (cf. \eqref{integrand_asymptotics}, \eqref{S_T_action})
\begin{equation}\label{negligible_double_integral}
    \frac1{(2\pi\i)^{2}}
    \int_{\CC_\z^{\deps}}
    d\z
    \oint_{\CC_\w^{\deps}}d\w\,\frac{1}{\w-\z}\exp\Big\{T\big(\Sfin(\z)-\Sfin(\w)\big)\Big\}
    \frac{(T+\D t)!\cdot T}{(T-1)!}
    \frac{(T\w+1)_{T-1}}{(T\w-\D x)_{T+\D t+1}}.
\end{equation}
We need the following statement which we prove later
in \Cref{sub:convergent_initial_data}:

\begin{lemma}\label{lemma:length_bounded}
    Under Assumptions \ref{ass:density} and \ref{ass:intermediate_behavior}
    the length of 
	$\CC_\w^{\deps}$ is bounded uniformly in $N$.
\end{lemma}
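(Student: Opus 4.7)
The contour $\CC_\w^{\deps}$ splits naturally into two parts: the portion inside the rectangle $\mathcal R$, which consists of four smooth level curves of $\Im \Sfin$ (two in the upper half plane emanating from $\zfin$ to the exit points $z_1, z_3$, and their complex conjugate reflections through $\zbfin$), and the portion outside $\mathcal R$, which is a union of horizontal and vertical segments near the real line. The exterior part is straightforward to bound: horizontal segments at heights $\pm \deps$, together with short vertical segments connecting them to the real line, encircle the $O(1)$ many poles $\{T^{-1}(\D x - \D t),\ldots,T^{-1}\D x\}$ lying in a bounded neighborhood of the origin and attach to the exit points $z_1, z_3$ (and their conjugates), whose coordinates are bounded by the fixed constants $\mathsf R_x, \mathsf R_y$ independent of $N$. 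The exterior portion thus has total length $O(1)$ uniformly in $N$.

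For each of the four interior pieces $\gamma \subset \bar{\mathcal R}$, Proposition \ref{Proposition_large_rectangle_contours} guarantees that $\Re \Sfin$ is strictly monotone along $\gamma$. Parameterizing $\gamma$ by $u = \Re \Sfin$ and using the Cauchy--Riemann equations (so that on a level curve of $\Im \Sfin$ the arclength element equals $ds = du/|\Sfin'(\z)|$) yields
\begin{equation*}
    \mathrm{length}(\gamma) = \int_{u_-}^{u_+} \frac{du}{|\Sfin'(\z(u))|}.
\end{equation*}
Since $\Im \z \geq \eps$ on $\bar{\mathcal R}$ keeps us at a positive distance from the singularities of $\Sfin$ on the real line, arguments of the same type as in Lemmas \ref{lemma_imaginary_part_1}--\ref{lemma_real_part} (depending only on Assumptions \ref{ass:density} and \ref{ass:intermediate_behavior}) yield a uniform upper bound $|\Sfin'(\z)| \leq M$ on $\bar{\mathcal R}$, and therefore $|u_+ - u_-| \leq M \cdot \mathrm{diam}(\mathcal R) = O(1)$. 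To control the integrand, split $\gamma$ at distance $\rho$ from $\zfin$ for a small fixed $\rho$: inside $B_\rho(\zfin)$, Lemma \ref{lemma:second_derivative} gives $|\Sfin'(\z)| \geq C^{-1}|\z - \zfin|$, so the contribution of the inner portion to the length is $O(\rho)$.

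The main remaining task, and the main obstacle, is to establish a uniform-in-$N$ lower bound $|\Sfin'(\z)| \geq c(\rho) > 0$ on the compact set $\bar{\mathcal R} \setminus B_\rho(\zfin)$. This follows by refining the quantitative estimates from Section \ref{sec:proof_of_proposition_roots_positioning}: the arguments in Lemmas \ref{lemma_imaginary_part_1} and \ref{lemma_real_part} already establish separation of $\Im \Sfin'$ and $\Re \Sfin'$ from zero on various curves in the upper half plane via Riemann-sum approximation by integrals against Cauchy densities. Extending those estimates from boundary curves to the full compact set $\bar{\mathcal R} \setminus B_\rho(\zfin)$, and invoking the uniqueness of $\zfin$ as a critical point (Lemma \ref{lemma:S_T_critical_points_equation}) to rule out any degeneracy, yields the desired lower bound. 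Combining these contributions gives $\mathrm{length}(\gamma) = O(\rho) + O(c(\rho)^{-1}) = O(1)$ uniformly in $N$, which together with the $O(1)$ bound on the exterior part completes the proof.
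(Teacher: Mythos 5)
Your framework — decomposing the contour, parametrizing the interior arcs by $u=\Re\Sfin$ so that $ds = du/|\Sfin'(\z)|$, and splitting into a neighborhood of $\zfin$ and the rest — is a genuinely different route from the paper's. But the single step you flag as ``the main remaining task,'' a uniform-in-$N$ lower bound $|\Sfin'(\z)|\ge c(\rho)>0$ on $\bar{\mathcal R}\setminus B_\rho(\zfin)$, is asserted rather than proved, and the sketch of how to obtain it does not hold up. \Cref{lemma_imaginary_part_1} and \Cref{lemma_real_part} bound $\Im\Sfin'$ and $\Re\Sfin'$ separately, only on particular curves (the boundary of $\mathcal D$ and on segments of $\{y=\eps\}$), and these are not the level curves of $\Im\Sfin$ making up $\CC_\w^{\deps}$; in particular $\Im\Sfin'$ is not controlled on those level curves, and $\Re\Sfin'$ vanishes at $\zfin$ and along the curve from \Cref{lemma_imaginary_part_2}. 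Invoking the uniqueness of $\zfin$ (\Cref{lemma:S_T_critical_points_equation}) only tells you that for each fixed $N$ the infimum of $|\Sfin'|$ on $\bar{\mathcal R}\setminus B_\rho(\zfin)$ is positive; it does not rule out that infimum tending to $0$ along a sequence of $N$'s (e.g.\ a near-degenerate double critical point appearing just outside $B_\rho(\zfin)$). Since $\Sfin$ depends on the arbitrary initial data $\mathfrak A(N)$ and the number of particles grows, this is exactly the kind of non-uniformity that needs to be excluded, and it is the whole content of the lemma.

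The paper handles this by a compactness/subsequence argument and proves \Cref{lemma:second_derivative,lemma:length_bounded} simultaneously: assuming the contrary, one passes to a subsequence along which the empirical measures $\frac1T\sum\delta_{a_i/T}$ converge vaguely and $\CCC_N(\RRR)$ converges, shows there are constants $c_N$ with $\Sfin-c_N\to\Sfin_*$ uniformly on compacts in $\HH$, and derives a contradiction from the fact that the limiting $\Sfin_*$ is holomorphic with a single, nondegenerate critical point (by Hurwitz and \Cref{lemma:S_T_critical_points_equation}) and hence finite-length steepest-ascent contours. This sidesteps the need for any explicit quantitative lower bound on $|\Sfin'|$. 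If you want to pursue your direct approach, you would need to supply an actual proof of the uniform lower bound on $|\Sfin'|$ away from $\zfin$ — most likely you would end up rediscovering a compactness argument of the same flavor, since the assumptions do not readily yield explicit constants.
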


This fact together with 
\eqref{discrete_sine_asympt} 
implies that 
that the parts in \eqref{negligible_double_integral} outside the exponent are 
bounded by a constant depending on $\D x,\D t$.

For $\z$ and $\w$ in a fixed small neighborhood of the critical point
$\zfin=\zfin(N)$ which is bounded away from $\R$ we can Taylor expand the
function $\Sfin(\z)$. 
Because
the second derivative of $\Sfin$ is nonzero by 
\Cref{lemma:second_derivative},
this leads to a convergent integral times $T^{-\frac12}$
which goes to zero. This is a standard part of the steepest descent analysis,
and we refer to, e.g., \cite[Section 3]{Okounkov2002} for details.

Consider the situation when $\z$ and $\w$ are outside of this neighborhood of $\zfin$. 
On the parts of the contours $\CC_\z^{\deps}$ and $\CC_\w^{\deps}$ 
inside the rectangle $\mathcal{R}$ we have
the steepest descent/ascent properties.
Together with \Cref{lemma:second_derivative} 
they imply that 
outside a sufficiently small neighborhood of 
$\zfin$ and 
for a sufficiently small fixed $\delta>0$
(both depend only on the constants in Assumptions
\ref{ass:density} and \ref{ass:intermediate_behavior}):
\begin{equation*}
	\begin{split}
		\Re\Sfin(\z)-\Re\Sfin(\zfin)&<-\delta;
		\\
		\Re\Sfin(\w)-\Re\Sfin(\zfin)&>\delta.
	\end{split}
\end{equation*}
Along the part of the of the $\z$ contour escaping to infinity
$\Re\Sfin(\z)$ cannot increase 
due to the second claim of \Cref{lemma:estimate_real}.

Let us consider the possible change of 
$\Re\Sfin$ along $\CC_\z^{\deps}$
and
$\CC_\w^{\deps}$
close to the real line.
The vertical segments crossing the real line at points
$(k+1/2)/T$, $k\in \mathbb{Z}$, 
have length $2\deps$, and due to the 
first claim of \Cref{lemma:estimate_real}
we see that the change of 
$\Re\Sfin$ is of order $\deps$.
The horizontal segments have length $C\cdot (\deps\log^2\deps+T^{-1})$
for some $C>0$ independent of $N$ or~$\deps$ (but $C$ might depend on $\D x,\D t$).
Using the third claim of \Cref{lemma:estimate_real} we can 
upper bound the absolute value of the change 
of $\Re \Sfin$ along the
horizontal parts of the contours by a constant times
\begin{equation*}
	\left( -\log\deps+\frac{1}{T^2\deps^2} \right)\left( \deps\log^2\deps+\frac{1}{T} \right)=
	\frac{1}{\varepsilon^2 T^3}+\frac{\log^2\varepsilon}{T^2\varepsilon}-
	\frac{\log\varepsilon}{T}
	-\deps \log^3\deps.
\end{equation*}
This can be made much smaller than $\delta$:
first choose $\eps$ that the forth term is small, and
then choose $N$ (thus $T(N)$) large enough so that the first three
terms are also small.
Therefore, 
the whole double contour integral \eqref{negligible_double_integral} is
negligible in the limit.
This completes the proof
of \Cref{Theorem_main}.


\subsection{Convergent initial data and proofs of \Cref{Theorem_main_convergence,Theorem_main_convergence_glob,Theorem_main_convergence_Lebesgue}} 
\label{sub:convergent_initial_data}

In this subsection we present 
proofs of \Cref{Theorem_main_convergence,Theorem_main_convergence_glob,Theorem_main_convergence_Lebesgue}
describing the convergence of the point processes to the extended sine process 
under suitable additional assumptions.
Moreover, using similar arguments
we prove \Cref{lemma:second_derivative,lemma:length_bounded}
which were formulated in the previous two subsections.
These lemmas are not directly involved in the proofs
of 
\Cref{Theorem_main_convergence,Theorem_main_convergence_glob,Theorem_main_convergence_Lebesgue}.

In addition to Assumptions \ref{ass:density} and \ref{ass:intermediate_behavior}, let
\begin{equation}\label{local_measure_convergence_S6}
    \lim_{N\to+\infty}\frac{1}{T(N)}\sum_{i=1}^N \delta_{{a_i(N)}/{T(N)}}=\mu_{loc},
\end{equation}
where $\mu_{loc}$ is a $\sigma$-finite measure, and the limit is understood according to \Cref{def:vague}. By Assumption \ref{ass:density}, $\mu_{loc}$ has a density (with respect to the Lebesgue measure) which is between $\lod$ and $\upd$.

Let us also assume that the quantities \eqref{drift_N} have a limit $\CCC(\RRR)=\lim_{N\to+\infty}\CCC_N(\RRR)$. Then the meromorphic function $\Sfin'(\z)$ \eqref{S_T_prime} has a limit as $N\to+\infty$:
\begin{lemma}\label{lemma:S_prime_limit}
    Under the above assumptions and notation, we have $\lim_{N\to+\infty}\Sfin'(\z) =\Sfin_*'(\z)$ for all $\z\in\HH$, where
    \begin{multline}\label{S_star_function}
        \Sfin_*'(\z)=
        \int_{-\infty}^{\infty}
        \left(\frac{1}{\z-v}
        +\frac{\mathbf{1}_{|v|>\RRR}}{v}
        \right)\mu_{loc}(dv)-\CCC(\RRR)
        \\+\log (\z+1)-\log \z+\i\pi
        -\log(\be^{-1}-1),
    \end{multline}
    and $\RRR>0$ is arbitrary (the limit does not depend on $\RRR$). The convergence is uniform in $\z$ belonging to compact subsets of $\HH$.
\end{lemma}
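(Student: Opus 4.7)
The plan is to work from the alternative formula \eqref{Sfin_prime_initial_formula} and treat the three terms
\begin{equation*}
    \sum_{r=1}^{N}\frac{1}{T\z-a_r},\qquad \sum_{i=1}^{T-1}\frac{1}{T\z+i},\qquad -\pi\cot(\pi T\z)
\end{equation*}
individually for $\z$ in a compact set $K\subset\HH$. The middle sum equals $\frac1T\sum_{i=1}^{T-1}(\z+i/T)^{-1}$, a Riemann sum converging to $\int_0^1(\z+t)^{-1}\,dt=\log_{\HH}(\z+1)-\log_{\HH}\z$ uniformly on $K$, since the integrand is holomorphic and bounded in a neighborhood of $[0,1]$ uniformly over such $\z$. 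For the cotangent term, the identity $\cot w=\i(e^{2\i w}+1)/(e^{2\i w}-1)$ together with the bound $|e^{2\pi\i T\z}|\le e^{-2\pi T\inf_K\Im\z}\to 0$ gives $-\pi\cot(\pi T\z)\to\i\pi$ uniformly on $K$.

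The substantive step is the sum over $\vec a$. Introduce the empirical measure $\mu_N:=\frac1T\sum_i\delta_{a_i/T}$ and use the identity $\frac{1}{\z-v}=\frac{\z}{v(\z-v)}-\frac1v$ (equivalently, $\frac{1}{T\z-a_r}+\frac{1}{a_r}=\frac{T\z}{a_r(T\z-a_r)}$) to split
\begin{equation*}
    \sum_{r=1}^N\frac{1}{T\z-a_r}=\int_{|v|\le\RRR}\frac{\mu_N(dv)}{\z-v}+\int_{|v|>\RRR}\frac{\z\,\mu_N(dv)}{v(\z-v)}-\CCC_N(\RRR).
\end{equation*}
By hypothesis $\CCC_N(\RRR)\to\CCC(\RRR)$. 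The first integral is taken against a bounded continuous function on $[-\RRR,\RRR]$, and converges to the corresponding integral against $\mu_{loc}$ by vague convergence (after the boundary issue discussed below). The tail integrand $\z/(v(\z-v))$ is continuous off $\{0,\z\}$ and bounded by $2|\z|/|v|^2$ once $|v|>2|\z|$; since the $a_r$ are distinct integers, this yields a uniform tail bound $\bigl|\int_{|v|>M}\z\,\mu_N(dv)/(v(\z-v))\bigr|\le C|\z|/M$ for $N$ arbitrary, so for any fixed $M$ the piece $\int_{\RRR<|v|\le M}$ converges by vague convergence and letting $M\to\infty$ afterwards closes the argument. Using $\int_{|v|>\RRR}\frac{\z}{v(\z-v)}\mu_{loc}(dv)=\int_{|v|>\RRR}\bigl(\frac{1}{\z-v}+\frac{1}{v}\bigr)\mu_{loc}(dv)$ reassembles the right-hand side of \eqref{S_star_function}.

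The main technical nuisance is that $\mu_{loc}$ is only $\sigma$-finite, so vague convergence does not apply directly to the sharp cutoffs $\mathbf{1}_{|v|\le\RRR}$ or $\mathbf{1}_{|v|\le M}$. I would sandwich each indicator between smooth cutoffs supported in a $\delta$-neighborhood of the target interval and equal to one on a $\delta$-interior; by \Cref{ass:density}, $\mu_N$ has density bounded by $\upd$ on $[-\RRR-1,\RRR+1]$ for $N$ large (since $\QQ(N)/T(N)\to\infty$), so each boundary annulus contributes at most $O(\delta)$ uniformly in $N$, and letting $\delta\to 0$ after $N\to\infty$ closes the gap. Uniformity over $\z\in K$ is preserved throughout because every error estimate depends on $\z$ only through an upper bound on $|\z|$ and a lower bound on $\Im\z$. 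Finally, independence of the limit from $\RRR$ is immediate from \Cref{rmk:intermediate}: changing $\RRR$ alters $\int\frac{\mathbf{1}_{|v|>\RRR}}{v}\mu_{loc}(dv)$ and $\CCC(\RRR)$ by the same finite harmonic-type quantity, while $\int\mu_{loc}(dv)/(\z-v)$ is unaffected.
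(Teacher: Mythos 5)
Your proposal is correct and takes essentially the same route as the paper: treat the cotangent and the sum over $\{1,\dots,T-1\}$ by direct limits, and handle the sum over $\mathfrak{A}$ by inserting the regularizer $\mathbf{1}_{|v|>\RRR}/v$, which isolates $\CCC_N(\RRR)$ and makes the remaining integrand decay like $v^{-2}$; the identity you write is exactly equation (\ref{S_prime_limit_proof}) of the paper. The only difference is that you spell out the smoothing-of-cutoffs argument needed to make vague convergence interact cleanly with the sharp indicators (using the uniform density bound from Assumption \ref{ass:density} to control the boundary annuli), a point the paper dispatches in one sentence by saying the density of $\mu_{loc}$ is bounded and one may truncate to $[-M,M]$ — a slightly more explicit treatment of the same argument, not a different one.
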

\begin{proof}
    Fix $\z\in\HH$. Let us use formula \eqref{Sfin_prime_initial_formula} for $\Sfin'(\z)$. First, we have
    \begin{equation*}
        -\pi\cot(\pi T\z)=
        -\i\pi
        \frac{e^{\i\pi T\z}+e^{-\i\pi T\z}}
        {e^{\i\pi T\z}-e^{-\i\pi T\z}}\to \i\pi,
    \end{equation*}
    because $e^{-\i\pi T\z}$ dominates for
    $\z\in\HH$.

    Next, the sum over $i=1,\ldots,T-1$ approximates the corresponding Riemann integral:
    \begin{equation*}
        \sum_{i=1}^{T-1}\frac{1}{T\z+i}
        =
        \frac{1}{T}\sum_{i=1}^{T-1}\frac{1}{\z+i/T}
        \to
        \int_{0}^{1}\frac{dv}{\z+v}=\log(\z+1)-\log\z,
    \end{equation*}
    and the convergence is uniform over $\z$ in compact subsets of $\HH$.

    Finally, recall the definition of the atomic measure \eqref{atomic_measure}, and note that \eqref{local_measure_convergence_S6} means that the measures $\m_{T}[\mathfrak{a}]$ vaguely converge to $\mu_{loc}$. The remaining $N$-dependent part of \eqref{Sfin_prime_initial_formula} can be written as
    \begin{equation*}
        \frac{1}{T}\sum_{r=1}^{N}\frac{1}{\z-a_r/T}
        =\int_{-\infty}^{+\infty}
        \frac{\m_{T}[\mathfrak{a}](dv)}{\z-v}.
    \end{equation*}
    Since the function $1/(\z-v)$ in $v$ does not have compact support, and its integral with respect to the Lebesgue measure diverges at infinity, one cannot directly apply \eqref{local_measure_convergence_S6} to the integral above. Here we need a regularization afforded by the convergence of the constants $\CCC_N(\RRR)$ \eqref{drift_N}. Namely, take any $\RRR>0$ and write
    \begin{equation}\label{S_prime_limit_proof}
        \int_{-\infty}^{+\infty}
        \frac{\m_{T}[\mathfrak{a}](dv)}{\z-v}=
        \int_{-\infty}^{+\infty}
        \left(\frac{1}{\z-v}
        +\frac{\mathbf{1}_{|v|>\RRR}}{v}
        \right)\m_{T}[\mathfrak{a}](dv)-
        \CCC_N(\RRR)
    \end{equation}
    (this expression does not depend on $\RRR$). Now the function under the integral decays as $v^{-2}$ at infinity, and so is Lebesgue integrable. Since the density of $\mu_{loc}$ is bounded, by restricting the integration to $[-M,M]$ for large $M$ and applying \eqref{local_measure_convergence_S6}, we conclude that the integral converges to the corresponding integral with respect to $\mu_{loc}$. The uniformity of each convergence above is evident, so this completes the proof.
\end{proof}

We are now in a good position to prove
the helpful 
\Cref{lemma:second_derivative,lemma:length_bounded}
formulated
previously: their proofs are similar
to each other and to that of \Cref{lemma:S_prime_limit}.

\begin{proof}[Proof of \Cref{lemma:second_derivative,lemma:length_bounded}]
		We will prove both statements 
		simultaneously. 
		If the contrary
		to one of the lemmas holds, 
		then
		there exists a subsequence $\{N_k\}$ along which
		the local measures converge in the sense of
		\eqref{local_measure_convergence_S6} (consider measures of segments with
		rational endpoints and choose a diagonal subsequence), the constants
		$\CCC_{N_k}(\RRR)$ converge to $\CCC(\RRR)$, 
		but 
		\begin{itemize}
			\item 
				(\Cref{lemma:second_derivative}) 
				The derivatives $\Sfin''(z_{N_k})$ converge to zero 
				or infinity along 
				a subsequence $z_{N_k}$ belonging to a compact 
				subset of the upper half plane.
				Further choosing a subsequence of 
				$\{ N_k \}$
				we may assume that along this subsequence 
				the $z_{N_k}$'s converge to some point
				$\tilde z$ is in the upper half plane.
			\item 
				(\Cref{lemma:length_bounded})
				The length of the
				contour $\CC_\w^{\deps}(N_k)$ grows to infinity.
		\end{itemize}
		To simplify notation, let us
		use the sequence $\{N\}$ instead of $\{N_k\}$ in the rest of the proof.

\smallskip

		Let us now show that there are constants $c_{N}$ such that there exists
		$\lim_{N\to+\infty}(\Sfin(\z)-c_N)$, call it $\Sfin_*(\z)$, uniformly in
		$\z$ belonging to bounded subsets of $\HH$. Moreover, $\Sfin_*$ is holomorphic in
		$\HH$, and its derivative is given by \eqref{S_star_function}.

    Fix $\z\in\HH$. Recall that $\Sfin(\z)$ is given by \eqref{S_T_action}, and let us consider the five summands in that formula separately. First, observe that
    \begin{equation*}
        -\frac 1T\log_{\HH}(\sin(\pi T \z))
        +
        \frac {2\pi\i}T
        \left\lfloor
        \tfrac12\Re(T\z)+\tfrac12
        \right\rfloor
        -\z\log(\be^{-1}-1)\to \i\pi\z-\z\log(\be^{-1}-1)
    \end{equation*}
    due to \eqref{log_branch2}, \eqref{log_branch_large_im}. Next, the second sum in \eqref{S_T_action} approximates a convergent Riemann integral:
    \begin{equation*}
        \frac 1T\sum_{i=1}^{T-1}\log_{\HH}\Big(\z+\frac iT\Big)\to \int_{0}^{1}\log(\z+v)dv=
        (\z+1) \log (\z+1)-\z \log \z-1
    \end{equation*}
    (in $\HH$ the branches $\log$ and $\log_\HH$ coincide). The first sum in \eqref{S_T_action} can be rewritten as
    \begin{multline*}
        \frac 1T\sum_{r=1}^{N}\log_{\HH}
        \Big(\z-\frac{a_r}T\Big)
        =
        \int_{-\infty}^{+\infty}
        \log(\z-v)\m_{T}[\mathfrak{a}](dv)
        \\=
        \int_{-\infty}^{+\infty}
        \biggl(\log(\z-v)-\log(\i-v)+
        (\z-\i)\frac{\mathbf{1}_{|v|>\RRR}}{v}\biggr)
        \m_{T}[\mathfrak{a}](dv)
        \\+
        \int_{-\infty}^{+\infty}\log(\i-v)
        \m_{T}[\mathfrak{a}](dv)
        -(\z-\i)\CCC_N(\RRR).
    \end{multline*}
    The integrand $\log(\z-v)-\log(\i-v)+(\z-\i)\frac{\mathbf{1}_{|v|>\RRR}}{v}$ decays as $v^{-2}$ at infinity, so the first integral in the right-hand side converges as $N\to+\infty$ to the same integral over $\mu_{loc}$. The second integral in the right-hand side does not depend on $\z$, call it $c_N$. The third summand converges to $-(\z-\i)\CCC(\RRR)$. Thus, we have the convergence $\Sfin(\z)-c_N\to\Sfin_*(\z)$ to a holomorphic function, uniformly in $\z$ in compact subsets in $\HH$. One can check that the derivative of $\Sfin_*$ is \eqref{S_star_function}.

\smallskip

	Once the existence of 
	the uniform limit
	$\Sfin_*(\z)=\lim_{N\to+\infty}(\Sfin(\z)-c_N)$
	is established, 
	we continue with separate arguments:
	\begin{itemize}
		\item 
			(\Cref{lemma:second_derivative})
			We have $\Sfin_*''(\tilde z)=0$ or $\infty$  and $\tilde z\in \HH$.
			The second case is not possible since $\Sfin_*$ is holomorphic
			in $\HH$. 
			If $\Sfin_*''(\tilde z)=0$
			then by Hurwitz's theorem 
			for all sufficiently large $N$
			there exist two complex critical points of 
			$\Sfin-c_N$ (equivalently, of $\Sfin$)
			in the upper half plane. 
			This is impossible by \Cref{lemma:S_T_critical_points_equation}.
			So in either case we get a contradiction.

		\item (\Cref{lemma:length_bounded})
			Observe that the length of the part of the contour $\CC_\w^{\deps}$ close
			to the real line is bounded. From the convergence of $\Sfin(\z)-c_N$ it
			follows that away from the real line the contour $\CC_\w^{\deps}$
			approximates the corresponding contour for $\Sfin_*(\z)$, and the latter
			has finite length. We get a contradiction, too.
	\end{itemize}
	This proves both desired statements.
\end{proof}

\begin{remark}\label{rmk:passing_to_subsequences}
		The previous argument
		shows that the additional hypotheses of \Cref{Theorem_main_convergence},
		namely, convergence of measures \eqref{local_measure_convergence_S6} and
		convergence $\CCC(\RRR)=\lim_{N\to+\infty}\CCC_N(\RRR)$, are not too
		restrictive.
\end{remark}

\begin{proof}[Proof of \Cref{Theorem_main_convergence}]
    By Hurwitz's theorem, the critical points $\zfin(N)\in\HH$
    of $\Sfin'$ converge, as $N\to+\infty$,
    to a critical point $\zlim\in\HH$, which belongs
    to the compact set $\mathcal Z$
    from \Cref{Theorem_main}.
    Applying the latter, we get the desired convergence.
    Equation \eqref{Theorem_main_convergence_equation}
    is simply $\Sfin'_*(\z)=0$ under a change of variables
    $\z=\uvar/(1-\uvar)$, which maps the
    upper half plane onto itself.
\end{proof}

As \Cref{prop:nonLeb_IC} is a particular case of \Cref{Theorem_main_convergence}, let us give its proof here:
\begin{proof}[Proof of \Cref{prop:nonLeb_IC}]
    Under the hypotheses of \Cref{prop:nonLeb_IC}, Assumptions \ref{ass:density} and \ref{ass:intermediate_behavior} clearly hold, and, moreover, $\CCC_N(\RRR)$ is close to zero for $\RRR>3$. Therefore, the function $\Sfin_*'(\z)$ \eqref{S_star_function} looks as
    \begin{align*}
        \Sfin_*'(\z)&=
        \frac12\int_{-3h}^{0}
        \frac{dv}{\z-v}+
        \frac13\int_{0}^{3h}
        \frac{dv}{\z-v}
        +\frac12\,\pv\int_{|v|>3h}\frac{dv}{\z-v}
        \\&\hspace{180pt}+\log (\z+1)-\log \z+\i\pi
        -\log(\be^{-1}-1)\\
        &=
        \frac{\i\pi}2-\frac16\log\z+\frac16\log(\z-3h)
        +\log (\z+1)-\log \z
        -\log(\be^{-1}-1)
        .
    \end{align*}
    After the substitution $\z=\uvar/(1-\uvar)$ this leads to the equation \eqref{eq_non_leb_example} for the complex slope.
\end{proof}

\begin{proof}[Proof of \Cref{Theorem_main_convergence_glob}]
    Fix an arbitrary $\eps>0$. Choose and fix $\updelta>0$ so small that $\A_{\RRR,\updelta}<\eps/3$ and that
    \begin{equation*}
        \left|\int_{|v|>\updelta}
        \frac{\mu_{glob}(dv)}{v}
        -
        \pv\int_{-\infty}^{\infty}
        \frac{\mu_{glob}(dv)}{v}
        \right|<\frac{\eps}{3}.
    \end{equation*}
    This approximation is possible because $\mu_{glob}$ is a probability measure, so $1/v$ is integrable at infinity, and thus the only singularity is at zero. Next, let $N$ be so large that
    \begin{equation*}
        \left|
        \sum_{i\colon
        |a_i(N)|>\updelta N}\frac{1}{a_i(N)}
        -\int_{|u|>\updelta}
        \frac{\mu_{glob}(dv)}{v}\right|<\frac{\eps}3.
    \end{equation*}
    This is possible because the sum above is the same as the integral of $\mathbf{1}_{|v|>\updelta}/v$ with respect to the atomic measure $\mu_{glob}^{N}=\frac{1}{N}\sum_{i=1}^N \delta_{a_i(N)/N}$ converging vaguely to $\mu_{glob}$. Here the vague convergence implies the convergence on the function $\mathbf{1}_{|v|>\updelta}/v$ because we are dealing with probability measures and so can cut away the tails at infinity. These estimates imply that
    \begin{equation*}
        \CCC_N(\RRR)=
        \sum_{i\colon \RRR T\le|a_i(N)|\le \updelta N}
        \frac{1}{a_i(N)}
        +
        \int_{|v|>\updelta}
        \frac{\mu_{glob}^{N}(dv)}{v}
    \end{equation*}
    is close to $\pv\int_{-\infty}^{\infty}v^{-1}\mu_{glob}(dv)$ within $\eps$, and so an application of \Cref{Theorem_main_convergence} gives the result.
\end{proof}

\begin{proof}[Proof of \Cref{Theorem_main_convergence_Lebesgue}]
    When $\mu_{loc}$ is a multiple of the Lebesgue measure, the integral in \eqref{S_star_function} can be explicitly computed:
    \begin{multline*}
        \int_{-\infty}^{\infty}
        \left(\frac{1}{\z-v}
        +\frac{\mathbf{1}_{|v|>\RRR}}{v}
        \right)dv=\int_{-\RRR}^{\RRR}\frac{dv}{\z-v}+
        \lim_{M\to+\infty}\int\limits_{\RRR<|v|<M}
        \left(\frac{1}{\z-v}
        +\frac{1}{v}
        \right)dv\\
        =
        \log (\z+\RRR)-\log (\z-\RRR)
        +\lim_{M\to+\infty}
        \Bigl(\log (\z+M)-\log (\z-M)+
        \log (\z-\RRR)-\log (\z+\RRR)\Bigr)
        \\=
        \lim_{M\to+\infty}
        \Bigl(\log (\z+M)-\log (\z-M)\Bigr),
    \end{multline*}
    where the branches of all the logarithms above are standard. The last limit is equal to $-\i\pi$ because $\arg(\z+M)\to0$ while $\arg(\z-M)\to\pi$. This immediately leads to the desired formula for $\ulim$ in \Cref{Theorem_main_convergence_Lebesgue}.
\end{proof}
\begin{remark}
    When $\mu_{loc}$ is a multiple of the Lebesgue measure, the above computation shows that the integral in \eqref{S_star_function} is independent of $\RRR$, and hence $\CCC(\RRR)$ is, too. This agrees with the fact that the difference $\CCC_N(\RRR)-\CCC_N(\RRR')$ (where, say, $\RRR>\RRR'$) is equal to the integral of ${\m_{T}[\mathfrak{a}](dv)}/{v}$ over $\RRR'<|v|<\RRR$, and thus vanishes as $N\to+\infty$.
\end{remark}



\section{Applications: proofs of
\Cref{Theorem_f_IC,Theorem_Bernoulli_IC,Proposition_sine_IC}} 
\label{sec:applications}

\subsection{Discretization of a continuous profile: proof of \Cref{Theorem_f_IC}} 
\label{sub:discretization_of_limiting_profile_proof_of_theorem_f_ic}

Let us show that the initial data \eqref{f_IC} defined using a
twice continuously differentiable function $f$ satisfies Assumptions
\ref{ass:density} and \ref{ass:intermediate_behavior}, as well as additional
hypotheses of \Cref{Theorem_main_convergence}.

Since $1<f'(x)<+\infty$ for all $x\in[-\frac12,\frac12]$,
Assumption \ref{ass:density} holds with
\begin{equation*}
    \lod=\frac12\inf_{x\in[-\frac12,\frac12]}\frac{1}{f'(x)},\qquad
    \upd=\frac12+\frac12\sup_{x\in[-\frac12,\frac12]}\frac{1}{f'(x)}
\end{equation*}
on scales, say, $\dd(N)=\fl{N^{\TNPower/2}}$ and $\QQ(N)=\fl{N^{(1+\TNPower)/2}}$.

Next, the local and global measures
as in \eqref{mu_loc_mu_glob_convergence} exist,
$\mu_{loc}$ is the Lebesgue measure on $\R$
times $q=1/f'(\chi)$ (recall that $f(\chi)=0$),
and $\mu_{glob}$
has the density
\begin{equation*}
    \mu_{glob}(dv)=\frac{dv}{f'(f^{-1}(v))}.
\end{equation*}
Because $f'>1$,
the principal value integral
\begin{equation}\label{Theorem_f_IC_proof}
    \pv\int_{-\infty}^{+\infty}
    \frac{dv}{vf'(f^{-1}(v))}=
    \pv\int_{-\frac12}^{\frac12}\frac{dx}{f(x)}
\end{equation}
also exists.

Let us now consider the quantities $\CCC_N(\RRR)$ \eqref{drift_N}.
Replace the condition
$|a_i|\ge\RRR T$
on $i$ in \eqref{drift_N} by
$|i-N\chi|>\RRR T/f'(\chi)$.
The difference between the two sums can be estimated by
a part of the harmonic series between $\RRR T$ and
$\RRR T+C T^2/N$, which is negligible.
Thus,
\begin{equation}\label{Theorem_f_IC_proof_2}
    \lim_{N\to+\infty}\CCC_N(\RRR f'(\chi))=
    \lim_{N\to+\infty}
    \sum_{i\colon |i-N\chi|>\RRR T}
    \frac{1}{\fl{N f(i/N)}}=
    \lim_{N\to+\infty}
    \frac{1}{N}\sum_{i\colon |i/N-\chi|>\RRR T/N}
    \frac{1}{f(i/N)}.
\end{equation}
Taylor expand $f(i/N)=(i/N-\chi)f'(\chi)+(i/N-\chi)^{2}r$,
where $|r|$ is uniformly bounded (here we use that
$f$ is twice continuously differentiable). Observe that
the sum of
\begin{equation*}
    \frac{1}{Nf(i/N)}-\frac{1}{(i-N\chi)f'(\chi)}=
    -\frac{r}{N}\cdot\frac{1}{f'(\chi)^{2}+r f'(\chi )(i/N-\chi)}
\end{equation*}
over $i$ such that $\RRR T/N<i/N-\chi<\updelta$ (i.e., the one-sided sum)
is bounded for sufficiently small $\updelta>0$, and goes
to zero as $\updelta\to0$,
and similarly for
$-\updelta<i/N-\chi<-\RRR T/N$.
The sum
of $1/\big((i-N\chi)f'(\chi)\big)$ over $|i-N \chi|>\RRR T$
(i.e., the symmetric sum) is negligible for large $N$.
Thus, the last sum in
\eqref{Theorem_f_IC_proof_2}
over $|i/N-\chi|>\RRR T/N$
is close to the same sum over
$|i/N-\chi|>\updelta$ for small $\updelta$,
and the latter approximates the principal value integral \eqref{Theorem_f_IC_proof}.
This
implies Assumption \ref{ass:intermediate_behavior}
and the property that the $\CCC_N(\RRR)$'s
converge as $N\to+\infty$.
This completes the proof of
\Cref{Theorem_f_IC}.


\subsection{Random initial configuration} 
\label{sub:random_initial_configuration}

Let us now consider
the noncolliding Bernoulli random walk
started from a
random initial configuration.
We assume that this random configuration
belongs to
$\{-M,-M+1,\ldots,M\}$,
where
$M\to+\infty$ is our main large parameter.
Denote by
\begin{equation}\label{W(M)}
    \mathbb{W}(M)=\bigcup_{k=0}^{2M+1}
    \big\{
    \vec x\in\W k\colon {-M}\le x_1<\ldots<x_k\le M
    \big\}
\end{equation}
the space of possible initial configurations (cf. \eqref{Weyl_chamber}). The law of the initial configuration will be denoted by $\mathbf{P}^{M}$, and the configuration itself by $\boldsymbol{\mathfrak{A}}(\NRandom)$ (here $\NRandom$ can be random). Let $\vec{\mathbf{X}}(t)$ stand for the noncolliding Bernoulli random walk started from $\boldsymbol{\mathfrak{A}}(\NRandom)$.

When does $\vec{\mathbf{X}}(t)$ satisfy an annealed\footnote{That is, with respect to the combined randomness coming from the initial configuration and from the random walk itself.} bulk limit theorem similar to the ones formulated in \Cref{sub:main_result_bulk_limit_theorems}? Informally speaking, this happens when the walk started from a fixed ``typical''configuration (with respect to $\mathbf{P}^{M}$) satisfies a bulk limit theorem with a constant complex slope $\ulim$ (i.e., independent of the randomness coming from the random initial data). Let us formalize this understanding, cf. \cite{duse2015cusp}, \cite{Gorin2016} for recent annealed limit theorems for uniformly random tilings.

\begin{proposition}\label{prop:annealed}
    Choose and fix a time scale $T=T(M)\ll M$, $T(M)\to+\infty$. Suppose that there exist subsets $\mathbb{W}_{reg}(M)\subset \mathbb{W}(M)$, $M=1,2,\ldots$, such that
    \begin{enumerate}[\bf{}1.]
        \item $\lim_{M\to+\infty}\mathbf{P}^{M} (\mathbb{W}_{reg}(M))=1$;
        \item For any fixed sequence of (nonrandom) initial configurations $\mathfrak{A}(N_{M})\in\mathbb{W}_{reg}(M)$ the bulk limit theorem near $x=0$ (i.e., the conclusion of \Cref{Theorem_main_convergence}) holds for a complex slope $\ulim$ independent of this sequence $\mathfrak{A}(N_{M})$.
    \end{enumerate}
    Then as $M\to+\infty$ the point process describing $\{\vec{\mathbf{X}}(T(M)+t)\}_{t}$ near $x=0$ converges in distribution to the extended sine process of the complex slope $\ulim$.
\end{proposition}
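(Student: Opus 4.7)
\smallskip

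\noindent\textbf{Proof plan.} The plan is to establish convergence in distribution by convergence of the annealed $k$-point correlation functions for every $k\ge 1$ and every collection of pairwise distinct space-time points $(t_1,x_1),\dots,(t_k,x_k)$. Since the limiting extended sine process is a determinantal point process on $\Z\times\Z$, and since the configurations in question live in a discrete state space with at most one particle per space-time site, convergence of all such correlation functions is equivalent to convergence of finite-dimensional distributions (via inclusion-exclusion), which in turn is equivalent to convergence in distribution in the standard topology on point processes on $\Z\times\Z$. Let us denote
\begin{equation*}
    F_M(\mathfrak{A})
    =\det\!\left[K^{\textnormal{Bernoulli}}_{\mathfrak{A};\be}
    (T(M)+t_\aind,x_\aind;T(M)+t_\bind,x_\bind)\right]_{\aind,\bind=1}^{k},
    \qquad F_\infty=\det\!\left[K_{\ulim}(t_\aind,x_\aind;t_\bind,x_\bind)\right]_{\aind,\bind=1}^{k}.
\end{equation*}
Conditioning on the initial configuration yields
\begin{equation*}
    (\mathbf{P}^M\otimes\PP)\bigl(\text{sites }(T(M)+t_i,x_i)\text{ all occupied}\bigr)
    =\mathbf{E}^M\bigl[F_M(\boldsymbol{\mathfrak{A}}(\NRandom))\bigr].
\end{equation*}

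The first step is to observe that $0\le F_M(\mathfrak{A})\le 1$ uniformly in $M$ and $\mathfrak{A}$, because $F_M(\mathfrak{A})$ is the probability that the determinantal process $\{\vec X(T(M)+t)\}_t$ started from $\mathfrak{A}$ occupies each of the given sites simultaneously. The same bound holds for $F_\infty\in[0,1]$.

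The second (and main) step is to upgrade the sequential convergence in Hypothesis \textbf{2} to the uniform statement
\begin{equation*}
    \sup_{\mathfrak{A}\in\mathbb{W}_{reg}(M)}\bigl|F_M(\mathfrak{A})-F_\infty\bigr|\xrightarrow[M\to\infty]{}0.
\end{equation*}
Suppose, towards a contradiction, that there exist $\eps>0$, a subsequence $M_j\to\infty$, and choices $\mathfrak{A}_j\in \mathbb{W}_{reg}(M_j)$ with $|F_{M_j}(\mathfrak{A}_j)-F_\infty|\ge\eps$. Complete this subsequence to a full sequence $\mathfrak{A}(N_M)\in\mathbb{W}_{reg}(M)$ (by any valid choices at the missing indices). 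Hypothesis \textbf{2} applied to this sequence states that the corresponding kernel entries, and hence $F_M(\mathfrak{A}(N_M))$, converge to $F_\infty$, contradicting the supposed separation along $M_j$.

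The third step combines the first two via a truncation. Writing
\begin{equation*}
    \bigl|\mathbf{E}^M[F_M(\boldsymbol{\mathfrak{A}}(\NRandom))]-F_\infty\bigr|
    \le
    \mathbf{E}^M\bigl[\mathbf{1}_{\boldsymbol{\mathfrak{A}}(\NRandom)\in\mathbb{W}_{reg}(M)}\,|F_M-F_\infty|\bigr]
    +2\,\mathbf{P}^M\!\bigl(\mathbb{W}_{reg}(M)^{c}\bigr),
\end{equation*}
the first term tends to $0$ by the uniform bound established above, and the second term tends to $0$ by Hypothesis \textbf{1}. Consequently $\mathbf{E}^M[F_M(\boldsymbol{\mathfrak{A}}(\NRandom))]\to F_\infty$, which is precisely the required convergence of annealed correlation functions to those of the extended sine process with slope $\ulim$. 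The main obstacle is the contradiction argument in the second step, whose crux is that Hypothesis \textbf{2} must be read as a \emph{uniform} statement over arbitrary sequences inside $\mathbb{W}_{reg}(M)$; once one commits to this reading, the remainder of the proof reduces to a bounded convergence argument.
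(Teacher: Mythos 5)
Your proof is correct and takes essentially the same approach as the paper: both split the annealed expectation according to whether $\boldsymbol{\mathfrak{A}}(\NRandom)\in\mathbb{W}_{reg}(M)$, kill the complementary part with Hypothesis 1 and the uniform bound $F_M\in[0,1]$, and then upgrade the per-sequence convergence of Hypothesis 2 to uniform convergence over $\mathbb{W}_{reg}(M)$. The only stylistic difference is in the last step — the paper sandwiches between the explicit configurations $\mathfrak{A}_{\min}$ and $\mathfrak{A}_{\max}$ maximizing/minimizing $\PP_{\mathfrak{A}}(F)$ over the finite set $\mathbb{W}_{reg}(M)$ and applies Hypothesis 2 to those two extremal sequences, whereas you obtain the same uniformity by a subsequence/contradiction argument; both are equivalent and equally rigorous.
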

In particular, hypotheses of \Cref{prop:annealed} imply that the random number $\NRandom$ of particles in the initial configuration goes to infinity in probability (with respect to the $\mathbf{P}^{M}$'s).
\begin{proof}[Proof of \Cref{prop:annealed}]
    Fix an event of the form
    \begin{equation*}
        F=\Big\{
        \textnormal{the configuration
        $\vec{\mathbf{X}}(T+t_i)$ on $\Z$
        contains the point $y_i$ for all $i=1,\ldots,k$
        }\Big\},
    \end{equation*}
    where $k=1,2,\ldots$, and $t_i,y_i\in\Z$
    (cf. \eqref{determinantal_kernel}).
    Such events generate
    the
    $\sigma$--algebra
    describing the
    configuration
    $\{\vec{\mathbf{X}}(T+t)\}_{t}$
    near $x=0$.

    Let $\PP_{\mathfrak{A}(N)}$ stand for the
    law of the noncolliding Bernoulli random walk
    started from the
    initial configuration $\mathfrak{A}(N)$.
    We need to show that
    \begin{equation}\label{annealed_convergence}
        \lim_{M\to+\infty}
        \mathbf{E}_{\mathbf{P}^{M}}
        \big[
        \PP_{\boldsymbol{\mathfrak{A}}(\NRandom)}(F)\big]=
        \det\big[K_{\ulim}(t_\aind,y_\aind;t_\bind,y_\bind)\big]
        _{\aind,\bind=1}^{k},
    \end{equation}
    where $K_{\ulim}$ is the extended sine kernel
    \eqref{incomplete_beta},
    and
    $\mathbf{E}_{\mathbf{P}^{M}}$ denotes the expectation
    with respect to
    $\mathbf{P}^{M}$.
    We have
    \begin{equation*}
        \mathbf{E}_{\mathbf{P}^{M}}
        \big[
        \PP_{\boldsymbol{\mathfrak{A}}(\NRandom)}(F)\big]
        =
        \mathbf{E}_{\mathbf{P}^{M}}
        \big[
        \PP_{\boldsymbol{\mathfrak{A}}(\NRandom)}(F)
        \mathbf{1}_{\mathbb{W}_{reg}(M)}
        \big]
        +
        \mathbf{E}_{\mathbf{P}^{M}}
        \big[
        \PP_{\boldsymbol{\mathfrak{A}}(\NRandom)}(F)
        \mathbf{1}_{\mathbb{W}(M)
        \setminus\mathbb{W}_{reg}(M)}
        \big].
    \end{equation*}
    The second summand goes to zero by hypothesis 1,
    and the first
    summand can be
    estimated as
    \begin{equation}\label{annealed_convergence_2}
        \PP_{\mathfrak{A}_{\min}(N_{\min})}(F)
        \le\mathbf{E}_{\mathbf{P}^{M}}
        \big[
        \PP_{\boldsymbol{\mathfrak{A}}(\NRandom)}(F)
        \mathbf{1}_{\mathbb{W}_{reg}(M)}
        \big]\le
        \PP_{\mathfrak{A}_{\max}(N_{\max})}(F),
    \end{equation}
    where $\mathfrak{A}_{\max}(N_{\max})$
    is the configuration
    which maximizes $\PP_{\mathfrak{A}}(F)$
    over all $\mathfrak{A}\in\mathbb{W}_{reg}(M)$
    (it exists because this is a finite set),
    and similarly for
    $\mathfrak{A}_{\min}(N_{\min})$.
    Because both minimizing and maximizing configurations
    belong to $\mathbb{W}_{reg}(M)$,
    both bounds in
    \eqref{annealed_convergence_2}
    converge to the right-hand side of \eqref{annealed_convergence}
    by hypothesis 2,
    and so the desired convergence holds.
\end{proof}


\subsection{Bernoulli initial data: proof of \Cref{Theorem_Bernoulli_IC}} 
\label{sub:bernoulli_initial_data_proof_of_theorem_bernoulli_ic}

Let the parameters $p$ and $\al$ be as in \Cref{Theorem_Bernoulli_IC},
the time scale be $T(M)=\fl{M^{\TNPower}}$, and the initial
particle configuration on
$\{-\fl{M(1-\al)},-\fl{M(1-\al)}+1,\dots,
\fl{M\al}-1,\fl{M\al}\}$ be obtained by
putting a particle at each location with
probability $p$ independently of all others.

We will construct a subset $\mathbb{W}_{reg}(M)\subset\mathbb{W}(M)$ \eqref{W(M)} satisfying \Cref{prop:annealed} by imposing two conditions (of asymptotic $\mathbf{P}^M$-probability $1$) on the configuration. First, fix $0<\delta<\min (1-\TNPower,\TNPower/7)$, and take scales $\dd(M)=\fl{M^{\TNPower- \delta}}$, $\QQ(M)=\fl{M^{\TNPower+\delta}}$. Denote
\begin{equation*}
    \mathbb{W}_{reg,1}(M)=
    \left\{
    \parbox{.62\textwidth}{
    $\vec x\in\mathbb{W}(M)$ such that
    in every segment of length
    $\dd(M)$ inside
    $[-\QQ(M),\QQ(M)]$
    the number of points in the configuration
    $\vec x$ is between $p\dd-\dd^{2/3}$
    and $p\dd+\dd^{2/3}$}
    \right\}.
\end{equation*}
Since the expected number of points in one of the segments of length $\dd$ is $p\dd$ and the variance is of order $\dd$, by the Chebyshev inequality the probability that in one of such segments the (random) number of points is not between $p\dd-\dd^{2/3}$ and $p\dd+\dd^{2/3}$ can be bounded from above by a constant times $\dd^{-1/3}$. The number of segments of length $\dd$ inside $[-\QQ,\QQ]$ is of order $\QQ/\dd$, and so
\begin{equation*}
    1-\mathbf{P}^M(\mathbb{W}_{reg,1}(M))\le C\cdot
    \dd^{-\frac13}\frac{\QQ}{\dd}=C\cdot M^{-\frac43(\TNPower- \delta)+
    \TNPower+\delta}=C\cdot M^{\frac13(7 \delta-\TNPower)}\to0.
\end{equation*}
Clearly, configurations in $\mathbb{W}_{reg,1}(M)$ satisfy Assumption \ref{ass:density}. Moreover, for these configurations the local density of particles at $0$ vaguely converges as $M\to+\infty$ to $p$ times the Lebesgue measure on $\R$.

Second, recall $\CCC_M(\RRR)$ defined by \eqref{drift_N} as a sum of $1/a_i$ over $|a_i|\ge\RRR T(M)$, and interpret it as a sum of independent random variables $\boldsymbol\delta_i/i$ over all $i\in\{-\fl{M(1-\al)},\ldots,\fl{M\al}\}$, where $\boldsymbol\delta_i$ is the indicator of the event that there is a point of the configuration at the location $i$. We have
\begin{equation*}
    \mathbf{E}_{\mathbf{P}^{M}}(\CCC_M(\RRR))=
    p\sum_{\substack{j\colon|j|\ge\RRR T(M)\\
    -\fl{M(1-\al)}\le j\le\fl{M\al}}}\frac{1}{j},\qquad
    \mathbf{Var}_{\mathbf{P}^{M}}(\CCC_M(\RRR))=
    p(1-p)\sum_{\substack{j\colon|j|\ge\RRR T(M)\\
    -\fl{M(1-\al)}\le j\le\fl{M\al}}}
    \frac{1}{j^{2}}.
\end{equation*}
We see that the expectation approximates an integral
\begin{equation*}
    \mathbf{E}_{\mathbf{P}^{M}}(\CCC_M(\RRR))=\pv\int_{-(1-\al)}^{\al}\frac{p}{v}\,dv+O(M^{-1})=
    p\log\bigg(\frac{\al}{1-\al}\bigg)+O(M^{-1}),
\end{equation*}
and $\mathbf{Var}_{\mathbf{P}^{M}}(\CCC_M(\RRR))=O(T(M)^{-1})$, so the random variable $\CCC_M(\RRR)$ converges as $M\to+\infty$ to the constant $\CCC=p\log(\frac{\al}{1-\al})$. Thus, if we define for some fixed $\RRR>0$:
\begin{equation*}
    \mathbb{W}_{reg,2}(M)=
    \bigg\{\vec x\in\mathbb{W}(M)\colon
    \bigg|p\log\bigg(\frac{\al}{1-\al}\bigg)-
    \sum_{i\colon|x_i|\ge\RRR T(M)}\frac{1}{x_i}\bigg|
    <M^{-\TNPower/3}
    \bigg\},
\end{equation*}
then by the Chebyshev inequality we have $\mathbf{P}^M(\mathbb{W}_{reg,2}(M))\to1$. Configurations from the sets $\mathbb{W}_{reg,2}(M)$ satisfy Assumption \ref{ass:intermediate_behavior}, and have $\CCC_M(\RRR)\to\CCC$.

Defining $\mathbb{W}_{reg}(M)=\mathbb{W}_{reg,1}(M)\cap \mathbb{W}_{reg,2}(M)$, we see that the $\mathbf{P}^{M}$-probabilities of these sets go to $1$, while any sequence of configurations from $\mathbb{W}_{reg}(M)$ satisfies the hypotheses of \Cref{Theorem_main_convergence} and hence the bulk limit theorem near $x=0$. Thus, applying \Cref{prop:annealed} we see that \Cref{Theorem_Bernoulli_IC} is established.


\subsection{Sine process initial data: proof of \Cref{Proposition_sine_IC}} 
\label{sub:sine_initial_data_proof_of_theorem_sine_ic}

Let the parameters $\phi$ and $\al$ be as in \Cref{Proposition_sine_IC}, the time scale be $T(M)=\fl{M^{\TNPower}}$, and the initial particle configuration be obtained by restricting the configuration of the discrete sine process of density $\phi/\pi$ to $\{-\fl{M(1-\al)},-\fl{M(1-\al)}+1,\dots, \fl{M\al}-1,\fl{M\al}\}$. We will use the same scales $\dd(M),\QQ(M)$ and sets $\mathbb{W}_{reg,1}(M), \mathbb{W}_{reg,2}(M)$ as in the Bernoulli case in \Cref{sub:bernoulli_initial_data_proof_of_theorem_bernoulli_ic} with $p$ replaced by $\phi/\pi$. This ensures that the second hypothesis of \Cref{prop:annealed} holds for $\mathbb{W}_{reg}(M)=\mathbb{W}_{reg,1}(M)\cap \mathbb{W}_{reg,2}(M)$. To establish \Cref{Proposition_sine_IC} it remains to show that
\begin{equation*}
    \lim_{M\to+\infty}
    \mathbf{P}^{M}(\mathbb{W}_{reg,1}(M))=
    \lim_{M\to+\infty}
    \mathbf{P}^{M}(\mathbb{W}_{reg,2}(M))=1,
\end{equation*}
where now
$\mathbf{P}^{M}$ stands for the law of
the initial configuration under the restriction of the
discrete sine process.
For $\mathbb{W}_{reg,1}(M)$, observe that
the variance under $\mathbf{P}^{M}$
of the number
of points in a segment of length $\dd$
(say, $\{1,\ldots,\dd\}$, since the
sine process is translation invariant)
can be
estimated as
\begin{multline}\label{variance_estimate}
    \mathbf{Var}_{\mathbf{P}^{M}}\bigg(\sum_{i=1}^{\dd}
    \boldsymbol{\delta}_i\bigg)=
    \frac{\phi}{\pi}\dd-
    \Bigl(\frac{\phi}{\pi}\dd\Bigr)^{2}+2\sum_{1\le i<j\le\dd}
    \mathbf{E}_{\mathbf{P}^{M}}
    (\boldsymbol{\delta}_i
    \boldsymbol{\delta}_j)\\=
    \dd\frac{\phi}{\pi}\Bigl(1-\frac{\phi}{\pi}\Bigr)-
    2\sum_{1\le i<j\le\dd}
    \biggl(\frac{\sin(\phi(i-j))}{\pi(i-j)}\biggr)^{2}
    \le \dd\frac{\phi}{\pi}\Bigl(1-\frac{\phi}{\pi}\Bigr),
\end{multline}
where $\boldsymbol{\delta}_i$ is as in
\Cref{sub:bernoulli_initial_data_proof_of_theorem_bernoulli_ic}.
This variance does not exceed
the one in the Bernoulli case,
and so
using the argument from
\Cref{sub:bernoulli_initial_data_proof_of_theorem_bernoulli_ic}
we conclude that
$\mathbf{P}^{M}(\mathbb{W}_{reg,1}(M))\to1$.

\begin{remark}
    In fact,
    the variance in the left-hand side of \eqref{variance_estimate}
    grows as $O(\log\dd)$,
    see \cite[Lemma 4.6]{BorFerr2008DF},
    \cite[Section 4.2]{Bufetov_Sr_entropy},
    but we do not need this for our proof.
\end{remark}

For $\mathbb{W}_{reg,2}(M)$
consider the random variable
$\CCC_M(\RRR)$ \eqref{drift_N}.
Arguing as in \Cref{sub:bernoulli_initial_data_proof_of_theorem_bernoulli_ic},
we see that
its expectation converges to
$\frac{\phi}{\pi}\log(\frac{\al}{1-\al})$. Let us
estimate its variance.
Observe that for any subset $B\subset\Z\setminus\{0\}$ one has
\begin{equation}\label{variance_estimate2}
    \mathbf{Var}_{\mathbf{P}^{M}}
    \bigg(
    \sum_{i\in B}\frac{\boldsymbol{\delta}_i}{i}
    \bigg)
    =
    \frac{\phi}{\pi}\Bigl(1-\frac{\phi}{\pi}\Bigr)
    \sum_{i\in B}\frac{1}{i^2}-
    2\sum_{i,j\in B\colon i<j}\frac{1}{ij}\biggl(\frac{\sin(\phi(i-j))}{\pi(i-j)}\biggr)^{2}.
\end{equation}
Apply this with
\begin{equation*}
    B=\{-\fl{M(1-\al)},\dots,
    -\RRR T-1,-\RRR T,\ldots,\RRR T,\RRR T+1,\ldots,
    \fl{M\al}-1,\fl{M\al}\}
\end{equation*}
for some fixed $\RRR\in\Z_{\ge1}$.
We see that as $M\to+\infty$,
the first sum in \eqref{variance_estimate2}
decays as $O(T^{-1})$. Throwing away the pairs
$(i,j)$ of the same sign from the second sum, we
can bound the second sum in \eqref{variance_estimate2} by
a constant times
\begin{equation}\label{variance_estimate3}
    \sum_{k,j\ge\RRR T}
    \frac{1}{kj(k+j)^{2}}\le
    \sum_{j\ge \RRR T}
    \biggl(\sum_{k\ge 1}
    \frac{1}{kj(k+j)^{2}}\biggr)\le
    \sum_{j\ge \RRR T}\frac{C_1+C_2\log j}{j^{2}}=
    O(T^{-1+\eps})
\end{equation}
for some $C_{1,2}>0$ and an arbitrary small $\epsilon>0$. Thus, the variance of $\CCC_M(\RRR)$ decays as $O(T^{-1+\eps})$, and so by the Chebyshev inequality we have $\mathbf{P}^M(\mathbb{W}_{reg,2}(M))\to1$. Applying \Cref{prop:annealed}, we see that \Cref{Proposition_sine_IC} holds.

\begin{remark}
    One can say that the constant $\CCC=\frac{\phi}{\pi}\log(\frac{\al}{1-\al})$
    in \Cref{Proposition_sine_IC}
    corresponds via \eqref{main_convergence_glob_drift_term}
    to the global probability measure
    $\mu_{glob}$
    which is the uniform measure on the segment
    $[-(1-\al)\frac{\pi}{\phi},\al \frac{\pi}{\phi}]$.
    Indeed, this $\mu_{glob}$ is a limit
    as in \eqref{mu_loc_mu_glob_convergence}
    of random atomic measures
    corresponding to the sine process initial data,
    where as $N$ one should take the random number of particles
    $\NRandom$ (it is concentrated around $\frac{\phi}{\pi}M$).
    Similarly,
    the constant
    $p\log(\frac{\al}{1-\al})$
    in \Cref{Theorem_Bernoulli_IC} corresponds to $\mu_{glob}$
    being the uniform measure on
    $[-(1-\al)p^{-1},\al p^{-1}]$.
\end{remark}

\begin{remark}
    The proof of \Cref{Proposition_sine_IC}
    carries over from the Bernoulli case
    modulo two
    estimates of the
    variance \eqref{variance_estimate} and
    \eqref{variance_estimate2}--\eqref{variance_estimate3},
    which are rather straightforward
    for the sine process.
    Thus, the bulk limit theorem should hold
    for rather general random initial data,
    but we will not formulate any other
    results in this direction.
\end{remark}



\appendix

\section{Determinantal kernels for other noncolliding processes} 
\label{sec:determinantal_kernels_for_other_noncolliding_dynamics}

\subsection{Noncolliding Poisson random walk} 
\label{sub:poisson_case}

Taking the limit as $\be\to0$ and scaling to the continuous time as $t=\fl{\be^{-1}\tau}$, $\tau\in\R_{\ge0}$, turns the noncolliding Bernoulli random walk into the noncolliding Poisson random walk --- the continuous time dynamics of $N$ independent speed $1$ Poisson particles conditioned to never collide \cite{konig2002non}. This Markov chain $\vec X(\tau)$ on $\W N$ has jump rates (cf. \eqref{noncolliding_Bernoulli_walks_transitions})
\begin{equation*}
    \PP(\vec X(\tau+d\tau)=\vec x'\mid\vec X(\tau)=\vec x)=
    \begin{cases}
        \dfrac{\V(\vec x')}{\V(\vec x)}d\tau +O(d\tau^2),&
        \textnormal{\parbox{.22\textwidth}{$x_i'=x_i+1$
        for some $i$, and $x_j'=x_j$ for $j\ne i$;}}\\
        \rule{0pt}{14pt}1-N d\tau+O(d\tau^{2}),& \vec x'=\vec x;\\
        \rule{0pt}{11pt}0,&\textnormal{otherwise},
    \end{cases}
\end{equation*}
where $\vec x,\vec x'\in\W N$ are arbitrary.\footnote{In particular, this implies $\sum_{i=1}^{N}\V(\vec x+e_i)=N \V(\vec x)$, where $e_i$ is the $i$-th basis vector $(0,\ldots,0,1,0,\ldots,0)$.} The noncolliding Poisson random walk is also sometimes referred to as the Charlier process \cite{BorFerr2008DF} due to the fact that if it starts from the densely packed initial configuration $(0,1,\ldots,N-1)$, then its (fixed time) distribution is the Charlier orthogonal polynomial ensemble (cf. \Cref{rmk:Krawtchouk}).

\begin{theorem}\label{thm:Poisson_kernel}
    The noncolliding Poisson random walk $\vec X(\tau)$ started from an arbitrary initial configuration $\vec a\in\W N$ is determinantal in the sense of \eqref{determinantal_kernel}, with the kernel
    \begin{multline}\label{K_Poisson}
        K^{\textnormal{Poisson}}_{\vec a}(\tau_1,x_1;\tau_2,x_2)=
        -
        \mathbf{1}_{x_1\ge x_2}\mathbf1_{\tau_1>\tau_2}
        \frac{(\tau_1- \tau_2)^{x_1-x_2}}{(x_1-x_2)!}
        \\-
        \frac1{(2\pi\i)^{2}}
        \int\limits_{x_2-\frac12-\i\infty}^{x_2-\frac12+\i\infty}dz
        \oint\limits_{\textnormal{all $w$ poles}}dw
        \,\frac{1}{w-z}
        \frac{\G(x_2-z)}{\G(x_1-w+1)}
        \tau_1^{x_1-w}\tau_2^{z-x_2}
        \prod_{r=1}^{N}\frac{z-a_r}{w-a_r},
    \end{multline}
    where $x_{1,2}\in\Z$, $\tau_{1,2}>0$, the $z$ integration contour is a vertical line $\Re z=x_2-\frac12$ traversed upwards, and the $w$ contour is a positively oriented circle or a union of two circles encircling all the $w$ poles $\{\ldots,x_1-1,x_1\}\cap\{a_1,\ldots,a_N\}$ of the integrand except $w=z$.
\end{theorem}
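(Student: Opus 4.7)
My plan is to prove Theorem \ref{thm:Poisson_kernel} by degenerating the Bernoulli kernel \eqref{K_Bernoulli} of Theorem \ref{thm:intro_Bernoulli} under the scaling $t=\lfloor\be^{-1}\tau\rfloor$, $\be\to 0$ that links the two processes. First I would verify that the time-rescaled noncolliding Bernoulli walk $\vec X(\lfloor\be^{-1}\tau\rfloor)$ converges in finite-dimensional distributions to the noncolliding Poisson walk as $\be\to 0$: this is a standard consequence of the convergence of the one-step transition probabilities \eqref{noncolliding_Bernoulli_walks_transitions}, whose jump rate per particle is of order $\be$ with Vandermonde weight $\V(\vec x')/\V(\vec x)$, to the infinitesimal generator of $N$ independent Poisson particles conditioned never to collide. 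Since the correlation functions of the Bernoulli walk are the determinants of $K^{\text{Bernoulli}}_{\vec a;\be}$, it then suffices to establish the pointwise convergence
\[
 (-\be)^{x_1-x_2}K^{\text{Bernoulli}}_{\vec a;\be}\bigl(\lfloor\be^{-1}\tau_1\rfloor,x_1;\lfloor\be^{-1}\tau_2\rfloor,x_2\bigr)\xrightarrow[\be\to 0]{}K^{\text{Poisson}}_{\vec a}(\tau_1,x_1;\tau_2,x_2),
\]
since the diagonal gauge factor $(-\be)^{x}$ (cf.\ footnote \ref{gaugefootnote}) does not change the determinantal correlations.

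The pointwise limit would be computed in several steps. To begin, I would invoke Remark \ref{rmk:moving_int_contours} to shift the $z$-contour in \eqref{K_Bernoulli} from $\Re z=x_2-t_2+\tfrac12$ (which escapes to $-\infty$ as $t_2\to\infty$) to the fixed vertical line $\Re z=x_2-\tfrac12$ appearing in \eqref{K_Poisson}. Next, the Gamma asymptotics \eqref{gamma_asymptotics} give
\[
 (z-x_2+1)_{t_2-1}\sim \frac{(t_2-1)!\,t_2^{z-x_2}}{\G(z-x_2+1)},\qquad (w-x_1)_{t_1+1}\sim \frac{t_1!\,t_1^{w-x_1}}{\G(w-x_1)},
\]
and the reflection identity $\G(u)\G(1-u)=\pi/\sin(\pi u)$ converts $1/\G(z-x_2+1)$ and $1/\G(w-x_1)$ into $\G(x_2-z)$ and $\G(x_1-w+1)$ multiplied by appropriate sines. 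Since $x_1,x_2\in\Z$, those sines collapse to $\pm\sin(\pi z)$ and $\pm\sin(\pi w)$ and cancel the factor $\sin(\pi w)/\sin(\pi z)$ in the integrand, leaving a net sign $(-1)^{x_1+x_2+1}$. Combining $((1-\be)/\be)^{w-z}\sim \be^{z-w}$ with $t_2^{z-x_2}/t_1^{w-x_1}\sim\be^{x_2-x_1+w-z}\tau_2^{z-x_2}\tau_1^{x_1-w}$ produces exactly the factor $\be^{x_2-x_1}\tau_1^{x_1-w}\tau_2^{z-x_2}$, so the gauge $(-\be)^{x_1-x_2}$ cancels all remaining powers of $\be$ and yields precisely the double-integral term of \eqref{K_Poisson}. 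A short parallel computation for the separate binomial term gives $(-1)^{x_1-x_2+1}\binom{t_1-t_2}{x_1-x_2}\sim(-1)^{x_1-x_2+1}\be^{x_2-x_1}(\tau_1-\tau_2)^{x_1-x_2}/(x_1-x_2)!$, which after multiplication by the gauge becomes $-(\tau_1-\tau_2)^{x_1-x_2}/(x_1-x_2)!$, matching the indicator term in \eqref{K_Poisson}.

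The main technical obstacle I expect is justifying the passage to the pointwise limit \emph{inside} the $z$-integral, which runs over the unbounded vertical line $\Re z=x_2-\tfrac12$. I would control this by the same strategy as the uniform-decay argument in the proof of Proposition \ref{prop:proof_of_intro_Bernoulli} (just after \eqref{from_tilings_to_Bernoulli_estimation}): the factor $1/\sin(\pi z)$ provides exponential decay in $|\Im z|$ that dominates the polynomial growth of the Pochhammer and product factors, while $((1-\be)/\be)^{w-z}$ is bounded by $C\be^{\Re z-\Re w}$ uniformly on a bounded $w$-contour. With the gauge included, these bounds are uniform in small $\be$, so dominated convergence applies. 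The $w$-contour itself presents no problem: the $w$-poles $\{x_1-t_1,\ldots,x_1\}\cap\{a_1,\ldots,a_N\}$ stabilise as $t_1\to\infty$ to $\{\ldots,x_1-1,x_1\}\cap\{a_1,\ldots,a_N\}$, which is a finite set since $\{a_r\}$ is finite, so one may simply take the $w$-contour of \eqref{K_Poisson} for all sufficiently small $\be$.
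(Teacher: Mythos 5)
Your proposal is correct and follows essentially the same route as the paper's own proof: shift the $z$-contour via Remark~\ref{rmk:moving_int_contours}, fix the $w$-contour around the stabilized pole set, apply the Stirling/reflection asymptotics to the Pochhammer ratio, match powers of $\be$ against the gauge $(-\be)^{x_1-x_2}$, and treat the binomial term by the analogous estimate. The only addition you make beyond the paper's argument is the explicit dominated-convergence justification for passing to the limit inside the unbounded $z$-integral, which the paper leaves implicit here (having carried out a nearly identical estimate in the proof of Proposition~\ref{prop:proof_of_intro_Bernoulli}).
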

\begin{proof}
    We will obtain $K^{\textnormal{Poisson}}_{\vec a}$ from $K^{\textnormal{Bernoulli}}_{\vec a; \be}$ \eqref{K_Bernoulli} via the $\be\to0$ limit described above.
    Employing \Cref{rmk:moving_int_contours}, write $K^{\textnormal{Bernoulli}}_{\vec a; \be}$ as
    \begin{multline}
        K^{\textnormal{Bernoulli}}_{\vec a; \be}(t_1,x_1;t_2,x_2)
        =
        \mathbf{1}_{x_1\ge x_2}\mathbf{1}_{t_1>t_2}
        (-1)^{x_1-x_2+1}
        \binom{t_1-t_2}{x_1-x_2}
        +
        \frac{t_1!}{(t_2-1)!}\frac1{(2\pi\i)^{2}}\\\times
        \int\limits_{x_2-\frac12-\i\infty}^{x_2-\frac12+\i\infty}dz
        \oint\limits_{\textnormal{all $w$ poles}}dw
        \frac{(z-x_2+1)_{t_2-1}}{(w-x_1)_{t_1+1}}
        \frac{1}{w-z}
        \frac{\sin(\pi w)}{\sin(\pi z)}
        \left(\frac{1-\be}{\be}\right)^{w-z}
        \prod_{r=1}^{N}\frac{z-a_r}{w-a_r}.
        \label{K_Bernoulli_in_Poisson_proof}
    \end{multline}
    Here the $w$ contour (a circle or a union of two circles) can be
    taken to encircle the points $\{a_1,a_2,\ldots,a_N\}$, which
    contain all the $w$ poles except $w=z$. (Indeed, for $w=a_i$ to
    be a pole, it must additionally satisfy $(a_i-x_1)_{t_1+1}=0$ to
    not cancel with the zero coming from $\sin(\pi w)$.) Therefore,
    the integration contours do not depend on $t_{1,2}$, and we can
    take the Poisson rescaling of
    \eqref{K_Bernoulli_in_Poisson_proof}, that is, $\be\to0$ and
    $t_{1,2}=\fl{\be^{-1}\tau_{1,2}}$ with $\tau_{1}\ge0$, $\tau_2>0$.

    The $t$-dependent part of the first summand
    in \eqref{K_Bernoulli_in_Poisson_proof} scales as
    \begin{multline*}
        \mathbf1_{\tau_1>\tau_2}
        \binom{\fl{\be^{-1}\tau_{1}}-\fl{\be^{-1}\tau_{2}}}{x_1-x_2}
        =\mathbf1_{\tau_1>\tau_2}
        \frac{\G(\fl{\be^{-1}\tau_{1}}-\fl{\be^{-1}\tau_{2}}+1)}{
        \G(\fl{\be^{-1}\tau_{1}}-\fl{\be^{-1}\tau_{2}}-x_1+x_2+1)
        (x_1-x_2)!
        }
        \\\sim
        \mathbf1_{\tau_1>\tau_2}
        \frac{(\tau_1- \tau_2)^{x_1-x_2}}{(x_1-x_2)!}\,\be^{-(x_1-x_2)},
    \end{multline*}
    where we used \eqref{gamma_asymptotics}. Similarly,
    for the part of the integrand depending
    on $\be$ and $t_{1,2}$ we have
    \begin{multline*}
        \frac{t_1!}{(t_2-1)!}
        \frac{(z-x_2+1)_{t_2-1}}{(w-x_1)_{t_1+1}}
        \left(\frac{1-\be}{\be}\right)^{w-z}
        \frac{\sin(\pi w)}{\sin(\pi z)}
        \\=
        \frac{\G(t_1+1)}{\G(w-x_1+t_1+1)}\frac{\G(z-x_2+t_2)}{\G(t_2)}
        \frac{\G(w-x_1)}{\G(z-x_2+1)}
        \left(\frac{1-\be}{\be}\right)^{w-z}
        \frac{\sin(\pi w)}{\sin(\pi z)}
        \\\sim
        \frac{\G(x_2-z)}{\G(x_1-w+1)}(-1)^{x_1-x_2+1}
        {\tau_1}^{x_1-w}
        {\tau_2}^{z-x_2}\be^{-(x_1-x_2)}.
    \end{multline*}
    Note that $\left(1-\be\right)^{w-z}\to1$ as $\be\to0$,
    and in the last step we used \eqref{Gamma_function_flip}.
    This implies
    \begin{equation*}
        \lim_{\be\to0}
        \Big((-\be)^{x_1-x_2}
        K^{\textnormal{Bernoulli}}_{\vec a; \be}(\fl{\be^{-1}\tau_1},x_1
        ;\fl{\be^{-1}\tau_1},x_2)\Big)
        =K^{\textnormal{Poisson}}_{\vec a}(\tau_1,x_1;\tau_2,x_2),
    \end{equation*}
    where $K^{\textnormal{Poisson}}_{\vec a}$
    is given by \eqref{K_Poisson}.
    Because the multiplication by $(-\be)^{x_1-x_2}$ does not change the
    correlation functions
    (cf. footnote${}^{\textnormal{\ref{gaugefootnote}}}$),
    this completes the proof.
\end{proof}

The correlation kernel of \Cref{thm:Poisson_kernel} appears to be new. By analogy with the results in \Cref{sub:main_result_bulk_limit_theorems}, we believe that the local statistics of the noncolliding Poisson random walk are universally described by an extension of the discrete sine kernel with the continuous time parameter. This extension first appeared in \cite{borodin2006stochastic}, see also \cite{borodin2007periodic}, \cite{borodin2010gibbs} for a general discussion of extensions of the discrete sine kernel. We will not pursue this in the present paper.


\subsection{Dyson Brownian Motion} 
\label{sub:dyson_brownian_motion}

A diffusion scaling brings the kernel
$K^{\textnormal{Bernoulli}}_{\vec a; \be}$
\eqref{K_Bernoulli} to the kernel of the Dyson Brownian Motion.
We will use the following scaling (where $M\to+\infty$):
\begin{equation*}
    t_{1,2}=\fl{M\tau_{1,2}},\qquad
    x_{1,2}=\fl{\be M\tau_{1,2}+\xi_{1,2}{\sqrt{\be(1-\be)}}\sqrt{M}},\qquad
    a_i=\fl{\al_{i}\sqrt{\be(1-\be)}\sqrt{M}},
\end{equation*}
where $(\al_1\le\ldots\le\al_N)\in\R^{N}$ are the rescaled starting points (by agreement,
when some of the $\al_i$'s
coincide, the corresponding $a_i$'s differ by $1$,
so that the discrete
noncolliding Bernoulli random walk
is well-defined).

\begin{theorem}\label{thm:Dyson_convergence}
    Under the above scaling and up to a gauge transformation as in footnote${}^{\textnormal{\ref{gaugefootnote}}}$, the kernel $(M\be(1-\be))^{\frac12}K^{\textnormal{Bernoulli}}_{\vec a; \be}$ converges to the following kernel:
    \begin{multline}\label{Dyson_kernel}
        K^{\textnormal{DBM}}_{\vec a; \be}(\tau_1,\xi_1;\tau_2,\xi_2)
        =
        -
        \frac{\mathbf{1}_{\tau_1>\tau_2}}{\sqrt{2\pi\D \tau}}
        \exp\bigg\{
        -\frac{(\D\xi)^{2}}{2\D \tau}
        \bigg\}
        \\-
        \frac{1}{(2\pi\i)^{2}\sqrt{\tau_1\tau_2}}
        \int\limits_{c-\i\infty}^{c+\i\infty}dz
        \oint\limits_{\textnormal{all $w$ poles}}dw\,\frac{1}{w-z}
        \exp\bigg\{
        \frac{\tau_1(z-\xi_2)^{2}-\tau_2(w-\xi_1)^{2}}{2\tau_1\tau_2}\bigg\}
        \prod_{r=1}^{N}\frac{z-\al_r}{w-\al_r},
    \end{multline}
    where $\xi_{1,2}\in\R$, $\tau_{1,2}>0$, and we use the notation $\D\xi=\xi_1-\xi_2$, $\D\tau=\tau_1-\tau_2$. The $z$ contour is a vertical line which lies to the left of all the $\al_r$'s (i.e., $c<\al_1$), and the $w$ contour is a positively oriented circle encircling all the $\al_r$'s.
\end{theorem}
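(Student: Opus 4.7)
The plan is to take the $M\to\infty$ limit in \eqref{K_Bernoulli} by a steepest-descent type analysis of the double contour integral, with the explicit binomial term handled separately via a local central limit theorem. As a preparatory step I would eliminate the $\sin(\pi w)/\sin(\pi z)$ prefactor using the reflection identity $\Gamma(u)\Gamma(1-u)=\pi/\sin(\pi u)$ applied to $\Gamma(z-x_2+1)$ inside $(z-x_2+1)_{t_2-1}$ and to $\Gamma(w-x_1)$ inside $(w-x_1)_{t_1+1}$, together with $\sin(\pi(z-x_2+1))=(-1)^{x_2+1}\sin(\pi z)$ and its analogue for $w$. The integrand of the double integral becomes
\begin{equation*}
    \frac{(-1)^{x_1-x_2+1}}{w-z}\,\frac{\Gamma(z-x_2+t_2)\,\Gamma(x_2-z)}{\Gamma(w-x_1+t_1+1)\,\Gamma(x_1-w+1)}\left(\frac{1-\be}{\be}\right)^{w-z}\prod_{r=1}^{N}\frac{z-a_r}{w-a_r}.
\end{equation*}

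Next, I rescale $z=\tilde z\sqrt{\be(1-\be)M}$, $w=\tilde w\sqrt{\be(1-\be)M}$: under this, $(z-a_r)/(w-a_r)=(\tilde z-\al_r)/(\tilde w-\al_r)$, the four Gamma arguments take the form $cM+a\sqrt M$ (with a ``$+1$'' shift for the two $w$-Gammas) where $c\in\{(1-\be)\tau_i,\be\tau_i\}$ and $a\propto\tilde z-\xi_2$ or $\tilde w-\xi_1$, and the $w$-contour (originally around $\{a_r\}$) becomes a contour around $\{\al_r\}$ in $\tilde w$. The original $z$-line $\Re z=x_2-t_2+\tfrac12$ lies at $\Re\tilde z\to-\infty$, but after the reflection step the integrand is holomorphic in $z$ throughout the strip $x_2-t_2<\Re z<x_2$, so the $z$-contour may be deformed without crossing poles to a vertical line $\Re\tilde z=c_0$ with any fixed $c_0<\min_r\al_r$, matching the target configuration of \eqref{Dyson_kernel}.

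The technical core is the Stirling expansion
\begin{equation*}
\log\Gamma(cM+a\sqrt M)=cM\log(cM)-cM+a\sqrt M\log(cM)+\tfrac{a^2}{2c}-\tfrac12\log(cM)+\tfrac12\log(2\pi)+O(M^{-1/2}),
\end{equation*}
applied (with sign flip on the $\log(cM)$ term for the two ``$+1$''-shifted arguments) to each of the six Gammas $\Gamma(t_1+1),\Gamma(t_2),\Gamma(z-x_2+t_2),\Gamma(x_2-z),\Gamma(w-x_1+t_1+1),\Gamma(x_1-w+1)$, and combined with $\bigl((1-\be)/\be\bigr)^{w-z}$, the Jacobian $\be(1-\be)M\,d\tilde z\,d\tilde w$, the factor $1/(w-z)=1/[(\tilde w-\tilde z)\sqrt{\be(1-\be)M}]$, and the external $\sqrt{M\be(1-\be)}$. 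Direct bookkeeping shows the $M\log M$ terms cancel; the residual $O(M)$ and $O(\sqrt M)$ contributions reduce to $(\tau_2-\tau_1)M[\be\log\be+(1-\be)\log(1-\be)]$ and $\sqrt{\be(1-\be)M}\,(\xi_1-\xi_2)\log((1-\be)/\be)$ respectively, depending only on $(t_{1,2},x_{1,2})$ and absorbing into a single gauge transformation; the $a^2/(2c)$ corrections sum to exactly $(\tilde z-\xi_2)^2/(2\tau_2)-(\tilde w-\xi_1)^2/(2\tau_1)$, producing the Gaussian exponent of \eqref{Dyson_kernel}; and the $\log M$ and constant-order pieces of the six Gammas together with the Jacobian and external factors collapse to $-1/[(2\pi\i)^2\sqrt{\tau_1\tau_2}(\tilde w-\tilde z)]$.

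The binomial term is handled by the local central limit theorem for $\be^{\D x}(1-\be)^{\D t-\D x}\binom{\D t}{\D x}$, yielding $-\mathbf{1}_{\tau_1>\tau_2}(2\pi\D\tau)^{-1/2}\exp(-(\D\xi)^2/(2\D\tau))$ after the external scaling and gauge are applied. The main obstacle will be justifying exchange of limit and integral along the unbounded $z$-contour: Stirling's expansion is uniform for $\tilde z,\tilde w$ in bounded regions (giving pointwise convergence to the Gaussian integrand and a uniform bound there), whereas outside a bounded imaginary strip the classical estimate $|\Gamma(\sigma+\i y)|\sim C|y|^{\sigma-1/2}e^{-\pi|y|/2}$ produces rapid exponential decay in both the Bernoulli and DBM integrands, rendering the tails uniformly negligible and permitting a dominated convergence argument.
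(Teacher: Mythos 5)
Your proposal follows essentially the same route as the paper's proof: rescale $z,w$ by $\sqrt{\beta(1-\beta)M}$, cancel the $\sin(\pi w)/\sin(\pi z)$ factor against the Pochhammer ratios via the reflection formula so the integrand becomes a ratio of six Gamma functions, apply Stirling to extract the Gaussian exponent $(\tilde z-\xi_2)^2/(2\tau_2)-(\tilde w-\xi_1)^2/(2\tau_1)$ and identify the divergent factors as a gauge transformation of the form $f(t_1,x_1)/f(t_2,x_2)$, and treat the residual binomial term by the analogous Stirling (equivalently LCLT) asymptotics. The only genuine addition is that you explicitly flag the need for a dominated-convergence argument to justify passing the limit inside the unbounded $z$-integral (using the decay of the Gamma ratio for large imaginary part), a point the paper leaves implicit; that observation is correct and a sensible thing to include, but it does not change the structure of the argument.
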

The multiplication by $(M\be(1-\be))^{\frac12}$ corresponds to the rescaling of the space from discrete to continuous (the correlation kernel should be viewed as a kernel of an integral operator). Note also that the kernel \eqref{Dyson_kernel} can similarly be obtained as a diffusion limit of the Poisson kernel \eqref{K_Poisson}, but we will not perform this computation.
\begin{proof}[Proof of \Cref{thm:Dyson_convergence}]
    Let us denote $\sigma=\sqrt{\be(1-\be)}$ to shorten the notation. Changing the variables as $z=\tilde z \sigma\mm$, $w=\tilde w \sigma\mm$, and renaming back to $z,w$, we can rewrite \eqref{K_Bernoulli} as
    \begin{multline}
        K^{\textnormal{Bernoulli}}_{\vec a; \be}(t_1,x_1;t_2,x_2)
        =
        \mathbf{1}_{x_1\ge x_2}\mathbf{1}_{t_1>t_2}
        (-1)^{x_1-x_2+1}
        \binom{t_1-t_2}{x_1-x_2}
        \\+
        \sigma\mm\frac{t_1!}{(t_2-1)!}\frac1{(2\pi\i)^{2}}
        \int\limits_{(x_2-t_2+\frac12)/(\sigma\mm)-\i\infty}
        ^{(x_2-t_2+\frac12)/(\sigma\mm)+\i\infty}dz
        \oint\limits_{\textnormal{all $w$ poles}}dw\,
        \frac{(z \sigma\mm-x_2+1)_{t_2-1}}{(w \sigma\mm-x_1)_{t_1+1}}
        \\\times\frac{1}{w-z}
        \frac{\sin(\pi w \sigma\mm)}{\sin(\pi z \sigma\mm)}
        \left(\frac{1-\be}{\be}\right)^{(w-z)\sigma\mm}
        \prod_{r=1}^{N}\frac{z-\fl{\al_r \sigma\mm}/(\sigma\mm)}{w-\fl{\al_r \sigma\mm}/(\sigma\mm)}.
        \label{K_Bernoulli_to_Dyson_limit}
    \end{multline}
    The $w$ contour encircles all the $w$ poles of the integrand except $w=z$, which are close to the
    points $(\al_1\le \ldots\le \al_N)$. For large $M$ the $z$ contour will get shifted further to the left;
    the computation below shows that there will be no poles crossed while doing this.

    Denote
    \begin{equation*}
        c_{1,2}=t_{1,2}-M \tau_{1,2},\qquad
        d_{1,2}=x_{1,2}-
        \be M\tau_{1,2}-\xi_{1,2}\sigma\mm,
    \end{equation*}
    these are numbers between $-1$ and $0$.
    Let us first consider asymptotics of the
    non-integral summand. For large $M$, the two indicators reduce simply to
    $\mathbf{1}_{\tau_1>\tau_2}$, and the binomial coefficient has the asymptotics
    \begin{multline*}
        \binom{t_1-t_2}{x_1-x_2}=\frac{\G(t_1-t_2+1)}{\G(x_1-x_2+1)\G(t_1-t_2-(x_1-x_2)+1)}
        =\frac{1/\mm}{\sqrt{2\pi \sigma\D\tau}}
        \left(\frac{1-\be}{\be}\right)^{\sigma\mm\D\xi}
        \\\times
        e^{-M\D \tau(\be\log\be+(1-\be)\log(1-\be))}
        \be^{d_2-d_1} (1-\be)^{c_2-c_1+d_1-d_2}
        e^{-(\D \xi)^{2}/(2\D \tau)}
        \big(1+O(1/\mm)\big)
    \end{multline*}
    where we used \eqref{Stirling_Gamma}.

    Now consider the asymptotics of various parts of the integrand. We have
    \begin{equation*}
        \frac{t_1!}{(t_2-1)!}=M
        (M\tau_1)^{M \tau_1+c_1} (M \tau_2)^{-M \tau_2-c_2} e^{-M\D\tau}
        \sqrt{\tau_1 \tau_2}
        \big(1+O(1/M)\big).
    \end{equation*}
    We can also write
    \begin{multline*}
        \frac{(z \sigma\mm-x_2+1)_{t_2-1}}{(w \sigma\mm-x_1)_{t_1+1}}
        =
        \frac{\G(z \sigma\mm-x_2+t_2)}{\G(z \sigma\mm-x_2+1)}
        \frac{\G(w \sigma\mm-x_1)}{\G(w \sigma\mm-x_1+t_1+1)}
        \\=(-1)^{x_1-x_2+1}\frac{\G(z \sigma\mm-x_2+t_2)}{\G(w \sigma\mm-x_1+t_1+1)}
        \frac{\G(-z \sigma\mm+x_2)}{\G(1-w \sigma\mm+x_1)}
        \frac{\sin(\pi z \sigma\mm)}{\sin(\pi w \sigma\mm)},
    \end{multline*}
    where we used \eqref{Gamma_function_flip}. This cancels with the existing ratio of the sine functions.
    Continuing with the asymptotics, we obtain
    \begin{multline*}
        \frac{\G(z \sigma\mm-x_2+t_2)}{\G(w \sigma\mm-x_1+t_1+1)}
        \frac{\G(-z \sigma\mm+x_2)}{\G(1-w \sigma\mm+x_1)}=M^{-2}
        (M\tau_1)^{-M \tau_1-c_1} (M \tau_2)^{M \tau_2+c_2} e^{M\D\tau}
        \\\times
        e^{-M\D \tau(\be\log\be+(1-\be)\log(1-\be))}
        \be^{d_2-d_1} (1-\be)^{c_2-c_1+d_1-d_2}
        \left(\frac{1-\be}{\be}\right)^{\sigma\mm(\D\xi+z-w)}
        \\\times
        \frac{1}{\be(1-\be)\tau_1\tau_2}
        \exp\bigg\{\frac{\tau_1(z-\xi_2)^{2}-\tau_2(w-\xi_1)^{2}}{2\tau_1\tau_2}\bigg\}
        \big(1+O(1/\mm)\big).
    \end{multline*}

    We see that the factor
    \begin{equation*}
        (-1)^{x_1-x_2}e^{-M\D \tau(\be\log\be+(1-\be)\log(1-\be))}
        \be^{d_2-d_1} (1-\be)^{c_2-c_1+d_1-d_2}
        \left(\frac{1-\be}{\be}\right)^{\sigma\mm\D\xi}
    \end{equation*}
    appearing in both summands in \eqref{K_Bernoulli_to_Dyson_limit} is of the form
    $f(\tau_1,\xi_1)/f(\tau_2,\xi_2)$, and thus does not affect
    the correlation functions
    (cf. footnote${}^{\textnormal{\ref{gaugefootnote}}}$).
    The remaining parts of \eqref{K_Bernoulli_to_Dyson_limit}
    multiplied by $\sigma\mm$
    converge to \eqref{Dyson_kernel}, as desired.
\end{proof}

Since the noncolliding Bernoulli random walks converge under the diffusion scaling to the Dyson Brownian Motion \cite{eichelsbacher2008ordered}, the kernel \eqref{Dyson_kernel} is the correlation kernel for the latter process started from the arbitrary initial configuration $(\al_1\le\ldots\le\al_N)$. When $\tau_1=\tau_2$, the kernel \eqref{Dyson_kernel} turns into the one appeared in \cite{brezin1997extension}, \cite{Johansson2001Universality}, \cite{Shcherbina2008universality}. Utilizing \eqref{Dyson_kernel}, these papers show that the local statistics of the eigenvalues of the deformed GUE ensemble (equivalently, the distribution of the Dyson Brownian Motion started from $(\al_1\le\ldots\le\al_N)$ at a fixed time) are universally governed by the continuous sine kernel.



\section{Representation-theoretic interpretation of noncolliding walks} 
\label{sub:representation_theoretic_interpretation}

The random matrix analogue of noncolliding Bernoulli or Poisson random walks is the GUE Dyson Brownian Motion. The distribution of the Dyson Brownian Motion started from an arbitrary initial configuration $(\al_1\le\ldots\le \al_N)\in\R^{N}$ after time $t>0$ can also be interpreted as the eigenvalue distribution of the deformed GUE ensemble $A+\sqrt{t}G$, where $A=\mathop{\mathrm{diag}}(\al_1,\ldots,\al_N)$ is a fixed diagonal matrix, and $G$ is an $N\times N$ random matrix from the GUE.\footnote{There are different normalizations of the GUE random matrices and Dyson Brownian Motion in the literature (cf. \cite[(2.2.2)]{AndersonGuionnetZeitouniBook}, \cite[Example 2]{TaoVu2012Survey}), and here we assume that they agree.} Let us discuss a similar interpretation of the noncolliding Bernoulli or Poisson random walks in terms of representation theory of unitary groups.

Irreducible representations of the unitary group $U(N)$ can be parametrized by points of $\W N$. Let $\ch_{\vec x}(u_1,\ldots,u_N)$, $\vec x\in\W N$ denote the corresponding normalized irreducible characters. Here the $u_i$'s are eigenvalues of the matrix from $U(N)$, and the characters $\ch_{\vec x}$ are normalized in the sense that $\ch_{\vec x}(1,\ldots,1)=1$. These characters are the normalized Schur polynomials:
\begin{equation*}
    \ch_{\vec x}(u_1,\ldots,u_N)
    =\frac{s_\la(u_1,\ldots,u_N)}{\dim_N\vec x},
\end{equation*}
where $\la=(\la_1\ge \ldots \ge \la_N)$ with $\la_i=x_{N+1-i}+i-N$ is the highest weight of the representation, and
\begin{equation*}
    \dim_N \vec x=s_\la(1,\ldots,1)
    =\prod_{1\le i<j\le N}\frac{x_j-x_i}{j-i}=
    \frac{\V(\vec x)}{\V(0,1,\ldots,N-1)}
\end{equation*}
is the dimension of this representation. Details on representations of unitary groups can be found in, e.g., \cite{Weyl1946}.

An (abstract) normalized character $\ch$ of $U(N)$ is defined as a nonnegative definite continuous function on $U(N)$ which satisfies $\ch(e)=1$ and $\ch(ab)=\ch(ba)$ for any $a,b\in U(N)$ (that is, we speak about characters which do not necessarily correspond to actual representations). The set of such characters is convex, and the normalized irreducible characters are its extreme points. Thus, any abstract character can be decomposed into irreducibles as $\ch(u_1,\ldots,u_N)=\sum_{\vec x\in\W N}c_{\vec x}\,\ch_{\vec x}(u_1,\ldots,u_N)$, where the numbers $\{c_{\vec x}\}_{\vec x\in \W N}$ are nonnegative and sum to one, hence they define a probability distribution on $\W N$.

The product of two normalized characters $\ch^{(1)}$ and $\ch^{(2)}$ is also a normalized character. (If both $\ch^{(1)}$ and $\ch^{(2)}$ correspond to actual representations, then $\ch^{(1)}\ch^{(2)}$ is the normalized character of the tensor product of these representations.) The product $\ch^{(1)}\ch^{(2)}$ then can be decomposed into irreducibles, thus yielding a probability distribution on $\W N$.

Fix $\vec a\in\W N$ and take the irreducible normalized character $\ch_{\vec a}$ as $\ch^{(1)}$. Let $\ch^{(2)}$ be the restriction to $U(N)$ of a certain extreme character of the infinite-dimensional unitary group $U(\infty)$. We will consider two classes of such characters of $U(N)$ having the form
\begin{equation*}
    \ch_{\be;t}(u_1,\ldots,u_N)=\prod_{r=1}^{N}(1-\be+\be u_r)^{t}
    \qquad\textnormal{or}\qquad
    \ch_{\tau}(u_1,\ldots,u_N)=
    \prod_{r=1}^{N}e^{\tau(u_r-1)},
\end{equation*}
where $t\in\Z_{\ge0}$ and $\tau>0$. There is a number of papers discussing classification of extreme characters of $U(\infty)$, e.g., see \cite{Edrei53}, \cite{Voiculescu1976}, \cite{VK82CharactersU}, \cite{OkOl1998}, and other references in \cite{BorodinOlsh2011GT} and \cite{Petrov2012GT}.

\begin{proposition}\label{prop:rep_th}
    The probability weights $\{c_{\vec x}\}_{\vec x\in\W N}$ arising from the decomposition
    \begin{equation*}
        \ch_{\vec a}(u_1,\ldots,u_N)
        \ch_{\be;t}(u_1,\ldots,u_N)=
        \sum_{\vec x\in\W N}c_{\vec x}\,\ch_{\vec x}(u_1,\ldots,u_N)
    \end{equation*}
    describe the distribution of the noncolliding Bernoulli random walk with parameter $\be$ started from the initial configuration $\vec a$ after $t$ steps.

    Similarly, the decomposition of $\ch_{\vec a}(u_1,\ldots,u_N) \ch_{\tau}(u_1,\ldots,u_N)$ into irreducibles cor\-res\-ponds to the distribution of the noncolliding Poisson random walk started from the configuration $\vec a$ after time $\tau$.
\end{proposition}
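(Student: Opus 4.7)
Proof proposal. The plan is to compute the character decomposition directly via the Weyl character formula and match it with the Karlin--McGregor formula for the transition probabilities of $\vec X(t)$. The key identity is that when a symmetric function is multiplied by an antisymmetric Weyl-type determinant, the symmetric factor can be pulled row-by-row into the determinant.

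Concretely, writing $\vec a=(a_1<\dots<a_N)$ and using $s_\la(u_1,\dots,u_N)=\det[u_i^{a_j}]_{i,j=1}^N/\V(u)$ (where $\V(u)=\prod_{i<j}(u_j-u_i)$), I would compute
\begin{equation*}
\prod_{r=1}^N(1-\be+\be u_r)^t\cdot\det[u_i^{a_j}]_{i,j=1}^N
=\det\bigl[(1-\be+\be u_i)^t\, u_i^{a_j}\bigr]_{i,j=1}^N,
\end{equation*}
by absorbing the $r$-th factor of the product into the $i=r$-th row. Next, I expand $(1-\be+\be u)^t u^a=\sum_{m} p^{\be,t}(a,m)\, u^m$, where
\begin{equation*}
p^{\be,t}(a,m)=\binom{t}{m-a}\be^{m-a}(1-\be)^{t-m+a}
\end{equation*}
is precisely the single-particle $t$-step Bernoulli transition probability from $a$ to $m$. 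Then, using column multilinearity and antisymmetrization to collapse repeated exponents and sort each multi-index $(m_1,\dots,m_N)$ into an element $\vec b\in\W N$, I get
\begin{equation*}
\det\bigl[(1-\be+\be u_i)^t u_i^{a_j}\bigr]=\sum_{\vec b\in\W N}\det\bigl[p^{\be,t}(a_j,b_i)\bigr]_{i,j=1}^N\cdot\det[u_i^{b_j}]_{i,j=1}^N.
\end{equation*}
Dividing by $\V(u)$ and by $\dim_N\vec a$, and remembering that $\dim_N\vec b/\dim_N\vec a=\V(\vec b)/\V(\vec a)$, I obtain
\begin{equation*}
\ch_{\vec a}(u)\,\ch_{\be;t}(u)=\sum_{\vec b\in\W N}\frac{\V(\vec b)}{\V(\vec a)}\det\bigl[p^{\be,t}(a_j,b_i)\bigr]_{i,j=1}^N\ch_{\vec b}(u).
\end{equation*}
The Karlin--McGregor formula (see \cite{KMG59-Coincidence}, \cite{konig2002non}) identifies the right-hand coefficient with $\PP\bigl(\vec X(t)=\vec b\mid \vec X(0)=\vec a\bigr)$, matching \eqref{noncolliding_Bernoulli_walks_transitions} (one checks consistency of signs and vanishing outside the cone $b_i-a_i\in\{0,\dots,t\}$ using the support of $p^{\be,t}$ and that the determinant vanishes unless a monotone coupling exists).

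The Poisson case proceeds identically: the row-wise factor $e^{\tau(u_r-1)}$ pulls into row $r$, expands as $e^{\tau(u-1)}u^a=\sum_{m\ge a}\frac{\tau^{m-a}e^{-\tau}}{(m-a)!}u^m$, and the resulting kernel $p^{\tau}(a,m)=\frac{\tau^{m-a}e^{-\tau}}{(m-a)!}\mathbf{1}_{m\ge a}$ is the single-particle Poisson transition. The rest of the argument is unchanged. There is no real obstacle: the entire proof is the row-factorization trick plus column antisymmetrization, and the only bookkeeping point to watch is the correspondence between the orderings of $\vec a,\vec b\in\W N$ and the partition indexing in the Weyl formula, which is fixed by the convention $\lambda_i=x_{N+1-i}+i-N$ stated before the proposition.
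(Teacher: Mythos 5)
Your argument is correct, but it takes a genuinely different route from the paper's. You compute the full $t$-step decomposition in one shot: absorb $\prod_r(1-\be+\be u_r)^t$ row-by-row into the Weyl numerator, expand each column by multilinearity in the single-particle transition weights $p^{\be,t}(a,m)$, antisymmetrize to collapse onto strictly increasing exponent sets, and then identify the coefficient $\frac{\V(\vec b)}{\V(\vec a)}\det[p^{\be,t}(a_j,b_i)]$ with the $t$-step transition probability via the Karlin--McGregor/Doob $h$-transform representation of the conditioned walk; the Poisson case you treat by the same computation with the Poisson kernel. The paper instead works only with $t=1$: it extracts the coefficient of the leading monomial $u_1^{\mu_1+N-1}\cdots u_N^{\mu_N}$, matches it directly with the one-step probabilities \eqref{noncolliding_Bernoulli_walks_transitions}, gets general $t$ by induction (i.e., by the semigroup property on both sides), and obtains the Poisson statement as a $\be\searrow0$ limit. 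Your approach buys an explicit closed formula for all $t$ at once and handles Poisson without a limit transition, at the cost of importing the identity $\PP(\vec X(t)=\vec b\mid\vec X(0)=\vec a)=\frac{\V(\vec b)}{\V(\vec a)}\det[p^{\be,t}(a_i,b_j)]$ from \cite{KMG59-Coincidence}, \cite{konig2002non}; since the paper defines the walk by its one-step kernel \eqref{noncolliding_Bernoulli_walks_transitions}, if you want the argument self-contained you should either cite that $t$-step formula explicitly or close the loop internally by checking $t=1$ (where, for steps in $\{0,1\}$ between strictly increasing configurations, the Karlin--McGregor determinant reduces to the diagonal product, so your coefficient coincides with \eqref{noncolliding_Bernoulli_walks_transitions}) and then observing that both sides are semigroups in $t$ --- which is exactly the paper's induction. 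The sign/ordering bookkeeping you flag is indeed the only other point to verify, and it works out with the stated convention $\la_i=x_{N+1-i}+i-N$.
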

The case $\vec a=(0,1,\ldots,N-1)$ leads to the trivial representation: $\ch_{\vec a}(u_1,\ldots,u_N)\equiv 1$. In this case the distribution of the noncolliding Bernoulli or Poisson random walks is related to the decomposition of extreme characters of $U(\infty)$ into irreducibles. Probabilistic properties of the corresponding measures were studied in, e.g., \cite{BorodinKuan2007U}, \cite{BorFerr2008DF}. These measures can be regarded as discrete analogues of the GUE eigenvalue distribution.
\begin{proof}[Proof of \Cref{prop:rep_th}]
    This fact is well known to specialists, but we include its proof for completeness.

    It suffices to consider only the Bernoulli $t=1$ case, because the general $t$ case follows by induction, and the Poisson statement follows by a simple limit transition. The result would follow if we interpret the coefficients $c_\mu$ in the decomposition
    \begin{equation*}
        \frac{s_\la(u_1,\ldots,u_N)}{s_\la(1,\ldots,1)}
        \prod_{r=1}^{N}(\be u_r+1-\be)
        =\sum_{\mu_1\ge \ldots\ge\mu_N}c_\mu
        \frac{s_\mu(u_1,\ldots,u_N)}{s_\mu(1,\ldots,1)},
        \qquad \la_i=a_{N+1-i}+i-N,
    \end{equation*}
    as one-step transition probabilities \eqref{noncolliding_Bernoulli_walks_transitions}. Multiply the above decomposition by $s_\la(1,\ldots,1)$ and the Vandermonde in the $u_i$'s, and expand the determinants in
    \begin{equation*}
        \det[u_i^{\la_j+N-j}]_{i,j=1}^{N}
        \prod_{r=1}^{N}(\be u_r+1-\be)
        =\sum_{\mu_1\ge \ldots\ge\mu_N}c_\mu
        \det[u_i^{\mu_j+N-j}]_{i,j=1}^{N}
        \frac{s_\la(1,\ldots,1)}{s_\mu(1,\ldots,1)}.
    \end{equation*}
    Because of the ordering in $\la$ and $\mu$, it suffices to consider the coefficient by $u_1^{\mu_1+N-1}\ldots u_N^{\mu_N}$ in $u_1^{\la_1+N-1}u_2^{\la_2+N-2}\ldots u_N^{\la_N}$ multiplied by $\prod_{r=1}^{N}(\be u_r+1-\be)$. Clearly, picking $\be u_r$ from each $r$th factor corresponds to the $r$th particle jumping by one to the right, while picking $(1-\be)$ means that this particle stays put. Particle collisions are not allowed because $\mu_1+N-1> \ldots>\mu_N$, and the factor ${s_\la(1,\ldots,1)}/{s_\mu(1,\ldots,1)}$ can be identified with the ratio of the Vandermondes in \eqref{noncolliding_Bernoulli_walks_transitions}.

    The Poisson case follows from the above argument in the limit as $\be\searrow0$.
\end{proof}

We see that tensor multiplication of representations (and, more generally, multiplication of normalized characters) is a discrete analogue of the matrix addition. Moreover, multiplying by a suitable extreme character of $U(\infty)$ corresponds to adding a multiple of the GUE matrix. This similarity can be continued further to include the operation of the free convolution --- its discrete analogue is the so-called quantized free convolution, see \cite{GorinBufetov2013free}.

Therefore, our main universality results in \Cref{sub:main_result_bulk_limit_theorems} can be reformulated as bulk universality for tensor products of two representations of $U(N)$, when one of the factors is arbitrary, and the other factor is the specific representation $\ch_{\be;t}$ (with $t$ large). We \emph{conjecture} that under mild technical conditions the same bulk universality should hold for tensor products of two arbitrary representations of $U(N)$ (cf. \cite{bao2015local} for a progress towards a similar random matrix result). A weaker version of the bulk universality for tensor products of two arbitrary representations is established in \cite{Gorin2016}.



\printbibliography

\end{document}